\documentclass[a4paper,11pt]{article}
\usepackage{amsmath,amsthm,amssymb,enumitem,xcolor}

\usepackage[hang]{footmisc}
\setlength\footnotemargin{0mm}

\usepackage[nosort,nocompress,noadjust]{cite}

\usepackage[bookmarks=false,hyperfootnotes=false,colorlinks,
    linkcolor={red!60!black},
    citecolor={blue!50!black},
    urlcolor={blue!80!black}]{hyperref}

\renewcommand{\eqref}[1]{\hyperref[#1]{(\ref{#1})}}

\pagestyle{plain}

\setlength{\evensidemargin}{0pt}
\setlength{\oddsidemargin}{0pt}
\setlength{\topmargin}{-20pt}
\setlength{\footskip}{40pt}
\setlength{\textheight}{690pt}
\setlength{\textwidth}{450pt}
\setlength{\headsep}{10pt}
\setlength{\parindent}{0pt}
\setlength{\parskip}{1ex plus 0.5ex minus 0.2ex}
\setlength{\skip\footins}{30pt}

\newlist{enumlist}{enumerate}{2}
\setlist[enumlist,1]{labelindent=0cm,label=\arabic*.,ref=\arabic*,labelwidth=2.5ex,labelsep=0.5ex,leftmargin=3ex,align=left,topsep=0.5ex,itemsep=1ex,parsep=1ex}
\setlist[enumlist,2]{labelindent=0cm,label=\alph*),ref=\arabic*,labelwidth=2.5ex,labelsep=0.5ex,leftmargin=3ex,align=left,topsep=0.5ex,itemsep=1ex,parsep=1ex}

\newlist{itemlist}{itemize}{1}
\setlist[itemlist]{labelindent=0cm,label=$\bullet$,labelwidth=2.5ex,labelsep=0.5ex,leftmargin=3ex,align=left,topsep=0.5ex,itemsep=1ex,parsep=1ex}

\numberwithin{equation}{section}

{\theoremstyle{definition}\newtheorem{definition}{Definition}[section]
\newtheorem*{definition*}{Definition}

\newtheorem{remark}[definition]{Remark}
\newtheorem{example}[definition]{Example}
\newtheorem*{example*}{Example}
\newtheorem*{examples*}{Examples}}

\newtheorem{proposition}[definition]{Proposition}
\newtheorem{lemma}[definition]{Lemma}
\newtheorem{theorem}[definition]{Theorem}
\newtheorem{corollary}[definition]{Corollary}

{\theoremstyle{definition}}

\newcommand{\cB}{\mathcal{B}}

\newcommand{\cD}{\mathcal{D}}
\newcommand{\cE}{\mathcal{E}}
\newcommand{\cG}{\mathcal{G}}
\newcommand{\cF}{\mathcal{F}}

\newcommand{\cM}{\mathcal{M}}

\newcommand{\cP}{\mathcal{P}}
\newcommand{\cR}{\mathcal{R}}
\newcommand{\cQ}{\mathcal{Q}}
\newcommand{\cS}{\mathcal{S}}
\newcommand{\cT}{\mathcal{T}}
\newcommand{\cU}{\mathcal{U}}

\newcommand{\cW}{\mathcal{W}}

\newcommand{\C}{\mathbb{C}}
\newcommand{\F}{\mathbb{F}}
\newcommand{\N}{\mathbb{N}}
\newcommand{\Q}{\mathbb{Q}}
\newcommand{\R}{\mathbb{R}}
\newcommand{\T}{\mathbb{T}}

\newcommand{\al}{\alpha}
\newcommand{\be}{\beta}
\newcommand{\eps}{\varepsilon}
\newcommand{\om}{\omega}

\newcommand{\vphi}{\varphi}

\newcommand{\Atil}{\widetilde{A}}

\newcommand{\gammatil}{\widetilde{\gamma}}

\newcommand{\Ghat}{\widehat{G}}

\newcommand{\Aut}{\operatorname{Aut}}
\newcommand{\ball}{\operatorname{ball}}
\newcommand{\End}{\operatorname{End}}

\newcommand{\Homeo}{\operatorname{Homeo}}
\newcommand{\Isom}{\operatorname{Isom}}

\newcommand{\Prob}{\operatorname{Prob}}
\newcommand{\res}{\operatorname{res}}
\newcommand{\lspan}{\operatorname{span}}

\newcommand{\vNalg}{\operatorname{vNalg}}

\newcommand{\ot}{\otimes}
\newcommand{\id}{\mathord{\text{\rm id}}}
\newcommand{\actson}{\curvearrowright}

\newcommand{\othat}{\mathbin{\otimes_\wedge}}
\newcommand{\otvee}{\mathbin{\otimes_\vee}}
\newcommand{\otmin}{\mathbin{\otimes_{\text{\rm min}}}}
\newcommand{\otmax}{\mathbin{\otimes_{\text{\rm max}}}}
\newcommand{\otalg}{\mathbin{\otimes_{\text{\rm alg}}}}
\newcommand{\rtimesred}{\rtimes_{\text{\rm red}}}
\newcommand{\rtimesf}{\rtimes_{\text{\rm f}}}
\newcommand{\full}{_{\text{\rm f}}}
\newcommand{\red}{_{\text{\rm red}}}
\newcommand{\nmax}{_{\text{\rm max}}}

\begin{document}
\begin{center}
{\boldmath\Large\bf Borel fields and measured fields of Polish spaces,\vspace{1ex}\\ Banach spaces, von Neumann algebras and C$^*$-algebras}

\bigskip

{\sc by Stefaan Vaes\footnote{\noindent KU~Leuven, Department of Mathematics, Celestijnenlaan 200b - box 2400, 3001 Leuven (Belgium).}\textsuperscript{,}\footnote{E-mail: stefaan.vaes@kuleuven.be. S.V.\ is supported by Methusalem grant METH/21/03 of the Flemish Government and by FWO research projects G090420N and G016325N of the Research Foundation Flanders (FWO).} and Lise Wouters\textsuperscript{1,}\footnote{L.W.\ is supported by PhD grant 11B6622N funded by the Research Foundation Flanders (FWO).}}
\end{center}

\begin{abstract}\addtocounter{footnote}{1}\footnotetext{AMS Subject Classification: primary 46L10, 46L05 and secondary 03E15, 46B20, 22D05}
\noindent Several recent articles in operator algebras make a nontrivial use of the theory of measurable fields of von Neumann algebras $(M_x)_{x \in X}$ and related structures. This includes the associated field $(\Aut M_x)_{x \in X}$ of automorphism groups and more general measurable fields of Polish groups with actions on Polish spaces. Nevertheless, a fully rigorous and at the same time sufficiently broad and flexible theory of such Borel fields and measurable fields is not available in the literature. We fill this gap in this paper and include a few counterexamples to illustrate the subtlety: for instance, for a Borel field $(M_x)_{x \in X}$ of von Neumann algebras, the field of Polish groups $(\Aut M_x)_{x \in X}$ need not be Borel.
\end{abstract}

\section{Introduction}

Since every von Neumann algebra $M$ with separable predual can be written as a direct integral of factors $(M_x)_{x \in X}$ indexed by a standard probability space $(X,\mu)$, such measurable fields of factors often appear in the operator algebra literature. They are thoroughly and rigorously treated in \cite{Tak79}. In several situations, for instance when dealing with the classification of group actions on von Neumann algebras as in \cite{ST84,PSV18,SW23,Cha23}, one needs to consider canonically associated fields of separable structures like the Polish groups $\Aut(M_x)$ of automorphisms, or the unitary groups $\cU(M_x)$, or more complicated fields of cocycle conjugacies between group actions, etc. An entirely consistent and rigorous treatment of Borel fields and measurable fields encompassing all such constructions is not available in the literature.

This lack of a rigorous treatment has never led to mistakes because in the above context, we are always free to discard from $(X,\mu)$ a Borel set of measure zero, so that at any stage the Jankov-von Neumann measurable selection theorem is available.

The theory of Borel fields is however more subtle. A rigorous theory of Borel fields of Polish groups was given in \cite{Sut85}. In such a context, one is no longer allowed to discard sets of measure zero. As a result in \cite{Sut85}, it was left as an open problem if for a Borel field $(G_x)_{x \in X}$ of second countable locally compact groups, the field of Pontryagin dual groups $\Ghat = (\Ghat_x)_{x \in X}$ was still a Borel field, the issue being to prove that there exist enough everywhere defined Borel sections $X \to \Ghat$.

With this paper, we want to fill this gap in the literature and present a fully rigorous and at the same time very flexible theory of Borel fields and measurable fields encompassing all kinds of separable structures: Polish spaces, Polish groups, separable Hilbert spaces, separable Banach spaces, separable C$^*$-algebras, von Neumann algebras with separable predual, their unitary groups and automorphism groups, etc.

Let $X$ be a standard Borel space and $(S_x)_{x \in X}$ a family of such separable structures, indexed by $X$. In this paper, a \emph{Borel field} structure only consists of a $\sigma$-algebra $\cB$ on the disjoint union $S = \sqcup_{x \in X} S_x$, with corresponding map $\pi : S \to X$. Of course, $(S,\cB)$ has to satisfy a number of properties, always including that $(S,\cB)$ is standard Borel, that $\pi : S \to X$ is measurable and that there exists a dense sequence of Borel sections $X \to S$. By restricting our definition to the only essential point, namely a $\sigma$-algebra on the total space $S$, our definition is sufficiently flexible to easily construct new Borel fields from existing ones, e.g.\ passing from a field $(M_x)_{x \in X}$ of von Neumann algebras to a field of Polish groups $(\Aut(M_x))_{x \in X}$.

When $\mu$ is a probability measure on the Borel $\sigma$-algebra of $X$, we define a \emph{measured field} structure on $S = \sqcup_{x \in X} S_x$ as a $\sigma$-algebra $\cB$ on $S$ with the property that there exists a conull Borel set $X_0 \subset X$ such that $(S_x)_{x \in X_0}$ is a Borel field.

In Sections \ref{sec.fields-Polish-spaces} and \ref{sec.fields-banach-spaces}, we define in this way Borel fields of Polish spaces, separable Banach spaces and separable C$^*$-algebras. In Sections \ref{sec.Effros-Borel} and \ref{sec.constructions-new-fields}, we prove a number of general results on how to construct new Borel fields from existing ones. We apply this in Section \ref{sec.fields-banach-spaces} to prove the following results on Borel fields $V = (V_x)_{x \in X}$ of separable Banach spaces. First, the set $V^* = (V_x^*)_{x \in X}$ has a canonical standard Borel structure such that $\ball V^*$ becomes a Borel field of compact Polish spaces (using the weak$^*$ topology on each $\ball V_x^*$). Second, if $W = (W_x)_{x \in X}$ is a family of closed subspaces of $(V_x)_{x \in X}$ such that $W \subset V$ is a Borel set, we prove that $W$ is \emph{not necessarily} a Borel field, again because there may not be enough Borel sections, but always is a Borel field if all the spaces $V_x$ are reflexive. Also, for any probability measure $\mu$ on $X$, $W$ always is a measured field.

In Section \ref{sec.completion}, we define Borel fields of separable pseudometric spaces, resp.\ separable seminormed spaces, and we prove that their separation-completions canonically are Borel fields of Polish spaces, resp.\ separable Banach spaces. This will be a key tool in Section \ref{sec.fields-vNalg} to prove that the ``abstract'' and ``concrete'' definitions of a Borel field of von Neumann algebras are equivalent.

In Section \ref{sec.fields-operators}, given Borel fields of separable Banach spaces $V = (V_x)_{x \in X}$ and $W = (W_x)_{x \in X}$, we define a canonical standard Borel structure on the field $B(V,W) = (B(V_x,W_x))_{x \in X}$ of bounded linear operators from $V_x$ to $W_x$. We prove that for any probability measure $\mu$ on $X$, $\ball B(V,W)$ is a measured field of Polish spaces (where each $B(V_x,W_x)$ is equipped with the strong topology). We also prove that in many cases, in particular when all $W_x$ are reflexive, $\ball B(V,W)$ actually is a Borel field of Polish spaces, but suspect that this is not true in general.

We also define in Section \ref{sec.fields-operators} Borel fields of Polish groups. Given a Borel field of separable Banach spaces $V = (V_x)_{x \in X}$, we define a canonical standard Borel structure on the field $\Isom V = (\Isom V_x)_{x \in X}$ of isometric automorphisms of $V_x$. While we prove that for every probability measure $\mu$ on $X$, $\Isom V$ is a measured field of Polish groups, we also show that $\Isom V$ is not always a Borel field of Polish groups.

In Section \ref{sec.fields-vNalg}, we combine several results to give two equivalent definitions for a Borel field $M = (M^x)_{x \in X}$ of von Neumann algebras $M^x$ with separable predual $M^x_*$. The first ``abstract'' definition consists of a Borel field structure on the field of separable Banach spaces $(M^x_*)_{x \in X}$ such that w.r.t.\ the corresponding Borel structure on the dual $M$ (see Section \ref{sec.fields-banach-spaces}), the multiplication and adjoint operation on $M$ are Borel maps. The second ``concrete'' definition consists of an embedding $M \subset B(H)$, where $H = (H_x)_{x \in X}$ is a Borel field of separable Hilbert spaces and $M = (M^x)_{x \in X}$ is a family of von Neumann algebras $M^x \subset B(H_x)$ such that $M \subset B(H)$ is a Borel set. We prove in Proposition \ref{prop.equivalence-defs-Borel-field-vNalg} that both definitions are equivalent. In particular, given an ``abstract'' field $M = (M^x)_{x \in X}$, we make a Borel choice of faithful normal states $\om_x$ on $M^x$ and define $H = (H_x)_{x \in X}$ as the completion of the field of pre-Hilbert spaces given by $(M^x,\om_x)$.

We then prove in Section \ref{sec.aut-groups} that there is a natural standard Borel structure on $\Aut M = (\Aut M^x)_{x \in X}$ and that, for any probability measure $\mu$ on $X$, $\Aut M$ becomes a measured field of Polish groups. We also show that $\Aut M$ is not necessarily a Borel field of Polish groups.

In Section \ref{sec.lc-groups}, we consider Borel fields of second countable locally compact groups $G = (G_x)_{x \in X}$. We solve the two problems left open in \cite{Sut85}: the field of Pontryagin dual groups $\Ghat = (\Ghat_x)_{x \in X}$ is indeed a Borel field, and we may always choose Haar measures $\lambda_x$ on $G_x$ depending on $x \in X$ in a Borel way. As a consequence, naturally associated fields such as $L^p(G) = (L^p(G_x))_{x \in X}$ are shown to be Borel fields of separable Banach spaces.

Throughout the paper, we illustrate the flexibility of our approach by proving obvious stability results of the following kind. If $V = (V_x)_{x \in X}$ is a Borel field of locally compact Polish spaces, then the field $C(V) = C(V_x)_{x \in X}$ of continuous functions from $V_x$ to $\C$ with the compact-open topology, naturally is a Borel field of Polish spaces (Proposition \ref{prop.Borel-field-cont-functions-compact-open}), while the continuous functions vanishing at infinity, $C_0(V) = C_0(V_x)_{x \in X}$, form a Borel field of separable C$^*$-algebras (Proposition \ref{prop.Borel-field-C0}).

To further demonstrate the flexibility of the theory, we prove in Section \ref{sec.tensor-crossed} that the injective and projective tensor products of Borel fields of separable Banach spaces are again Borel fields, and the same for the minimal and maximal tensor products of Borel fields of separable C$^*$-algebras. We further prove that for Borel fields of actions of locally compact Polish groups on separable C$^*$-algebras and von Neumann algebras, all associated crossed products naturally are Borel fields.

In the final Section \ref{sec.universal}, we consider universal Borel fields. We say that a Borel field $S = (S_x)_{x \in X}$ of a certain separable structure is universal if every Borel field of such a structure is isomorphic with a subfield $(S_x)_{x \in X_0}$ for a Borel set $X_0 \subset X$. We construct such universal Borel fields for Polish spaces, separable Banach spaces, von Neumann algebras with separable predual and Polish groups. This may be viewed as a reformulation in our setting of the Borel coding of Polish groups from \cite{Sut85}, the Borel coding of separable C$^*$-algebras from \cite{Kec96}, and the Borel coding of separable Banach spaces from \cite{Bos01}.

We do not really touch in this article upon the well-studied complexity questions for natural equivalence relations between separable structures (e.g.\ the isomorphism relation of separable Banach spaces). We nevertheless explain in Remark \ref{rem.about-Borel-properties} how our concept of a universal Borel field expresses which Borel codings of a separable structure are the right ones to study these complexity questions. Using any universal Borel field, one can unambiguously define which properties of, and relations between, such separable structures are Borel, or analytic, or ...

This article grew out of the appendix of the second author's PhD thesis \cite{Wou23}. Several of the results in this paper are implicitly available the literature, in particular in \cite{Eff64, Eff65, Azo81, Sut85, Kec96, Bos01, Dod08, Dod10, DG11, Bra15}. So parts of this paper can be considered as expository. The main goal of the paper is to have all definitions and basic results together in one coherent framework that we expect to serve as a very useful toolbox for anybody encountering measured fields or Borel fields of a separable mathematical structure, especially in an operator algebraic context.

{\bf Acknowledgment.} We thank Alekos Kechris, Julien Melleray, Bruno de Mendon\c{c}a Braga and the referee for useful comments on a first draft of this paper, which led to several improvements.

\section{Borel and measured fields of Polish spaces}\label{sec.fields-Polish-spaces}

We call \emph{measurable space} any set equipped with a $\sigma$-algebra. Once a measure is given on a measurable space, we call this a \emph{measured space}. We always equip product spaces with the product $\sigma$-algebra. We call \emph{standard Borel space} any measurable space that is isomorphic with a Borel subset of a Polish space.

Throughout this paper, we use that an injective Borel map $f : X \to Y$ between standard Borel maps Borel sets $U \subset X$ to Borel sets $f(U) \subset Y$, see e.g.\ \cite[Corollary 15.2]{Kec95}.

When $V = (V_x)_{x \in X}$ is a family of sets, we also view $V$ as the disjoint union of the sets $V_x$ and always denote by $\pi : V \to X$ the map satisfying $\pi(v) = x$ for all $v \in V_x$. When $V = (V_x)_{x \in X}$ and $W = (W_x)_{x \in X}$ are families of sets, we define
\[
V \times_\pi W = \{(v,w) \in V \times W \mid \pi(v) = \pi(w)\} \; .
\]
We refer to a map $\vphi : X \to V$ satisfying $\vphi(x) \in V_x$ for all $x \in X$ as a \emph{section}.

\begin{definition}\label{def.Borel-field-Polish}
A \emph{Borel field of Polish spaces} consists of a standard Borel space $X$, a family $V = (V_x)_{x \in X}$ of Polish spaces and a standard Borel structure on $V$ such that the following holds.
\begin{enumlist}
\item $\pi : V \to X$ is Borel.
\item There exists a Borel map $d : V \times_\pi V \to [0,+\infty)$ such that for every $x \in X$, the restriction of $d$ to $V_x \times V_x$ is a separable complete metric that induces the topology on $V_x$.
\item There exists a sequence of sections $\vphi_n : X \to V$ such that every $\vphi_n$ is a Borel map and such that for every $x \in X$, the set $\{\vphi_n(x) \mid n \in \N\}$ is dense in $V_x$.
\end{enumlist}
\end{definition}

We will always refer to the map $d$ in point~2 of Definition \ref{def.Borel-field-Polish} as a \emph{compatible Borel metric} and to a sequence satisfying the condition in point~3 as a \emph{dense sequence of Borel sections}.

Another way to define the measurable structure on a field of Polish spaces consists in declaring which sections $\vphi : X \to V$ are Borel. In Proposition \ref{prop.two-approach-equivalent}, we explain how this is equivalent to our Definition \ref{def.Borel-field-Polish}. We find it more natural and also more flexible to define the measurable structure just as a $\sigma$-algebra on the field satisfying properties 1, 2 and 3 in Definition \ref{def.Borel-field-Polish}.

Yet another way to define the notion of a Borel field of Polish spaces is based on the fact that every Polish space is homeomorphic to a closed subset of the Polish space $\R^\N$ with the topology of pointwise convergence. In general for a Polish space $W$, the set $F(W)$ of nonempty closed subsets of $W$ equipped with the Effros Borel structure is a standard Borel space. One may then define a Borel field of Polish spaces as a Borel map $X \to F(\R^\N)$. Also this approach is equivalent to Definition \ref{def.Borel-field-Polish} (see Sections \ref{sec.Effros-Borel} and \ref{sec.universal}), but is certainly less flexible and natural than the definition above.

Note that condition 3 in Definition \ref{def.Borel-field-Polish} is essential, as the following example illustrates.

\begin{example}[{\cite[Example 5.1.7]{Sri98}}]\label{ex.counter1}
By \cite[Example 5.1.7]{Sri98}, we can choose a Polish space $P$ and a closed subset $C \subset [0,1] \times P$ such that for every $x \in [0,1]$, the subset $C_x \subset P$ defined by $C_x = \{v \in P \mid (x,v) \in C\}$ is nonempty, but there does not exist a Borel map $\vphi : [0,1] \to P$ with the property that $\vphi(x) \in C_x$ for all $x \in [0,1]$. By restricting a compatible metric on $P$, we get a family $C = (C_x)_{x \in [0,1]}$ of Polish spaces that satisfies conditions 1 and 2 of Definition \ref{def.Borel-field-Polish}, but not condition 3, because the field $C$ does not admit any everywhere defined Borel section.
\end{example}

\begin{lemma}\label{lem.Borel-field-Polish-some-properties}
Let $V = (V_x)_{x \in X}$ be a Borel field of Polish spaces and let $d$ be a compatible Borel metric.
\begin{enumlist}
\item Whenever $\vphi_n : X \to V$ is a dense sequence of Borel sections, the Borel $\sigma$-algebra on $V$ is the smallest $\sigma$-algebra such that $\pi : V \to X$ and the maps $V \to [0,+\infty) : v \mapsto d(v,\vphi_n(\pi(v)))$ are measurable for all $n \in \N$.
\item For every $x \in X$, the restriction of the Borel $\sigma$-algebra on $V$ to $V_x$ equals the Borel $\sigma$-algebra of $V_x$.
\item Let $Y$ be a measurable space and $F_n,F : Y \to V$ maps such that $\pi(F_n(y)) = \pi(F(y))$ for all $n \in \N$, $y \in Y$. If for all $y \in Y$, $d(F_n(y),F(y)) \to 0$ and if $F_n$ is measurable for all $n$, also $F$ is measurable.
\end{enumlist}
\end{lemma}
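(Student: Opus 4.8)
The plan is to prove the three parts in order, using the density of the Borel sections as the bridge between the abstract $\sigma$-algebra on $V$ and the metric $d$.

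\textbf{Part 1.} Let $\cC$ denote the smallest $\sigma$-algebra on $V$ making $\pi$ and all the maps $v \mapsto d(v,\vphi_n(\pi(v)))$ measurable. Since $\pi$ is Borel, $d$ is Borel on $V \times_\pi V$, and each $\vphi_n$ is a Borel section, the composite $v \mapsto d(v,\vphi_n(\pi(v)))$ is Borel, so $\cC \subset \cB$ where $\cB$ is the Borel $\sigma$-algebra on $V$. For the reverse inclusion, the idea is that $(V,\cC)$ already ``sees'' enough: fix a countable base $\{B_k\}$ for the topology of, say, $\R^\N$ in which $X$ sits, so $\pi^{-1}(B_k \cap X) \in \cC$; and for each $n$ and each rational $r>0$, the set $\{v \in V \mid d(v,\vphi_n(\pi(v))) < r\}$ lies in $\cC$. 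These latter sets are precisely the ``open balls around the sections'', and because the $\vphi_n(x)$ are dense in $V_x$ for every $x$, their fiberwise traces generate the Borel structure of each $V_x$. The key point I would invoke is that $(V,\cB)$ is standard Borel, $(V,\cC)$ is countably generated, and the identity map $(V,\cB) \to (V,\cC)$ is a bijective measurable map; by the Lusin–Souslin theorem (e.g.\ \cite[Corollary 15.2]{Kec95}, already cited in the excerpt) a bijective Borel map between standard Borel spaces has Borel inverse, so it suffices to check that $\cC$ is itself standard Borel, equivalently that $\cC$ separates points of $V$. Two points $v,w$ in the same fiber $V_x$ are separated because some $\vphi_n(x)$ is close to $v$ but not to $w$, so some ball distinguishes them; two points in different fibers are separated by $\pi$. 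Hence $\cC = \cB$.

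\textbf{Part 2.} This is essentially a corollary of Part 1. Fix $x \in X$ and restrict the generating maps of $\cC = \cB$ to $V_x$: the map $\pi|_{V_x}$ is constant, hence contributes nothing, and the maps $v \mapsto d(v,\vphi_n(x))$ restricted to $V_x$ are exactly the distance functions to a dense sequence in the Polish space $(V_x,d)$. The $\sigma$-algebra generated by distance-to-a-dense-sequence functions on a separable metric space is exactly the Borel $\sigma$-algebra (these functions are continuous, and the balls $\{d(\cdot,\vphi_n(x))<r\}$ form a countable base). Therefore $\cB|_{V_x}$ equals the Borel $\sigma$-algebra of $V_x$.

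\textbf{Part 3.} Using Part 1, it suffices to check measurability of $F$ against each generator. For $\pi \circ F$: since $\pi(F_n(y)) = \pi(F(y))$ for all $y$ and $F_n$ is measurable, $\pi \circ F = \pi \circ F_n$ is measurable. For the maps $y \mapsto d(F(y),\vphi_n(\pi(F(y))))$: write $\psi_n(y) = \pi(F(y)) = \pi(F_m(y))$, which is measurable. Then $d(F_m(y),\vphi_n(\psi_n(y)))$ is measurable in $y$ for every fixed $m$, because $y \mapsto (F_m(y),\vphi_n(\psi_n(y)))$ is a measurable map into $V \times_\pi V$ (its components are measurable and land in the same fiber) composed with the Borel map $d$. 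By the triangle inequality $|d(F(y),\vphi_n(\psi_n(y))) - d(F_m(y),\vphi_n(\psi_n(y)))| \le d(F(y),F_m(y)) \to 0$ as $m\to\infty$, so $y \mapsto d(F(y),\vphi_n(\psi_n(y)))$ is a pointwise limit of measurable functions, hence measurable. This holds for all $n$, so $F$ is measurable.

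\textbf{Main obstacle.} The only nonroutine step is the reverse inclusion in Part 1, i.e.\ showing $\cB \subset \cC$. The subtlety is that a countably generated point-separating $\sigma$-algebra on a set need not be standard Borel in the abstract, so one genuinely needs the ambient hypothesis that $(V,\cB)$ is standard together with an injectivity/Lusin–Souslin argument rather than a naive generation argument; everything else is a direct unwinding of definitions and the triangle inequality.
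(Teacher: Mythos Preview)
Your proof is correct and follows essentially the same route as the paper. The paper's version is simply more compressed: it defines the injective Borel map $\Theta : V \to X \times \R^\N$, $\Theta(v) = (\pi(v),(d(v,\vphi_n(\pi(v))))_n)$, and notes that by Lusin--Souslin this is a Borel isomorphism onto its image, which is exactly your point-separation/injectivity argument for Part~1; Parts~2 and~3 are then dismissed in one line as immediate consequences, whereas you spell them out (correctly) via restriction of generators and the triangle inequality.
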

\begin{proof}
1.\ Fix a dense sequence of Borel sections $\vphi_n : X \to V$. Define the injective Borel map
\[
\Theta : V \to X \times \R^\N : \Theta(v) = (\pi(v), (d(v,\vphi_n(\pi(v))))_{n \in \N}) \; .
\]
Then $\Theta$ is a Borel isomorphism onto its image, which means that the Borel $\sigma$-algebra on $V$ equals the smallest $\sigma$-algebra such that $\pi$ and the maps $v \mapsto d(v,\vphi_n(\pi(v)))$ are measurable. So, 1 is proven.

2 and 3.\ These are immediate consequences of 1.
\end{proof}

\begin{definition}\label{def.measured-field-Polish}
A \emph{measured field of Polish spaces} consists of a standard $\sigma$-finite measured space $(X,\mu)$, a family $V = (V_x)_{x \in X}$ of Polish spaces and a standard Borel structure on $V$ with the following properties.
\begin{enumlist}
\item $\pi : V \to X$ is Borel.
\item There exists a Borel set $X_0 \subset X$ such that $\mu(X \setminus X_0)=0$ and such that the restriction $(V_x)_{x \in X_0} = \pi^{-1}(X_0)$ is a Borel field of Polish spaces.
\end{enumlist}
\end{definition}

So, contrary to Definition \ref{def.Borel-field-Polish} of a Borel field, the definition of a measured field only requires the existence of a dense sequence of Borel sections $\vphi_n : X_0 \to V$ that are defined on a conull Borel set $X_0 \subset X$. Under the extra set theoretic axiom of $\Sigma^1_1$-determinacy (see \cite[Definition 26.3]{Kec95}), we prove that such a dense sequence of Borel sections exists automatically. We thank the referee for providing this result.

\begin{proposition}\label{prop.sections-automatic-determinacy}
Assume that the axiom of $\Sigma^1_1$-determinacy holds. Let $V = (V_x)_{x \in X}$ be a family of Polish spaces indexed by a standard $\sigma$-finite measured space $(X,\mu)$. Assume that $V$ is a standard Borel space such that $\pi : V \to X$ is Borel. Assume that there exists a Borel map $d : V \times_\pi V \to [0,+\infty)$ such that for every $x \in X$, the restriction of $d$ to $V_x \times V_x$ is a separable complete metric that induces the topology on $V_x$.

Then $V$ is a measured field of Polish spaces.
\end{proposition}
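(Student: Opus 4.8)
The plan is to produce the required dense sequence of Borel sections, off a $\mu$-null set, by a single uniformization applied to the coanalytic set of \emph{all} dense sequences in the fibres, and then to invoke $\Sigma^1_1$-determinacy to upgrade the resulting uniformizing map to a $\mu$-measurable, hence $\mu$-a.e.\ Borel, map. We may assume that $V_x\neq\emptyset$ for every $x\in X$: the set $\{x\in X:V_x=\emptyset\}$ is the complement of the analytic set $\pi(V)$ and hence $\mu$-measurable, so either it is non-null (in which case $V$ is not a measured field and there is nothing to prove) or it is contained in a $\mu$-null Borel set, in which case we restrict $X$ to its complement.

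First I would form the fibred power $Y=\{(v_n)_{n\in\N}\in V^\N:\pi(v_n)=\pi(v_0)\text{ for all }n\}$, which is a Borel, hence standard Borel, subset of $V^\N$, with Borel surjection $\Pi:Y\to X$ given by $\Pi((v_n)_n)=\pi(v_0)$ and fibre $Y_x=(V_x)^\N$. Inside $X\times Y$ I consider
\[
D=\bigl\{(x,(v_n)_n)\in X\times Y:\Pi((v_n)_n)=x\ \text{ and }\ \{v_n:n\in\N\}\text{ is }d\text{-dense in }V_x\bigr\}\,.
\]
Because each $V_x$ is separable and nonempty, the section $D_x$ is nonempty for every $x\in X$, so $\operatorname{proj}_X(D)=X$. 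The density clause unfolds as $\forall w\in V\,\bigl(\pi(w)=x\Rightarrow\forall k\in\N\,\exists n\in\N:\ d(v_n,w)<1/(k+1)\bigr)$, a universal quantifier over the standard Borel space $V$ in front of a Borel matrix, so $D$ is $\boldsymbol{\Pi}^1_1$.

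By the Kond\^o--Addison $\boldsymbol{\Pi}^1_1$-uniformization theorem (see \cite[\S36]{Kec95}), $D$ contains a $\boldsymbol{\Pi}^1_1$ subset which is the graph of a function $g:X\to Y$ with $g(x)\in D_x$ for all $x$. For every Borel set $B\subseteq Y$, both $g^{-1}(B)=\operatorname{proj}_X\bigl(\operatorname{graph}(g)\cap(X\times B)\bigr)$ and $g^{-1}(Y\setminus B)$ are projections of $\boldsymbol{\Pi}^1_1$ sets, hence $\boldsymbol{\Sigma}^1_2$, so $g^{-1}(B)$ is $\boldsymbol{\Delta}^1_2$. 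At this point the hypothesis is used: $\Sigma^1_1$-determinacy implies that every $\boldsymbol{\Sigma}^1_2$ subset of a standard Borel space is universally measurable (by theorems of Harrington, Martin and Solovay: $\Sigma^1_1$-determinacy yields $x^\#$ for every real $x$, and this gives the measurability of $\boldsymbol{\Sigma}^1_2$ sets; see \cite{Kec95}). Hence $g$ is $\mu$-measurable, and since $(X,\mu)$ is standard $\sigma$-finite and $Y$ is standard Borel, $g$ agrees with a Borel map $h:X_0\to Y$ on some conull Borel set $X_0\subseteq X$.

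Writing $h(x)=(\vphi_n(x))_{n\in\N}$, each $\vphi_n:X_0\to V$ is Borel (compose $h$ with a Borel coordinate projection $Y\to V$); it satisfies $\pi(\vphi_n(x))=x$ because $h(x)\in D_x$ forces $\Pi(h(x))=x$, and $\{\vphi_n(x):n\in\N\}$ is $d$-dense in $V_x$ by definition of $D$. Since $\pi^{-1}(X_0)$ is a Borel, hence standard Borel, subset of $V$ on which $\pi$ is Borel and $d$ restricts to a compatible Borel metric, the family $(V_x)_{x\in X_0}$ satisfies conditions 1--3 of Definition \ref{def.Borel-field-Polish}, i.e.\ is a Borel field of Polish spaces; by Definition \ref{def.measured-field-Polish}, $V$ is a measured field of Polish spaces. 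The only genuinely nontrivial step, and the sole place where determinacy intervenes, is the measurability upgrade for $g$: the Jankov--von Neumann selection theorem does not suffice here, since the selection problem ``choose a dense sequence in $V_x$'' is coanalytic rather than analytic, and $\boldsymbol{\Pi}^1_1$-uniformizing functions need not be universally measurable in $\mathrm{ZFC}$ alone.
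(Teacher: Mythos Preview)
Your proof is correct and follows the same strategy as the paper: both identify the $\boldsymbol{\Pi}^1_1$ set of pairs $(x,(v_n)_n)$ with $(v_n)_n$ dense in $V_x$ and then extract a $\mu$-a.e.\ Borel selection under $\Sigma^1_1$-determinacy. The paper simply cites \cite[Corollary~36.21]{Kec95} for that selection step, whereas you unpack it via Kond\^o uniformization together with the universal measurability of $\boldsymbol{\Sigma}^1_2$ sets (equivalently, sharps for all reals), which is exactly the content behind the cited corollary.
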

\begin{proof}
Define the Borel subset $Y \subset X \times V^\N \times V$ of elements $(x,v,w)$ satisfying $\pi(v_n) = x = \pi(w)$ for all $n \in \N$. Define $Z \subset Y$ as the subset of $(x,v,w) \in Y$ with the property that $w$ does not belong to the closure of the sequence $(v_n)_{n \in \N}$ in $V_x$. Since
$$Z = \{(x,v,w) \in Y \mid \exists k \in \N, \forall n \in \N : d(v_n,w) \geq 1/k\} \, $$
the set $Z$ is Borel. Define $A \subset X \times V^\N$ as the Borel set of elements $(x,v)$ such that $\pi(v_n) = x$ for all $n \in \N$. Defining $\theta(x,v,w) = (x,v)$, it follows that $\theta(Z) \subset A$ is an analytic subset, so that its complement $B := A \setminus \theta(Z)$ is a $\Pi^1_1$-set. By definition, an element $(x,v)$ of $A$ belongs to $B$ if and only if the sequence $(v_n)_{n \in \N}$ is dense in $V_x$.

By \cite[Corollary 36.21]{Kec95}, which relies on $\Sigma^1_1$-determinacy, we can choose a conull Borel set $X_0 \subset X$ and a Borel map $\vphi : X_0 \to V^\N$ such that $(x,\vphi(x)) \in B$ for all $x \in X_0$. Writing $\vphi(x) = (\vphi_n(x))_{n \in \N}$, we have found a dense sequence of Borel sections for $(V_x)_{x \in X_0}$.
\end{proof}

The following lemma will be used in some of our constructions of new Borel fields out of existing ones to uniquely characterize the Borel field structure.

\begin{lemma}\phantomsection\label{lem.uniqueness}
\begin{enumlist}
\item Let $V = (V_x)_{x \in X}$ be a Borel field of Polish spaces and $\vphi_k : X \to V$ a dense sequence of Borel sections.

There exists a sequence of Borel maps $\gamma_n : V \to V$ such that $\pi \circ \gamma_n = \pi$ for all $n \in \N$, such that for every $v \in V$, $\gamma_n(v) \to v$, and  such that for every $n \in \N$, there is a partition of $V$ into Borel subsets $(U_{n,k})_{k \in \N}$ such that $\gamma_n(v) = \vphi_k(\pi(v))$ for all $v \in U_{n,k}$.

\item Let $V = (V_x)_{x \in X}$ and $W = (W_x)_{x \in X}$ be Borel fields of Polish spaces. Let $\vphi_k : X \to V$ be a dense sequence of Borel sections. Let $\theta : V \to W$ be a map such that $\pi \circ \theta = \pi$ and $\theta_x : V_x \to W_x$ is continuous for every $x \in X$. Then $\theta$ is a Borel map if and only if $\theta \circ \vphi_k$ is Borel for every $k \in \N$.
\end{enumlist}
\end{lemma}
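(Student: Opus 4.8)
The plan is to prove part~1 by an explicit ``rounding'' construction and then to deduce part~2 by approximating $\theta$ with the Borel maps $\theta\circ\gamma_n$ and invoking the pointwise-limit stability of Borel maps from Lemma~\ref{lem.Borel-field-Polish-some-properties}(3).

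For part~1, I would first fix a compatible Borel metric $d$ on $V$, as provided by Definition~\ref{def.Borel-field-Polish}. Each map $v\mapsto d(v,\vphi_k(\pi(v)))$ is Borel, being the composition of the Borel map $v\mapsto(v,\vphi_k(\pi(v)))\in V\times_\pi V$ with $d$. For $n,k\in\N$ I would then set
\[
U_{n,k}=\{v\in V\mid d(v,\vphi_k(\pi(v)))<1/n\ \text{and}\ d(v,\vphi_j(\pi(v)))\geq 1/n\ \text{for all}\ j<k\}\,,
\]
a Borel subset of $V$. For fixed $n$ the sets $(U_{n,k})_{k\in\N}$ are pairwise disjoint by construction, and they cover $V$: for each $v$, density of $\{\vphi_k(\pi(v))\mid k\in\N\}$ in $V_{\pi(v)}$ yields some $k$ with $d(v,\vphi_k(\pi(v)))<1/n$. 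Defining $\gamma_n(v)=\vphi_k(\pi(v))$ for $v\in U_{n,k}$ produces a Borel map (it agrees with the Borel map $\vphi_k\circ\pi$ on each piece of a countable Borel partition of $V$), and $\pi\circ\gamma_n=\pi$ together with $d(v,\gamma_n(v))<1/n$ hold by construction, so $\gamma_n(v)\to v$ for every $v\in V$.

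For part~2, the ``only if'' direction is immediate since $\theta\circ\vphi_k$ is then a composition of Borel maps. For the converse, I would assume each $\theta\circ\vphi_k$ is Borel and take the maps $\gamma_n$ from part~1 built from the given dense sequence $\vphi_k$. On $U_{n,k}$ one has $\theta\circ\gamma_n=(\theta\circ\vphi_k)\circ\pi$, which is Borel; as $(U_{n,k})_{k\in\N}$ is a countable Borel partition of $V$, the map $\theta\circ\gamma_n:V\to W$ is Borel. For every $v\in V$ we have $\pi(\theta(\gamma_n(v)))=\pi(v)=\pi(\theta(v))$, and since $\gamma_n(v)\to v$ in $V_{\pi(v)}$ and $\theta_{\pi(v)}$ is continuous, $\theta(\gamma_n(v))\to\theta(v)$ in $W_{\pi(v)}$. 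Applying Lemma~\ref{lem.Borel-field-Polish-some-properties}(3) to the Borel field $W$, with $Y=V$, $F_n=\theta\circ\gamma_n$ and $F=\theta$, then shows $\theta$ is Borel.

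I do not expect a genuine obstacle here; both parts come down to careful bookkeeping. The two points that need a little care are checking that the $U_{n,k}$ are Borel and partition $V$ (this uses both the density of the sections and the Borelness of $v\mapsto d(v,\vphi_k(\pi(v)))$), and arranging part~2 so that Lemma~\ref{lem.Borel-field-Polish-some-properties}(3) applies verbatim with the measurable space $Y$ taken to be $V$ itself.
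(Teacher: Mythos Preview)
Your proof is correct and essentially identical to the paper's. The paper phrases part~1 via the Borel map $A_n:V\to\N$ sending $v$ to the smallest $k$ with $d(v,\vphi_k(\pi(v)))<1/n$, but this is exactly your definition of $U_{n,k}$ unwound; part~2 is argued in the paper in the same way, by noting $\theta\circ\gamma_n$ is Borel on each $U_{n,k}$ and invoking Lemma~\ref{lem.Borel-field-Polish-some-properties}(3).
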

\begin{proof}
1.\ Choose a compatible Borel metric $d$ on $V$. For every $n \in \N$, define the Borel map $A_n : V \to \N$ such that $A_n(v) = k$ when $k$ is the smallest integer satisfying $d(v,\vphi_k(\pi(v))) < 1/n$. Define $\gamma_n : V \to V : \gamma_n(v) = \vphi_{A_n(v)}(\pi(v))$. With $U_{n,k} = \{v \in V \mid A_n(v) = k\}$, the first point is proven.

2.\ One implication being trivial, assume that all $\theta \circ \vphi_k$ are Borel. Take $\gamma_n : V \to V$ given by 1. Then $\theta \circ \gamma_n$ is Borel for all $n$, because its restriction to every $U_{n,k}$ is Borel. It follows from point~3 in Lemma \ref{lem.Borel-field-Polish-some-properties} that $\theta$ is Borel.
\end{proof}

\section{Closed subfields and relation with the Effros Borel structure}\label{sec.Effros-Borel}

Whenever $P$ is a Polish space, the set $F(P)$ of nonempty closed subsets of $P$ is a standard Borel space when equipped with the \emph{Effros Borel structure}. This is the smallest $\sigma$-algebra such that the sets $\{K \in F(P) \mid K \cap \cU \neq \emptyset\}$ are measurable for all open sets $\cU \subset P$. When $d$ is a metric that induces the topology of $P$, the Effros Borel structure can also be viewed as the smallest $\sigma$-algebra such that the maps $F(P) \to [0,+\infty) : K \mapsto d(v,K)$ are measurable for all $v \in P$. We refer to \cite[Section 12.C]{Kec95} for further background on the Effros Borel structure.

When $v_n$ is a dense sequence in $P$, we have that $d(v,K) = \inf_n (d(v,v_n) + d(v_n,K))$ for every $K \in F(P)$, so that also $P \times F(P) \to [0,+\infty) : (v,K) \mapsto d(v,K)$ is Borel.

A basic result (see \cite[Theorem 12.13]{Kec95} and see also Proposition \ref{prop.selection-Effros}) says that there exists a sequence of Borel maps $\vphi_n : F(P) \to P$ such that for every $K \in F(P)$, the set $K$ equals the closure of the set $\{\vphi_n(K) \mid n \in \N\}$. We thus immediately get the following result. Let $X$ be a standard Borel space and $V = (P)_{x \in X}$ the constant Borel field, with $P$ a Polish space. Let $C = (C_x)_{x \in X}$ be a family of nonempty closed subsets of $P$. Then the following are equivalent.
\begin{enumlist}
\item $C \subset V$ is a Borel set and with the restricted Borel structure, $C$ is itself a Borel field of Polish spaces.
\item The map $X \to F(P) : x \mapsto C_x$ is Borel.
\item For every $v \in P$, the map $X \to [0,+\infty) : x \mapsto d(v,C_x)$ is Borel.
\item For every Borel map $\vphi : X \to P$, the map $X \to [0,+\infty) : x \mapsto d(\vphi(x),C_x)$ is Borel.
\end{enumlist}

A similar result holds for subfields of an arbitrary, nonconstant Borel field of Polish spaces.

\begin{proposition}\label{prop.selection-Effros}
Let $V = (V_x)_{x \in X}$ be a Borel field of Polish spaces. Let $d : V \times_\pi V \to [0,+\infty)$ be a compatible Borel metric. Let $C = (C_x)_{x \in X}$ be a family of nonempty closed subsets $C_x \subset V_x$. Then the following are equivalent.
\begin{enumlist}
\item $C \subset V$ is a Borel set and with the restricted Borel structure, $C$ is itself a Borel field of Polish spaces.
\item There exists a sequence of Borel sections $\psi_n : X \to V$ such that for every $x \in X$, the set $C_x$ is the closure of $\{\psi_n(x)\mid n \in \N\}$.
\item For every Borel section $\vphi : X \to V$, the map $X \to [0,+\infty) : x \mapsto d(\vphi(x),C_x)$ is Borel.
\item There exists a dense sequence of Borel sections $\vphi_n : X \to V$ such that for every $n \in \N$, the map $X \to [0,+\infty) : x \mapsto d(\vphi_n(x),C_x)$ is Borel.
\item The map $V \to [0,+\infty) : v \mapsto d(v,C_{\pi(v)})$ is Borel.
\end{enumlist}
\end{proposition}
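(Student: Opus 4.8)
The plan is to prove the cyclic chain of implications $(2) \Rightarrow (1) \Rightarrow (5) \Rightarrow (3) \Rightarrow (4) \Rightarrow (2)$, reducing the nonconstant case to the constant-field case already established before the proposition by means of the embedding $\Theta$ from Lemma \ref{lem.Borel-field-Polish-some-properties}. The only genuinely new content over the constant case is transporting everything through $\Theta$ and, for $(4) \Rightarrow (2)$, running a Jankov--von Neumann / Kuratowski--Ryll-Nardzewski style selection argument fibrewise in the total space; this last implication is where the work lies.

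First I would fix a dense sequence of Borel sections $\vphi_n : X \to V$ and recall from Lemma \ref{lem.Borel-field-Polish-some-properties} the Borel isomorphism $\Theta : V \to \Theta(V) \subset X \times \R^\N$ given by $\Theta(v) = (\pi(v),(d(v,\vphi_n(\pi(v))))_n)$. Since $\R^\N$ is Polish, the constant-field equivalence quoted just before the proposition applies to $X \times \R^\N$; the point is that under $\Theta$ a family $C = (C_x)$ of closed subsets $C_x \subset V_x$ corresponds to the family $(\Theta(C_x))_x$ of closed subsets of $\R^\N$, and $\Theta(V)$ itself is, by point~1 of that lemma, the graph of the Borel map $x \mapsto \Theta(V_x) \in F(\R^\N)$. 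The implications $(2)\Rightarrow(1)$, $(1)\Rightarrow(5)$, $(5)\Rightarrow(3)$ and $(3)\Rightarrow(4)$ are then either immediate transfers of the constant-field statement through $\Theta$, or elementary: for instance $(5)\Rightarrow(3)$ is just precomposition of the Borel map $v \mapsto d(v,C_{\pi(v)})$ with a Borel section $\vphi$, while $(3)\Rightarrow(4)$ is trivial. For $(1)\Rightarrow(5)$ one uses the remark that $V \times_\pi V \to [0,+\infty):(v,w)\mapsto d(v,w)$ is Borel together with the formula $d(v,C_{\pi(v)}) = \inf_n(d(v,\psi_n(\pi(v))))$ once a dense sequence of Borel sections $\psi_n$ for $C$ is available, which follows from (1) applied to the Borel field $C$ via point~3 of Lemma \ref{lem.Borel-field-Polish-some-properties}.

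The main obstacle is $(4) \Rightarrow (2)$: producing \emph{everywhere}-defined Borel sections $\psi_n : X \to V$ whose values are dense in each $C_x$, given only that each $x \mapsto d(\vphi_n(x),C_x)$ is Borel. Here I would argue in the total space: hypothesis (4) says that the function $v \mapsto d(v,C_{\pi(v)})$ is Borel on each slice $\vphi_n(X)$, and since $\bigcup_n \vphi_n(X)$ is Borel-dense fibrewise, an approximation argument (point~3 of Lemma \ref{lem.Borel-field-Polish-some-properties}, applied to the maps $\gamma_m$ from Lemma \ref{lem.uniqueness}) upgrades this to Borelness of $v \mapsto d(v,C_{\pi(v)})$ on all of $V$ — so (4) already implies (5), and it remains to get from (5) back to (2). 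For that, consider for each $k \in \N$ the multifunction $x \mapsto \{v \in V_x : d(v,C_x) < 1/k \ \text{and}\ d(v,\vphi_{j}(x)) < 1/k \ \text{for the relevant } j\}$; more cleanly, fix $k$ and build $\psi_k$ by a standard selection: the set $\{(x,v) \in V : d(v,C_{\pi(v)}) = 0\} = C$ is Borel (by (5)), its fibres $C_x$ are nonempty closed, so $x \mapsto C_x$ defines a Borel map $X \to F(\R^\N)$ after composing with $\Theta$, and the Kuratowski--Ryll-Nardzewski theorem (\cite[Theorem 12.13]{Kec95}) yields Borel maps $F(\R^\N) \to \R^\N$ selecting a dense sequence; pulling back through $\Theta$ and its inverse, and noting $\Theta^{-1}$ restricted to $\Theta(V)$ is Borel, gives the desired $\psi_n : X \to V$. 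Care is needed to check that the selected points actually lie in $\Theta(V_x)$ (not merely in $\R^\N$), which holds because $\Theta(C_x) \subset \Theta(V_x)$ and the selectors take values in $\Theta(C_x)$; this verification, together with the measurability bookkeeping for the composite $x \mapsto \Theta(C_x) \in F(\R^\N)$, is the technical heart of the argument.
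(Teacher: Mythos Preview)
Your cycle and the easy implications are fine, and your route $(4)\Rightarrow(5)$ via Lemma~\ref{lem.uniqueness} matches the paper. The gap is in the step $(5)\Rightarrow(2)$, where you reduce to the constant field $X\times\R^\N$ via $\Theta$ and invoke Kuratowski--Ryll-Nardzewski on $x\mapsto\Theta(C_x)\in F(\R^\N)$. The problem is that $\Theta_x(C_x)$ need not be \emph{closed} in $\R^\N$ with the product topology: the embedding $\Theta_x:V_x\to\R^\N$ is only a homeomorphism onto its image, and that image is typically a $G_\delta$, not a closed set. (Concretely, take $V_x=\N$ with the discrete $0$--$1$ metric and $\vphi_n(x)=n$; then $\Theta_x(\N)$ consists of the sequences that are $0$ in one coordinate and $1$ elsewhere, whose closure in the product topology contains the constant sequence $(1,1,\ldots)\notin\Theta_x(\N)$.) So $x\mapsto\Theta(C_x)$ is not a map into $F(\R^\N)$, the KRN selectors for the closure may land outside $\Theta_x(V_x)$, and pulling back through $\Theta^{-1}$ is undefined there. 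Your ``care is needed'' remark correctly identifies the danger zone but the justification you offer (``$\Theta(C_x)\subset\Theta(V_x)$ and the selectors take values in $\Theta(C_x)$'') presupposes exactly the closedness that fails.

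The paper avoids this by not reducing to the constant case at all in the hard direction. Its $5\Rightarrow 1$ builds the dense Borel sections of $C$ directly inside $V$: starting from any $\vphi_n$ with $d(\vphi_n(x),C_x)<\eps$, it runs an explicit Borel recursion producing indices $A_i(x)\in\N$ with $C_x\cap B(\vphi_{A_i(x)}(x),2^{-i}\eps)\neq\emptyset$ and $d(\vphi_{A_{i+1}(x)}(x),\vphi_{A_i(x)}(x))<2^{-i}\eps$, so that $(\vphi_{A_i(x)}(x))_i$ is Cauchy in $V_x$ with limit in $C_x$. This is essentially a fibrewise reproof of KRN that uses completeness of each $(V_x,d_x)$ rather than closedness in an ambient Polish space, which is exactly what your reduction lacks. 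If you want to salvage your approach, you would need an embedding of $V$ into a constant Polish field with \emph{closed} fibres --- this is what Proposition~\ref{prop.universal-field-RN} does, but that comes much later and itself relies on Proposition~\ref{prop.selection-Effros}.
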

\begin{proof}
1 $\Rightarrow$ 2 follows immediately from the definition of a Borel field.

2 $\Rightarrow$ 3. Choose a Borel section $\vphi : X \to V$. Then, $x \mapsto d(\vphi(x),C_x) = \inf_n d(\vphi(x),\psi_n(x))$ is Borel.

3 $\Rightarrow$ 4 is trivial.

4 $\Rightarrow$ 5. Define the maps $\gamma_n : V \to V$ given by point~1 of Lemma \ref{lem.uniqueness}, with $\gamma_n(v) = \vphi_k(\pi(v))$ when $v \in U_{n,k}$. Then,
\[
d(v,C_{\pi(v)}) = \lim_n d(\gamma_n(v),C_{\pi(v)}) \; ,
\]
and for $v \in U_{n,k}$, we have that $d(\gamma_n(v),C_{\pi(v)}) = d(\vphi_k(\pi(v)),C_{\pi(v)})$, which defines a Borel map on $U_{n,k}$. It follows that $v \mapsto d(v,C_{\pi(v)})$ is Borel.

5 $\Rightarrow$ 1. Since $C = \{v \in V \mid d(v,C_{\pi(v)}) = 0\}$, it follows that $C$ is a Borel set. We have to construct a dense sequence of Borel sections $X \to C$. Fix a dense sequence of Borel sections $\vphi_n : X \to V$. Composing $\vphi_n$ with the map $v \mapsto d(v,C_{\pi(v)})$, it follows that the maps $x \mapsto d(\vphi_n(x),C_x)$ are Borel.

Fix $n \in \N$ and $\eps > 0$. Define the Borel set
\[
X_{n,\eps} = \{x \in X \mid d(\vphi_n(x),C_x) < \eps\} = \{x \in X \mid C_x \cap B(\vphi_n(x),\eps) \neq \emptyset\} \; .
\]
We construct a Borel section $\gamma_{n,\eps} : X_{n,\eps} \to C$ such that $d(\gamma_{n,\eps}(x),\vphi_n(x)) < 2 \eps$.

We inductively define for all $i \geq 0$, Borel maps $A_i : X_{n,\eps} \to \N$ such that
\begin{equation}\label{eq.my-goal-with-Ai}
C_x \cap B(\vphi_{A_i(x)}(x),2^{-i}\eps) \neq \emptyset \quad\text{and}\quad d(\vphi_{A_{i+1}(x)},\vphi_{A_{i}(x)}(x)) < 2^{-i}\eps
\end{equation}
for all $i \geq 0$ and $x \in X_{n,\eps}$. We start by defining $A_0(x) = n$ for all $x \in X_{n,\eps}$. Assume that we have defined $A_0,\ldots,A_{i-1}$ for $i \geq 1$. For every $x \in X_{n,\eps}$, write
\[J_i(x) = \{a \in \N \mid \vphi_a(x) \in B(\vphi_{A_{i-1}(x)}(x),2^{-i+1}\eps) \} \; .\]
Since $(\vphi_a(x))_{a \in \N}$ is dense in $V_x$, we get that $B(\vphi_{A_{i-1}(x)}(x),2^{-i+1}\eps) \subset \bigcup_{a \in J_i(x)} B(\vphi_a(x),2^{-i}\eps)$. Since $C_x \cap B(\vphi_{A_{i-1}(x)}(x),2^{-i+1}\eps) \neq \emptyset$, there exists an $a \in J_i(x)$ such that $C_x \cap B(\vphi_a(x),2^{-i}\eps) \neq \emptyset$. We can thus define $A_i : X_{n,\eps} \to \N$ by $A_i(x) = a$ if and only if $a$ is the smallest integer such that
\[C_x \cap B(\vphi_a(x),2^{-i}\eps) \neq \emptyset \quad\text{and}\quad d(\vphi_a(x),\vphi_{A_{i-1}(x)}(x)) < 2^{-i+1} \eps \; .\]
By construction, \eqref{eq.my-goal-with-Ai} holds.

Because $A_0(x) = n$ for all $x$, it follows from \eqref{eq.my-goal-with-Ai} that $(\vphi_{A_i(x)}(x))_{i \in \N}$ converges in $V_x$ to an element $\gamma_{n,\eps}(x)$ satisfying $d(\gamma_{n,\eps}(x),\vphi_n(x)) < 2\eps$ and $d(\gamma_{n,\eps}(x),C_x) = 0$. So, $\gamma_{n,\eps}(x) \in C_x$. By Lemma \ref{lem.Borel-field-Polish-some-properties}, the map $\gamma_{n,\eps} : X_{n,\eps} \to C$ is Borel.

Doing the same process by starting with $\eps = 1$ and $A_0 : X \to \N$ defined such that $A_0(x) = a$ if and only if $a$ is the smallest integer such that $C_x \cap B(\vphi_a(x),1) \neq \emptyset$, we find a Borel section $\gamma : X \to C$. We then define the family of Borel sections $\psi_{n,k} : X \to C$ by $\psi_{n,k}(x) = \gamma_{n,1/k}(x)$ if $x \in X_{n,1/k}$ and $\psi_{n,k}(x) = \gamma(x)$ otherwise. By construction, we have found a countable dense family of Borel sections $X \to C$.
\end{proof}

\begin{proposition}\label{prop.subfield}
Let $(X,\mu)$ be a standard $\sigma$-finite measured space and $V = (V_x)_{x \in X}$ a family of Polish spaces. Assume that $V$ is a standard Borel space such that $\pi : V \to X$ is a Borel map. Assume that the following holds.
\begin{enumlist}
\item There exists a Borel map $d : V \times_\pi V \to [0,+\infty)$ such that for every $x \in X$, the restriction of $d$ to $V_x \times V_x$ is a separable complete metric that induces the topology on $V_x$.
\item There exists a sequence of Borel sets $U_n \subset V$ such that for every $x \in X$, the set $\{U_n \cap V_x \mid n \in \N\}$ is a basis for the topology on $V_x$.
\end{enumlist}
Then $V$ is a measured field of Polish spaces.

In particular, if $V = (V_x)_{x \in X}$ is a measured field of Polish spaces and $W = (W_x)_{x \in X}$ is a family of nonempty closed subsets such that $W \subset V$ is a Borel set, then $(W_x)_{x \in X}$ is a measured field of Polish spaces.
\end{proposition}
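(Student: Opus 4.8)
The plan is to isolate the only real content, namely producing enough Borel sections after deleting a $\mu$-null set, and to get those sections by a measurable selection argument. Precisely, it suffices to find a conull Borel set $X_0 \subseteq X$ together with Borel sections $\psi_n : X_0 \to V$ of $\pi$ such that $\{\psi_n(x) \mid n \in \N\}$ is dense in $V_x$ for every $x \in X_0$: then $\pi^{-1}(X_0)$ is a standard Borel space (a Borel subset of $V$), $\pi$ restricted to it is Borel, the restriction of $d$ is a compatible Borel metric, and the $\psi_n$ form a dense sequence of Borel sections, so $(V_x)_{x \in X_0}$ satisfies Definition \ref{def.Borel-field-Polish} and $V$ is a measured field of Polish spaces. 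Only $\sigma$-finiteness of $\mu$ is used below (or one may replace $\mu$ by an equivalent finite measure).

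For the construction, set $U_0 = V$ and let $U_n \subseteq V$, $n \geq 1$, be the Borel sets of hypothesis~2. For each $n \geq 0$, the set $\{(x,v) \in X \times V \mid v \in U_n,\ \pi(v) = x\}$ is Borel and its projection to the first coordinate is $\pi(U_n)$; by the Jankov--von Neumann uniformization theorem \cite[Theorem 18.1]{Kec95} it has a $\mu$-measurable uniformization, i.e.\ a $\mu$-measurable section $\sigma_n : \pi(U_n) \to U_n$ of $\pi$ (recall that analytic sets are universally measurable). Since $\pi(U_n)$ is $\mu$-measurable, fix a Borel set $E_n \subseteq X$ with $\mu(E_n \triangle \pi(U_n)) = 0$, where we may and do take $E_0 = X$ because every $V_x$ is nonempty. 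Extending each $\sigma_n$ arbitrarily to a $\mu$-measurable map $X \to V$ and using that such a map agrees $\mu$-a.e.\ with a Borel map, we obtain a conull Borel set $X_0 \subseteq X$ such that, for every $n$, $E_n \cap X_0 = \pi(U_n) \cap X_0$, the section $\sigma_n$ agrees on $E_n \cap X_0$ with a Borel map $\widetilde{\sigma}_n : X_0 \to V$, and $\widetilde{\sigma}_n(x) \in U_n \cap V_x$ for all $x \in E_n \cap X_0$; in particular $\widetilde{\sigma}_0(x) \in V_x$ for all $x \in X_0$. Now define $\psi_n : X_0 \to V$ by $\psi_n(x) = \widetilde{\sigma}_n(x)$ for $x \in E_n \cap X_0$ and $\psi_n(x) = \widetilde{\sigma}_0(x)$ otherwise; being piecewise Borel on a Borel partition, each $\psi_n$ is a Borel section of $\pi$ over $X_0$.

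It remains to check density, and then to deduce the ``in particular'' statement. Let $x \in X_0$ and let $\mathcal{O} \subseteq V_x$ be nonempty and open. By hypothesis~2 there is $m \geq 1$ with $\emptyset \neq U_m \cap V_x \subseteq \mathcal{O}$, hence $x \in \pi(U_m) \cap X_0 = E_m \cap X_0$, so $\psi_m(x) = \widetilde{\sigma}_m(x) \in U_m \cap V_x \subseteq \mathcal{O}$; thus $\{\psi_n(x) \mid n \in \N\}$ is dense in $V_x$, proving the first assertion. For the second, choose a conull Borel set over which $V$ is a Borel field and, restricting $\mu$ to it, assume from the start that $V$ itself is a Borel field with a compatible Borel metric $d$ and a dense sequence of Borel sections $\vphi_k$. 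Then $W \subseteq V$ is a standard Borel space on which $\pi$ is Borel, $d$ restricts to a Borel map $W \times_\pi W \to [0,+\infty)$ whose restriction to each $W_x \times W_x$ is a separable complete metric inducing the topology (as $W_x$ is closed in $V_x$), and the sets $\{v \in W \mid d(v,\vphi_k(\pi(v))) < 1/m\}$, which are Borel by Lemma \ref{lem.Borel-field-Polish-some-properties}, restrict to a basis of the topology of each $W_x$. Hence $W$ satisfies the hypotheses of the first part, so $(W_x)_{x \in X}$ is a measured field of Polish spaces.

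The step requiring care is the null-set bookkeeping in the construction: the sets $\pi(U_n)$ are only analytic, not Borel, and the selections $\sigma_n$ are only $\mu$-measurable, so one must arrange all countably many ``modulo null'' identifications to hold simultaneously on a single conull Borel set $X_0$ and, crucially, ensure that the resulting honest Borel maps $\psi_n$ genuinely take values in the fibres $V_x$. Once that is set up, density is immediate from hypothesis~2, and verifying that a Borel subfield carries the metric and basis needed to reapply the first part is routine via Lemma \ref{lem.Borel-field-Polish-some-properties}.
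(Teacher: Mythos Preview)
Your proof is correct and follows essentially the same route as the paper's: apply Jankov--von Neumann selection to each $U_n$, clean up the countably many null sets to get honest Borel sections on a single conull $X_0$, patch them into globally defined sections using a fallback, and then for the ``in particular'' clause feed the balls $\{w \in W : d(w,\vphi_k(\pi(w))) < 1/m\}$ back into the first part. The only cosmetic difference is that you introduce $U_0 = V$ to serve as the fallback section, whereas the paper uses the smallest index $k$ with $x \in \pi(U_k) \cap X_0$; both work equally well.
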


Note that in Proposition \ref{prop.sections-automatic-determinacy}, we proved that the second assumption in Proposition \ref{prop.subfield} holds automatically, if the axiom of $\Sigma^1_1$-determinacy holds.

\begin{proof}
By the Jankov-von Neumann selection theorem (see \cite[Theorem 18.1]{Kec95}), we can choose a conull Borel set $X_0 \subset X$ such that $\pi(U_n) \cap X_0$ is Borel for all $n$ and there exist Borel sections $\psi_n : \pi(U_n) \cap X_0 \to U_n$. Write $Y_n = \pi(U_n) \cap X_0$.

Since $\bigcup_{n \in \N} (U_n \cap V_x) = V_x$ for all $x \in X$, also $\bigcup_{n \in \N} U_n = V$ and $\bigcup_{n \in \N} Y_n = X_0$. Define the Borel sections $\vphi_n : X_0 \to V$ by
\[
\vphi_n(x) = \begin{cases} \psi_n(x) &\quad\text{if $x \in Y_n$,}\\
\psi_k(x) &\quad\text{if $x \in X_0 \setminus Y_n$, $x \in Y_k$ and $x \not\in Y_i$ for all $i < k$.}
\end{cases}\]
For every $x \in X_0$, the set $\{\vphi_n(x) \mid n \in \N\}$ contains at least one element from each of the nonempty members of the family $(U_n \cap V_x)_{n \in \N}$, so that the set is dense in $V_x$.

For the second part of the proposition, we may already remove a conull Borel set from $X$ and assume that $V$ is a Borel field, with a dense sequence of Borel sections $\vphi_n : X \to V$. Then the sets
\[
U_{n,k} = \{w \in W \mid d(w,\vphi_n(\pi(w))) < 1/k\}
\]
are Borel subsets of $W$ such that for every $x \in X$, the family $(U_{n,k})_{n,k \in \N}$ is a basis for the topology on $W_x$. By the first part of the proposition, $W$ is a measured field of Polish spaces.
\end{proof}

It is quite natural to ask if Proposition \ref{prop.subfield} holds in the Borel context. More precisely:
\begin{enumlist}
\item Given a Borel field $V = (V_x)_{x \in X}$ of Polish spaces and a family $(W_x)_{x \in X}$ of closed subsets such that $W \subset V$ is Borel, does it follow that $W$ is a Borel field?
\item Does the first part of Proposition \ref{prop.subfield} hold in the Borel context?
\end{enumlist}

Clearly, a positive answer to question~2 implies a positive answer to question~1. It turns out that the answer to question~1 is in general negative, even when the Borel field $V$ is a constant field: this already follows from Example \ref{ex.counter1} above. A fortiori, the answer to question~2 is in general negative.

We also provide a positive answer to question~1 when each $W_x$ is $\sigma$-compact; see Proposition \ref{prop.subfield-sigma-compact}.

Later on we will use this to study how the answer to questions~1 and 2 changes for Borel fields of more specific structures, like Hilbert spaces and Banach spaces. The situation is surprisingly subtle: a subfield of a Borel field of Hilbert spaces (consisting of closed subspaces and forming a Borel set) is automatically a Borel field; the same holds for reflexive Banach spaces; but fails for $\ell^1(\N)$. We prove these results in Proposition \ref{prop.subfield-Banach} and Proposition \ref{prop.counterex-subfield} below.

Our positive result is based on the following selection theorem. This theorem has a long history and is essentially due to \cite{Sai75}. It can be found exactly in this form in \cite[Theorem 1.2]{Sri80}.

\begin{theorem}[{\cite[Theorem 1.2]{Sri80}}]\label{thm.selection-sigma-compact}
Let $X$ be a standard Borel space and $P$ a Polish space. If $W \subset X \times P$ is a Borel set and if for every $x \in X$, the set $W_x$ is a nonempty $\sigma$-compact subset of $P$, there exists a sequence of Borel maps $\vphi_n : X \to P$ such that for every $x \in X$, the set $\{\vphi_n(x) \mid n \in \N\}$ is dense in $W_x$.
\end{theorem}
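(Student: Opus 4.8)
The plan is to reduce Theorem \ref{thm.selection-sigma-compact} to the Arsenin--Kunugui theorem (see e.g.\ \cite{Kec95}), i.e.\ to the statement that if $B \subseteq X \times P$ is Borel and every section $B_x$ is $\sigma$-compact, then $\operatorname{proj}_X(B) \subseteq X$ is Borel and $B$ admits a Borel uniformization $s : \operatorname{proj}_X(B) \to P$. This is the genuine content of the theorem; once it is available, producing an everywhere defined \emph{dense} sequence of Borel selectors is just a matter of running the uniformization over a countable basis. Note that the Jankov--von Neumann theorem does not suffice here, as it only yields a selector off a null or meager set, whereas we must select on all of $X$.

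Concretely, I would fix a complete metric $d$ inducing the topology of $P$ and a countable dense sequence $(p_k)_k$ in $P$, and consider the countable family of basic open balls $U_{k,m} = \{p \in P \mid d(p,p_k) < 1/m\}$. The first step is the observation that $W \cap (X \times U_{k,m})$ is again Borel with $\sigma$-compact sections: writing $W_x = \bigcup_j K_j$ with each $K_j$ compact, we have $W_x \cap U_{k,m} = \bigcup_j (K_j \cap U_{k,m})$, and an open subset $V$ of a compact metric space $K$ is $\sigma$-compact, being the increasing union of the closed (hence compact) sets $\{y \in K \mid d(y, K \setminus V) \ge 1/n\}$. Hence, by Arsenin--Kunugui, the set $A_{k,m} = \{x \in X \mid W_x \cap U_{k,m} \ne \emptyset\} = \operatorname{proj}_X(W \cap (X \times U_{k,m}))$ is Borel, and there is a Borel map $u_{k,m} : A_{k,m} \to P$ with $u_{k,m}(x) \in W_x \cap U_{k,m}$ for all $x \in A_{k,m}$. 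Applying Arsenin--Kunugui to $W$ itself (all sections nonempty, so the projection is all of $X$) also gives a total Borel selector $s : X \to P$ with $s(x) \in W_x$. I then define $\vphi_{k,m} : X \to P$ by $\vphi_{k,m}(x) = u_{k,m}(x)$ for $x \in A_{k,m}$ and $\vphi_{k,m}(x) = s(x)$ otherwise; each $\vphi_{k,m}$ is Borel since $A_{k,m}$ is Borel. Re-enumerating $(\vphi_{k,m})_{k,m}$ as $(\vphi_n)_n$ finishes the construction, and density is immediate: given $x \in X$, $w \in W_x$ and $\eps > 0$, choose $k$ with $d(w,p_k) < \eps/4$ and then $m$ with $d(w,p_k) < 1/m < \eps/2$; then $w \in W_x \cap U_{k,m}$, so $x \in A_{k,m}$ and $d(\vphi_{k,m}(x), w) \le d(\vphi_{k,m}(x), p_k) + d(p_k, w) < 1/m + 1/m < \eps$.

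The main obstacle is of course the Arsenin--Kunugui theorem, which I would cite (it is essentially due to \cite{Sai75} and is stated in exactly this packaging in \cite{Sri80}) rather than reprove, since it is itself a substantial result. If one wanted a self-contained argument, the heart would be showing that $\operatorname{proj}_X(B)$ is Borel for Borel $B$ with $\sigma$-compact sections: this goes through a transfinite derivative on the sections that at each stage strips off an open piece whose closure in the section is compact, so that the process terminates countably far out with the sections exhausted, combined with the boundedness theorem for the associated $\Pi^1_1$-rank; the Borel uniformization is then obtained by a Luzin--Novikov / Kuratowski--Ryll-Nardzewski selection on the successive pieces, which have compact sections and hence give Borel maps into the hyperspace $K(P)$. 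I would present the reduction above and leave Arsenin--Kunugui as the cited input.
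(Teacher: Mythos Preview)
Your proposal is correct. The paper does not give its own proof of this theorem at all: it simply cites it from \cite[Theorem 1.2]{Sri80}, noting that it is essentially due to \cite{Sai75}. Your argument is the standard reduction of the dense-selection statement to the Arsenin--Kunugui uniformization theorem, and since you end up citing exactly the same sources for the hard part, your approach and the paper's are in substance identical; you have just spelled out the (routine) passage from a single Borel uniformization to a dense sequence of Borel selectors via a countable basis, which the paper leaves implicit by quoting the already-packaged form from \cite{Sri80}.
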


Theorem \ref{thm.selection-sigma-compact} will be particularly useful for us in situations where $P$ is a compact Polish space, like the unit ball of a reflexive separable Banach space with the weak topology. We use Theorem \ref{thm.selection-sigma-compact} to deduce the following.

\begin{proposition}\label{prop.subfield-sigma-compact}
Let $V = (V_x)_{x \in X}$ be a Borel field of Polish spaces. Let $W = (W_x)_{x \in X}$ be a family of nonempty closed subsets $W_x \subset V_x$. Assume that $W \subset V$ is a Borel set and that for every $x \in X$, $W_x$ is a $\sigma$-compact subset of $V_x$. Then $W$ is a Borel field of Polish spaces: there exists a dense sequence of Borel sections $\psi_n : X \to W$.
\end{proposition}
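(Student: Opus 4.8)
The plan is to reduce to the case of a \emph{constant} Borel field, where Theorem~\ref{thm.selection-sigma-compact} applies directly. Fix a compatible Borel metric $d$ on $V$ and a dense sequence of Borel sections $\vphi_n : X \to V$. As in the proof of Lemma~\ref{lem.Borel-field-Polish-some-properties}, consider the injective Borel map
\[
\Theta : V \to X \times \R^\N : \Theta(v) = \bigl(\pi(v),(d(v,\vphi_n(\pi(v))))_{n \in \N}\bigr),
\]
which is a Borel isomorphism of $V$ onto the Borel set $\Theta(V) \subset X \times \R^\N$ and which satisfies $\Theta(V_x) \subset \{x\} \times \R^\N$ for every $x$. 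Restricted to a single fiber, $\Theta|_{V_x}$ is moreover a homeomorphism onto its image: continuity is clear, and if $\Theta(v_k) \to \Theta(v)$ in $\{x\} \times \R^\N$, then $d(v_k,\vphi_m(x)) \to d(v,\vphi_m(x))$ for every $m$, so choosing $m$ with $d(v,\vphi_m(x)) < \eps$ yields $d(v_k,v) < 2\eps$ for $k$ large, whence $v_k \to v$.

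Granting this, $\Theta(W) \subset X \times \R^\N$ is a Borel set, being the injective Borel image of the Borel set $W \subset V$. For every $x$, its fiber $\Theta(W)_x$ equals the image of $W_x$ under the homeomorphism $\Theta|_{V_x}$, hence is a nonempty $\sigma$-compact subset of $\R^\N$ (a continuous image of a compact set is compact). We are therefore exactly in the situation of Theorem~\ref{thm.selection-sigma-compact} with $P = \R^\N$: it provides Borel maps $\chi_n : X \to \R^\N$ such that $(x,\chi_n(x)) \in \Theta(W)$ and $\{\chi_n(x) \mid n \in \N\}$ is dense in $\Theta(W)_x$ for every $x \in X$.

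Finally, I would transport this selection back through $\Theta^{-1}$. Since $\Theta(W) \subset \Theta(V)$ and $\Theta^{-1} : \Theta(V) \to V$ is Borel, the sections
\[
\psi_n : X \to W : \psi_n(x) = \Theta^{-1}(x,\chi_n(x))
\]
are well defined, Borel, and take values in $W$. As $\Theta^{-1}$ restricts to a homeomorphism $\Theta(W)_x \to W_x$, the set $\{\psi_n(x) \mid n \in \N\}$ is dense in $W_x$ for every $x$. Together with the compatible Borel metric $d|_{W \times_\pi W}$ (and the Borel map $\pi|_W$), this exhibits $W$ as a Borel field of Polish spaces.

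The only point requiring care is the verification that $\Theta|_{V_x}$ is a homeomorphism and not merely a continuous Borel injection, since this is what lets $\sigma$-compactness and density pass back and forth between the fibers $W_x$ and the fibers $\Theta(W)_x$; a related minor point is to make sure that the cited selection theorem produces selectors lying \emph{inside} the fibers $\Theta(W)_x$ (it does), so that $\Theta^{-1}(x,\chi_n(x))$ is meaningful. Everything else is routine bookkeeping, the substantive content being imported from Theorem~\ref{thm.selection-sigma-compact}.
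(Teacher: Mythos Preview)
Your proof is correct and follows essentially the same route as the paper's: both embed $V$ into $X \times \R^\N$ via the map $\Theta(v) = (\pi(v),(d(v,\vphi_n(\pi(v))))_n)$, observe that each $\Theta|_{V_x}$ is a homeomorphism onto its image so that the fibers $\Theta(W)_x$ remain $\sigma$-compact, and then invoke Theorem~\ref{thm.selection-sigma-compact} to obtain the required dense sequence of Borel sections. You have in fact written out the verification that $\Theta|_{V_x}$ is a homeomorphism, which the paper only asserts.
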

\begin{proof}
Fix a dense sequence of Borel sections $\vphi_n : X \to V$ and fix a compatible Borel metric $d : V \times_\pi V \to [0,+\infty)$. As in the proof of Lemma \ref{lem.Borel-field-Polish-some-properties}, define the Borel map
\[
\Theta : V \to X \times \R^\N : \Theta(v) = (\pi(v), (d(v,\vphi_n(\pi(v))))_{n \in \N}) \; .
\]
We consider on $\R^\N$ the Polish topology of pointwise convergence. Note that $\Theta$ is injective and that, for every $x \in X$, the restriction $\Theta_x : V_x \to \R^\N$ is continuous and defines a homeomorphism from $V_x$ to $\Theta_x(V_x)$ with the induced topology.

Write $C = \Theta(V)$. It follows that $C \subset X \times \R^\N$ is a Borel set and that, for every $x \in X$, the section $C_x \subset \R^\N$ is a $\sigma$-compact subset. The conclusion thus follows from Theorem \ref{thm.selection-sigma-compact}.
\end{proof}

\section{Construction and uniqueness of standard Borel spaces}\label{sec.constructions-new-fields}

We prove the following elementary technical lemma, which will in particular be used to define a standard Borel structure on fields of continuous maps between Borel fields of Polish spaces.

\begin{lemma}\label{lem.construction-uniqueness}
Let $X$ be a standard Borel space. Let $V = (V_x)_{x \in X}$ and $W = (W_x)_{x \in X}$ be Borel fields of Polish spaces with the natural projection maps $\pi_V : V \to X$ and $\pi_W : W \to X$. Let $Y$ be a set and let $\pi_Y : Y \to X$ be a map. Write
\[Y \times_\pi V : \{(y,v) \in Y \times V \mid \pi_Y(y) = \pi_V(v)\} \quad\text{with}\quad \Pi(y,v) = \pi_Y(y) = \pi_V(v) \; ,\]
and let $C : Y \times_\pi V \to W$ be a map such that $\pi_W \circ C = \Pi$.

For every $y \in Y$, denote $C_y : V_{\pi_V(y)} \to W_{\pi_W(y)} : v \mapsto C(y,v)$. Assume that $C_y$ is continuous for every $y \in Y$.

\begin{enumlist}
\item Assume that $C_y \neq C_z$ if $y \neq z$ and let $\vphi_n : X \to V$ be a dense sequence of Borel sections. Define the injective map
\[\theta : Y \to X \times W^\N : \theta(y) = \bigl(\pi_Y(y),(C_y(\vphi_n(\pi_Y(y))))_{n \in \N}\bigr) \; .\]
\begin{enumlist}
\item The subset $\theta(Y) \subset X \times W^\N$ is Borel if and only if $Y$ admits a standard Borel structure such that the maps $\pi_Y : Y \to X$ and $C : Y \times_\pi V \to W$ are Borel.
\end{enumlist}
Assume that $\theta(Y) \subset X \times W^\N$ is a Borel subset. Then the following holds.
\begin{enumlist}[resume]
\item There is a unique standard Borel structure on $Y$ such that $\pi_Y$ and $C$ are Borel.
\item Whenever $\psi_n : X \to V$ is a dense sequence of Borel sections, the standard Borel $\sigma$-algebra on $Y$ defined in b) is the smallest $\sigma$-algebra such that the maps $\pi_Y : Y \to X$ and $Y \to W : y \mapsto C(y,\psi_n(\pi_Y(y)))$ are measurable.
\end{enumlist}
\item Assume that $Y$ is a measurable space and that $\pi_Y : Y \to X$ is measurable. Let $\vphi_n : X \to V$ be a dense sequence of Borel sections. Then the following are equivalent.
\begin{enumlist}
\item The map $C : Y \times_\pi V \to W$ is measurable.
\item For every Borel section $\vphi : X \to V$, the map $Y \to W : y \mapsto C(y,\vphi(\pi_Y(y)))$ is measurable.
\item For every $n \in \N$, the map $Y \to W : y \mapsto C(y,\vphi_n(\pi_Y(y)))$ is measurable.
\end{enumlist}
\end{enumlist}
\end{lemma}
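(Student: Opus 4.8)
The plan is to prove part~2 first, since its implication (c) $\Rightarrow$ (a) is the technical core and will be reused in the proof of part~1(c). In part~2, the implications (a) $\Rightarrow$ (b) $\Rightarrow$ (c) are routine: if $C$ is measurable and $\vphi : X \to V$ is a Borel section, then $y \mapsto (y,\vphi(\pi_Y(y)))$ is a measurable map from $Y$ into $Y \times_\pi V$, so its composition with $C$ is measurable, giving (b); and (b) $\Rightarrow$ (c) is trivial since each $\vphi_n$ is a Borel section. For (c) $\Rightarrow$ (a) I would invoke the maps $\gamma_n : V \to V$ furnished by point~1 of Lemma \ref{lem.uniqueness}: they satisfy $\gamma_n(v) \to v$ for every $v$, and for each $n$ there is a Borel partition $(U_{n,k})_{k \in \N}$ of $V$ with $\gamma_n(v) = \vphi_k(\pi_V(v))$ whenever $v \in U_{n,k}$. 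On the measurable subset $\{(y,v) \in Y \times_\pi V \mid v \in U_{n,k}\}$ the map $(y,v) \mapsto C(y,\gamma_n(v))$ coincides with $(y,v) \mapsto C(y,\vphi_k(\pi_Y(y)))$, which is measurable by hypothesis~(c); letting $k$ range over $\N$ shows $(y,v) \mapsto C(y,\gamma_n(v))$ is measurable on all of $Y \times_\pi V$. Since each $C_y$ is continuous and all values lie over $\pi_Y(y)$, a compatible Borel metric $d$ on $W$ satisfies $d(C(y,\gamma_n(v)),C(y,v)) \to 0$, so point~3 of Lemma \ref{lem.Borel-field-Polish-some-properties} yields that $C$ is measurable.

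For part~1 I would first check that $\theta$ is injective: if $\theta(y) = \theta(z)$, then $\pi_Y(y) = \pi_Y(z) =: x$ and $C_y, C_z$ agree on the dense set $\{\vphi_n(x) \mid n \in \N\}$ of $V_x$, hence on all of $V_x$ by continuity, so $y = z$ by the standing hypothesis $C_y \neq C_z$. For the equivalence in (a): if $Y$ carries a standard Borel structure making $\pi_Y$ and $C$ Borel, then each coordinate $y \mapsto C(y,\vphi_n(\pi_Y(y)))$ of $\theta$ is Borel (it is $C$ precomposed with the Borel map $y \mapsto (y,\vphi_n(\pi_Y(y)))$), so $\theta$ is an injective Borel map between the standard Borel spaces $Y$ and $X \times W^\N$ and hence has Borel image, by the fact recalled at the beginning of Section~\ref{sec.fields-Polish-spaces}. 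Conversely, if $\theta(Y)$ is Borel, it is standard Borel and we transport its Borel structure to $Y$ along the bijection $\theta : Y \to \theta(Y)$; then $\pi_Y$ is Borel (it is $\theta$ followed by the projection onto $X$) and each $y \mapsto C(y,\vphi_n(\pi_Y(y)))$ is Borel, so part~2, implication (c) $\Rightarrow$ (a) applied with the sequence $\vphi_n$, shows that $C$ is Borel.

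For (b), any standard Borel structure on $Y$ making $\pi_Y$ and $C$ Borel turns $\theta$ into an injective Borel map onto the Borel set $\theta(Y)$, hence into a Borel isomorphism onto $\theta(Y)$, so the structure is forced to equal the $\theta$-pullback of the Borel structure of $\theta(Y)$; existence was established in (a). For (c), write $\cB$ for the structure of (b) and $\cB'$ for the smallest $\sigma$-algebra on $Y$ making $\pi_Y$ and all $y \mapsto C(y,\psi_n(\pi_Y(y)))$ measurable. Then $\cB' \subset \cB$, since these maps are compositions of $\cB$-Borel maps. Conversely, $(Y,\cB')$ satisfies condition~(c) of part~2 for the dense sequence $\psi_n$, so $C$ is $\cB'$-measurable; hence each $y \mapsto C(y,\vphi_n(\pi_Y(y)))$ is $\cB'$-measurable, so $\theta$ is $\cB'$-measurable, so $\cB$ — being the $\theta$-pullback of the Borel structure of $\theta(Y)$ — is contained in $\cB'$. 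Thus $\cB = \cB'$.

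The step I expect to be the main obstacle is precisely (c) $\Rightarrow$ (a) of part~2, that is, upgrading measurability of $C$ along the countably many fixed Borel sections to joint measurability of $C$ on $Y \times_\pi V$; the resolution is the approximation $v \approx \gamma_n(v)$ by section-valued maps together with the pointwise-limit stability of point~3 of Lemma \ref{lem.Borel-field-Polish-some-properties}. The remaining parts are bookkeeping with injective Borel maps between standard Borel spaces and their Borel images, the only subtlety being the order of the argument: part~2 must be proved before it is invoked in part~1(c).
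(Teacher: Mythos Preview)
Your proposal is correct and follows essentially the same route as the paper: prove part~2 first via the approximation $v \approx \gamma_n(v)$ and point~3 of Lemma~\ref{lem.Borel-field-Polish-some-properties}, then use part~2 in the proof of part~1. The only cosmetic difference is that you invoke the maps $\gamma_n$ from Lemma~\ref{lem.uniqueness}, whereas the paper reconstructs them inline (defining $I_n(v)$ as the least $k$ with $d(v,\vphi_k(\pi_V(v)))<1/n$ and $\theta_n(v)=\vphi_{I_n(v)}(\pi_V(v))$); since Lemma~\ref{lem.uniqueness} is already available, your packaging is if anything slightly cleaner.
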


When $W$ is merely a Polish space, rather than a Borel field of Polish spaces, and $C : Y \times_\pi V \to W$ is a map, we can consider the constant field $X \times W$ and apply Lemma \ref{lem.construction-uniqueness} to the map $Y \times_\pi V \to X \times W : (y,v) \mapsto (\pi_Y(y),C(y,v))$. This leads to an obvious variant of Lemma \ref{lem.construction-uniqueness} that we will also use and that we will also refer to as Lemma \ref{lem.construction-uniqueness}.

\begin{proof}
We first prove the second part of the lemma. The implications a) $\Rightarrow$ b) $\Rightarrow$ c) are trivial. So assume that c) holds. Choose compatible Borel metrics $d : V \times_{\pi_V} V \to [0,+\infty)$ and $d' : W \times_{\pi_W} W \to [0,+\infty)$. Define the Borel maps $I_n : V \to \N$ such that $I_n(v) = k$ if and only if $k$ is the smallest integer satisfying $d(v,\vphi_k(\pi_V(v))) < 1/n$. Then define $\theta_n : V \to V : \theta_n(v) = \vphi_{I_n(v)}(\pi_V(v))$. By construction, $d(v,\theta_n(v)) \to 0$ for all $v \in V$. By continuity of $C_y$, we find that $d'(C(y,\theta_n(v)),C(y,v)) \to 0$ for all $(y,v) \in Y \times_\pi V$. By Lemma \ref{lem.Borel-field-Polish-some-properties}, it thus suffices to prove that for every $n \in \N$, the map $C_n : (y,v) \mapsto C(y,\theta_n(v))$ is measurable.

Fix $n \in \N$ and define the Borel set $V_{n,k} \subset B$ by $V_{n,k} = \{v \in V \mid I_n(v) = k\}$. Then $\bigcup_{k \in \N} V_{n,k} = V$. The restriction of $C_n$ to $Y \times_\pi V_{n,k}$ equals the map $(y,v) \mapsto C(y,\vphi_k(\pi_Y(y)))$, which is measurable by assumption. So, $C_n$ is measurable and a) holds.

We now prove the first part of the lemma. If we have a standard Borel structure on $Y$ such that $\pi_Y$ and $C$ are Borel, it follows that $\theta$ is an injective Borel map, so that $\theta(Y) \subset X \times W^\N$ is a Borel subset. Conversely, assume that $\theta(Y)$ is a Borel set. We can then define a standard Borel structure on $Y$ such that $\theta : Y \to \theta(Y)$ is a Borel isomorphism. By construction, $\pi_Y : Y \to X$ and $y \mapsto C(y,\vphi_n(\pi_Y(y)))$ are Borel, for all $n \in \N$. It then follows from the already proven second part of the lemma that $C : Y \times_\pi V \to W$ is a Borel map.

Since for any standard Borel structure on $Y$ that makes $\pi_Y$ and $C$ Borel, the map $\theta$ is Borel, the uniqueness is obvious.

Denote by $\cB$ the standard Borel $\sigma$-algebra on $Y$. Let $\psi_n : X \to V$ be a dense sequence of Borel sections and denote by $\cB_0$ the smallest $\sigma$-algebra on $Y$ such that the maps $\pi_Y : Y \to X$ and $Y \to W : y \mapsto C(y,\psi_n(\pi_Y(y)))$ are measurable. Since $\pi_Y : Y \to X$ and $C : Y \times_\pi V \to W$ are Borel maps, we get that $\cB_0 \subset \cB$. Applying the second part of the lemma to $(Y,\cB_0)$, it follows that $C$ is measurable if we equip $Y \times_\pi V$ with the restriction of the product $\sigma$-algebra $\cB_0 \times \cB_V$. It then follows that $\theta$ is measurable from $(Y,\cB_0)$ to $\theta(Y)$ with the Borel $\sigma$-algebra. That precisely means that $\cB \subset \cB_0$. So, $\cB_0 = \cB$.
\end{proof}

In certain approaches to measurable fields, the measurable structure is rather defined by declaring which sections $\vphi : X \to V$ are Borel. This set of sections is then required to satisfy the axioms listed in Proposition \ref{prop.two-approach-equivalent} below. We prove that this approach is equivalent to ours in a canonical way.

\begin{proposition}\label{prop.two-approach-equivalent}
Let $X$ be a standard Borel space and $V = (V_x)_{x \in X}$ a family of Polish spaces, with a separable complete metric $d_x$ on $V_x$ for all $x \in X$. Assume that $\cS$ is a family of sections $\vphi : X \to V$ satisfying the following properties.
\begin{enumlist}
\item For all $\vphi,\psi \in \cS$, the map $X \to [0,+\infty) : x \mapsto d_x(\vphi(x),\psi(x))$ is Borel.
\item If $\gamma : X \to V$ is any section such that $X \to [0,+\infty) : x \mapsto d_x(\gamma(x),\vphi(x))$ is Borel for all $\vphi \in \cS$, then $\gamma \in \cS$.
\item There exists a sequence $(\vphi_n)_{n \in \N}$ in $\cS$ such that for all $x \in X$, the set $\{\vphi_n(x) \mid n \in \N\}$ is dense in $V_x$.
\end{enumlist}
There then exists a unique standard Borel structure on $V$ such that the maps $\pi : V \to X$, $d : V \times_\pi V \to [0,+\infty)$ and $\vphi : X \to V$ for $\vphi \in \cS$ are all Borel. In this way, $V = (V_x)_{x \in X}$ becomes a Borel field of Polish spaces and $\cS$ equals the set of Borel sections $X \to V$.

Conversely, if $V = (V_x)_{x \in X}$ is a Borel field of Polish spaces, the set of Borel sections $X \to V$ satisfies properties 1, 2 and 3 above.
\end{proposition}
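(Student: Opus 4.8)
The plan is to establish the forward implication by an explicit coding of $V$ into $X \times \R^\N$, and then dispatch the converse using Lemma~\ref{lem.Borel-field-Polish-some-properties}.

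\emph{Forward implication.} Fix a dense sequence $(\vphi_n)_n$ in $\cS$ provided by hypothesis~3, write $d(v,w) = d_{\pi(v)}(v,w)$ for $(v,w) \in V \times_\pi V$, and put $\delta_n(v) = d(v,\vphi_n(\pi(v)))$. I would consider the map
\[
\Theta : V \to X \times \R^\N : \Theta(v) = \bigl(\pi(v),(\delta_n(v))_{n \in \N}\bigr),
\]
which is injective: if $v,w \in V_x$ satisfy $\delta_n(v)=\delta_n(w)$ for all $n$ and $\vphi_{n_k}(x) \to v$, then $\delta_{n_k}(w) \to 0$, so $w=v$. The crucial step is to show that $\Theta(V) \subset X \times \R^\N$ is Borel. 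I claim that $(x,(t_n)_n) \in \Theta(V)$ if and only if $|t_n - t_m| \le d_x(\vphi_n(x),\vphi_m(x)) \le t_n + t_m$ for all $n,m$ and $\inf_n t_n = 0$. Necessity is clear. For sufficiency, pick $n_k$ with $t_{n_k} < 2^{-k}$; the displayed inequalities force $(\vphi_{n_k}(x))_k$ to be $d_x$-Cauchy, hence $d_x$-convergent to some $v \in V_x$ by completeness, and letting $k \to \infty$ in $|t_{n_k} - t_n| \le d_x(\vphi_{n_k}(x),\vphi_n(x)) \le t_{n_k}+t_n$ yields $d_x(v,\vphi_n(x)) = t_n$, i.e. $\Theta(v) = (x,(t_n)_n)$. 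Since $x \mapsto d_x(\vphi_n(x),\vphi_m(x))$ is Borel by hypothesis~1, this exhibits $\Theta(V)$ as a Borel set, and I transport the Borel structure of $\Theta(V)$ along $\Theta^{-1}$ to obtain a standard Borel structure on $V$.

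\emph{This is the desired structure.} With it, $\pi$ and each $\vphi_n$ are Borel, and each $\delta_n$ is Borel; moreover $d(v,w) = \inf_n(\delta_n(v)+\delta_n(w))$ on $V \times_\pi V$ (again using density of $(\vphi_n(x))_n$), so $d$ is Borel. Since each $d_x$ is a separable complete metric inducing the topology of $V_x$ and $(\vphi_n)$ is a dense sequence of Borel sections, $V$ is a Borel field of Polish spaces. For a Borel section $\vphi : X \to V$, composing with $d$ shows that $x \mapsto d_x(\vphi(x),\psi(x))$ is Borel for every Borel section $\psi$, in particular for every $\psi \in \cS$ (each of which is itself Borel, by hypothesis~1 together with Lemma~\ref{lem.Borel-field-Polish-some-properties}.1); hence $\vphi \in \cS$ by hypothesis~2, and conversely every $\vphi \in \cS$ is Borel. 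Finally, for uniqueness: any standard Borel structure on $V$ making $\pi$, $d$ and all $\vphi \in \cS$ Borel makes all $\delta_n$ Borel, hence makes $\Theta$ a Borel injection between standard Borel spaces, hence a Borel isomorphism onto $\Theta(V)$, so it agrees with the one built above.

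\emph{Converse and main obstacle.} If $V = (V_x)_{x \in X}$ is a Borel field of Polish spaces with compatible Borel metric $d$ and $d_x := d|_{V_x \times V_x}$, then for Borel sections $\vphi,\psi$ the composite $x \mapsto d(\vphi(x),\psi(x))$ is Borel, giving property~1; property~3 is the defining dense sequence of Borel sections; and if a section $\gamma$ satisfies that $x \mapsto d(\gamma(x),\vphi_n(x))$ is Borel for a fixed dense sequence of Borel sections $\vphi_n$ (a fortiori if this holds for every element of $\cS$), then $\gamma$ is Borel by Lemma~\ref{lem.Borel-field-Polish-some-properties}.1, giving property~2. The one genuinely nontrivial point in the whole argument is the Borelness of $\Theta(V)$ — equivalently, the intrinsic characterization of which sequences of ``distances to the $\vphi_n$'' are actually realized by a point of $V_x$, where completeness of $d_x$ is used; everything else is bookkeeping around Lemma~\ref{lem.Borel-field-Polish-some-properties} and the fact that an injective Borel map between standard Borel spaces is a Borel isomorphism onto its (Borel) image.
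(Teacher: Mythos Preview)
Your proof is correct and follows essentially the same route as the paper: embed $V$ into $X \times \R^\N$ via $\Theta(v) = (\pi(v),(\delta_n(v))_n)$, show the image is Borel, transport the structure, and read off everything else from Lemma~\ref{lem.Borel-field-Polish-some-properties}. The only noteworthy difference is in how you pin down $\Theta(V)$: the paper characterizes it as the set of $(x,z)$ for which $z$ lies in the uniform closure of $\{(d_x(\vphi_k(x),\vphi_n(x)))_n : k \in \N\}$, whereas you use the triangle-inequality constraints $|t_n - t_m| \le d_x(\vphi_n(x),\vphi_m(x)) \le t_n + t_m$ together with $\inf_n t_n = 0$ and invoke completeness directly. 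Both descriptions are Borel by hypothesis~1 and both are valid; yours is arguably more self-contained, while the paper's is the description that recurs throughout (e.g.\ in Propositions~\ref{prop.completion} and~\ref{prop.Borel-field-CK}).
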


In the next sections, we consider Borel fields of other separable structures, including separable Banach spaces, Polish groups and von Neumann algebras with separable predual. In each of these settings, an obvious analogue of Proposition \ref{prop.two-approach-equivalent} can be formulated and proven.

\begin{proof}
Fix a sequence $(\vphi_n)_{n \in \N}$ in $\cS$ as in 3. Define the injective maps
\[
\theta_x : V_x \to \R^\N : \theta_x(v) = (d_x(v,\vphi_n(x)))_{n \in \N} \quad\text{and}\quad \theta : V \to X \times \R^\N : \theta(v) = (\pi(v),\theta_{\pi(v)}(v))
\]
and write $Z = \theta(V)$. Since $\theta_x(V_x)$ is the closure of $\{\theta_x(\vphi_k(x)) \mid k \in \N\}$ in the topology of uniform convergence on $\R^\N$, we get that
\[
Z = \bigl\{ (x,z) \in X \times \R^\N \bigm| \forall r \in \N, \exists k \in \N , \forall n \in \N : |z_n - d_x(\vphi_k(x),\vphi_n(x))| < 1/r \bigr\} \; .
\]
It follows from 1 that $Z \subset X \times \R^\N$ is a Borel set. We define a standard Borel structure on $V$ such that $\theta$ is a Borel isomorphism.

By construction, the maps $\pi : V \to X$ and $\vphi : X \to V$ for $\vphi \in \cS$ are Borel. Also by construction, for every $n \in \N$, the map $V \to [0,+\infty) : v \mapsto d(v,\vphi_n(\pi(v)))$ is Borel. Since
\[
d : V \times_\pi V \to [0,+\infty) : (v,w) \mapsto d(v,w) = \inf \{d(v,\vphi_n(\pi(v))) + d(\vphi_n(\pi(w)),w) \mid n \in \N\} \; ,
\]
it follows that also $d$ is Borel.

For any other standard Borel structure making the maps $\pi$, $d$ and $\vphi \in \cS$ Borel, we have that $\theta$ is Borel, so that uniqueness follows. By construction, $V$ is a Borel field of Polish spaces and every $\vphi \in \cS$ is Borel. If $\psi : X \to V$ is a Borel section, we get that $x \mapsto d_x(\psi(x),\vphi(x))$ is Borel for every Borel section $\vphi : X \to V$ and thus, in particular, for every $\vphi \in \cS$. So by 2, $\psi \in \cS$.

Conversely assume that $V = (V_x)_{x \in X}$ is a Borel field of Polish spaces and define $\cS$ as the set of Borel sections $X \to V$. By definition, statements 1 and 3 hold, while 2 follows from Lemma \ref{lem.Borel-field-Polish-some-properties}.
\end{proof}

\section{Borel and measured fields of separable Banach spaces and duality}\label{sec.fields-banach-spaces}

In \cite{Bos01}, separable Banach spaces are encoded as the standard Borel space of closed subspaces of $C(\Delta)$, where $\Delta$ is the Cantor set. One could thus define Borel fields of separable Banach spaces as Borel maps to the space of closed subspaces of $C(\Delta)$. We however give the following more generic definition and prove in Proposition \ref{prop.universal-field-Banach} that it is equivalent to the approach of \cite{Bos01}.

\begin{definition}\label{def.Borel-field-Banach}
A \emph{Borel field of separable Banach spaces} consists of a standard Borel space $X$, a family $V = (V_x)_{x \in X}$ of separable Banach spaces and a standard Borel structure on $V$ with the following properties.
\begin{enumlist}
\item $\pi : V \to X$ is Borel.
\item The map $V \to [0,+\infty) : v \mapsto \|v\|$ is Borel.
\item The maps $V \times_\pi V \to V : (v,w) \mapsto v+w$ and $\C \times V \to V : (\lambda,v) \mapsto \lambda v$ are Borel.
\item There exists a dense sequence of Borel sections $\vphi_n : X \to V$.
\end{enumlist}

A \emph{measured field of separable Banach spaces} consists of a standard $\sigma$-finite measured space $(X,\mu)$, a family $V = (V_x)_{x \in X}$ of separable Banach spaces and a standard Borel structure on $V$ such that $\pi : V \to X$ is Borel and such that there exists a conull Borel set $X_0 \subset X$ with $(V_x)_{x \in X_0}$ being a Borel field of Banach spaces.
\end{definition}

The following provides an easy example of Borel fields of separable Banach spaces.

\begin{proposition}\label{prop.Borel-field-CK}
Let $K = (K_x)_{x \in X}$ be a Borel field of Polish spaces. Assume that every $K_x$ is compact and consider the family $C(K) = C(K_x)_{x \in X}$ of continuous functions from $K_x$ to $\C$ with the supremum norm.

There exists a unique standard Borel structure on $C(K)$ such that the maps $\pi : C(K) \to X$ and $C(K) \times_\pi K \to \C : (F,k) \mapsto F(k)$ are Borel. In this way, $C(K)$ is a Borel field of separable Banach spaces.
\end{proposition}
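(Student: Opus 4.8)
The plan is to apply Lemma \ref{lem.construction-uniqueness} with $Y = C(K)$, $V = K$ (the given Borel field of compact Polish spaces), $W$ the constant field $X \times \C$, $\pi_Y : C(K) \to X$ the obvious projection, and $C : C(K) \times_\pi K \to \C$ the evaluation map $(F,k) \mapsto F(k)$. Each $C_F = F$ is continuous on $K_x$ by definition, and $C_F \neq C_{F'}$ whenever $F \neq F'$, so the injectivity hypothesis holds. Fixing a dense sequence of Borel sections $\vphi_n : X \to K$ for the field $K$, the associated injective map is
\[
\theta : C(K) \to X \times \C^\N : \theta(F) = \bigl(\pi(F), (F(\vphi_n(\pi(F))))_{n \in \N}\bigr) \; .
\]
By the variant of Lemma \ref{lem.construction-uniqueness} for constant target fields, it suffices to show that $\theta(C(K)) \subset X \times \C^\N$ is a Borel set; Lemma \ref{lem.construction-uniqueness} then immediately provides the unique standard Borel structure on $C(K)$ making $\pi$ and the evaluation map Borel, and also identifies it as the smallest $\sigma$-algebra making $\pi$ and the maps $F \mapsto F(\vphi_n(\pi(F)))$ measurable.

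The main work is identifying $\theta(C(K))$ by an explicit Borel description. A sequence $z = (z_n)_{n \in \N} \in \C^\N$ is of the form $(F(\vphi_n(x)))_n$ for some $F \in C(K_x)$ if and only if the map $\vphi_n(x) \mapsto z_n$ is well defined and uniformly continuous on the dense set $\{\vphi_n(x) \mid n \in \N\} \subset K_x$: by compactness of $K_x$ such a map extends uniquely to a continuous function on $K_x$, and conversely every continuous $F$ restricts to a uniformly continuous function on this dense set. Fixing a compatible Borel metric $d$ on $K$, "well defined and uniformly continuous" is expressed by: for every $r \in \N$ there exists $s \in \N$ such that for all $m,n \in \N$, if $d(\vphi_m(x),\vphi_n(x)) < 1/s$ then $|z_m - z_n| \le 1/r$. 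Hence
\[
\theta(C(K)) = \bigl\{ (x,z) \in X \times \C^\N \bigm| \forall r \, \exists s \, \forall m,n : d(\vphi_m(x),\vphi_n(x)) < 1/s \Rightarrow |z_m - z_n| \le 1/r \bigr\} \; .
\]
Since $d$ and the $\vphi_n$ are Borel, each condition inside is Borel, and the countable quantifier combination keeps the set Borel.

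The remaining points are routine. Lemma \ref{lem.construction-uniqueness} gives that $C(K)$ is standard Borel with $\pi$ Borel, and that the evaluation map is Borel. To see $C(K)$ is a Borel field of \emph{Banach spaces} in the sense of Definition \ref{def.Borel-field-Banach}: the norm $\|F\| = \sup_n |F(\vphi_n(\pi(F)))|$ is Borel as a countable supremum of Borel maps; addition and scalar multiplication are Borel by the uniqueness part of Lemma \ref{lem.construction-uniqueness} (or directly, since under $\theta$ they become coordinatewise operations on $\C^\N$); and a dense sequence of Borel sections is obtained from the dense sequence of Borel sections of the constant field $C(K)$ via a universal-field argument, or more concretely by noting that the finite $\Q[i]$-linear combinations of a countable family of "coordinate" Borel sections $x \mapsto (\text{function on } K_x \text{ interpolating a fixed finite pattern})$ are dense --- but the cleanest route is to invoke that, having shown $C(K)$ is standard Borel with Borel evaluation, one may use a dense sequence in $C(\Delta)$ transported along Borel maps $K_x \to \Delta$, or simply observe that Proposition \ref{prop.selection-Effros} type arguments are not needed since the Borel sections $x \mapsto F_j(x)$ built by interpolating values $z_n$ depending Borel-measurably on $x$ already form a dense sequence. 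I expect the identification of $\theta(C(K))$ as a Borel set, i.e. correctly capturing "uniformly continuous on a dense set extends to $K_x$", to be the only genuinely non-formal step; everything after that is a direct application of the machinery already developed.
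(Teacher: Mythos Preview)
Your identification of $\theta(C(K))$ as a Borel set via the uniform-continuity description, and your use of Lemma \ref{lem.construction-uniqueness} to obtain existence and uniqueness of the standard Borel structure, match the paper's proof exactly. The verification that the norm, addition and scalar multiplication are Borel is also the same.

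The gap is in the construction of a dense sequence of Borel sections $X \to C(K)$, which you treat as routine but do not actually carry out. None of the sketches you offer is complete: there is no ``constant field $C(K)$'' to borrow sections from (the $K_x$ vary with $x$); ``interpolating a fixed finite pattern'' does not obviously produce a \emph{continuous} function on $K_x$, let alone one varying Borel-measurably in $x$; transporting through $C(\Delta)$ would require a Borel version of the Alexandroff--Hausdorff theorem, which is proved only later in the paper (Lemma \ref{lem.Alexandroff-Hausdorff}) and would make this argument circular. This step is not mere machinery: nothing in Lemma \ref{lem.construction-uniqueness} or the description of $\theta(C(K))$ hands you a dense sequence of Borel sections.

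The paper's solution is short and worth internalizing: define $F_n : X \to C(K)$ by $(F_n(x))(k) = d(k,\vphi_n(x))$. Each $F_n$ is a Borel section because the evaluations $x \mapsto (F_n(x))(\vphi_m(x)) = d(\vphi_m(x),\vphi_n(x))$ are Borel, and Lemma \ref{lem.construction-uniqueness} applies. Since the functions $F_n(x)$ separate points of $K_x$, the Stone--Weierstrass theorem gives that all $(\Q+i\Q)$-linear combinations of products of the $F_n$ and $\overline{F_n}$ form a countable dense family of Borel sections.
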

\begin{proof}
Fix a dense sequence of Borel sections $\vphi_n : X \to K$ and fix a compatible Borel metric $d : K \times_\pi K \to [0,+\infty)$. Since continuous functions on a compact Polish space are uniformly continuous, the image of the injective map
\[
\theta : C(K) \to X \times \C^\N : F \mapsto (\pi(F),(F(\vphi_n(\pi(F))))_{n \in \N})
\]
equals
\[
Z = \{(x,z) \mid \forall r \in \N , \exists s \in \N , \forall n,m \in \N : \;\text{if $d(\vphi_n(x),\vphi_m(x)) < 1/s$ then $|z_n-z_m| < 1/r$}\;\} \; .
\]
Since $Z$ is a Borel set, it follows from the first part of Lemma \ref{lem.construction-uniqueness} that there exists a unique standard Borel structure on $C(K)$ such that the maps $\pi : C(K) \to X$ and $C(K) \times_\pi K \to \C : (F,k) \mapsto F(k)$ are Borel. It also follows from Lemma \ref{lem.construction-uniqueness} that the norm, addition and scalar multiplication are Borel maps.

It remains to construct a dense sequence of Borel sections $X \to C(K)$. For every $n \in \N$, define the section $F_n : X \to C(K) : (F_n(x))(k) = d(k,\vphi_n(x))$. Since for all $m \in \N$, the map $X \to \C : x \mapsto (F_n(x))(\vphi_m(x)) = d(\vphi_m(x),\vphi_n(x))$ is Borel, it follows from Lemma \ref{lem.construction-uniqueness} that $F_n$ is a Borel section for every $n \in \N$. By the Stone-Weierstrass theorem, all linear combinations with coefficients in $\Q + i \Q$  of products of $F_n$ and $\overline{F_n}$ then provide a countable family of Borel sections $X \to C(K)$. Since the functions $F_n(x) \in C(K_x)$ separate the points of $K_x$, we have indeed found a countable dense family of Borel sections, so that $C(K)$ is a Borel field of Banach spaces.
\end{proof}

Having defined Borel fields of separable Banach spaces, there are obvious definitions of Borel fields of separable Banach spaces with extra structure, such as C$^*$-algebras or Banach algebras, by requiring that the extra structure maps are Borel. For later reference, we give the following explicit definition. In \cite{Kec96}, a Borel parametrization of separable C$^*$-algebras was given. In Proposition \ref{prop.universal-field-Cstar}, we prove that this Borel parametrization can be viewed as a universal Borel field of separable C$^*$-algebras.

\begin{definition}\label{def.Borel-field-Cstar}
We say that $A = (A_x)_{x \in X}$ is a Borel field of separable C$^*$-algebras, resp.\ separable Banach algebras, or separable Banach $*$-algebras, if $(A_x)_{x \in X}$ is a Borel field of separable Banach spaces such that the maps $A \times_\pi A \to A$ and $A \to A$ given by multiplication and adjoint are Borel.
\end{definition}

So by Proposition \ref{prop.Borel-field-CK}, whenever $(K_x)_{x \in X}$ is a Borel field of compact Polish spaces, we have the natural Borel field $C(K) = C(K_x)_{x \in X}$ of separable C$^*$-algebras.

Whenever $V$ is a separable Banach space, the unit ball of the dual Banach space $V^*$, which we denote as $\ball V^*$, is a compact Polish space in the weak$^*$ topology.

As another application of Theorem \ref{thm.selection-sigma-compact}, we prove that for a Borel field $V = (V_x)_{x \in X}$ of separable Banach spaces, the field $(\ball V_x^*)_{x \in X}$ is in a canonical way a Borel field of the Polish spaces $\ball V_x^*$ with the weak$^*$ topology. This result is essentially contained in \cite{Dod08}.

\begin{proposition}\label{prop.dual-field-Banach}
Let $V = (V_x)_{x \in X}$ be a Borel field of separable Banach spaces. Define $V^* = (V_x^*)_{x \in X}$.
\begin{enumlist}
\item There is a unique standard Borel structure on $V^*$ such that $\pi : V^* \to X$ and $V^* \times_\pi V \to \C : (\om,v) \mapsto \om(v)$ are Borel maps.
\item Given any total sequence of Borel sections $\psi_k : X \to V$, this standard Borel structure on $V^*$ is the smallest $\sigma$-algebra on $V^*$ such that the maps $\pi : V^* \to X$ and $V^* \to \C : \om \mapsto \om(\psi_k(\pi(\om)))$ are Borel.
\item The maps $V^* \to [0,+\infty) : \om \mapsto \|\om\|$, $V^* \times_\pi V^* \to V^* : (\om,\eta) \mapsto \om + \eta$ and $\C \times V^* \to V^* : (\lambda,\om) \mapsto \lambda \om$ are Borel.
\item Denoting $\ball V^* = (\ball V^*_x)_{x \in X}$, the restriction of the Borel structure on $V^*$ to $\ball V^*$ turns $\ball V^*$ into a Borel field of the Polish spaces $\ball V_x^*$ with the weak$^*$ topology.
\item If every $V_x^*$ is a separable Banach space, then $V^*$ is a Borel field of separable Banach spaces.
\end{enumlist}
\end{proposition}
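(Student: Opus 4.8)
The plan is to fix once and for all a dense sequence of Borel sections $\vphi_n : X \to V$, which by Lemma \ref{lem.construction-uniqueness} together with points~2--3 of Definition \ref{def.Borel-field-Banach} we may take to be closed under $\Q+i\Q$-linear combinations, so that $\{\vphi_n(x) \mid n \in \N\}$ is a dense $\Q+i\Q$-linear subspace of $V_x$ for every $x$, with addition and scalar multiplication implemented by fixed reindexings of $\N$. Since $\{\vphi_n(x)\}$ is dense, each $\om \in V_x^*$ is determined by the numbers $\om(\vphi_n(x))$, so the map
\[
\theta : V^* \to X \times \C^\N : \theta(\om) = \bigl(\pi(\om),(\om(\vphi_n(\pi(\om))))_{n \in \N}\bigr)
\]
is injective. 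The first step is to check that $\theta(V^*) \subset X \times \C^\N$ is Borel: it is exactly the set of $(x,z)$ such that $n \mapsto z_n$ is additive and $\Q+i\Q$-homogeneous for the fixed reindexings and bounded, i.e.\ $\exists C \in \N$ with $|z_n| \le C\,\|\vphi_n(x)\|$ for all $n$; as $x \mapsto \|\vphi_n(x)\|$ is Borel (point~2 of the Banach field), this is a Borel condition. With this, points~1 and~2 follow from the variant of Lemma \ref{lem.construction-uniqueness} applied to $Y = V^*$, the constant field $X \times \C$ and $C(\om,v) = \om(v)$: there is a unique standard Borel structure on $V^*$ making $\pi$ and $(\om,v)\mapsto\om(v)$ Borel, it is the one pulled back along $\theta$, and for any total sequence of Borel sections $\psi_k : X \to V$ it is the smallest $\sigma$-algebra making $\pi$ and $\om \mapsto \om(\psi_k(\pi(\om)))$ measurable (one passes from total to dense by taking rational linear combinations of the $\psi_k$).

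For point~3, I would express the norm as $\|\om\| = \sup_n g_n(\om)$, where $g_n(\om) = |\om(\vphi_n(\pi(\om)))|/\|\vphi_n(\pi(\om))\|$ when $\|\vphi_n(\pi(\om))\| \neq 0$ and $g_n(\om) = 0$ otherwise; each $g_n$ is Borel, so $\|\cdot\|$ is Borel. For the linear operations, $(\om+\eta)(\vphi_n(x)) = \om(\vphi_n(x)) + \eta(\vphi_n(x))$ and $(\lambda\om)(\vphi_n(x)) = \lambda\,\om(\vphi_n(x))$ are Borel in $(\om,\eta)$, resp.\ $(\lambda,\om)$, so by the characterization in point~2 the maps $(\om,\eta) \mapsto \om+\eta$ and $(\lambda,\om) \mapsto \lambda\om$ are Borel.

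Point~4 is where Theorem \ref{thm.selection-sigma-compact} is used a second time. By point~3, $\ball V^* = \{\om \mid \|\om\| \le 1\}$ is Borel in $V^*$, hence standard Borel, and $\pi$ is Borel on it. A compatible Borel metric for the weak$^*$ topology is $d(\om,\eta) = \sum_n 2^{-n}\min(1,|\om(\vphi_n(\pi(\om)))-\eta(\vphi_n(\pi(\om)))|)$: it is Borel, complete (a weak$^*$-compact space is complete in any compatible metric), and induces on $\ball V_x^*$ the topology of pointwise convergence on $\{\vphi_n(x)\}$, which by equicontinuity of $\ball V_x^*$ on $V_x$ coincides with the weak$^*$ topology. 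For a dense sequence of Borel sections, note that $\theta(\ball V^*) = \{(x,z) \in \theta(V^*) \mid |z_n| \le \|\vphi_n(x)\|\ \forall n\}$ is Borel and each of its fibers is a closed subset of the compact polydisc $\prod_n \{w \in \C \mid |w| \le \|\vphi_n(x)\|\}$, hence $\sigma$-compact (indeed compact); Theorem \ref{thm.selection-sigma-compact} with $P = \C^\N$ then yields Borel maps $X \to \C^\N$ that are fiberwise dense in $\theta(\ball V^*)$, and composing with the Borel isomorphism $\theta^{-1}$ gives a weak$^*$-dense sequence of Borel sections $X \to \ball V^*$.

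The delicate point is~5, which I expect to be the main obstacle. Assuming every $V_x^*$ is separable, we must produce a \emph{norm}-dense sequence of Borel sections $X \to V^*$ (equivalently into $\ball V^*$, since $V^* = \bigcup_k k\,\ball V^*$), and the weak$^*$-dense sections from point~4 do not suffice: a weak$^*$-dense subspace of a separable dual need not be norm-dense. One first observes that on each fiber the weak$^*$-Borel and norm-Borel structures coincide — the norm is weak$^*$-lower semicontinuous, hence weak$^*$-Borel, and conversely each norm-open ball is weak$^*$-$F_\sigma$ via the weak$^*$-lower semicontinuous norm — so the standard Borel structure does restrict to the norm-Borel structure of each separable Banach space $V_x^*$, and the norm is a compatible Borel metric for the norm topology by point~3. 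What is left is to find norm-dense Borel sections; since the norm ball is never norm-$\sigma$-compact, Theorem \ref{thm.selection-sigma-compact} cannot be invoked, and I would instead use a selection theorem valid for non-$\sigma$-compact Polish fibers after realizing the field $(V_x^*)_{x\in X}$ (or a Borel field isometrically containing it) inside a constant field $X \times Y$ over a fixed separable Banach space $Y$, using that every separable Banach space embeds isometrically into, say, $C(\Delta)$. Arranging such a fiberwise isometric embedding to depend on $x$ in a Borel way — without circular use of point~5 itself — is the crux; alternatively, point~5 can be deduced from the universal Borel field of separable Banach spaces constructed later in the paper.
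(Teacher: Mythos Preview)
Your treatment of points~1--4 is correct and essentially matches the paper's proof; the only cosmetic difference is that the paper chooses its dense sequence $\vphi_n$ to take values in $\ball V$, which makes the norm formula $\|\om\| = \sup_n |\om(\vphi_n(\pi(\om)))|$ and the compactness argument for point~4 slightly cleaner than your normalized version.

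The gap is in point~5, and you have correctly located it but not closed it. Both escape routes you suggest fail. The appeal to the universal Borel field of separable Banach spaces is circular: the proof of Proposition~\ref{prop.universal-field-Banach} begins by invoking exactly this proposition to turn $(\ball W_y^*)_{y \in Y}$ into a Borel field of compact Polish spaces. The other route --- embedding each $V_x^*$ isometrically into a fixed $C(\Delta)$ in a Borel way and then applying a selection theorem for Polish-valued multifunctions --- runs into the problem you yourself flag: constructing such a Borel embedding already requires a norm-dense sequence of Borel sections for $V^*$, which is precisely what you are trying to produce. And without $\sigma$-compact fibers, the general selection situation is exactly that of Example~\ref{ex.counter1}, so no generic selection theorem will rescue you.

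The paper's argument for point~5 is genuinely different from anything you outlined: it stays inside the coordinate picture $\Theta(\ball V^*) = Z_1 \subset X \times D_1^\N$ and observes that the separability hypothesis on $V_x^*$ means each fiber $Z_{1,x}$ is separable in the \emph{supremum norm} on $D_1^\N$. It then invokes \cite[Proposition~2]{Dod08}, a selection result specific to this setup, which produces Borel maps $\gamma_n : X \to D_1^\N$ whose images are \emph{uniformly} dense in $Z_{1,x}$. Pulling back through $\Theta^{-1}$ and rescaling gives the norm-dense Borel sections. The point is that Dodos's result is the missing ingredient --- a bespoke selection theorem exploiting that the target $D_1^\N$ with the sup norm, while not Polish, has enough structure (it is the unit ball of $\ell^\infty(\N)$) for a Kuratowski--Ryll-Nardzewski-type argument to go through on norm-separable closed subsets.
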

\begin{proof}
Fix a dense sequence of Borel sections $\vphi_n : X \to \ball V$. For every $k \in \N$, write $D_k = \{\lambda \in \C \mid |\lambda| \leq k\}$ and consider the compact Polish space $D_k^\N$. Write $\cQ = \Q + i \Q$ and define $Z_k \subset X \times D_k^\N$ by
\begin{multline*}
Z_k = \bigl\{ (x,z) \in X \times D_k^\N \bigm| \forall n_1,n_2,n_3 \in \N , \forall \al_1,\al_2,\al_3 \in \cQ :\\ |\al_1 z_{n_1} - \al_2 z_{n_2} - \al_3 z_{n_3}| \leq k \|\al_1 \vphi_{n_1}(x) - \al_2 \vphi_{n_2}(x) - \al_3 \vphi_{n_3}(x) \| \bigr\} \; .
\end{multline*}
Note that for every $k \in \N$, $Z_k \subset X \times D_k^\N$ is Borel and $Z_{k,x} \subset D_k^\N$ is closed for every $x \in X$. Write $Z = \bigcup_{k=1}^\infty Z_k$. Then $Z \subset X \times \C^\N$ is a Borel set and
\begin{equation}\label{eq.Theta}
\Theta : V^* \to Z : \Theta(\om) = (\pi(\om),(\om(\vphi_n(x)))_{n \in \N})
\end{equation}
is a bijection satisfying $\Theta(\ball V^*) = Z_1$.

1 and 2 now follow immediately from the first part of Lemma \ref{lem.construction-uniqueness}. We have that $\Theta : V^* \to Z$ is a Borel isomorphism.

3.\ Define $N : Z \to [0,+\infty) : N(x,z) = \sup_{n \in \N} |z_n|$. Then $\|\om\| = N(\Theta(\om))$, so that $\om \mapsto \|\om\|$ is Borel. Since addition $\C^\N \times \C^\N \to \C^\N$ and scalar multiplication $\C \times \C^\N \to \C^\N$ are Borel maps, using $\Theta$, we conclude that addition and scalar multiplication are Borel on $V^*$.

4.\ When $\vphi_n : X \to \ball V$ is a dense sequence of Borel sections, the map
$$d : \ball V^* \times_\pi \ball V^* \to [0,+\infty) : d(\om,\mu) = \sum_{n =1}^\infty 2^{-n} |\om(\vphi_n(\pi(\om))) - \mu(\vphi_n(\pi(\mu)))|$$
is Borel and, for every $x \in X$, restricts to a separable complete metric on $\ball V_x^*$ that is compatible with the weak$^*$ topology.

Note that $D_1^\N$ is a compact Polish space and that $Z_1 \subset X \times D_1^\N$ is a Borel set with the property that for every $x \in X$, the subset $Z_{1,x}$ of $D_1^\N$ is compact. By Theorem \ref{thm.selection-sigma-compact}, we find a dense sequence of Borel sections $\gamma_n : X \to D_1^\N$. Define $\om_n(x) = \Theta^{-1}(x,\gamma_n(x))$. Then, $\om_n : X \to \ball V^*$ is a weak$^*$ dense sequence of Borel sections. So $\ball V^*$ is a Borel field of Polish spaces.

5.\ If every $V_x^*$ is a separable Banach space, we get that every $Z_{1,x}$ is a norm separable subset of $D_1^\N$ w.r.t.\ the supremum norm on $D_1^\N$. It then follows from \cite[Proposition 2]{Dod08} that there exists a sequence of Borel maps $\gamma_n : X \to D_1^\N$ such that for every $x \in X$, $Z_{1,x}$ equals the uniform closure of $\{\gamma_n(x) \mid n \in \N\}$. Then $\om_{n,q}(x) = q \Theta^{-1}(x,\gamma_n(x))$, with $q \in \Q$ and $n \in \N$, is a countable norm dense family of Borel sections $\om_n : X \to V^*$. So, $V^*$ is a Borel field of separable Banach spaces.
\end{proof}

The following two results show that subfields of closed subspaces of \emph{reflexive} Banach spaces behave well, while pathologies may arise in the non-reflexive case.

\begin{proposition}\label{prop.subfield-Banach}
Let $V = (V_x)_{x \in X}$ be a Borel field of separable \emph{reflexive} Banach spaces. Let $W \subset V$ be a Borel set such that for every $x \in X$, $W_x \subset V_x$ is a closed subspace. Then $W$ is a Borel field of Banach spaces: there exists a dense sequence of Borel sections $\psi_n : X \to W$.
\end{proposition}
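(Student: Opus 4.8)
The key point is that $W_x$ being a closed subspace of the reflexive space $V_x$ forces $\ball W_x$ to be weakly compact, hence, as a metrizable subset, $\sigma$-compact in the weak topology; this is exactly the situation Theorem~\ref{thm.selection-sigma-compact} and Proposition~\ref{prop.subfield-sigma-compact} are built for. So the plan is to transport the problem from the norm topology on $V$ to the weak topology, where everything becomes $\sigma$-compact.

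\textbf{Step 1: Set up the weak Borel field.} First I would fix a dense sequence of Borel sections $\vphi_n : X \to \ball V$ and a compatible Borel metric $d$ on $V$. Since each $V_x$ is reflexive and separable, $V_x^{**} = V_x$ and each $\ball V_x$ is weakly compact and weakly metrizable. I would apply Proposition~\ref{prop.dual-field-Banach} to the Borel field $V^*$ (noting that $V_x^* = (V_x)^*$ is separable because $V_x$ is separable reflexive, so Proposition~\ref{prop.dual-field-Banach}(5) applies) to get a dense sequence of Borel sections $\om_k : X \to \ball V^*$. Then I equip the \emph{same} total space $V$ with the coarser $\sigma$-algebra generated by $\pi$ and the maps $v \mapsto \om_k(\pi(v))(v)$; call the resulting structure $V^{\mathrm w}$. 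One checks, as in the proof of Proposition~\ref{prop.dual-field-Banach}, that $\ball V^{\mathrm w}$ is a Borel field of compact Polish spaces (each $\ball V_x$ with its weak topology), via the injective Borel map $\ball V \to X \times \prod_k D_1$, $v \mapsto (\pi(v),(\om_k(\pi(v))(v))_k)$, whose image is Borel and has compact $x$-sections. More generally, writing $kB$ for the ball of radius $k$, the field $kB V^{\mathrm w}$ is a Borel field of compact Polish spaces for each $k$, and $V^{\mathrm w} = \bigcup_k kB V^{\mathrm w}$ exhibits $V^{\mathrm w}$ as a countable increasing union of such.

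\textbf{Step 2: Reduce to the weak picture and apply $\sigma$-compact selection.} Since $W \subset V$ is Borel and the identity map $V \to V^{\mathrm w}$ is Borel (it is norm-to-weak continuous fiberwise, so Borel by Lemma~\ref{lem.uniqueness}(2) applied to the dense sections $\vphi_n$), the set $W$ is also Borel in $V^{\mathrm w}$. For each $k$, $W \cap kB V^{\mathrm w}$ is a Borel subset of the Borel field of compact Polish spaces $kB V^{\mathrm w}$, and each fiber $W_x \cap kB V_x$ is weakly closed (a closed subspace is weakly closed, and $kB V_x$ is weakly compact), hence weakly compact. Therefore $W_x$, being $\bigcup_k (W_x \cap kB V_x)$, is $\sigma$-compact in the weak topology, and $W \subset V^{\mathrm w}$ is a Borel set with $\sigma$-compact fibers in the Borel field of Polish spaces $V^{\mathrm w}$. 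By Proposition~\ref{prop.subfield-sigma-compact}, there is a dense-in-the-weak-topology sequence of Borel sections $\psi_n^{\mathrm w} : X \to W$, Borel with respect to $V^{\mathrm w}$.

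\textbf{Step 3: Upgrade from weak density to norm density.} The sections $\psi_n^{\mathrm w}$ are Borel maps $X \to W$ for the $V^{\mathrm w}$-structure, hence also Borel for the original norm structure on $V$ — here one uses that the $\sigma$-algebra of $V$ restricted to the Borel set $W$ is generated by $\pi$ together with $w \mapsto d(w,\vphi_n(\pi(w)))$, and I must check these are $V^{\mathrm w}$-Borel on $W$; but $d(w,\vphi_n(\pi(w))) = \|w - \vphi_n(\pi(w))\| = \sup_k |\om_k(\pi(w))(w - \vphi_n(\pi(w)))|$ is a countable supremum of $V^{\mathrm w}$-Borel functions (using Step~1 and that $\om_k$ norms $V_x$ by reflexivity/Hahn--Banach), hence $V^{\mathrm w}$-Borel. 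So each $\psi_n^{\mathrm w}$ is a genuine Borel section $X \to W \subset V$. Finally, for each $x$ the set $\{\psi_n^{\mathrm w}(x)\}$ is weakly dense in $W_x$; by Mazur's theorem, its norm closure (being convex, as I would instead take all rational convex combinations — a countable operation, still giving Borel sections by point~3 of Definition~\ref{def.Borel-field-Banach}) equals its weak closure, which is all of $W_x$. Thus the (countable) family of rational convex combinations of the $\psi_n^{\mathrm w}$ is a dense sequence of Borel sections $X \to W$ in norm, and $W$ is a Borel field of Banach spaces.

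\textbf{Main obstacle.} The delicate point is Step~3: one must be careful that the weak-topology selection produces sections that are Borel for the \emph{original} norm Borel structure on $V$, not just the coarser weak one, and that weak density can be promoted to norm density. Both are handled — the first by the explicit formula $\|v\| = \sup_k |\om_k(\pi(v))(v)|$ showing the norm structure is generated by weak-Borel data on bounded pieces, the second by Mazur's theorem together with the fact that countably many rational convex combinations still form a countable family of Borel sections. A secondary technical point is verifying that $\ball V^{\mathrm w}$ (and each $kB V^{\mathrm w}$) really is a Borel field of compact Polish spaces with Borel set image and compact sections, but this is a routine rerun of the argument in Proposition~\ref{prop.dual-field-Banach}(4) with the roles of $V$ and $V^*$ interchanged, using reflexivity.
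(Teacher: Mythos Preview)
Your approach is essentially the same as the paper's: pass to the weak topology where reflexivity makes balls compact, apply Proposition~\ref{prop.subfield-sigma-compact}, then upgrade weak density to norm density via convexity. The paper does this more directly by working only on $\ball V$ (rather than all of $V^{\mathrm w}$), applying Proposition~\ref{prop.subfield-sigma-compact} once to $\ball W \subset \ball V$, and then taking rational linear combinations.

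There is one genuine slip in your Step~2: you call $V^{\mathrm w}$ a ``Borel field of Polish spaces'' and invoke Proposition~\ref{prop.subfield-sigma-compact} on $W \subset V^{\mathrm w}$. But the weak topology on an infinite-dimensional Banach space is not metrizable, so $(V_x,\text{weak})$ is not Polish and Proposition~\ref{prop.subfield-sigma-compact} does not apply as stated. You already have the fix in hand, since you observed that each $kB\,V^{\mathrm w}$ \emph{is} a Borel field of compact Polish spaces: either apply Proposition~\ref{prop.subfield-sigma-compact} to $W \cap kB\,V \subset kB\,V^{\mathrm w}$ for each $k$ separately and take the union of the resulting sections, or (as the paper does) just take $k=1$ and work with $\ball W \subset \ball V$, which already suffices since rational linear combinations of a weakly dense subset of $\ball W_x$ are norm dense in $W_x$. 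Your Step~3 reasoning about the two Borel structures agreeing on bounded sets is correct and is exactly why the paper can treat $\ball V$ with its inherited $\sigma$-algebra as a Borel field for the weak-topology metric without further comment.
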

\begin{proof}
By reflexivity, each $\ball V_x$ with the weak topology is a compact Polish space. By Proposition \ref{prop.dual-field-Banach}, we can take a weak$^*$ dense sequence of Borel sections $\om_n : X \to \ball V^*$. Then
\[
d : \ball V \times_\pi \ball V \to [0,+\infty) : d(v,w) = \sum_{n \in \N} 2^{-n} |\langle v-w,\om_n(\pi(v))\rangle|
\]
is a Borel map whose restriction to each $\ball V_x \times \ball V_x$ is a complete separable metric that induces the weak topology on $\ball V_x$. So, $\ball V$ is a Borel field of Polish spaces. Every $\ball W_x \subset \ball V_x$ is a nonempty compact subset. By Proposition \ref{prop.subfield-sigma-compact}, we can choose a weakly dense sequence of Borel sections $\gamma_n : X \to \ball W_x$.

Denoting $\cQ = \Q + i \Q$, by convexity, the countable family of Borel sections $\al_1 \gamma_{n_1} + \cdots + \al_k \gamma_{n_k}$ for $k \in \N$, $\al_1,\ldots,\al_k \in \cQ$ and $n_1,\ldots,n_k \in \N$ is a norm dense family of Borel sections $X \to W$.
\end{proof}

\begin{proposition}[{\cite[Theorem 3.8]{DG11}}]\label{prop.counterex-subfield}
Let $V = (\ell^1(\N))_{x \in [0,1]}$ be the constant field of Banach spaces $\ell^1(\N)$ with dual field $V^* = (\ell^\infty(\N))_{x \in [0,1]}$.
\begin{enumlist}
\item (Proposition \ref{prop.subfield-Banach} may fail in the non-reflexive case.) There exists a Borel set $W \subset [0,1] \times \ell^1(\N)$ such that for every $x \in [0,1]$, $W_x \subset \ell^1(\N)$ is a closed subspace, but nevertheless $W$ is not a Borel field of Banach spaces, because there is no dense sequence of Borel sections.
\item (The intersection operation is ill-behaved.) There exist Borel fields $W = (W_x)_{x \in [0,1]}$ and $W' = (W'_x)_{x \in [0,1]}$ of closed subspaces of $\ell^1(\N)$ such that $W \cap W'$ is no longer a Borel field of Banach spaces, because there is no dense sequence of Borel sections.
\end{enumlist}
\end{proposition}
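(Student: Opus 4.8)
The plan is to reduce both statements to \cite[Theorem~3.8]{DG11}, after translating them into the language of this paper. Since $V$ is the \emph{constant} field $[0,1]\times\ell^1(\N)$, the equivalences recorded just before Proposition~\ref{prop.selection-Effros} apply: a family $W=(W_x)_{x\in[0,1]}$ of nonempty closed subsets with $W\subset V$ Borel is a Borel field of Polish spaces — hence, when the $W_x$ are closed subspaces, a Borel field of Banach spaces — \emph{if and only if} the map $[0,1]\to F(\ell^1(\N)):x\mapsto W_x$ into the Effros Borel structure is Borel; equivalently, by the selection theorem \cite[Theorem~12.13]{Kec95} (cf.\ Proposition~\ref{prop.selection-Effros}), if and only if there is a dense sequence of Borel sections. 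So for statement~1 I would exhibit a Borel set $W\subset[0,1]\times\ell^1(\N)$ with each $W_x$ an infinite-dimensional closed subspace for which $x\mapsto W_x$ is \emph{not} Effros Borel, and for statement~2 two Effros Borel maps $x\mapsto W_x$ and $x\mapsto W'_x$ into closed subspaces of $\ell^1(\N)$ whose fibrewise intersection $x\mapsto W_x\cap W'_x$ fails to be Effros Borel.

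For statement~1 the mechanism should be that of Example~\ref{ex.counter1}, now carried inside a fixed Banach space: start from a Borel set $C\subset[0,1]\times P$ (with $P$ Polish) having nonempty fibres and no Borel uniformisation, and ``linearise'' it. The naive attempt — fixing a homeomorphic embedding of $P$ onto a closed subset of $\ell^1(\N)$ and setting $W_x:=\overline{\lspan}\{\text{image of }C_x\}$ — fails on the essential point, since the resulting total space $W$ is in general only analytic, and if $x\mapsto C_x$ is not Effros Borel then neither is the span. The content I would import from \cite[Theorem~3.8]{DG11} is a tailor-made combinatorial construction of subspaces of $\ell^1(\N)$ (of the ``tree-like'' type that occurs around Bourgain's $\ell^1$-index and in Dodos' work) encoding a non-uniformisable Borel set into a family $(W_x)_x$ with $W$ genuinely \emph{Borel} while $x\mapsto W_x$ stays non-Effros-Borel; the only thing left to check would be that the $W_x$ are genuinely infinite-dimensional. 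This last point is forced: by Proposition~\ref{prop.subfield-sigma-compact} any Borel subfield with $\sigma$-compact — in particular finite-dimensional — fibres is automatically a Borel field, and by Proposition~\ref{prop.subfield-Banach} the example cannot live in a reflexive space, so non-reflexivity of $\ell^1(\N)$ must be used in an essential way.

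For statement~2 I would think in terms of annihilators: since $\ball\ell^\infty(\N)$ with the weak$^*$ topology is compact metrisable, any Borel field $(W_x)_x$ of closed subspaces of $\ell^1(\N)$ has $x\mapsto W_x^\perp\cap\ball\ell^\infty(\N)$ Effros Borel into $F(\ball\ell^\infty(\N))$, and $W_x\cap W'_x$ has annihilator the weak$^*$-closure of $W_x^\perp+(W'_x)^\perp$; the subtlety is precisely that even when $x\mapsto W_x^\perp$ and $x\mapsto (W'_x)^\perp$ are Effros Borel, and their weak$^*$-closed sum is too, the corresponding pre-annihilator need not be a Borel field. Exploiting this gap is exactly the task: I would take $W_x,W'_x$ ``transparently good'' — e.g.\ images of $\ell^1(\N)$ under a Borel family of isometric embeddings, or suitable $\ell^1$-sums of hyperplanes, both of which are visibly Borel fields — engineered, following \cite[Theorem~3.8]{DG11} (a variant of the same device as in statement~1), so that $W_x\cap W'_x$ literally reproduces a pathological subspace field as in statement~1. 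I expect the main obstacle throughout — and the reason to invoke \cite{DG11} rather than argue by hand — to be exactly this: the generic operations (``generated subspace'', ``intersection of countably many hyperplanes'', ``pre-annihilator'') only ever produce an \emph{analytic} total space and an Effros-\emph{analytic} fibre map, so one genuinely needs a bespoke construction that simultaneously keeps the total space Borel and destroys \emph{all} Borel dense sections, which is the technical heart of \cite[Theorem~3.8]{DG11} and which I would not reproduce here.
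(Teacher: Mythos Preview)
Your proposal is not a proof: you explicitly defer the construction to \cite[Theorem~3.8]{DG11} and offer only high-level commentary on what such a construction ought to look like. The paper, by contrast, is self-contained and reproduces the tree construction in full. More importantly, the paper's logical order is the reverse of yours and considerably simpler. It proves statement~2 first via an explicit construction, and then observes that statement~1 follows immediately: if $W$ and $W'$ are Borel fields of closed subspaces, then $W\cap W'\subset V$ is automatically a Borel set (as the intersection of two Borel sets), so the failure of $W\cap W'$ to be a Borel field is \emph{already} the example required for~1. You missed this shortcut and instead sketched an independent ``linearisation'' approach to~1 that, as you yourself acknowledge, does not work in its naive form and would again require importing the full \cite{DG11} construction.

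For statement~2 the paper's argument is direct, with no annihilator duality. Index by the standard Borel space $\cT$ of trees on $\N$ (Borel isomorphic to $[0,1]$). In $\ell^1(\cS)\oplus\ell^1(\cS)$ with $\cS=\N^{<\N}$, set $a_s=\sum_{t\preceq s}2^{-|t|}\delta_t$ and $b_s=2^{-|s|}\delta_s$, define $W_T=\overline{\lspan}\{a_s\oplus b_s:s\in T\}$, and take the constant field $W'_T=\ell^1(\cS)\oplus\{0\}$. Both are visibly Borel fields. An elementary computation (using that for $\xi\oplus 0\in W_T$ one has $\xi(t)=2\sum_k\xi(tk)$, forcing an infinite branch) shows $W_T\cap W'_T\neq\{0\}$ if and only if $T$ is ill-founded. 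Since ill-foundedness is complete analytic, a dense sequence of Borel sections $\vphi_n:\cT\to W\cap W'$ would make the set of ill-founded trees equal to $\bigcup_n\{T:\vphi_n(T)\neq 0\}$, hence Borel --- a contradiction. Your annihilator discussion is a detour the paper never takes; it is used later, in Proposition~\ref{prop.counterex-weak-star-closed-subfield}, but in the opposite direction (deducing a dual pathology \emph{from} this one).
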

\begin{proof}
We start by proving 2, which is essentially contained in \cite[Theorem 3.8]{DG11}. Denote by $\cS = \N^{< \N}$ the set of finite sequences $(s_1 \cdots s_n)$ of natural numbers, including the empty sequence $()$. For every $s \in \cS$, denote by $|s|$ its length. For $s,t \in \cS$, we write $s \preceq t$ if $|s| \leq |t|$ and $t$ is a ``continuation'' of $s$, i.e.\ $t_i = s_i$ for all $i \in \{1,\ldots,|s|\}$. By convention $() \preceq t$ for all $t \in T$. A \emph{tree} is a subset $T \subset \cS$ with the property that $s \in T$ whenever $t \in T$ and $s \preceq t$. We say that a tree $T$ is \emph{ill-founded} if there exists an $\al \in \N^\N$ such that $(\al_1 \cdots \al_n) \in T$ for all $n \in \N$, i.e.\ when the tree has at least one infinite branch.

The set $\cT$ of trees is a Borel subset of $2^\cS$ and in this way, $\cT$ is a standard Borel space. The subset of ill-founded trees is the standard example of a complete analytic, and thus non Borel, set. Since $\cT$ is Borel isomorphic with $[0,1]$ and since all $\ell^1$-spaces of countable infinite sets are isometrically isomorphic, we will construct $W$ and $W'$ as subfields of the constant field $(\ell^1(\cS) \oplus \ell^1(\cS))_{T \in \cT}$.

For every $s \in \cS$, denote by $\delta_s \in \ell^1(\cS)$ the element that is equal to $1$ in $s$ and equal to $0$ elsewhere. Define the elements $a_s,b_s \in \ell^1(\cS)$ by
\[
a_s = \sum_{t \in \cS, t \preceq s} 2^{-|t|} \delta_t \quad\text{and}\quad b_s = 2^{-|s|} \delta_s \; .
\]
For every tree $T \in \cT$, define the closed subspace $W_T \subset \ell^1(\cS) \oplus \ell^1(\cS)$ as the closed linear span of the vectors $a_s \oplus b_s$, $s \in T$. So by construction, $W = (W_T)_{T \in \cT}$ is a Borel field of Banach spaces.

Let $T \in \cT$ be a tree. Note that by construction, for all $s \in T$, we have that
\[
a_s(t) = 0 = b_s(t) \;\;\text{for all $t \in \cS \setminus T$, and}\quad a_s(t) = b_s(t) + 2 \sum_{k \in \N} a_s(tk) \;\;\text{for all $t \in \cS$.}
\]
It follows that for every tree $T \in \cT$ and every $\xi \oplus \eta \in W_T$, we have that
\begin{equation}\label{eq.my-prop}
\xi(t) = 0 = \eta(t) \;\;\text{for all $t \in \cS \setminus T$, and}\quad \xi(t) = \eta(t) + 2 \sum_{k \in \N} \xi(tk) \;\;\text{for all $t \in \cS$.}
\end{equation}

Define the constant field $W'_T = \ell^1(\cS) \oplus \{0\}$. Then also $W'$ is a Borel field of Banach spaces. We claim that $W_T \cap W'_T \neq \{0\}$ if and only if the tree $T$ is ill-founded. Indeed, first assume that $\xi \in \ell^1(\cS)$ such that $\xi \neq 0$ and $\xi \oplus 0 \in W_T$. We use \eqref{eq.my-prop}. Since $\xi \neq 0$ and $\xi(t) = 0$ for all $t \in \cS \setminus T$, we can choose $t_0 \in T$ such that $\xi(t_0) \neq 0$. Since now $\eta = 0$, we get that $\xi(t_0) = 2 \sum_{k \in \N} \xi(t_0 k)$. There thus exists a $k_1 \in \N$ such that $\xi(t_0 k_1) \neq 0$. Since $\xi(t) = 0$ for all $t \in \cS \setminus T$, it follows that $t_0 k_1 \in T$. We can repeat the same reasoning and find inductively a sequence $k_1,k_2,\ldots$ in $\N$ such that $(t_0 k_1 \cdots k_n) \in T$ for all $n \in \N$. So, $T$ is ill-founded.

Conversely, if $T$ is ill founded and $\al \in \N^\N$ such that $s_n := (\al_1 \cdots \al_n) \in T$ for all $n \in \N$, it follows that $a_{s_n} \oplus b_{s_n}$ converges to $\xi \oplus 0$, where
\[
\xi = \delta_{()} + \sum_{n=1}^\infty 2^{-n} \delta_{s_n} \; .
\]
So, $W_T \cap W'_T \neq \{0\}$. This concludes the proof of the claim.

To conclude the proof of 2, assume that $(W_T \cap W'_T)_{T \in \cT}$ is a Borel field and choose a dense sequence of Borel sections $\vphi_n : \cT \to W \cap W'$. Since $W_T \cap W'_T \neq \{0\}$ if and only if
\[
T \in \bigcup_{n \in \N} \bigl\{T \in \cT \bigm| \vphi_n(T) \neq 0 \bigr\} \; ,
\]
we reach the contradiction that the set of ill-founded trees is Borel.

The first statement immediately follows from the second: since $W \subset V$ and $W' \subset V$ are Borel sets, also $W \cap W'$ is a Borel set. But we have seen above that $W \cap W'$ is not a Borel field of Banach spaces.
\end{proof}

Weak$^*$ closed subfields of a dual field $V^*$ of Banach spaces always behave well as Borel fields, which we prove in the following Proposition \ref{prop.duality}. Nevertheless, certain subtleties arise in the Borel context, but not in the measured context, as we explain in Proposition \ref{prop.counterex-weak-star-closed-subfield} below.

When $V$ is a separable Banach space and $W \subset V^*$ is a weak$^*$ closed subspace, we consider the closed subspace $W^\perp \subset V$ given by $W^\perp = \{v \in V \mid \forall \om \in W : \om(v)=0\}$. Then there is a canonical isomorphism $(V/W^\perp)^* \cong W$.

\begin{proposition}\label{prop.duality}
Let $V = (V_x)_{x \in X}$ be a Borel field of separable Banach spaces. Define $V^* = (V_x^*)_{x \in X}$ with the standard Borel structure given by Proposition \ref{prop.dual-field-Banach}.
Let $W = (W_x)_{x \in X}$ be a family of weak$^*$ closed subspaces $W_x \subset V_x^*$. Write $W^\perp = (W^\perp_x)_{x \in X}$, which is a family of closed subspaces of $V_x$. Then the following statements are equivalent.

\begin{enumlist}
\item $W \subset V^*$ is a Borel set.
\item There exists a sequence of Borel sections $\psi_n : X \to V^*$ such that for every $x \in X$, $\psi_n(x) \in \ball W_x$ and the sequence $\psi_n(x)$ is weak$^*$ dense in $\ball W_x$.
\item $W^\perp$ is a Borel field of Banach spaces: $W^\perp \subset V$ is a Borel set and there exists a dense sequence of Borel sections $\gamma_n : X \to W^\perp$.
\end{enumlist}

Write $W_* = (V_x / W^\perp_x)_{x \in X}$. If one of the above statements holds, there is a unique standard Borel structure on $W_*$ such that the maps $\pi : W_* \to X$ and $W_* \times_\pi W \to \C$ are Borel. With this standard Borel structure, $W_*$ is a Borel field of Banach spaces, the quotient map $V \to W_*$ is Borel and the Borel structure on $(W_*)^*$ equals the Borel structure induced on $W \subset V^*$ by $V^*$.
\end{proposition}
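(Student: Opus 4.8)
\emph{Proof proposal.}

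The plan is to first establish the equivalences $1 \Leftrightarrow 2 \Leftrightarrow 3$, and then to construct the Borel structure on $W_*$ and verify its properties. For $1 \Rightarrow 2$, when $W \subset V^*$ is Borel, $\ball W = W \cap \ball V^*$ is a Borel subset of the Borel field of compact Polish spaces $\ball V^*$ (Proposition~\ref{prop.dual-field-Banach}(4)) with nonempty weak$^*$ compact fibres $\ball W_x$, so Proposition~\ref{prop.subfield-sigma-compact} produces the sections $\psi_n$. For $3 \Rightarrow 1$, given a dense sequence of Borel sections $\gamma_n : X \to W^\perp$, the bipolar theorem gives $(W^\perp_x)^\perp = W_x$ (using that $W_x$ is weak$^*$ closed), whence $W = \{\om \in V^* \mid \om(\gamma_n(\pi(\om))) = 0 \text{ for all } n\}$, a Borel set because the evaluation $V^* \times_\pi V \to \C$ is Borel by Proposition~\ref{prop.dual-field-Banach}. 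The heart of the matter is $2 \Rightarrow 3$. Given $\psi_n$ as in 2, weak$^*$ density of $\{\psi_n(x)\}$ in $\ball W_x$ first yields $W^\perp_x = \{v \in V_x \mid \psi_n(x)(v) = 0 \text{ for all } n\}$, so $W^\perp \subset V$ is a Borel set; and then the key identity
\[
d(v, W^\perp_x) \;=\; \sup\{\,|\om(v)| \mid \om \in \ball W_x\,\} \;=\; \sup_n |\psi_n(x)(v)|
\]
holds, the first equality because $(W^\perp_x)^\perp = W_x$ (so the quotient norm on $V_x/W^\perp_x$ is computed against $\ball W_x$) and the second by weak$^*$ density. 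The right-hand side is a Borel function on $V$, so condition~5 of Proposition~\ref{prop.selection-Effros} holds for the compatible Borel metric $d(v,w) = \|v-w\|$, and that proposition then makes $W^\perp$ a Borel field of Polish spaces, hence of separable Banach spaces, the norm, addition and scalar multiplication being inherited from $V$.

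Now assume these equivalent conditions. To construct $W_*$, I would realize each fibre isometrically inside a Borel field already understood. By $1 \Rightarrow 2$ and Proposition~\ref{prop.subfield-sigma-compact}, $\ball W$ is a Borel field of compact Polish spaces for the weak$^*$ topology, so by Proposition~\ref{prop.Borel-field-CK} the field $C(\ball W) = (C(\ball W_x))_{x \in X}$ is a Borel field of separable Banach spaces. The evaluation map $V \to C(\ball W)$, $v \mapsto \widehat v$ with $\widehat v(\om) = \om(v)$, is Borel: by the universal property in Lemma~\ref{lem.construction-uniqueness} and Proposition~\ref{prop.Borel-field-CK} it suffices that $v \mapsto \widehat v(\gamma_n(\pi(v))) = \gamma_n(\pi(v))(v)$ be Borel for a dense sequence of Borel sections $\gamma_n$ of $\ball W$, which it is. This map factors through the quotient maps $V_x \to V_x/W^\perp_x$ and, by the displayed norm identity, induces on each fibre an \emph{isometric} isomorphism $V_x/W^\perp_x \cong \widehat{V_x}$; in particular each $\widehat{V_x}$ is a closed subspace of $C(\ball W_x)$. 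Taking a dense sequence of Borel sections $\vphi_k$ of $V$, the sections $x \mapsto \widehat{\vphi_k(x)}$ are Borel and fibrewise dense in $\widehat{V_x}$ (as $\widehat{\,\cdot\,}$ is continuous and surjective onto $\widehat{V_x}$), so by Proposition~\ref{prop.selection-Effros} ($2 \Rightarrow 1$) the field $\widehat V = (\widehat{V_x})_{x \in X}$ is a Borel subset of $C(\ball W)$ and a Borel field of separable Banach spaces. I define the standard Borel structure on $W_*$ by transporting this one along $\bar v \mapsto \widehat v$; then $W_*$ is a Borel field of separable Banach spaces and the quotient map $V \to W_*$ is Borel.

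It remains to verify the universal property, the uniqueness, and the identification of $(W_*)^*$ with $W$. The pairing $W_* \times_\pi W \to \C$, $(\bar v, \om) \mapsto \om(v)$, restricted to $W_* \times_\pi \ball W$ is the restriction of the Borel evaluation $C(\ball W) \times_\pi \ball W \to \C$ under the identification above; for general $\om \in W$ one writes $\om = \|\om\| \cdot (\om/\|\om\|)$ when $\om \neq 0$, using that the norm and scalar multiplication are Borel on $V^*$ (Proposition~\ref{prop.dual-field-Banach}(3)). For uniqueness, if $\cB'$ is any standard Borel structure on $W_*$ making $\pi$ and the pairing Borel, then the injective map $\bar v \mapsto (\pi(\bar v), (\psi_n(\pi(\bar v))(v))_n) \in X \times \C^\N$ (with $\psi_n$ from 2) is $\cB'$-measurable and also Borel for our structure; being an injective Borel map between standard Borel spaces it is a Borel isomorphism onto its Borel image, so it generates our structure, which is therefore contained in $\cB'$; a Borel bijection between standard Borel spaces is a Borel isomorphism, so the two structures coincide. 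Finally, the canonical fibrewise map $W_x \to (V_x/W^\perp_x)^*$, $\om \mapsto [\bar v \mapsto \om(v)]$, is an isometric isomorphism because $(W^\perp_x)^\perp = W_x$; by Proposition~\ref{prop.dual-field-Banach}(2) applied to $W_*$ with the total sequence of Borel sections $\sigma_k : x \mapsto \widehat{\vphi_k(x)}$, the Borel structure on $(W_*)^*$ is the smallest one making $\pi$ and the maps $\Lambda \mapsto \Lambda(\sigma_k(\pi(\Lambda)))$ Borel; under the identification $W \cong (W_*)^*$ these maps become $\om \mapsto \om(\vphi_k(\pi(\om)))$, so this is the smallest $\sigma$-algebra on $W$ making $\pi$ and $\om \mapsto \om(\vphi_k(\pi(\om)))$ Borel, which by Proposition~\ref{prop.dual-field-Banach}(2) applied to $V$ is exactly the Borel structure that $V^*$ induces on the Borel subset $W$.

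The main obstacle is the implication $2 \Rightarrow 3$, which explains why annihilator subfields escape the pathology of Proposition~\ref{prop.counterex-subfield}: everything hinges on the identity $d(v, W^\perp_x) = \sup_n |\psi_n(x)(v)|$, which exhibits the distance-to-$W^\perp$ function as manifestly Borel and thereby activates Proposition~\ref{prop.selection-Effros}. After that, the $W_*$ part is mainly bookkeeping; the one design choice worth flagging is to build the Borel structure on $W_*$ through the isometric embedding into $C(\ball W)$ rather than as an abstract quotient, which sidesteps the question of whether such a quotient of a standard Borel space is standard Borel.
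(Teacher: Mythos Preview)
Your proof is correct and follows essentially the same approach as the paper. The equivalences $1 \Leftrightarrow 2 \Leftrightarrow 3$ are proved identically (same selection theorem for $1 \Rightarrow 2$, same norm identity $d(v,W^\perp_x) = \sup_n |\psi_n(x)(v)|$ feeding into Proposition~\ref{prop.selection-Effros} for $2 \Rightarrow 3$, same bipolar description for $3 \Rightarrow 1$).

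For the construction of $W_*$ there is a small organizational difference worth noting. The paper applies Lemma~\ref{lem.construction-uniqueness} directly: it writes down the Borel set $Z \subset X \times \C^\N$ of sequences approximated by $(\langle \vphi_k(x),\psi_n(x)\rangle)_n$ and observes that $\Delta : W_* \to Z$, $\bar v \mapsto (\pi(\bar v),(\langle v,\psi_n(\pi(v))\rangle)_n)$, is a bijection. You instead route the same embedding through the already established Borel field $C(\ball W)$ from Proposition~\ref{prop.Borel-field-CK}, realizing $W_*$ as the Borel subfield $\widehat V \subset C(\ball W)$ via Proposition~\ref{prop.selection-Effros}. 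Since the injection $\theta : C(\ball W) \to X \times \C^\N$ in the proof of Proposition~\ref{prop.Borel-field-CK} sends $\widehat V$ exactly onto the paper's $Z$, the two constructions are the same map in different packaging; your version just reuses more of the existing machinery. The uniqueness argument and the identification $(W_*)^* \cong W$ via Proposition~\ref{prop.dual-field-Banach}(2) are likewise the same in substance.
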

\begin{proof}
1 $\Rightarrow$ 2. Since $\ball W \subset \ball V^*$ is a Borel set and for every $x \in X$, $\ball W_x$ is a closed subset of the compact Polish space $\ball V_x^*$, the conclusion follows from Proposition \ref{prop.subfield-sigma-compact}.

2 $\Rightarrow$ 3. Note that for every Borel section $\vphi : X \to V$, the map
\[
x \mapsto d(\vphi(x),W_x^\perp) = \|\vphi(x)+W_x^\perp\|_{V_x/W_x^\perp} = \sup_n |\langle \vphi(x),\psi_n(x)\rangle|
\]
is Borel, because all the maps $x \mapsto \langle \vphi(x),\psi_n(x)\rangle$ are Borel. So, 3 follows from Proposition \ref{prop.selection-Effros}.

3 $\Rightarrow$ 1. Fix a dense sequence of Borel sections $\gamma_n : X \to W^\perp$. Since for every $x \in X$, we have that $W_x = W_x^{\perp\perp}$, we get that
\[
W = \{\om \in V^* \mid \forall n \in \N : \langle \gamma_n(\pi(\om)),\om \rangle = 0\}
\]
is a Borel subset of $V^*$.

To prove the final statement, fix a weak$^*$ dense sequence of Borel sections $\psi_n : X \to \ball W$ and fix a dense sequence of Borel sections $\vphi_n : X \to V$. Define the Borel subset $Z \subset X \times \C^\N$ of pairs $(x,z)$ such that
\[
\forall r \in \N , \exists k \in \N , \forall n \in \N : |z_n -  \langle \vphi_k(x),\psi_n(x)\rangle | < 1/r \; .
\]
Then $\Delta : W_* \to Z : v \mapsto (\pi(v),(\langle v,\psi_n(\pi(v))\rangle)_{n \in \N})$ is a bijection. By the first part of Lemma \ref{lem.construction-uniqueness}, there is a unique standard Borel structure on $W_*$ such that $\pi : W_* \to X$ and the pairing $W_* \times_\pi W \to \C$ are Borel.

By construction, the quotient map $V \to W_*$ is Borel and composing $\vphi_n$ with this quotient map, we obtain a dense sequence of Borel sections for $W_*$. Also
$$W_* \to [0,+\infty) : v \mapsto \|v\| = \sup \bigl\{ |\langle v, \psi_n(\pi(v))\rangle| \bigm| n \in \N \bigr\}$$
is Borel. So $W_*$ is a Borel field of Banach spaces. It follows from the unique characterization of the Borel structure on $(W_*)^*$ that the identification $(W_*)^* \cong W$ is a Borel isomorphism.
\end{proof}

The following example shows that when working with dual fields of Banach spaces, it is essential to have sections taking values in the unit ball.

\begin{proposition}\label{prop.counterex-weak-star-closed-subfield}
Let $V = (V_x)_{x \in X}$ be a Borel field of separable Banach spaces. Define $V^* = (V_x^*)_{x \in X}$ with the standard Borel structure given by Proposition \ref{prop.dual-field-Banach}. Let $\om_n : X \to V^*$ be a sequence of Borel sections and define $W_x \subset V_x^*$ as the weak$^*$ closed linear span of the set $\{\om_n(x) \mid n \in \N\}$.
\begin{enumlist}
\item For every $\sigma$-finite measure $\mu$ on $X$, there exists a conull Borel set $X_0 \subset X$ such that $W \cap \pi^{-1}(X_0) \subset V^*$ is a Borel set and $(\ball W_x)_{x \in X_0}$ and $(W^\perp_x)_{x \in X_0}$ are Borel fields.
\item It may happen that $W \subset V^*$ is \emph{not} a Borel set.
\end{enumlist}
\end{proposition}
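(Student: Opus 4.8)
The plan is to derive both parts from the duality of Proposition~\ref{prop.duality}, the common key point being that the pre-annihilator field $W^\perp = (W^\perp_x)_{x\in X}$, with $W^\perp_x = \{v\in V_x \mid \om_n(x)(v)=0 \text{ for all } n\}$, is \emph{always} a Borel subset of $V$, with no measure needed: for each $n$ the map $V\to\C : v\mapsto\langle\om_n(\pi(v)),v\rangle$ is the composition of the Borel map $v\mapsto(\om_n(\pi(v)),v)\in V^*\times_\pi V$ with the Borel pairing from Proposition~\ref{prop.dual-field-Banach}, hence Borel, so that $W^\perp=\bigcap_n\{v\in V\mid\langle\om_n(\pi(v)),v\rangle=0\}$ is Borel. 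Since $v\mapsto\om(v)$ is weak$^*$-continuous for fixed $v$, this $W^\perp_x$ is exactly the pre-annihilator of $W_x$ in the sense of Proposition~\ref{prop.duality}.

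\textbf{Part 1.} Given $\mu$, I would first apply the ``in particular'' clause of Proposition~\ref{prop.subfield}: $V$ is in particular a measured field of Polish spaces and $W^\perp\subset V$ is a Borel set of nonempty closed (indeed linear) subsets, so $W^\perp$ is a measured field of Polish spaces; hence there is a conull Borel set $X_0\subset X$ with $(W^\perp_x)_{x\in X_0}$ a Borel field of Polish spaces, and then also of Banach spaces, the operations being restrictions of those on $V$. Working over the standard Borel base $X_0$, statement~3 of Proposition~\ref{prop.duality} now holds, so statements~1 and~2 do as well: $W\cap\pi^{-1}(X_0)$ is Borel in $V^*$ and there are Borel sections $\psi_n:X_0\to V^*$ with $\psi_n(x)\in\ball W_x$ weak$^*$-dense in $\ball W_x$. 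Since $\ball V^*$ is a Borel field of Polish spaces (Proposition~\ref{prop.dual-field-Banach}) and $\ball W\cap\pi^{-1}(X_0)=(W\cap\pi^{-1}(X_0))\cap\ball V^*$ is a Borel subset of it carrying the dense Borel sections $\psi_n$, it follows directly from the definition that $(\ball W_x)_{x\in X_0}$ is a Borel field of Polish spaces. This gives all three assertions of part~1.

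\textbf{Part 2.} For the counterexample I would work over $X=\cT$, the standard Borel space of trees $T\subset\cS:=\N^{<\N}$ (Borel isomorphic with $[0,1]$), whose subset of ill-founded trees is analytic but not Borel. Take the constant field $V=(\ell^1(\cS))_{T\in\cT}$, so $V^*=(\ell^\infty(\cS))_{T\in\cT}$. Writing $\delta_t\in\ell^\infty(\cS)$ for the indicator of $t$ and $c_t\in\ell^\infty(\cS)$ for the indicator of the set $\{tk\mid k\in\N\}$ of children of $t$, let the Borel sections $\om_n:\cT\to V^*$ enumerate the constant sections $u_t=\delta_t-2c_t$ $(t\in\cS)$ together with the sections $v_t$ given by $v_t(T)=\delta_t$ if $t\notin T$ and $v_t(T)=0$ if $t\in T$. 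A direct computation then gives $W^\perp_T=\{\xi\in\ell^1(\cS)\mid\supp\xi\subseteq T,\ \xi(t)=2\sum_k\xi(tk)\ \text{for all } t\}$, and a short argument shows $W^\perp_T\neq\{0\}$ precisely when $T$ is ill-founded: an infinite branch $(\al_n)$ of $T$ yields the nonzero element $\xi$ with $\xi(\al_1\cdots\al_n)=2^{-n}$, while conversely a nonzero $\xi\in W^\perp_T$ forces, via $\xi(t)=2\sum_k\xi(tk)$ and $\supp\xi\subseteq T$, an infinite branch through any $t_0$ with $\xi(t_0)\neq0$. If $W=(W_T)_{T\in\cT}$ were a Borel subset of $V^*$, then by Proposition~\ref{prop.duality} the field $W^\perp$ would admit a dense sequence of Borel sections $\gamma_k:\cT\to W^\perp$, and since the norm is Borel on $V$ the set $\{T\mid W^\perp_T\neq\{0\}\}=\bigcup_k\{T\mid\|\gamma_k(T)\|\neq0\}$ would be Borel, contradicting the non-Borelness of the ill-founded trees.

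\textbf{Main obstacle.} The routine verifications are the Borelness of $v\mapsto\langle\om_n(\pi(v)),v\rangle$ and the explicit description and non-triviality test for $W^\perp_T$ in the tree example. The only point that requires a little care is checking, in part~2, that the $\om_n$ are Borel \emph{as sections of the dual field} $V^*$; by Proposition~\ref{prop.dual-field-Banach} this reduces, for the constant field, to Borel dependence on $T$ of the coordinates of $\om_n(T)$, which is immediate. So there is no genuine obstacle: the one nontrivial ingredient, namely the impossibility of a Borel selection, is imported wholesale from the non-Borelness of the set of ill-founded trees.
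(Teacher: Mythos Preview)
Your argument for part~1 is essentially identical to the paper's: both observe that $W^\perp=\bigcap_n\{v\in V\mid \langle\om_n(\pi(v)),v\rangle=0\}$ is Borel, apply Proposition~\ref{prop.subfield} to get a conull $X_0$ over which $W^\perp$ is a Borel field of Banach spaces, and then invoke Proposition~\ref{prop.duality}.

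For part~2 you take a genuinely different, more direct route. The paper reuses Proposition~\ref{prop.counterex-subfield} as a black box: given Borel subfields $A,B\subset \ell^1(\N)$ with $A\cap B$ not a Borel field, it takes weak$^*$-dense Borel sections $\al'_n,\be'_n$ into $A^\perp,B^\perp$, lets $W_x$ be the weak$^*$-closed span of their union, observes that $W^\perp=A\cap B$, and concludes via Proposition~\ref{prop.duality} that $W$ cannot be Borel. You instead construct the sections $\om_n$ from scratch so that $W^\perp_T$ directly encodes ``$T$ has an infinite branch''; your $W^\perp_T$ is exactly the space $\{\xi:\supp\xi\subseteq T,\ \xi(t)=2\sum_k\xi(tk)\}$, which coincides (up to the obvious identification) with the intersection $W_T\cap W'_T$ appearing inside the proof of Proposition~\ref{prop.counterex-subfield}. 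So the underlying tree mechanism is the same; the difference is packaging. The paper's detour has the side benefit of simultaneously establishing Proposition~\ref{prop.dual-intersection-and-sum}(2) (that $(A,B)\mapsto\overline{A+B}^{\,w^*}$ is ill-behaved), while your version is self-contained and avoids the $\ell^1\oplus\ell^1$ embedding and the auxiliary subfield $W'$. Both are correct.
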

\begin{proof}
1.\ Since $W^\perp = \{v \in V \mid \forall n \in \N : \langle v, \om_n(\pi(v))\rangle = 0\}$, we see that $W^\perp \subset V$ is a Borel set. By Proposition \ref{prop.subfield}, we can take a conull Borel set $X_0 \subset X$ such that $(W^\perp_x)_{x \in X_0}$ is a Borel field of Banach spaces. So we can take a dense sequence of Borel sections $\eta_n : X_0 \to W^\perp$. Since $W = W^{\perp \perp}$, it follows that
\[
W \cap \pi^{-1}(X_0) = \{\om \in V^* \mid \pi(\om) \in X_0 \;\;\text{and}\;\; \forall n \in \N : \langle \eta_n(\pi(\om)), \om \rangle = 0 \}
\]
is a Borel set.

2.\ Proposition \ref{prop.counterex-subfield} provides, for $X = [0,1]$, families of closed subspaces $A_x \subset \ell^1(\N)$ and $B_x \subset \ell^1(\N)$ such that $A$ and $B$ are Borel fields of separable Banach spaces, which are subfields of the constant field $V = (\ell^1(\N))_{x \in X}$, and such that $A \cap B$ is not a Borel field of separable Banach spaces.

Consider the weak$^*$ closed subspaces $A^\perp_x \subset \ell^\infty(\N)$ and $B^\perp_x \subset \ell^\infty(\N)$. Choosing dense sequences of Borel sections $\al_n : X \to A$ and $\be_n : X \to B$, we have that
\begin{align*}
& A^\perp = \{\om \in V^* \mid \forall n \in \N : \langle \al_n(\pi(\om)),\om\rangle=0\} \quad\text{and}\\
& B^\perp = \{\om \in V^* \mid \forall n \in \N : \langle \be_n(\pi(\om)),\om\rangle=0\} \; ,
\end{align*}
so that $A^\perp \subset V^*$ and $B^\perp \subset V^*$ are Borel sets. By Proposition \ref{prop.duality}, we can choose weak$^*$ dense sequences of Borel sections $\al'_n : X \to A^\perp$ and $\be'_n : X \to B^\perp$.

Define $W_x \subset \ell^\infty(\N)$ as the weak$^*$ closed linear span of the set $\{\al'_n(x) \mid n \in \N\} \cup \{\be'_k(x) \mid k \in \N\}$. By construction, $W_x$ equals the weak$^*$ closure of $A^\perp_x + B^\perp_x$. By construction, $A \cap B = W^\perp$. Since $A \cap B$ is not a Borel field of separable Banach spaces, it follows from Proposition \ref{prop.duality} that $W \subset V^*$ is not a Borel set.
\end{proof}

We already saw in Proposition \ref{prop.counterex-subfield} that the intersection operation is ill-behaved on Borel fields of separable Banach spaces. Dually, the operation $(A,B) \mapsto \overline{A+B}$ is ill-behaved for Borel fields of dual spaces.

\begin{proposition}\label{prop.dual-intersection-and-sum}
Let $V = (V_x)_{x \in X}$ be a Borel field of separable Banach spaces. Define $V^* = (V_x^*)_{x \in X}$ with the standard Borel structure given by Proposition \ref{prop.dual-field-Banach}.

Let $A = (A_x)_{x \in X}$ and $B = (B_x)_{x \in X}$ be families of weak$^*$ closed subspaces of $V_x^*$ such that $A \subset V^*$ and $B \subset V^*$ are Borel sets (and thus all the nice properties in  Proposition \ref{prop.dual-field-Banach} hold). Define $C_x$ as the weak$^*$ closure of $A_x+B_x$.

\begin{enumlist}
\item For every $\sigma$-finite measure $\mu$ on $X$, there exists a conull Borel set $X_0 \subset X$ such that $C \cap \pi^{-1}(X_0) \subset V^*$ is a Borel set.
\item It may happen that $C \subset V^*$ is \emph{not} a Borel set.
\end{enumlist}
\end{proposition}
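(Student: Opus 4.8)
The plan is to reduce everything to the duality dictionary of Proposition~\ref{prop.duality}, using the elementary identity $C_x^\perp = A_x^\perp \cap B_x^\perp$, which holds because $C_x$ is the weak$^*$ closure of $A_x+B_x$ and the pairing with a fixed vector of $V_x$ is weak$^*$ continuous. Passing to preannihilators thus converts the operation $(A,B)\mapsto\overline{A+B}$ into the intersection operation $(A^\perp,B^\perp)\mapsto A^\perp\cap B^\perp$, which we already know from Proposition~\ref{prop.counterex-subfield} to be ill-behaved. Conversely, since $C_x$ is weak$^*$ closed we have $C_x^{\perp\perp}=C_x$, so $C$ is recovered from $C^\perp$ and it suffices to analyse $C^\perp$.

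For part~1, I would first invoke Proposition~\ref{prop.duality}: since $A\subset V^*$ and $B\subset V^*$ are Borel sets, the preannihilator fields $A^\perp$ and $B^\perp$ are Borel fields of Banach spaces, and in particular $A^\perp\subset V$ and $B^\perp\subset V$ are Borel sets. Hence $C^\perp = A^\perp\cap B^\perp$ is a Borel set in $V$, consisting of closed linear subspaces of the $V_x$ (nonempty, as they contain $0$). Now $V$ is a Borel field of separable Banach spaces, hence a measured field for the given $\sigma$-finite $\mu$; Proposition~\ref{prop.subfield} then produces a conull Borel set $X_0\subset X$ such that $(C_x^\perp)_{x\in X_0}$ is a Borel field of Banach spaces, i.e.\ $C^\perp\cap\pi^{-1}(X_0)$ is Borel and admits a dense sequence of Borel sections over $X_0$. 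Restricting $V$, $V^*$ and $C$ to the base $X_0$ and using $C_x=C_x^{\perp\perp}$, the implication $3\Rightarrow1$ of Proposition~\ref{prop.duality} (now applied over $X_0$) gives that $C\cap\pi^{-1}(X_0)$ is a Borel subset of $V^*$.

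For part~2, I would reuse the construction already carried out in the proof of Proposition~\ref{prop.counterex-weak-star-closed-subfield}(2). Over $X=[0,1]$ one has, as in Proposition~\ref{prop.counterex-subfield}, closed subspaces $A'_x,B'_x\subset\ell^1(\N)$ forming Borel fields with $A'\cap B'$ \emph{not} a Borel field of Banach spaces. Taking $A_x:=(A'_x)^\perp$ and $B_x:=(B'_x)^\perp$ inside $V^*_x=\ell^\infty(\N)$, one checks exactly as in that proof that $A\subset V^*$ and $B\subset V^*$ are Borel sets, and that $C_x$, the weak$^*$ closure of $A_x+B_x$, satisfies $C^\perp = A'\cap B'$. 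Since $A'\cap B'$ is not a Borel field of Banach spaces, Proposition~\ref{prop.duality} forces $C\subset V^*$ to fail to be a Borel set, which proves part~2.

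The only delicate point is the bookkeeping in part~1: Proposition~\ref{prop.duality} is a statement about genuine Borel fields, so it must be applied over the conull base $X_0$ supplied by Proposition~\ref{prop.subfield} and not over all of $X$, and one has to check that restricting the Borel field $V$ and its dual $V^*$ to the Borel set $X_0$ behaves as expected (which follows from the uniqueness clause in Proposition~\ref{prop.dual-field-Banach}). Everything else is a direct application of the perp-duality results already established, with no new ideas required.
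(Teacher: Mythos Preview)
Your argument is correct and essentially coincides with the paper's. For part~1 the paper chooses weak$^*$ dense Borel sections into $\ball A$ and $\ball B$ and then invokes Proposition~\ref{prop.counterex-weak-star-closed-subfield}(1), whose proof is exactly your perp-then-Proposition~\ref{prop.subfield}-then-Proposition~\ref{prop.duality} sequence; for part~2 both you and the paper simply point back to the construction in Proposition~\ref{prop.counterex-weak-star-closed-subfield}(2).
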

\begin{proof}
1.\ By Proposition \ref{prop.dual-field-Banach}, we can choose weak$^*$ dense sequences of Borel sections $\al_n : X \to \ball A$ and $\be_n : X \to \ball B$. Taking the $\al_n$ and $\be_k$ together, we have a countable family of Borel sections $X \to V^*$ and $C_x$ equals the weak$^*$ closed linear span of $\{\al_n(x) \mid n \in \N\} \cup \{\be_k(x) \mid k \in \N\}$. The conclusion then follows from statement~1 in Proposition \ref{prop.counterex-weak-star-closed-subfield}.

2.\ This is precisely the way in which we proved statement~2 of Proposition \ref{prop.counterex-weak-star-closed-subfield}.
\end{proof}

\section{Borel fields of separable metric spaces, normed spaces and completions}\label{sec.completion}

Recall that a \emph{pseudometric} on a set $V$ is a map $d : V \times V \to [0,+\infty)$ satisfying $d(v,v) = 0$, $d(v,w) = d(w,v)$ and $d(v,w) \leq d(v,u) + d(u,w)$ for all $u,v,w \in V$. The \emph{separation-completion} of a pseudometric space $(V,d)$ is a complete metric space $(V',d')$ together with a map $\gamma : V \to V'$ such that $\gamma(V)$ is dense in $V'$ and $d'(\gamma(v),\gamma(w)) = d(v,w)$ for all $v,w \in V$. Up to an isometric isomorphism preserving the map $V \to V'$, the separation-completion is unique.

We prove that for a Borel field of separable pseudometric spaces, the separation-completion can be chosen in a Borel way.

\begin{definition}\label{def.Borel-field-metric-spaces}
A \emph{Borel field of separable pseudometric spaces} consists of a standard Borel space $X$, a family $(V,d) = (V_x,d_x)_{x \in X}$ of pseudometric spaces and a standard Borel structure on $V$ such that the maps $\pi : V \to X$ and $d : V \times_\pi V \to [0,+\infty)$ are Borel and such that there exists a dense sequence of Borel sections $\vphi_n : X \to V$.
\end{definition}

\begin{proposition}\label{prop.completion}
Let $(V,d) = (V_x,d_x)_{x \in X}$ be a Borel field of separable pseudometric spaces. Assume that $(V',d') = (V'_x,d'_x)_{x \in X}$ is any family of separation-completions for $(V_x,d_x)$ with corresponding $\gamma_x : V_x \to V'_x$.
\begin{enumlist}
\item $V'$ admits a unique standard Borel structure such that the map $\pi : V' \to X$, the map $\gamma : V \to V'$ and the metric $d' : V' \times_\pi V' \to [0,+\infty)$ are Borel. In this way, $V'$ is a Borel field of Polish spaces.
\item Whenever $\vphi_n : X \to V$ is a dense sequence of Borel sections, the $\sigma$-algebra on $V'$ defined in 1 is the smallest $\sigma$-algebra such that the maps $\pi : V' \to X$ and $V' \to [0,+\infty) : v \mapsto d'(v,\gamma(\vphi_n(\pi(v))))$ are measurable.
\item Let $Y$ be a measurable space and $W = (W_x)_{x \in X}$ a Borel field of Polish spaces. Assume that $\pi : Y \to X$ is a measurable map and $C : Y \times_\pi V' \to W$ is any map such that $\pi \circ C = \pi$ and such that restrictions $C_y : V'_{\pi(y)} \to W_{\pi(y)}$ are continuous for all $y \in Y$. Then $C$ is measurable if and only if the map $Y \times_\pi V \to W : (y,v) \mapsto C(y,\gamma(v))$ is measurable.
\end{enumlist}
\end{proposition}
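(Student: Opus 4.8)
\textbf{Proof plan for Proposition~\ref{prop.completion}.}

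The plan is to build the standard Borel structure on $V'$ by transporting it from a concrete model. Fix a dense sequence of Borel sections $\vphi_n : X \to V$. The natural candidate model is the map
\[
\Theta : V' \to X \times \R^\N : \Theta(v) = \bigl(\pi(v), (d'(v,\gamma(\vphi_n(\pi(v)))))_{n \in \N}\bigr) \, .
\]
First I would check that $\Theta$ is injective: two points of the same fibre $V'_x$ with equal distances to the dense set $\{\gamma(\vphi_n(x))\}$ must coincide, since that set is dense in $V'_x$. Next I would identify the image $Z := \Theta(V')$ explicitly. A pair $(x,z) \in X \times \R^\N$ lies in $Z$ precisely when there is a point of $V'_x$ at distances $z_n$ from each $\gamma(\vphi_n(x))$; since $\gamma(V_x)$ is dense in $V'_x$, this happens if and only if the numbers $z_n$ are ``consistent'' with the pseudometric $d_x$ in the sense that for all $r$ there is a $k$ with $|z_n - d_x(\vphi_k(x),\vphi_n(x))| < 1/r$ for all $n$ — this is the Cauchy/limit condition describing which points of the completion arise. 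Writing this out gives
\[
Z = \bigl\{ (x,z) \in X \times \R^\N \bigm| \forall r\in\N,\ \exists k\in\N,\ \forall n\in\N : |z_n - d_x(\vphi_k(x),\vphi_n(x))| < 1/r \bigr\} \, ,
\]
which is Borel since $x \mapsto d_x(\vphi_k(x),\vphi_n(x))$ is Borel by hypothesis. (I should also verify the triangle-type consistency conditions $|z_n - z_m| \le d_x(\vphi_n(x),\vphi_m(x))$ and $z_n + z_m \ge d_x(\vphi_n(x),\vphi_m(x))$ are automatically implied, or simply add them to the description; either way $Z$ stays Borel.) We then declare the Borel structure on $V'$ to be the one making $\Theta : V' \to Z$ a Borel isomorphism.

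With this definition, $\pi : V' \to X$ is Borel by construction, and each coordinate map $v \mapsto d'(v,\gamma(\vphi_n(\pi(v))))$ is Borel, so point~2's $\sigma$-algebra $\cB_0$ is contained in the Borel structure; conversely $\Theta$ is measurable for $\cB_0$, giving $\cB_0 = \cB$ and proving point~2. The maps $\gamma(\vphi_n) : X \to V'$ are Borel sections with $\{\gamma(\vphi_n(x))\}$ dense in $V'_x$, so $V'$ has a dense sequence of Borel sections. To see that $d'$ is Borel, I would write $d'(v,w) = \inf_n \bigl( d'(v,\gamma(\vphi_n(\pi(v)))) + d'(\gamma(\vphi_n(\pi(w))),w) \bigr)$, which is a countable infimum of Borel functions on $V' \times_\pi V'$; hence $V'$ is a Borel field of Polish spaces. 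For $\gamma : V \to V'$ being Borel: by Lemma~\ref{lem.construction-uniqueness} (or directly by point~2) it suffices that $x \mapsto d'(\gamma(\vphi_m(x)),\gamma(\vphi_n(\pi(\vphi_m(x)))))$, i.e.\ $x \mapsto d_x(\vphi_m(x),\vphi_n(x))$, be Borel for each $m,n$, which holds; so $\gamma$ is Borel. Uniqueness is then immediate: any other such Borel structure makes $\Theta$ Borel and injective with Borel image, hence agrees with $\cB$.

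For point~3, one implication is trivial: if $C$ is measurable then so is its composition with the Borel map $(y,v) \mapsto (y,\gamma(v))$. For the converse, suppose $(y,v) \mapsto C(y,\gamma(v))$ is measurable. The idea is that every point of $V'$ is a limit of points $\gamma(\vphi_{n}(\pi(\cdot)))$ along a ``nearest-point'' selection, so $C$ is a pointwise limit of measurable maps. Concretely, using a compatible Borel metric and the $\vphi_n$, define Borel maps $I_n : V' \to \N$ by letting $I_n(v)$ be the least $k$ with $d'(v,\gamma(\vphi_k(\pi(v)))) < 1/n$, and set $\theta_n(v) = \gamma(\vphi_{I_n(v)}(\pi(v)))$, so that $\theta_n : V' \to V'$ is Borel with $\pi\circ\theta_n = \pi$ and $d'(v,\theta_n(v)) \to 0$. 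By continuity of the $C_y$, $C(y,\theta_n(v)) \to C(y,v)$ in $W_{\pi(y)}$ for every $(y,v) \in Y \times_\pi V'$, so by Lemma~\ref{lem.Borel-field-Polish-some-properties}(3) it suffices that each $(y,v) \mapsto C(y,\theta_n(v))$ be measurable. Fixing $n$ and partitioning $V'$ into the Borel sets $\{v \mid I_n(v) = k\}$, the restriction of this map to $Y \times_\pi \{I_n = k\}$ equals $(y,v) \mapsto C(y,\gamma(\vphi_k(\pi(y))))$, and this is measurable because $(y,v)\mapsto C(y,\gamma(\vphi_k(\pi(y))))$ is the composition of $y \mapsto (y,\vphi_k(\pi(y)))$ with the assumed-measurable map $(y,w)\mapsto C(y,\gamma(w))$. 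Hence $C(y,\theta_n(\cdot))$ is measurable, and therefore so is $C$.

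The main obstacle I anticipate is getting the explicit description of the image set $Z$ exactly right — in particular making sure the Cauchy-type condition characterizing which sequences $(z_n)$ actually come from a point of the completion is both correct (surjectivity of $\Theta$ onto that set uses density of $\gamma(V_x)$ and completeness of $V'_x$) and manifestly Borel in $x$; the rest is a routine transfer of the arguments already used for Lemma~\ref{lem.Borel-field-Polish-some-properties}, Lemma~\ref{lem.construction-uniqueness} and Proposition~\ref{prop.two-approach-equivalent}.
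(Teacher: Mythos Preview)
Your proposal is correct and follows essentially the same approach as the paper: the same embedding $\Theta : V' \to X \times \R^\N$, the same description of $Z$ (your worry about extra triangle conditions is unnecessary --- the Cauchy-type condition alone characterizes $\Theta(V')$, but your hedge does no harm), the same infimum formula for $d'$, and the same uniqueness argument. For point~3 the paper simply invokes the second part of Lemma~\ref{lem.construction-uniqueness} (since $\gamma\circ\vphi_n$ is a dense sequence of Borel sections for $V'$), whereas you reprove that lemma's mechanism inline via the nearest-index maps $I_n$; this is the same argument unpacked.
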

\begin{proof}
Fix a dense sequence of Borel sections $\vphi_n : X \to V$. Define the Borel set
\[
Z \subset X \times \R^\N : Z = \bigl\{(x,z) \bigm| \forall n \in \N , \exists k \in \N , \forall i \in \N : |z_i - d(\vphi_k(x),\vphi_i(x))| < 1/n \bigr\} \; .
\]
Write $\gamma = (\gamma_x)_{x \in X}$. Then $\theta : V' \to Z : \theta(v) = (\pi(v),(d'(v,\gamma(\vphi_i(\pi(v))))_{i \in \N}))$ is a bijection. We define the standard Borel structure on $V'$ such that $\theta$ is a Borel isomorphism. By construction, $\pi : V' \to X$ and the map $\gamma : V \to V'$ are Borel. Since for all $(v,w) \in V' \times_\pi V'$,
\[
d'(v,w) = \inf_{n \in \N} \bigl(d'(v,\gamma(\vphi_n(\pi(v)))) + d'(w,\gamma(\vphi_n(\pi(w))))\bigr)
\]
also $d'$ is Borel. Also, $\gamma \circ \vphi_n$ is a dense sequence of Borel sections. So, $V'$ is a Borel field of Polish spaces.

Given any dense sequence of Borel sections, the corresponding map $V' \to X \times \R^\N$ is Borel and injective, so that the second statement follows. In this way, also the uniqueness statement about the standard Borel structure on $V'$ is proven. So, 1 and 2 are proven.

3.\ If $C$ is measurable, also the map $Y \times_\pi V \to W : (y,v) \mapsto C(y,\gamma(v))$ is measurable. Conversely, if this map is measurable and if $\vphi_n : X \to V$ is a dense sequence of Borel sections, also the maps $Y \to W : y \mapsto C(y,\gamma(\vphi_n(\pi(y))))$ are measurable. Since $\gamma \circ \vphi_n$ is a dense sequence of Borel sections for $V'$, it follows from the second part of Lemma \ref{lem.construction-uniqueness} that $C$ is measurable.
\end{proof}

\begin{definition}\label{def.Borel-field-normed-spaces}
A \emph{Borel field of separable seminormed spaces} consists of a standard Borel space $X$, a family $V = (V_x)_{x \in X}$ of seminormed spaces and a standard Borel structure on $V$ such that the maps
\begin{align*}
& \pi : V \to X \;\; , \quad V \times_\pi V \to V : (v,w) \mapsto v+w \;\; ,\\
& \C \times V \to V : (\lambda,v) \mapsto \lambda v \quad\text{and}\quad V \to [0,+\infty) : v \mapsto \|v\|
\end{align*}
are Borel and such that there exists a dense sequence of Borel sections $\vphi_n : X \to V$.
\end{definition}

The following result is an immediate corollary of Proposition \ref{prop.completion}.

\begin{corollary}\label{cor.completion-normed}
Let $X$ be a standard Borel space and $V = (V_x)_{x \in X}$ a Borel field of separable seminormed spaces. Assume that $V' = (V'_x)_{x \in X}$ is any family of separation-completions of $V_x$ with corresponding maps $\gamma_x : V_x \to V'_x$.

With the standard Borel structure on $V'$ given by Proposition \ref{prop.completion}, we get that $V'$ is a Borel field of separable Banach spaces. This standard Borel structure on $V'$ is the unique standard Borel structure on $V'$ such that the map $\pi : V' \to X$ and the map $\gamma : V \to V'$ are Borel and such that $V'$ is a Borel field of separable Banach spaces.
\end{corollary}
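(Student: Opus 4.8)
The plan is to deduce this from Proposition \ref{prop.completion} with very little extra work. A Borel field $(V_x)_{x\in X}$ of separable seminormed spaces is, forgetting the linear structure, a Borel field of separable pseudometric spaces for the Borel pseudometric $d(v,w)=\|v-w\|$, with the same dense sequence of Borel sections $\vphi_n$. For each $x$, the separation-completion of the seminormed space $V_x$ is a separable Banach space $V'_x$ whose underlying complete metric space is precisely the separation-completion of $(V_x,d_x)$, with the same dense linear map $\gamma_x:V_x\to V'_x$ (which is isometric on the quotient, so $\|\gamma_x(v)\|=\|v\|$ and $d'_x(\gamma_x v,\gamma_x w)=\|v-w\|$). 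Hence Proposition \ref{prop.completion} applies and endows $V'$ with a standard Borel structure for which $\pi:V'\to X$, $\gamma:V\to V'$ and $d':V'\times_\pi V'\to[0,+\infty)$ are Borel, for which $V'$ is a Borel field of Polish spaces, and for which $\gamma\circ\vphi_n$ is a dense sequence of Borel sections.

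It remains to verify that, with this Borel structure, the norm, addition and scalar multiplication on $V'$ are Borel, which then makes $V'$ a Borel field of separable Banach spaces. The norm is easy: the zero section $x\mapsto 0\cdot\vphi_1(x)=0_{V_x}$ of $V$ is Borel as a composition of scalar multiplication with $\vphi_1$, hence $x\mapsto\gamma_x(0_{V_x})=0_{V'_x}$ is Borel, and therefore $v\mapsto\|v\|=d'\bigl(v,0_{V'_{\pi(v)}}\bigr)$ is Borel. For scalar multiplication I would apply Proposition \ref{prop.completion}(3) with $Y=\C\times X$ (with $\pi_Y$ the second projection), $W=V'$ and $C\bigl((\lambda,x),v\bigr)=\lambda v$; each $C_{(\lambda,x)}$ is continuous, and on $Y\times_\pi V\cong\C\times V$ the map $((\lambda,x),v)\mapsto C((\lambda,x),\gamma(v))=\lambda\gamma(v)=\gamma(\lambda v)$ is Borel because scalar multiplication on $V$ is Borel and $\gamma$ is Borel; hence scalar multiplication on $V'$ is Borel. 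For addition I would apply Proposition \ref{prop.completion}(3) twice, once to peel off each argument: first with $Y=V'$ and $C(y,v)=y+v$, reducing Borelness of addition on $V'\times_\pi V'$ to Borelness of $(y,v)\mapsto y+\gamma(v)$ on $V'\times_\pi V$; then with $Y=V$ and (up to the obvious Borel swap of factors and commutativity of addition) $C'(w,u)=\gamma(w)+u$, reducing this in turn to Borelness of $(w,v)\mapsto\gamma(w)+\gamma(v)=\gamma(w+v)$ on $V\times_\pi V$, which holds since addition on $V$ is Borel and $\gamma$ is Borel. Together with the dense sequence of Borel sections $\gamma\circ\vphi_n$ and the separability of each $V'_x$, this shows $V'$ is a Borel field of separable Banach spaces.

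For the uniqueness statement, write $\cB$ for the standard Borel structure on $V'$ just described, and let $\cB'$ be any standard Borel structure on $V'$ making $\pi:V'\to X$ and $\gamma:V\to V'$ Borel and turning $V'$ into a Borel field of separable Banach spaces. Then with respect to $\cB'$ the norm and subtraction on $V'$ are Borel, hence so is $d'(v,w)=\|v-w\|$ on $V'\times_\pi V'$; thus $\cB'$ is a standard Borel structure for which $\pi$, $\gamma$ and $d'$ are all Borel, and the uniqueness clause of Proposition \ref{prop.completion}(1) forces $\cB'=\cB$.

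I expect no real obstacle: the statement is essentially a restatement of Proposition \ref{prop.completion} in the seminormed setting. The only point that needs a moment's care is transporting Borelness of the \emph{binary} operation of addition from $V$ to $V'$, since Proposition \ref{prop.completion}(3) only lets one replace a single ``completion variable'' by $\gamma$ of an element of $V$ at a time; one therefore applies it once for each slot, each time using linearity of the maps $\gamma_x$ to land back on the already known Borel operation on $V$.
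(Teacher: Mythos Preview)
Your proof is correct and follows essentially the same approach as the paper's: apply Proposition~\ref{prop.completion} to obtain the Borel field of Polish spaces, recover the norm from $d'$ and the zero section, and then transport addition and scalar multiplication from $V$ to $V'$ by applying Proposition~\ref{prop.completion}(3) once per ``completion variable''. Your argument for the uniqueness clause via the uniqueness part of Proposition~\ref{prop.completion}(1) is also correct; the paper's proof leaves this step implicit.
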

\begin{proof}
Denote $d' : V' \times_\pi V' \to [0,+\infty) : d'(v,w) = \|v-w\|$. By Proposition \ref{prop.completion}, we equip $V'$ with the unique standard Borel structure such that the map $\pi : V' \to X$, the map $\gamma : V \to V'$ and the metric $d'$ are Borel. In this way, $V'$ is a Borel field of Polish spaces.

Since $X \to V : x \mapsto 0_x$ is Borel, also $V' \to [0,+\infty) : v \mapsto \|v\| = d'(v,0_{\pi(v)})=d'(v,\gamma(0_{\pi(v)}))$ is Borel. Denote by
\[
A : V' \times_\pi V' \to V' : A(v,w) = v+w \quad\text{and}\quad M : \C \times V' \to V' : (\lambda,v) \mapsto \lambda v
\]
the addition and scalar multiplication maps. Since the map $A : V \times_\pi V \to V$ is Borel and since, for every fixed $v \in V$, addition of $\gamma(v)$ is a continuous map from $V'_{\pi(v)}$ to $V'_{\pi(v)}$, it follows from Proposition \ref{prop.completion} that the map $V \times_\pi V' \to V': (v,w) \mapsto \gamma(v)+w$ is Borel. Then, for every fixed $w \in V'$, also addition of $w$ is a continuous map from $V'_{\pi(w)}$ to $V'_{\pi(w)}$. By a second application of Proposition \ref{prop.completion}, it follows that $A$ is Borel. A similar argument gives us that $M$ is Borel. So, $V'$ is a Borel field of separable Banach spaces.
\end{proof}

\section{Borel fields of bounded operators}\label{sec.fields-operators}

For the following proposition, recall that a separable Banach space $V$ has the \emph{metric approximation property} if there exists a sequence of finite rank operators $T_n \in B(V)$ such that $\|T_n\| \leq 1$ for all $n \in \N$ and $T_n \to \id$ strongly.

\begin{proposition}\label{prop.bounded-operators-banach}
Let $X$ be a standard Borel space. Let $V= (V_x)_{x \in X}$ and $W = (W_x)_{x \in X}$ be Borel fields of separable Banach spaces. Denote by $B(V,W) = (B(V_x,W_x))_{x \in X}$ the field of bounded linear operators from $V_x$ to $W_x$.
\begin{enumlist}
\item There is a unique standard Borel structure on $B(V,W)$ such that the maps $\pi : B(V,W) \to X$ and $B(V,W) \times_\pi V \to W : (T,v) \mapsto T(v)$ are Borel.
\item The operator norm, addition and scalar multiplication of bounded operators all define Borel maps. If also $U = (U_x)_{x \in X}$ is a Borel field of separable Banach spaces, composition $B(W,U) \times_\pi B(V,W) \to B(V,U) : (S,T) \mapsto S \circ T$ is a Borel map.
\item For every $\sigma$-finite Borel measure $\mu$ on $X$, we have that $\ball B(V,W)$, with the strong topology on each $\ball B(V_x,W_x)$, is a measured field of Polish spaces over $(X,\mu)$.
\item In the following two cases, $\ball B(V,W)$ is a Borel field of the Polish spaces $\ball B(V_x,W_x)$ with the strong operator topology:
\begin{itemlist}
\item when $W_x$ is reflexive for every $x \in X$~;
\item when for every $x \in X$, $V_x$ or $W_x$ has the metric approximation property.
\end{itemlist}
\item Whenever $\vphi_n : X \to V$ a total sequence of Borel sections, the Borel $\sigma$-algebra on $B(V,W)$ is the smallest $\sigma$-algebra that makes the maps $\pi : B(V,W) \to X$ and $B(V,W) \to W : T \mapsto T(\vphi_n(\pi(T)))$ measurable.
\end{enumlist}
\end{proposition}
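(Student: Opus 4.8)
These I would obtain from Lemma~\ref{lem.construction-uniqueness}. Fix a dense sequence of Borel sections $\vphi_n:X\to\ball V$. With $Y=B(V,W)$ and $C(T,v)=T(v)$, each $C_T=T$ is continuous and injective in $T$, so by Lemma~\ref{lem.construction-uniqueness}(1) part~1 reduces to checking that the image of $\theta(T)=(\pi(T),(T\vphi_n(\pi(T)))_n)$ is Borel in $X\times W^\N$. This image equals $\bigcup_{k\ge1}Z_k$, where $Z_k$ collects the pairs $(x,z)$ with $\pi(z_n)=x$ for all $n$ and $\|\sum_i\al_i z_{n_i}\|\le k\,\|\sum_i\al_i\vphi_{n_i}(x)\|$ for all finite rational combinations: a point of $Z_k$ defines, via $\vphi_n(x)\mapsto z_n$, a norm-$\le k$ operator on the dense span of $\{\vphi_n(x)\}$ (well defined and bounded by the displayed inequality applied to differences), and conversely every $T$ gives a point of $Z_k$ with $k>\|T\|$. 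Each $Z_k$ is Borel since norm, addition, scalar multiplication on $W$ and $\pi$ are Borel; hence so is the image, proving part~1, and part~5 is then Lemma~\ref{lem.construction-uniqueness}(1c) (a merely total sequence being replaced by its countable set of $(\Q+i\Q)$-combinations, which generates the same $\sigma$-algebra of evaluations by linearity of $T$). For part~2, $\|T\|=\sup_n\|T\vphi_n(\pi(T))\|$ is Borel, and addition, scalar multiplication and composition are Borel by part~5 because the associated evaluations $S\vphi_n(\pi)+T\vphi_n(\pi)$, $\lambda\,T\vphi_n(\pi)$ and $S(T\vphi_n(\pi))$ are Borel (the last by feeding the Borel map $(S,T)\mapsto(S,T\vphi_n(\pi(T)))$ into the evaluation map of part~1 for $B(W,U)$).

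\textbf{Part 3.} Fix dense sequences of Borel sections $\vphi_n:X\to\ball V$ and $\psi_l:X\to W$. The map $d(S,T)=\sum_n2^{-n}\min(1,\|S\vphi_n(\pi(S))-T\vphi_n(\pi(T))\|)$ is Borel on $\ball B(V,W)\times_\pi\ball B(V,W)$ and, by uniform boundedness, restricts on each $\ball B(V_x,W_x)$ to a complete metric inducing the strong topology (a $d$-Cauchy sequence converges strongly to the operator it defines on the $\vphi_n(x)$). The Borel sets $U_{n,l,k}=\{T\in\ball B(V,W)\mid\|T\vphi_n(\pi(T))-\psi_l(\pi(T))\|<1/k\}$ have the property that, for each $x$, their finite intersections form a countable basis of the strong topology on $\ball B(V_x,W_x)$. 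Since $\ball B(V,W)$ is a Borel subset of the standard Borel space $B(V,W)$ and $\pi$ is Borel, Proposition~\ref{prop.subfield} yields that $\ball B(V,W)$ is a measured field of Polish spaces over any $\sigma$-finite $(X,\mu)$.

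\textbf{Part 4.} Everything so far holds over an arbitrary $X$; the content here is to upgrade the $\mu$-a.e.\ dense sequence of Borel sections of part~3 to an everywhere-defined one, i.e.\ to circumvent the fact (recalled after Proposition~\ref{prop.subfield}) that a Borel subfield of closed subsets of a Borel field of Polish spaces need not itself be a Borel field. Suppose every $W_x$ is reflexive. Then every $W_x^*$ is separable, so $W^*$ is a Borel field of separable Banach spaces by Proposition~\ref{prop.dual-field-Banach}(5); applying Proposition~\ref{prop.dual-field-Banach}(4) to $W^*$ and the canonical Borel identification $W=W^{**}$, the field $\ball W$ with the weak topology — and hence its $\N$-fold $\pi$-power $P$ with the product topology — is a Borel field of compact Polish spaces. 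The map $\Phi(T)=(T\vphi_n(\pi(T)))_n$ is an injective Borel map $\ball B(V,W)\to P$ with Borel image and weakly closed, hence compact, fibers, so by Proposition~\ref{prop.subfield-sigma-compact} the image, and therefore $\ball B(V,W)$ with the weak operator topology, is a Borel field of Polish spaces; fix weak-operator-dense Borel sections $\rho_m$. Since the strong and weak operator topologies on $B(V_x,W_x)$ have the same continuous linear functionals (namely $T\mapsto\sum_i\om_i(Tv_i)$), they have the same closed convex sets, so the countable set of rational convex combinations of the $\rho_m$ — Borel sections into $\ball B(V,W)$ by part~2 — is strongly dense in every $\ball B(V_x,W_x)$; with the metric $d$ of part~3 this exhibits $\ball B(V,W)$ with the strong topology as a Borel field of Polish spaces.

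Suppose instead that for every $x$ at least one of $V_x,W_x$ has the metric approximation property. Then for each $x$ the norm-$\le1$ finite-rank operators are strongly dense in $\ball B(V_x,W_x)$ ($TS_n\to T$ strongly if $V_x$ has the property, $R_nT\to T$ strongly if $W_x$ does), so it suffices to find, for each $d$, a countable family of Borel sections that is strongly dense in the Borel subfield $F_d=(\{T\in\ball B(V,W)\mid\operatorname{rank}T\le d\})_x$ — a Borel subset because linear dependence of the tuples $(T\vphi_{n_i}(\pi(T)))_i$ is a Borel condition. Using Auerbach's lemma, a norm-$\le1$ rank-$\le d$ operator factors as $\sum_{i\le d}\om_i\ot w_i$ (meaning $Tv=\sum_i\om_i(v)w_i$) with $\om_i\in\ball V_x^*$ and $w_i\in\ball W_x$, so $F_d$ is covered by the image of the Borel map $(\ball V^*)^d\times_\pi(\ball W)^d\to B(V,W)$, where $\ball V^*$ carries its weak$^*$ structure (Proposition~\ref{prop.dual-field-Banach}(4)) and $\ball W$ its norm structure; as this map is fibrewise continuous from weak$^*\times$norm into the strong topology, applying it to a dense sequence of Borel sections of that Borel field of Polish spaces gives the candidate sections, the delicate point being to keep the approximants inside $\ball B(V,W)$ (this is where the approximation-property hypothesis must be used carefully, since perturbing a fixed rank-$d$ decomposition a priori only controls the nuclear, not the operator, norm). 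The genuine obstacle throughout part~4 is exactly this replacement of an a.e.\ selection by an everywhere-defined one: it rests on compactness of the weak-operator ball in the reflexive case and on controlling finite-rank decompositions in the approximation-property case, and — as the authors expect — should fail for general $V$ and $W$, where only the measured statement of part~3 survives.
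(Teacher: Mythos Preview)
Parts 1, 2, 3, 5 and the reflexive half of part 4 are correct and essentially match the paper's proof. In the reflexive case you route through the Borel field of compact Polish spaces $(\ball W_x,\text{weak})$ and its countable $\pi$-power, whereas the paper embeds $\ball B(V,W)$ directly into $X\times\C^{\N\times\N}$ via a norm-dense sequence $\om_n:X\to W^*$; both reduce to the same selection theorem (Theorem~\ref{thm.selection-sigma-compact} / Proposition~\ref{prop.subfield-sigma-compact}) and both pass from pointwise-weak to strong density by taking rational convex combinations.

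The gap is in the metric-approximation-property case, and you have identified it yourself without closing it. Your map $(\ball V^*)^d\times_\pi(\ball W)^d\to B(V,W)$ does cover $F_d$ by Auerbach, but its image is not contained in $\ball B(V,W)$: a generic $\sum_{i\le d}\om_i\ot w_i$ has operator norm up to $d$. Pushing forward a dense sequence of Borel sections of the domain therefore gives Borel sections of $B(V,W)$, not of $\ball B(V,W)$, and rescaling them back into the ball destroys the density in $F_d$ (a strong-operator perturbation of a fixed Auerbach decomposition only controls the nuclear norm, as you note). Restricting the domain to the preimage of $\ball B(V,W)$ does not help either, since that preimage is a closed subfield to which Proposition~\ref{prop.subfield-sigma-compact} does not obviously apply.

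The paper's fix is different and cleaner: it avoids rank slices entirely. Fix a dense sequence $\psi_n:X\to W$ and set $E_{n,x}=\lspan\{\psi_1(x),\ldots,\psi_n(x)\}$. Each $E_n$ is a Borel field of \emph{finite-dimensional}, hence reflexive, Banach spaces, so the already-proven reflexive case gives strongly dense Borel sections $T_{n,k}:X\to\ball B(V,E_n)\subset\ball B(V,W)$. A short argument then shows that, under the MAP hypothesis on $V_x$ or $W_x$, every $T\in\ball B(V_x,W_x)$ is a strong limit of finite-rank contractions, and every finite-rank $S$ with $\|S\|<1$ lies in the operator-norm closure of $\bigcup_n\ball B(V_x,E_{n,x})$ (approximate the finitely many vectors in $S(V_x)$ by $\psi_i(x)$'s). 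The point is that working with $\ball B(V,E_n)$ keeps you inside the unit ball \emph{by construction}, which is exactly what your Auerbach route fails to guarantee.
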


We suspect that in full generality, $\ball B(V,W)$ may fail to be a Borel field of Polish spaces, but we could not prove this. We however present a related counterexample in Proposition \ref{prop.isom-group-Polish-group}, where we prove that the field $(\Isom V_x)_{x \in X}$ of isometric automorphisms of $V_x$ need not be a Borel field of Polish groups.

\begin{proof}
Fix dense sequences of Borel sections $\vphi_n : X \to V$ and $\psi_n : X \to W$. Define $\cW \subset W^\N$ by $\cW = \{(w_n)_{n \in \N} \mid \forall n,m \in \N : \pi(w_n) = \pi(w_m)\}$. Then $\cW$ is a Borel subset of the standard Borel space $W^\N$. As before, write $\cQ = \Q + i \Q$. Then define the Borel set $Z \subset X \times \cW$ by
\begin{multline*}
Z = \{(x,w) \in \cW \mid \exists k \in \N, \forall \al_1,\al_2,\al_3 \in \cQ, \forall n_1,n_2,n_3 \in \N : \\
\|\al_1 w_{n_1} - \al_2 w_{n_2} - \al_3 w_{n_3}\| \leq k \, \|\al_1 \vphi_{n_1}(x) - \al_2 \vphi_{n_2}(x) - \al_3 \vphi_{n_3}(x)\| \} \; .
\end{multline*}
Note that $\Theta : B(V,W) \to Z : T \mapsto (\pi(T),(T(\vphi_n(\pi(T))))_{n \in \N})$ is a bijection. By the first part of Lemma \ref{lem.construction-uniqueness}, there is a unique standard Borel structure on $B(V,W)$ such that the maps $\pi : B(V,W) \to X$ and $B(V,W) \times_\pi V \to W : (T,v) \mapsto T(v)$ are Borel. Also, $\Theta$ is a Borel isomorphism.

Fix a dense sequence of Borel sections $\vphi'_n : X \to \ball V$. Then $\|T\| = \sup_n \|T(\vphi'_n(\pi(T)))\|$ and it follows that the norm is Borel. Addition and scalar multiplication are Borel, because they are well-defined Borel maps on $\cW$ and then transfer to $\cB$ and $B(V,W)$.

To prove that the composition map $B(W,U) \times_\pi B(V,W) \to B(V,U)$ is Borel, it follows from the second part of Lemma \ref{lem.construction-uniqueness} that it suffices to prove that for every Borel section $T : X \to B(V,W)$, the map
\[
B(W,U) \to B(V,U) : S \mapsto S \circ T(\pi(S))
\]
is Borel. By statement 1.c) in Lemma \ref{lem.construction-uniqueness}, it then suffices to prove that for every Borel section $\vphi : X \to V$, the map
\[
B(W,U) \to U : S \mapsto (S \circ T(\pi(S)))(\vphi(\pi(S))) = S(T(\pi(S)) \vphi(\pi(S)))
\]
is Borel. But this holds true because $X \to W : x \mapsto T(x) \vphi(x)$ is a Borel section.

Note that $d : \ball B(V,W) \times_\pi B(V,W) \to [0,\infty) : d(T,S) = \sum_{n \in \N} 2^{-n} \|T(\vphi'_n(\pi(T))) - S(\vphi'_n(\pi(S)))\|$ is a Borel map whose restriction to $\ball B(V_x,W_x) \times \ball B(V_x,W_x)$ is a complete separable metric inducing the strong topology. For every $r,n,k \in \N, m \in \N^k$, define the Borel set $U(r,k,n,m) \in \ball B(V,W)$ by
\[
U(r,n,k,m) = \{T \in \ball B(V,W) \mid \forall i = 1,\ldots, k : \|T(\vphi_{i}(\pi(T))) - \psi_{n_i}(\pi(T))\| < 1/r \} \; .
\]
For every $x \in X$, the countable family of sets $\ball B(V_x,W_x) \cap U(r,n,k,m)$ forms a basis for the strong topology. So 3 follows from Proposition \ref{prop.subfield}.

To prove 4, first assume that every $W_x$ is reflexive. By point~5 of Proposition \ref{prop.dual-field-Banach}, we can choose a norm dense sequence of Borel sections $\om_n : X \to W^*$. Also choose a dense sequence of Borel sections $\vphi_k : X \to V$. Then the map
\[
\theta : \ball B(V,W) \to X \times \C^{\N \times \N} : \theta(T) = \bigl(\pi(T),\bigl(\langle T(\vphi_k(\pi(T))),\om_n(\pi(T))\rangle\bigr)_{(k,n) \in \N \times \N}\bigr)
\]
is injective and Borel. So $Z = \theta(\ball B(V,W))$ is a Borel set. Since every $W_x$ is reflexive, we can identify $B(V_x,W_x)$ with the set of bilinear maps $V_x \times W_x^* \to \C$ of norm at most one. It follows that
\begin{align*}
 Z = &\bigcap_{k_1,k_2,k_3,n \in \N, \al_1,\al_2,\al_3 \in \C} \bigl\{ (x,z) \in X \times \C^{\N \times \N} \bigm| |\al_1 z_{k_1,n} - \al_2 z_{k_2,n} - \al_3 z_{k_3,n}| \\ & \hspace{6cm} \leq \|\al_1 \vphi_{k_1}(x) - \al_2 \vphi_{k_2}(x) - \al_3 \vphi_{k_3}(x)\| \, \|\om_n(x)\| \bigr\} \\
 & \bigcap \bigcap_{k,n_1,n_2,n_3, \be_1,\be_2,\be_3 \in \C} \bigl\{ (x,z) \in X \times \C^{\N \times \N} \bigm| |\be_1 z_{k,n_1} - \be_2 z_{k,n_2} - \be_3 z_{k,n_3}| \\ & \hspace{6cm} \leq \|\vphi_k(x)\| \, \|\be_1 \om_{n_1}(x) - \be_2 \om_{n_2}(x) - \be_3 \om_{n_3}(x)\|\bigr\} \; .
\end{align*}
It follows from this description that for every $x \in X$, the subset $Z_x \subset \C^{\N \times \N}$ is compact, where $\C^{\N \times \N}$ is equipped with the topology of pointwise convergence. So by Theorem \ref{thm.selection-sigma-compact}, there exists a dense sequence of Borel sections $X \to \C^{\N \times \N}$ for the set $Z$. Composing with $\theta^{-1}$, we find a sequence of Borel sections $S_n : X \to \ball B(V,W)$ with the property that for every $x \in X$, the set $\{S_n(x) \mid n \in \N\}$ is dense in $\ball B(V,W)$ with the topology of pointwise weak convergence. Taking finite convex combinations of the $S_n$ with coefficients in $\Q \cap [0,1]$, we find a sequence of Borel sections $T_n : X \to \ball B(V,W)$ that is strongly dense. So, it follows that $\ball B(V,W)$ is a Borel field of Polish spaces.

Next assume that for every $x \in X$, at least one of the spaces $V_x$, $W_x$ has the metric approximation property. Above, we already fixed a dense sequence of Borel sections $\vphi_k : X \to V$. Also fix a dense sequence of Borel sections $\psi_n : X \to W$. Fix $n \in \N$. For every $x \in X$, define the closed subspace
\[
E_{n,x} \subset W_x : E_{n,x} = \lspan \{\psi_i(x) \mid 1 \leq i \leq n\} \; .
\]
It follows from Proposition \ref{prop.selection-Effros} that $E_n = (E_{n,x})_{x \in X}$ is a Borel field of finite-dimensional Banach spaces. By the preceding paragraphs, it follows that $\ball B(V,E_n)$ is a Borel field of Polish spaces. We can thus choose a sequence of Borel sections $T_{n,k} : X \to \ball B(V,E_n)$ such that for every $x \in X$ and $n \in \N$, the set $\{T_{n,k}(x) \mid k \in \N\}$ is strongly dense in $\ball B(V,E_n)$.

We conclude the proof of 4 by showing that for every $x \in X$, the set $A_x = \{T_{n,k}(x) \mid n,k \in \N\}$ is strongly dense in $\ball B(V,W)$. Fix $x \in X$ and $T \in \ball B(V_x,W_x)$. Since $V_x$ or $W_x$ has the metric approximation property, there exists a sequence of finite rank operators $S_n \in \ball B(V_x,W_x)$ such that $S_n \to T$ strongly. It thus suffices to prove that every finite rank operator $S \in \ball B(V_x,W_x)$ belongs to the strong closure of $A_x$. For that it suffices to prove that every finite rank operator $S \in B(V_x,W_x)$ with $\|S\| < 1$ belongs to the operator norm closure of $\bigcup_{n \in \N} \ball B(V_x,E_{n,x})$.

Take $\eps > 0$ arbitrary. Put $\delta = \min \{\eps,1-\|S\|\}$. Take $r \in \N$ and for all $i \in \{1,\ldots,r\}$, take $\eta_i \in V_x^*$ and $w_i \in W_x$ such that $S(v) = \sum_{i=1}^r \eta_i(v) \, w_i$ for all $v \in V_x$. Write $\kappa = 1+\sum_{i=1}^r \|\eta_i\|$. Take $n_1,\ldots,n_r \in \N$ such that $\|\psi_{n_i}(x) - w_i\| < \kappa^{-1} \delta$ for all $i = 1,\ldots,r$. Write $n = \max\{n_1,\ldots,n_r\}$ and define $S_0 \in B(V_x,E_{n,x})$ by $S_0(v) = \sum_{i=1}^r \eta_i(v) \, \psi_{n_i}(x)$. By construction, $\|S-S_0\| < \delta$. So $S_0 \in \ball B(V_x,E_{n,x})$ and $\|S-S_0\| < \eps$.

Point 5 follows directly from the first part of Lemma \ref{lem.construction-uniqueness}.
\end{proof}

Every C$^*$-algebra $A$ sits canonically as an ideal in its multiplier C$^*$-algebra $M(A)$, see \cite[Theorem 2.11]{Bus67}. Recall from \cite[Definition 3.4]{Bus67} that the strict topology on $M(A)$ is the weakest topology on $M(A)$ such that the maps $M(A) \to A$ given by $a \mapsto ab$ and $a \mapsto ba$ are continuous for all $b \in A$. As a first application of Proposition \ref{prop.bounded-operators-banach}, we prove that the field of multipliers $M(A_x)_{x \in X}$ of a Borel field of separable C$^*$-algebras has a natural standard Borel structure.

\begin{proposition}\label{prop.multiplier-field}
Let $A = (A_x)_{x \in X}$ be a Borel field of separable C$^*$-algebras and consider their multiplier algebras $M(A) = M(A_x)_{x \in X}$. There is a unique standard Borel structure on $M(A)$ such that the maps $\pi : M(A) \to X$ and $M(A) \times_\pi A \to A : (a,b) \mapsto ab$ are Borel.

Then the inclusion map $A \to M(A)$ and the multiplication $A \times_\pi M(A) \to A$ are Borel. The restriction of the Borel structure on $M(A)$ to $\ball M(A)$ turns $\ball M(A)$ into a Borel field of the Polish spaces $\ball M(A)$ with the strict topology.
\end{proposition}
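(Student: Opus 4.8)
The plan is to realize the field $M(A)=(M(A_x))_{x\in X}$ concretely as a field of double centralizers sitting inside the field $B(A,A)$ of Proposition~\ref{prop.bounded-operators-banach}, and then to use the fact that $\ball A_x$ is strictly dense in $\ball M(A_x)$ to produce enough everywhere defined Borel sections. So first I would take the standard Borel structure on $B(A):=B(A,A)=(B(A_x))_{x\in X}$, and hence on $B(A)\times_\pi B(A)$, provided by Proposition~\ref{prop.bounded-operators-banach} with $V=W=A$, and consider the subset
\[
DC(A) \;=\; \bigl\{(L,R)\in B(A)\times_\pi B(A)\ \bigm|\ a\,L(b)=R(a)\,b\ \text{ for all }a,b\in A_{\pi(L)}\bigr\}.
\]
Fixing a dense sequence of Borel sections $\vphi_n:X\to A$, the defining condition is equivalent, by continuity of $L$, $R$ and of multiplication, to the countable family of equalities $\vphi_n(x)\,L(\vphi_m(x))=R(\vphi_n(x))\,\vphi_m(x)$; each of these cuts out a Borel subset of $B(A)\times_\pi B(A)$, since the maps $L\mapsto L(\vphi_m(\pi(L)))$ and $R\mapsto R(\vphi_n(\pi(R)))$ are Borel by Proposition~\ref{prop.bounded-operators-banach}, multiplication on $A$ is Borel by Definition~\ref{def.Borel-field-Cstar}, and the zero section of $A$ is a Borel subset of $A$. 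Hence $DC(A)$ is a standard Borel space.

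Next, the classical identification of double centralizers with multipliers gives a bijection $M(A)\to DC(A):m\mapsto(L_m,R_m)$, where $L_m(b)=mb$ and $R_m(b)=bm$, and I would transport the Borel structure of $DC(A)$ along this bijection. Then $\pi:M(A)\to X$ is Borel, and $M(A)\times_\pi A\to A:(m,b)\mapsto mb$ corresponds to the evaluation map $(L,R,b)\mapsto L(b)$, which is Borel. Uniqueness of this standard Borel structure follows from the first part of Lemma~\ref{lem.construction-uniqueness}, applied with $Y=M(A)$, $V=W=A$ and $C(m,b)=mb$: the maps $C_m$ are continuous and, because $\ball A_x$ is strictly dense in $\ball M(A_x)$ (so an approximate identity of $A_x$ detects $m$), they separate the points of $M(A_x)$; hence Lemma~\ref{lem.construction-uniqueness} yields that the associated set $\theta(M(A))$ is Borel and that the structure is unique.

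The remaining assertions are then easy consequences. The inclusion $A\to M(A)$ sends $a$ to the pair of left/right multiplications by $a$ on $A_x$; by the characterization of the Borel structure on $B(A)$ in point~5 of Proposition~\ref{prop.bounded-operators-banach}, the maps $A\to B(A)$ given by $a\mapsto(b\mapsto ab)$ and $a\mapsto(b\mapsto ba)$ are Borel, because $a\mapsto a\,\vphi_n(\pi(a))$ and $a\mapsto\vphi_n(\pi(a))\,a$ are Borel; hence the inclusion $A\to M(A)$ is Borel, and likewise right multiplication $A\times_\pi M(A)\to A:(b,m)\mapsto bm$ is, in the model $DC(A)$, the restriction of the evaluation map $(b,(L,R))\mapsto R(b)$, hence Borel. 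For the strict topology, note that $\|m\|_{M(A_x)}=\|L_m\|$, so the norm is Borel on $M(A)$ and $\ball M(A)\subset M(A)$ is a Borel set. Fixing a dense sequence of Borel sections $\vphi_n:X\to\ball A$, I would put, for $(m,m')\in\ball M(A)\times_\pi\ball M(A)$ with $x=\pi(m)$,
\[
d(m,m') \;=\; \sum_{n=1}^\infty 2^{-n-2}\bigl(\|(m-m')\,\vphi_n(x)\| + \|\vphi_n(x)\,(m-m')\|\bigr).
\]
By the preceding this is a Borel map into $[0,1]$, and for each $x$ it restricts to a metric on $\ball M(A_x)$ inducing the strict topology (the countable set $\{\vphi_n(x)\}$ suffices since $\|m\|\le1$); it is complete (a $d$-Cauchy sequence produces, using $\|m_k\|\le1$, left and right limit maps forming a double centralizer of norm $\le1$) and separable (since $\ball A_x$ is norm separable, hence strictly separable, and strictly dense in $\ball M(A_x)$). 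Thus each $\ball M(A_x)$ with $d$ is Polish, and composing the sections $\vphi_n:X\to\ball A$ with the Borel inclusion $A\to M(A)$ gives a dense sequence of Borel sections $X\to\ball M(A)$, because $\{\vphi_n(x)\}$ is strictly dense in $\ball M(A_x)$; hence $\ball M(A)$ is a Borel field of the Polish spaces $\ball M(A_x)$ with the strict topology.

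The formal bookkeeping above is routine; the genuine point is that $\ball M(A)$ is a \emph{Borel} field, and not merely a measured field, of Polish spaces. One cannot get this by cutting $\ball M(A)$ out of $\ball B(A)$, since Proposition~\ref{prop.bounded-operators-banach} only guarantees $\ball B(A)$ to be a measured field when $A$ is neither reflexive nor has the metric approximation property. What rescues the Borel statement is the C$^*$-algebraic input that $\ball A_x$ is strictly dense in $\ball M(A_x)$, which converts an arbitrary dense sequence of Borel sections of $\ball A$ into one for $\ball M(A)$. A secondary subtlety worth flagging is that the strict topology on $\ball M(A_x)$ is strictly finer than the topology induced from $(\ball B(A_x),\text{strong})$, as it also records multiplication on the right; this is why one works with the double-centralizer model and keeps both summands in $d$.
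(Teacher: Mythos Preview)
Your proof is correct and follows essentially the same route as the paper: both realize $M(A)$ as the Borel set of double centralizers inside $B(A)\times_\pi B(A)$, transport the Borel structure, and use the strict density of $\ball A_x$ in $\ball M(A_x)$ to obtain a dense sequence of Borel sections for $\ball M(A)$. Your closing remarks on why this yields a genuine Borel (rather than merely measured) field, and on the need to keep both summands in the metric, are accurate and add useful context not spelled out in the paper.
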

\begin{proof}
Denote by $B(A,A) = B(A_x,A_x)_{x \in X}$ the spaces of bounded linear maps from $A_x$ to $A_x$. We equip $B(A,A)$ with the standard Borel structure given by Proposition \ref{prop.bounded-operators-banach}. Choose a dense sequence of Borel sections $\vphi_n : X \to A$. We then define the Borel subset $Z \subset X \times B(A,A) \times B(A,A)$ consisting of the elements $(x,L,R)$ satisfying
$$\pi(L) = \pi(R) = x \quad\text{and}\quad \vphi_n(x) \cdot L(\vphi_k(x)) = R(\vphi_n(x)) \cdot \vphi_k(x) \quad\text{for all $n,k \in \N$.}$$
For every $a \in M(A_x)$, we denote by $L_a$ and $R_a$ the left and right multiplication operators on $A_x$. Then, $M(A) \to Z : a \mapsto (\pi(a),L_a,R_a)$ is a bijection. We define the standard Borel structure on $M(A)$ such that this map is a Borel isomorphism.

By construction, the maps $\pi : M(A) \to X$, $A \to M(A)$ and the multiplication maps $M(A) \times_\pi A \to A$ and $A \times_\pi M(A) \to A$ are Borel. Since the map $M(A) \to B(A,A) : a \mapsto L_a$ is injective and Borel, the Borel $\sigma$-algebra on $M(A)$ equals the smallest $\sigma$-algebra such that $M(A) \times_\pi A \to A$ is Borel. This proves the uniqueness statement.

Choose a dense sequence of Borel sections $\psi_n : X \to \ball A$. Then the map
\begin{multline*}
d : \ball M(A) \times_\pi \ball M(A) \to [0,+\infty) : \\ (a,b) \mapsto \sum_{n=1}^\infty 2^{-n} \, \bigl(\|(a-b) \cdot \psi_n(\pi(a))\| + \|\psi_n(\pi(a)) \cdot (a-b)\|\bigr)
\end{multline*}
is Borel and its restriction to $\ball M(A_x) \times \ball M(A_x)$ is a separable complete metric that is compatible with the strict topology. By \cite[Proposition 3.5]{Bus67}, the unit ball of $A$ is strictly dense in the unit ball of $M(A)$. So, $\psi_n : X \to \ball A$ automatically is a strictly dense sequence of Borel sections $X \to \ball M(A)$. It follows that $\ball M(A)$ is a Borel field of Polish spaces.
\end{proof}

As mentioned above, we do not expect that in Proposition \ref{prop.bounded-operators-banach}, $\ball B(V,W)$ will always be a Borel field of Polish spaces. We illustrate this by the following combination of a positive result and a counterexample.

For a separable Banach space $V$ we denote by $\Isom V$ the group of bijective, isometric, linear maps $\al : V \to V$. Together with the strong operator topology, $\Isom V$ is a Polish group. Note that $\Isom V \subset \ball B(V)$.

Given a field $V = (V_x)_{x \in X}$ of separable Banach spaces, we want to consider the field $\Isom V = (\Isom V_x)_{x \in X}$. So we first need the following obvious definition. Note that a different definition was given in \cite{Sut85}, but from the proof of Proposition \ref{prop.universal-field-Polish-groups} below, it follows that both definitions are equivalent.

\begin{definition}\label{def.Borel-field-Polish-group}
A \emph{Borel field of Polish groups} consists of a standard Borel space $X$, a family $G = (G_x)_{x \in X}$ of Polish groups and a standard Borel structure on $G$ such that $G$ is a Borel field of Polish spaces and the maps
\[
X \to G : x \mapsto e_x \;\; , \;\; G \to G : g \mapsto g^{-1} \;\;\text{and}\;\; G \times_\pi G \to G : (g,h) \mapsto gh
\]
are Borel.

A \emph{measured field of Polish groups} consists of a standard $\sigma$-finite measured space $(X,\mu)$, a family $G = (G_x)_{x \in X}$ of Polish groups and a standard Borel structure on $G$ such that $\pi : G \to X$ is Borel and such that there exists a conull Borel set $X_0 \subset X$ with $(G_x)_{x \in X_0}$ being a Borel field of Polish groups.
\end{definition}

\begin{proposition}\label{prop.isom-group-Polish-group}
Let $V = (V_x)_{x \in X}$ be a Borel field of separable Banach spaces. Consider the standard Borel structure on $B(V) = B(V,V)$ given by Proposition \ref{prop.bounded-operators-banach}.
\begin{enumlist}
\item $\Isom V$ is a Borel subset of $B(V)$. The restricted Borel structure on $\Isom V$ is the unique standard Borel structure on $\Isom V$ such that the maps $\pi : \Isom V \to X$ and $\Isom V \times_\pi V \to V : (\al,v) \mapsto \al(v)$ are Borel. Then composition and inverse are Borel maps.
\item For every $\sigma$-finite measure on $X$, $\Isom V$ is a measured field of Polish groups.
\item There are examples in which $\Isom V$ is not a Borel field of Polish groups, because there is no dense sequence of Borel sections $X \to \Isom V$.
\end{enumlist}
\end{proposition}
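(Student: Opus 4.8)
For Part 1 the plan is to locate $\Isom V$ inside $B(V)$ and then appeal to the uniqueness statements of Lemma~\ref{lem.construction-uniqueness}. Fixing a dense sequence of Borel sections $\vphi_n : X \to V$, an operator $T \in B(V_x)$ is isometric if and only if $\|T(\vphi_n(x))\| = \|\vphi_n(x)\|$ for all $n$ (by continuity of $T$ and density), and an isometric $T$ has automatically closed range, so it is onto if and only if for all $m,k$ there is an $n$ with $\|T(\vphi_n(x)) - \vphi_m(x)\| < 1/k$. By Proposition~\ref{prop.bounded-operators-banach} these are Borel conditions, so $\Isom V$ is a Borel subset of $B(V)$, hence standard Borel with the restricted structure, and $\pi$ and evaluation remain Borel; uniqueness then follows from Lemma~\ref{lem.construction-uniqueness}, since the Borel isomorphism $\Theta$ of Proposition~\ref{prop.bounded-operators-banach} carries the Borel set $\Isom V$ onto a Borel subset of $X \times V^\N$. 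Composition is Borel as the restriction of the composition map of Proposition~\ref{prop.bounded-operators-banach}, and $\al \mapsto \al^{-1}$ is Borel because its graph $\{(\al,\be) \in \Isom V \times_\pi \Isom V \mid \al \circ \be = e_{\pi(\al)}\}$ is Borel and one may apply the Lusin--Souslin theorem (see \cite[Section 14]{Kec95}).

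\textbf{Part 2.} Here the plan is to apply Proposition~\ref{prop.subfield} to the family $(\Isom V_x)_{x \in X}$ sitting inside $\ball B(V)$. By Part 1 this is a standard Borel space with $\pi$ Borel. A Borel, fibrewise complete separable metric inducing the strong topology is $d'(\al,\be) = d(\al,\be) + d(\al^{-1},\be^{-1})$, where $d$ is the metric from the proof of Proposition~\ref{prop.bounded-operators-banach}; the $\al^{-1}$ term is exactly what restores completeness, since $\Isom V_x$ need not be strongly closed in $\ball B(V_x)$, while a $d'$-Cauchy sequence has both $\al_n$ and $\al_n^{-1}$ strongly convergent with mutually inverse limits. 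A countable Borel basis for the strong topology on each $\Isom V_x$ is obtained by intersecting $\Isom V$ with the basic Borel sets $U(r,n,k,m)$ from the proof of Proposition~\ref{prop.bounded-operators-banach}. Proposition~\ref{prop.subfield} then gives a conull Borel set over which $(\Isom V_x)$ is a Borel field of Polish spaces, and Part 1 upgrades this to a Borel field of Polish groups.

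\textbf{Part 3, the reduction.} By Parts 1 and 2, using $d'$ as compatible Borel metric, every clause of Definition~\ref{def.Borel-field-Polish-group} is satisfied except possibly the existence of a dense sequence of Borel sections, so the content of Part 3 is to rule out such sequences. The key observation is a definability one. The scalar isometries form a Borel subset of $\Isom V$ (they are the image of the Borel injection $\T \times \{x \mid V_x \neq \{0\}\} \to \Isom V$, $(\lambda,x) \mapsto \lambda\,\id_{V_x}$, together with the degenerate fibres), so the nontriviality locus $A = \{x \in X \mid \Isom V_x \neq \T\cdot\id_{V_x}\}$ is the $\pi$-image of a Borel set and is therefore analytic. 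If, however, $\Isom V$ admitted a dense sequence of Borel sections $\psi_n : X \to \Isom V$, then for each $x$ with $\Isom V_x \neq \T\cdot\id_{V_x}$ some $\psi_n(x)$ would necessarily be non-scalar, a dense set being unable to lie inside the proper closed set $\T\cdot\id_{V_x}$; hence $A = \bigcup_n \{x \mid \psi_n(x)\ \text{is not a scalar}\}$ would be Borel. It therefore suffices to exhibit a single Borel field $V$ of separable Banach spaces whose nontriviality locus $A$ is \emph{not} Borel.

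\textbf{Part 3, the construction and the obstacle.} To produce such a field I would work over the standard Borel space $\cT$ of trees and build $(V_T)_{T \in \cT}$ so that $V_T$ carries a non-scalar isometry precisely when $T$ is ill-founded -- a properly analytic condition -- using as a template the fields $W_T$, $W'_T$ of Proposition~\ref{prop.counterex-subfield}, for which $W_T \cap W'_T \neq \{0\}$ exactly for ill-founded $T$. The delicate point, and where the first attempt collapses, is that $(V_T)_{T \in \cT}$ must still be a genuine Borel field, i.e.\ admit a dense sequence of Borel sections: one cannot simply manufacture $V_T$ directly out of $W_T \cap W'_T$, since $(W_T \cap W'_T)_T$ is itself a family \emph{without} dense Borel sections, and one cannot let the extra isometry be pinned down by a single infinite branch, branch selection being exactly the non-Borel phenomenon of Example~\ref{ex.counter1}. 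So the witnessing isometry has to be hidden inside an otherwise well-sectioned field -- for instance realized as a Borel family of norms on one fixed separable space that are rigid (only scalar isometries) over well-founded trees but acquire, over an ill-founded tree, one additional isometry assembled from an infinite branch in a manner immune to Borel branch selection. Carrying this out, i.e.\ keeping the field Borel while proving that rigidity is controlled exactly by well-foundedness, is the main obstacle and the part of the argument that genuinely goes beyond the general theory developed above.
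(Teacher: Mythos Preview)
Your Parts 1 and 2 are correct and close to the paper's argument. One stylistic point: the paper handles both the Borelness of $\Isom V$ and of the inverse map in a single stroke by introducing $L = \{(T,S) \in \ball B(V) \times_\pi \ball B(V) \mid T\circ S = \id = S\circ T\}$; the first projection $L \to \ball B(V)$ is injective Borel with image $\Isom V$, and the inverse on $\Isom V$ corresponds to the flip $(T,S)\mapsto(S,T)$ on $L$. Your direct characterization of surjective isometries plus a graph argument via Lusin--Souslin is equivalent, just slightly longer.

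Part 3 has a genuine gap: you set up a plausible reduction (if a dense sequence of Borel sections existed, then the nontriviality locus $A=\{x\mid \Isom V_x \neq \T\cdot\id_{V_x}\}$ would be Borel), but you do not produce a Borel field for which $A$ is non-Borel, and you explicitly flag this as the remaining obstacle. The paper bypasses the tree machinery entirely and gives a short, concrete construction via the Banach--Stone theorem. By \cite{CG15} there is a Borel field $(K_y)_{y\in Y}$ of \emph{connected} compact Polish spaces for which the homeomorphism relation $R=\{(y,z)\mid K_y\cong K_z\}$ is not Borel. Put $X=Y\times Y$ and $V_{(y,z)}=C(K_y\sqcup K_z)$, a Borel field of separable Banach spaces by Proposition~\ref{prop.Borel-field-CK}. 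By Banach--Stone, every isometric automorphism of $C(K_y\sqcup K_z)$ is a homeomorphism of $K_y\sqcup K_z$ followed by multiplication by a continuous $\T$-valued function; since the two pieces are connected, the homeomorphism either preserves them or swaps them, and the swap is possible iff $K_y\cong K_z$. A dense sequence of Borel sections $\al_n:X\to\Isom C(L)$ would then make the set of $x$ for which some isometry sends $C(K_y)\oplus\{0\}$ outside itself a Borel set, i.e.\ would make $R$ Borel---contradiction.

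Note, incidentally, that the paper's example does \emph{not} fit your specific reduction: in $C(K_y\sqcup K_z)$ the isometry group always contains many non-scalar elements (multiplication by any continuous $\T$-valued function), so $A=X$ there. The definability principle you isolated is the right one, but the Borel invariant that a dense sequence of sections would render Borel has to be chosen to match the field; here it is ``some isometry mixes the two summands'' rather than ``some isometry is non-scalar''.
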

\begin{proof}
1.\ Define the Borel set $L \subset \ball B(V) \times_\pi \ball B(V)$ by $L = \{(T,S) \in \ball B(V) \times_\pi \ball B(V) \mid T \circ S = \id = S \circ T\}$. Note that the map $L \to \ball B(V) : (T,S) \mapsto T$ is Borel, injective and has image equal to $\Isom V$. So, $\Isom V$ is a Borel subset of $B(V)$. The inverse operation on $\Isom V$ corresponds to the Borel map $(T,S) \mapsto (S,T)$ on $L$ and is therefore Borel. The remaining part of statement 1 follows from Lemma \ref{lem.construction-uniqueness}.

2.\ Let $\mu$ be a $\sigma$-finite measure on $X$. Note that for every $x \in X$, $L_x$ is a strongly closed subset of $\ball B(V_x) \times \ball B(V_x)$. By Proposition \ref{prop.subfield}, we find a conull Borel set $X_0 \subset X$ and a dense sequence of Borel sections $X_0 \to L : x \mapsto (\al_n(x),\be_n(x))$. Then $\al_n : X_0 \to \Isom V$ is a dense sequence of Borel sections for $(\Isom V_x)_{x \in X_0}$. So, statement 2 is proven.

3.\ We make use of the main result of \cite{CG15} saying that the equivalence relation of being homeomorphic on the space of connected compact metric spaces is not Borel. By \cite[Theorem 1.1]{CG15}, we can take a standard Borel space $Y$ and a Borel field $K = (K_y)_{y \in Y}$ of Polish spaces such that every $K_y$ is compact and connected and such that the set
\begin{equation}\label{eq.equiv-rel-homeo}
R = \{(y,z) \in Y \times Y \mid \;\text{$K_y$ is homeomorphic to $K_z$}\;\}
\end{equation}
is not Borel. Define $X = Y \times Y$ and for every $x =(y,z) \in X$, define $L_x$ as the disjoint union of $K_y$ and $K_z$, so that $C(L_x) = C(K_y) \oplus_\text{max} C(K_z)$. Consider the Borel field $C(L_x)_{x \in X}$ of separable Banach spaces given by Proposition \ref{prop.Borel-field-CK}. We claim that $\Isom C(L)$ is not a Borel field of Polish groups.

Assume that $\Isom C(L)$ is a Borel field of Polish groups. We will deduce that the set $R$ in \eqref{eq.equiv-rel-homeo} is Borel, which is not the case. Choose a dense sequence of Borel sections $\al_n : X \to \Isom C(L)$. Every element $\al \in \Isom C(L_x)$ is in particular a bounded operator from $C(L_x)$ to itself. Since $C(L_x) = C(K_y) \oplus_\text{max} C(K_z)$, every such bounded operator $\al$ has four matrix coefficients, with one of these coefficients being a bounded operator from $C(K_y)$ to $C(K_z)$. For $\al_n(x) \in \Isom(C(L_x))$, we denote this component as $A_n(x) \in B(C(K_y),C(K_z))$. Then, $A_n : X \to B(C(K),C(K))$ is a Borel section.

Fix a dense sequence of Borel sections $F_n : Y \to C(K)$ and a weak$^*$ dense sequence of Borel sections $\om_n : Y \to \ball C(K)^*$. For $x=(y,z)$, write $F'_n(x) = F_n(y)$ and $\om'_n(x) = \om_n(z)$. Then,
\[
S = \{x \in X \mid \forall n,i,j \in \N : \langle A_n(x) (F'_i(x)), \om'_j(x) \rangle = 0\}
\]
is a Borel set. By construction and by the density of $\al_n : X \to \Isom C(L)$,
\begin{align*}
S &= \{x \in X \mid \forall n \in \N : \al_n(x)(C(K_y) \oplus \{0\}) \subset C(K_y) \oplus \{0\}\} \\
&= \{x \in X \mid \forall \al \in \Isom C(L_x) : \al(C(K_y) \oplus \{0\}) \subset C(K_y) \oplus \{0\}\} \; .
\end{align*}
By the Banach-Stone theorem, every $\al \in \Isom C(L_x)$ is induced by a homeomorphism of $L_x$ and a multiplication with a continuous function $L_x \to \T$. Since $L_x$ is the disjoint union of the connected subsets $K_y$ and $K_z$, every homeomorphism of $L_x$ will either preserve $K_y$, $K_z$, or interchange $K_y$, $K_z$, where the latter can only happen if $K_y$ and $K_z$ are homeomorphic. It follows from this discussion that
\[
S = \{x \in X \mid \;\text{$K_y$ is not homeomorphic to $K_z$}\;\} = X \setminus R \; .
\]
So also $R$ is a Borel set, which is absurd.
\end{proof}

By the proof of the Birkhoff-Kakutani theorem (see e.g.\ \cite[Theorem 1.5.2]{Tao14}), every Polish group $G$ admits a \emph{left invariant} metric $d$ that is compatible with the topology. Note however that it is not always possible to choose a left invariant complete metric that is compatible with the topology. The usual proof of the Birkhoff-Kakutani theorem is sufficiently constructive to get the following.

\begin{proposition}\label{prop.left-invariant-metric}
Let $X$ be a standard Borel space and $G = (G_x)_{x \in X}$ a Borel field of Polish groups. There exists a Borel map $D : G \times_\pi G \to [0,+\infty)$ such that for every $x \in X$, the restriction of $D$ to $G_x \times G_x$ is a left invariant metric on $G_x$ that induces the topology of $G_x$.
\end{proposition}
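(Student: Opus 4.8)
The plan is to run the classical Birkhoff–Kakutani construction of a left-invariant compatible metric fibrewise, making every choice Borel in $x$. Recall its three ingredients on a single Polish group: a decreasing sequence of open symmetric neighbourhoods $U_0 \supseteq U_1 \supseteq \cdots$ of the identity, forming a basis at the identity and satisfying the cube condition $U_{n+1}U_{n+1}U_{n+1} \subseteq U_n$; the length function $\rho(g) = 2^{-n}$ with $n$ the largest index for which $g \in U_n$ (and $\rho(g) = 0$ when $g$ lies in every $U_n$, i.e.\ $g = e$); and the chain metric $D(g,h) = \inf\{\sum_{i=0}^{k-1}\rho(g_i^{-1}g_{i+1}) \mid g = g_0,\dots,g_k = h\}$, which is left-invariant and satisfies $\tfrac12\rho(g^{-1}h) \le D(g,h) \le \rho(g^{-1}h)$. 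I will build a Borel-in-$x$ version of each ingredient. Crucially, the statement only asks for a metric inducing the topology, not a complete one (the remark before the statement points out that completeness may be unattainable), so the one genuinely delicate point of the usual proof disappears.

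\emph{Step 1 (the neighbourhood bases).} After replacing a compatible Borel metric on $G$ by a bounded one $d$, set $\sigma_0(g) = \max\bigl(d(g, e_{\pi(g)}),\, d(g^{-1}, e_{\pi(g)})\bigr)$, a Borel map $G \to [0,1]$ whose fibrewise restrictions $\sigma_0^x$ are continuous, vanish only at $e_x$, and have sublevel sets forming a symmetric neighbourhood basis at $e_x$. I will produce Borel functions $r_n : X \to (0,\infty)$ with $r_0 \equiv 2$ and $r_{n+1} \le \tfrac12 r_n$ so that $U_n^x := \{g \in G_x \mid \sigma_0^x(g) < r_n(x)\}$ satisfies $U_{n+1}^x U_{n+1}^x U_{n+1}^x \subseteq U_n^x$. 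For this put $\Phi(x,t) = \sup\{\sigma_0(ghk) \mid g,h,k \in G_x,\ \sigma_0(g),\sigma_0(h),\sigma_0(k) < t\}$; using that multiplication and inversion are Borel and fibrewise continuous and that $G$ admits a dense sequence of Borel sections, $x \mapsto \Phi(x, 1/m)$ is a countable supremum of Borel functions, hence Borel, for each $m \in \N$; and $\Phi(x,t) \to 0$ as $t \downarrow 0$ by continuity of multiplication and inversion at $e_x$. Given $r_n$, let $r_{n+1}(x) = 1/m$ for the least integer $m \ge 2/r_n(x)$ with $\Phi(x, 1/m) < r_n(x)$; this $m$ is a Borel function of $x$, so $r_{n+1}$ is Borel, and $g,h,k \in U_{n+1}^x$ then forces $\sigma_0^x(ghk) \le \Phi(x, r_{n+1}(x)) < r_n(x)$. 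Finally set $\mathcal U_n = \{g \in G \mid \sigma_0(g) < r_n(\pi(g))\}$, a Borel subset of $G$.

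\emph{Step 2 (the length function and the metric).} Define $\rho : G \to [0,1]$ by $\rho(g) = 2^{-n}$ for the largest $n$ with $g \in \mathcal U_n$, and $\rho(g) = 0$ if $g$ lies in all $\mathcal U_n$ — which forces $g = e_{\pi(g)}$ since $r_n(x) \le 2^{1-n}\to 0$. As $\{\rho \le 2^{-n}\} = \mathcal U_n$ is Borel, $\rho$ is Borel. Fibrewise, $\rho^x$ is symmetric (the $U_n^x$ are), vanishes exactly at $e_x$, is upper semicontinuous with sublevel sets a neighbourhood basis at $e_x$, and the cube condition yields $\rho^x(ghk) \le 2\max\bigl(\rho^x(g),\rho^x(h),\rho^x(k)\bigr)$ for all $g,h,k \in G_x$. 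Now define $D : G \times_\pi G \to [0,1]$ by the chain formula with $\rho^x$. For each $x$: concatenating chains gives the triangle inequality, reversing chains together with symmetry of $\rho^x$ gives symmetry, the one-term chain gives $D(g,h) \le \rho^x(g^{-1}h)$, and the standard induction on chain length (using $\rho^x(ghk) \le 2\max(\cdots)$ and $\rho^x \le 1$) gives $D(g,h) \ge \tfrac12\rho^x(g^{-1}h)$; hence $D|_{G_x \times G_x}$ is a genuine metric. It is left-invariant because left-translating a chain from $g$ to $h$ produces one from $ag$ to $ah$ of the same cost, and the two-sided bound against $\rho^x$ sandwiches the $D$-balls between left translates of the $U_n^x$, so $D$ induces the topology of $G_x$. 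Finally, $D$ is Borel: fixing a dense sequence of Borel sections $\varphi_j : X \to G$, one checks that $D(g,h)$ equals the infimum of chain costs over chains whose interior vertices all lie in $\{\varphi_j(\pi(g)) \mid j \in \N\}$ — a countable infimum of Borel functions of $(g,h)$ — because for fixed length and endpoints the chain-cost is an upper semicontinuous function of the interior vertices, so a near-optimal chain can be perturbed onto the dense sequence without increasing its cost.

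The main obstacle is Step 1: in the classical proof one simply invokes continuity of multiplication to choose each $U_{n+1}$, whereas here the choice must be realised as an explicit Borel function of $x$, which is what the auxiliary Borel functions $x \mapsto \Phi(x, 1/m)$ and the ``least $m$'' selection accomplish. A secondary point needing care is the reduction to countably many chains in Step 2, where the upper semicontinuity of the fibrewise length functions $\rho^x$ is precisely what lets one move the interior vertices of a near-optimal chain onto the dense sequence of Borel sections, which is what makes $D$ Borel.
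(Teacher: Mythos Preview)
Your proof is correct and follows the same high-level strategy as the paper: make the Birkhoff--Kakutani construction Borel in the base variable. The implementations differ, however. The paper follows Tao's version of the argument: it chooses radii $\rho_n(x)$ via a supremum so that $A_x(\rho_n(x))^2 \subset A_x(\rho_{n-1}(x))$, then builds the dyadic sets $V_x(q)$, the auxiliary function $\psi$, and finally the group norm $N(g) = \sup_h |\psi(gh)-\psi(h)|$; the Borel measurability of $D$ comes from writing $N$ as a countable supremum over the dense sequence of sections. You instead run the more classical chain-metric variant: cube condition $U_{n+1}^3 \subset U_n$, length function $\rho$, and $D$ as an infimum over chains. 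Your radii are obtained by a least-integer selection using the Borel functions $x \mapsto \Phi(x,1/m)$ rather than a supremum, and your Borel measurability of $D$ comes from the upper semicontinuity of $\rho^x$, which lets you reduce to a countable infimum over chains with interior vertices in the dense sequence. Both routes are standard; yours has the minor advantage of avoiding the dyadic encoding and the intermediate $\psi$, while the paper's route can simply cite \cite{Tao14} for the purely group-theoretic content once the Borel choices are in place.
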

\begin{proof}
We closely follow the proof of \cite[Lemma 1.5.4]{Tao14}. Fix a Borel map $d : G \times_\pi G \to [0,+\infty)$ such that for every $x \in X$, the restriction of $d$ to $G_x \times G_x$ is a separable complete metric on $G_x$ that induces the topology of $G_x$. Fix a dense sequence of Borel sections $\vphi_k : X \to G$. For every $x \in X$ and $r > 0$, define
\begin{align*}
A_x(r) &= \{g \in G_x \mid d(g,e_x) < r \;\;\text{and}\;\; d(g^{-1},e_x) < r\} \quad\text{and}\\
B_x(r) &= \{g \in G_x \mid d(g,e_x) \leq r \;\;\text{and}\;\; d(g^{-1},e_x) \leq r\} \; .
\end{align*}
We inductively define Borel functions $\rho_n : X \to (0,1]$ such that $\rho_n(x) \to 0$ for every $x \in X$ and $A_x(\rho_n(x))^2 \subset A_x(\rho_{n-1}(x))$ for all $x \in X$ and $n \geq 2$. Put $\rho_1(x) = 1$ for all $x \in X$. Then define for all $n \geq 2$,
$$\rho_n(x) = \sup\bigl\{r \in (0,1/n] \bigm| A_x(r)^2 \subset B_x(\rho_{n-1}(x)/2) \bigr\} \; .$$
Because multiplication and inverse are continuous in $G_x$, we have that $\rho_n(x) > 0$ for all $x \in X$. To see that $\rho_n$ is Borel, note that given $s \in (0,1/n)$, because $(\vphi_k(x))_k$ is dense in $G_x$, we have
\begin{align*}
\rho_n(x) > s &\Longleftrightarrow \exists r \in (s,1/n) \cap \Q : A_x(r)^2 \subset  B_x(\rho_{n-1}(x)/2)\\
& \Longleftrightarrow \exists r \in (s,1/n) \cap \Q , \forall k,l \in \N : \;\text{if}\; d(\vphi_k(x)^{\pm 1},e_x) < r \;\text{and}\; d(\vphi_l(x)^{\pm 1},e_x) < r \; ,\\
&\hspace{7.5cm}\text{then} \; d(\vphi_k(x) \vphi_l(x),e_x) \leq \rho_{n-1}(x)/2 \; .
\end{align*}

We can now literally repeat the argument of \cite[Lemma 1.5.4]{Tao14}. Denote by $\cD$ the set of dyadic rationals $0 < q < 1$. Every $q \in \cD$ can be uniquely written as $q = 2^{-n_1} + \cdots + 2^{-n_k}$ with integers $k \geq 1$ and $n_1 > n_2 > \cdots > n_k \geq 1$. We define for every $x \in X$, the subset $V_x(q) \subset G_x$ as the closure of $A_x(\rho_{n_1}(x)) \cdots A_x(\rho_{n_k}(x))$. Since $V_x(q)$ also is the closure of
$$\bigl\{ \vphi_{i_1}(x) \cdots \vphi_{i_k}(x) \bigm| \forall j \in \{1,\ldots,k\} : d(\vphi_{i_j}(x)^{\pm 1}, e_x) < \rho_{n_j}(x) \bigr\} \; ,$$
it follows from Proposition \ref{prop.selection-Effros} that $V(q) = \{g \in G \mid g \in V_{\pi(g)}(q)\}$ is a Borel subset of $G$ for every $q \in \cD$. As in \cite[Lemma 1.5.4]{Tao14}, we define
$$\psi : G \to [0,1] : \psi(g) = \sup \bigl\{ 1- q \bigm| q \in \cD \; , \; g \in V(q) \bigr\} \; ,$$
with the convention that $\psi(g) = 0$ if $g$ does not belong to any $V(q)$. Since all the sets $V(q)$ are Borel, $\psi$ is a Borel map. As in \cite[Definition 1.5.1]{Tao14}, we can define the group norm
\begin{align*}
N : G \to [0,1] : N(g) &= \sup \bigl\{ |\psi(gh) - \psi(h)| \bigm| h \in G_{\pi(g)} \bigr\} \\
&= \sup \bigl\{ |\psi(g\vphi_k(\pi(g))) - \psi(\vphi_k(\pi(g)))| \bigm| k \in \N \bigr\} \; ,
\end{align*}
which is Borel. By \cite[Exercises 1.5.1 and 1.5.2]{Tao14}, the map $D(g,h) = N(h^{-1}g)$ provides the required left invariant metric.
\end{proof}

\section{Borel fields of von Neumann algebras: abstract vs.\ concrete}\label{sec.fields-vNalg}

In the same way as von Neumann algebras can be defined abstractly as well as represented on a given Hilbert space, the same holds for Borel fields of von Neumann algebras with separable predual. We give both definitions and prove that they are equivalent. We call \emph{separable von Neumann algebra} any von Neumann algebra with a separable predual.

\begin{definition}\label{def.Borel-field-vNalg-abstract}
An abstract Borel field of separable von Neumann algebras consists of a standard Borel space $X$, a family $M = (M^x)_{x \in X}$ of von Neumann algebras with separable predual and a standard Borel structure on $M_* = (M^x_*)_{x \in X}$ such that the following holds.
\begin{enumlist}
\item $M_*$ is a Borel field of separable Banach spaces.
\item With the standard Borel structure on $M = (M_*)^*$ given by Proposition \ref{prop.dual-field-Banach}, we have that the maps
\[
1 : X \to M : x \mapsto 1_x \quad , \quad M \to M : a \mapsto a^* \quad , \quad M \times_\pi M \to M : (a,b) \mapsto ab
\]
are Borel.
\end{enumlist}
\end{definition}

In the concrete definition of a Borel field of separable von Neumann algebras, the von Neumann algebras $M^x$ are represented on Hilbert spaces $H_x$. So we first need the following definition.

\begin{definition}\label{def.Borel-field-Hilbert-spaces}
A Borel field of separable Hilbert spaces consists of a standard Borel space $X$, a family $H = (H_x)_{x \in X}$ of separable Hilbert spaces and a standard Borel structure on $H$ such that $H$ becomes a Borel field of separable Banach spaces.
\end{definition}

Note that by the polarization equality, automatically the scalar product map $H \times_\pi H \to \C : (\xi,\eta) \mapsto \langle \xi,\eta \rangle$ is Borel.

Given a Borel field $H = (H_x)_{x \in X}$ of separable Hilbert spaces, we equip $B(H) = B(H_x)_{x \in X}$ with the standard Borel structure given by Proposition \ref{prop.bounded-operators-banach}.

\begin{definition}\label{def.Borel-field-vNalg-concrete}
A concrete Borel field of separable von Neumann algebras consists of a standard Borel space $X$, a Borel field $H = (H_x)_{x \in X}$ of separable Hilbert spaces and a family of von Neumann algebras $M = (M^x)_{x \in X}$ acting on $H_x$ such that $M \subset B(H)$ is a Borel set.
\end{definition}

In \cite[Definition IV.8.17]{Tak79}, a slightly different definition of a concrete measured field of separable von Neumann algebras is given. As with all our definitions of Borel fields, also Definition \ref{def.Borel-field-vNalg-concrete} has the obvious variant defining concrete measured fields of separable von Neumann algebras. By Proposition \ref{prop.construction}, such a variant is equivalent with \cite[Definition IV.8.17]{Tak79}.

Below we prove the following result.

\begin{proposition}\label{prop.equivalence-defs-Borel-field-vNalg}
Let $X$ be a standard Borel space.
\begin{enumlist}
\item If $M = (M^x)_{x \in X}$ is an abstract Borel field of separable von Neumann algebras, there exists a Borel field $H = (H_x)_{x \in X}$ of separable Hilbert spaces and a family of faithful, normal, unital $*$-homomorphisms $\theta_x : M^x \to B(H_x)$ such that the corresponding map $\theta : M \to B(H)$ is Borel and $(\theta_x(M^x) \subset B(H_x))_{x \in X}$ is a concrete Borel field of separable von Neumann algebras.
\item If $(M^x \subset B(H_x))_{x \in X}$ is a concrete Borel field of separable von Neumann algebras, there is a unique standard Borel structure on $M_* = (M^x_*)_{x \in X}$ such that the map $M_* \times_\pi M \to \C$ is Borel. Then $M_*$ is a Borel field of separable Banach spaces and $M$ is an abstract Borel field of separable von Neumann algebras.
\end{enumlist}
\end{proposition}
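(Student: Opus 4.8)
The plan is to prove the two implications separately, reducing each to the duality results of Section~\ref{sec.fields-banach-spaces} and the completion results of Section~\ref{sec.completion}. The common auxiliary step, which I would record first, is: for a Borel field $H=(H_x)_{x\in X}$ of separable Hilbert spaces (fixing a Borel choice of orthonormal bases $e_k(x)$ via Gram--Schmidt applied to a dense sequence of Borel sections), the field $L^1(H)=(L^1(H_x))_{x\in X}$ of trace-class operators with the trace norm is a Borel field of separable Banach spaces, the trace pairing $L^1(H)\times_\pi B(H)\to\C:(T,a)\mapsto\mathrm{tr}(Ta)$ is Borel, and it identifies the standard Borel structure on $(L^1(H))^*$ from Proposition~\ref{prop.dual-field-Banach} with the one on $B(H)=B(H,H)$ from Proposition~\ref{prop.bounded-operators-banach}. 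To obtain this, note that the field $\cF(H)$ of finite rank operators is a Borel subset of $B(H)$ on which the trace norm is Borel ($\|T\|_1=\sup_n|\mathrm{tr}(T a_n(\pi T))|$ over a dense sequence $a_n:X\to\ball B(H)$, with $\mathrm{tr}(Ta)=\sum_k\langle Tae_k(\pi T),e_k(\pi T)\rangle$), and that the $\Q$-linear combinations of the rank-one operators $e_j(x)\ot\overline{e_k(x)}$ form a trace-norm dense sequence of Borel sections; thus $\cF(H)$ is a Borel field of separable seminormed spaces, its separation-completion is $L^1(H)$ (Corollary~\ref{cor.completion-normed}), and Proposition~\ref{prop.completion}(3) extends the trace pairing to a Borel pairing, the identification of structures being the uniqueness in Proposition~\ref{prop.dual-field-Banach}.

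For part~(2), let $(M^x\subset B(H_x))_{x\in X}$ be a concrete Borel field, so $M\subset B(H)$ is Borel. Each $M^x$ is $\sigma$-weakly closed, i.e.\ weak$^*$-closed in $B(H_x)=(L^1(H_x))^*$, with annihilator $(M^x)^\perp\subset L^1(H_x)$ and canonical identification $L^1(H_x)/(M^x)^\perp\cong M^x_*$ compatible with the dual pairings. I would then apply Proposition~\ref{prop.duality} with $V=L^1(H)$ and $W=M$: since $W\subset V^*$ is Borel, it gives the unique standard Borel structure on $M_*=W_*=(L^1(H_x)/(M^x)^\perp)_{x\in X}$ making $\pi$ and $M_*\times_\pi M\to\C$ Borel, shows $M_*$ is a Borel field of separable Banach spaces, and identifies the Borel structure on $(M_*)^*$ with the one induced from $B(H)$ on $M$. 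It then remains to check that the unit section $x\mapsto\id_{H_x}$, the adjoint $a\mapsto a^*$ and the multiplication $(a,b)\mapsto ab$ on $M$ are Borel; but these are the restrictions to the Borel subalgebra $M$ of the corresponding maps on $B(H)$, which are Borel by Proposition~\ref{prop.bounded-operators-banach} (for the unit and the adjoint via part~(5), using $T^*(\xi_n(\pi T))=\sum_k\overline{\langle T\xi_n(\pi T),e_k(\pi T)\rangle}\,e_k(\pi T)$). Hence $M$ is an abstract Borel field.

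For part~(1), the crucial and, I expect, hardest step is a Borel choice of faithful normal states $\om_x$ on $M^x$. The positive cone $M_*^+$ is Borel in $M_*$ (since $\om\geq0$ iff $\|\om\|=\langle1_{\pi(\om)},\om\rangle$). I would use the polar decomposition map $M_*\to M_*^+:\varphi\mapsto|\varphi|$, which is Borel --- it is in fact norm-continuous; failing that, its graph is Borel via the Cauchy--Schwarz characterisation $\||\varphi|\|=\|\varphi\|$, $|\varphi(a)|^2\leq\|\varphi\|\,|\varphi|(a^*a)$. Applied to a dense sequence of Borel sections $\varphi_n:X\to M_*$ it yields positive normal functionals $|\varphi_n(x)|$ for which no nonzero projection $p\in M^x$ satisfies $|\varphi_n(x)|(p)=0$ for all $n$: by Cauchy--Schwarz such a $p$ would have $\varphi_n(x)(p)=0$ for all $n$, contradicting density of $(\varphi_n(x))_n$ in $M^x_*$ (which contains a state sending $p$ to $1$). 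Thus the normalised sum $\om_x$ of $\sum_n 2^{-n}(1+\||\varphi_n(x)|\|)^{-1}|\varphi_n(x)|$ is a faithful normal state depending Borel-measurably on $x$.

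Finally, I would realise $H=(H_x)_{x\in X}$ with $H_x=L^2(M^x,\om_x)$ as the separation-completion (Corollary~\ref{cor.completion-normed}), with completion map $\gamma:M\to H$, of the Borel field of separable seminormed spaces $(M,\|\cdot\|_{2,\om})$, where $\|a\|_{2,\om}=\om_{\pi(a)}(a^*a)^{1/2}$ and $M=(M_*)^*$ carries its structure from Proposition~\ref{prop.dual-field-Banach}: the seminorm is Borel, and for a weak$^*$-dense sequence of Borel sections $b_n:X\to\ball M$ the finite $\Q$-linear combinations $c$ of the $b_n$ are Borel sections whose images span a norm-dense subspace of $H_x$ (their $\C$-span, being the image of the weak$^*$-dense subspace $\mathrm{lin}_\C\{b_n(x)\}\subset M^x$ under $\gamma$, is weakly dense in $H_x$, and a weakly dense subspace is norm-dense). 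By polarisation $H$ is a Borel field of separable Hilbert spaces; the GNS representation $\theta_x:M^x\to B(H_x)$ is faithful, normal and unital, and $\theta:M\to B(H)$ is Borel by Proposition~\ref{prop.bounded-operators-banach}(5), since $a\mapsto\theta_{\pi(a)}(a)(\gamma(c(\pi a)))=\gamma(a\,c(\pi a))$ is Borel. To see $(\theta_x(M^x)\subset B(H_x))_{x\in X}$ is a concrete Borel field, I take commutants twice: $N':=(\theta_x(M^x)')_{x\in X}$ is Borel in $B(H)$ because normality of $\theta_x$ gives $\theta_x(M^x)'=\{T:T\theta_x(b_n(x))=\theta_x(b_n(x))T\ \forall n\}$; then $\ball N'$ is a $\sigma$-weakly closed subfield of $\ball B(H)$, which by the auxiliary step is a Borel field of compact Polish spaces, so by Proposition~\ref{prop.subfield-sigma-compact} $\ball N'$ is a Borel field with $\sigma$-weakly dense Borel sections $R_k:X\to\ball N'$; hence $N'':=(\theta_x(M^x)'')_{x\in X}=\{S:SR_k(\pi S)=R_k(\pi S)S\ \forall k\}$ is Borel in $B(H)$, and $\theta_x(M^x)''=\theta_x(M^x)$ by the bicommutant theorem, so $\theta(M)=N''$ is Borel.
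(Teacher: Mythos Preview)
Your treatment of part~(2) and the auxiliary step on $L^1(H)$ is essentially the paper's argument: the paper records the trace-class predual as Proposition~\ref{prop.predual-BH} (left as an exercise) and then applies Proposition~\ref{prop.duality} exactly as you do.

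For part~(1) there is a genuine gap in your construction of the faithful states $\om_x$. Your fallback characterisation of $|\varphi|$ via Cauchy--Schwarz is wrong: the conditions $\psi\in M_*^+$, $\|\psi\|=\|\varphi\|$, $|\varphi(a)|^2\le\|\varphi\|\,\psi(a^*a)$ do \emph{not} determine $\psi$ uniquely. For $M=M_2(\C)$ and $\varphi(a)=a_{12}$ one has $|\varphi|(a)=a_{11}$, but $\psi(a)=a_{22}$ also satisfies all three conditions, since $(a^*a)_{22}=|a_{12}|^2+|a_{22}|^2\ge|a_{12}|^2$. So the graph you describe is not the graph of $\varphi\mapsto|\varphi|$. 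As for your primary claim, even granting that $\varphi\mapsto|\varphi|$ is norm-continuous on each fibre $M^x_*$, this alone does not make the map Borel on the field: by Lemma~\ref{lem.uniqueness}(2) you would still need to know that $x\mapsto|\varphi_k(x)|$ is a Borel section, which is precisely the point at issue. The paper avoids this entirely by proving directly (Lemma~\ref{lem.positive-functionals}(3)) that $M_*^+$ is a Borel subfield admitting a dense sequence of Borel sections $\om_n:X\to M_*^+$, using the explicit distance formula $d(\om,(M^x_*)^+)=\sup\{-\om(a):a\in\ball(M^x)^+\}$; then $\om_x=\sum_n 2^{-n}\om_n(x)$ is faithful.

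Your double-commutant argument at the end of part~(1) is correct but unnecessary: once you know $\theta:M\to B(H)$ is an injective Borel map between standard Borel spaces, its image $\theta(M)$ is automatically Borel, and that is all the paper uses.
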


To prove the first statement in Proposition \ref{prop.equivalence-defs-Borel-field-vNalg}, we need to use the GNS construction. In Definition \ref{def.Borel-field-normed-spaces}, we defined the notion of a Borel field of separable seminormed spaces and then constructed in Corollary \ref{cor.completion-normed} its separation-completion as a Borel field of separable Banach spaces. Entirely similarly, we can define the concept of a Borel field of separable vector spaces with a positive semidefinite Hermitian form and obtain its separation-completion into a Borel field of separable Hilbert spaces.

\begin{proposition}\label{prop.GNS}
Let $M = (M^x)_{x \in X}$ be an abstract Borel field of separable von Neumann algebras. Let $\om : X \to M_*$ be a Borel section such that for every $x \in X$, $\om_x \in M_*^+$. For every $x \in X$, write $H_x = L^2(M^x,\om_x)$. Denote by $\gamma_x : M^x \to H_x$ the canonical map and let $\theta_x : M^x \to B(H_x)$ be the GNS representation.

There is a unique standard Borel structure on $H = (H_x)_{x \in X}$ such that the maps $\pi : H \to X$ and $\gamma : M \to H$ are Borel and such that $H$ is a Borel field of separable Hilbert spaces. Then the map $\theta : M \to B(H)$ given by $\theta = (\theta_x)_{x \in X}$ is Borel.
\end{proposition}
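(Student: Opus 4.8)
The plan is to view $M$, equipped with the positive semidefinite Hermitian forms $\langle a,b\rangle_{\om_x} := \om_x(b^*a)$ on $M^x$, as a Borel field of separable vector spaces with a positive semidefinite Hermitian form, and to take $H = (H_x)_{x \in X}$ as its separation-completion via the Hilbert space analogue of Corollary \ref{cor.completion-normed}. That analogue directly provides the unique standard Borel structure on $H$ for which $\pi : H \to X$ and $\gamma : M \to H$ are Borel and $H$ is a Borel field of separable Hilbert spaces (the scalar product being Borel by polarization), so it yields the first assertion as soon as two facts are checked: that the Hermitian form $M \times_\pi M \to \C : (a,b) \mapsto \om_{\pi(a)}(b^*a)$ is Borel, and that $(M,\langle\cdot,\cdot\rangle_\om)$ admits a dense sequence of Borel sections, i.e.\ Borel sections $\psi_m : X \to M$ with $\{\gamma_x(\psi_m(x)) \mid m \in \N\}$ norm-dense in $H_x$ for every $x \in X$.

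The first fact is routine: $(a,b) \mapsto b^*a$ is a Borel map $M \times_\pi M \to M$ by point~2 of Definition \ref{def.Borel-field-vNalg-abstract}, the map $c \mapsto (c,\om_{\pi(c)})$ is Borel from $M$ to $M \times_\pi M_*$ because $\om : X \to M_*$ is a Borel section, and the pairing $M \times_\pi M_* \to \C$ is Borel by Proposition \ref{prop.dual-field-Banach}; composing these, $(a,b) \mapsto \om_{\pi(a)}(b^*a)$ is Borel, and hence so is $a \mapsto \|a\|_{\om_{\pi(a)}} = \langle a,a\rangle_{\om_{\pi(a)}}^{1/2}$. For the second fact, since by point~4 of Proposition \ref{prop.dual-field-Banach} the field $\ball M = (\ball M^x)_{x \in X}$ of weak$^*$-compact unit balls is a Borel field of Polish spaces, we may fix a sequence of Borel sections $\vphi_n : X \to \ball M$ that is weak$^*$-dense in $\ball M^x$ for every $x$, and let $\psi_m : X \to M$ enumerate all linear combinations $\sum_k \lambda_k \vphi_{n_k}$ with coefficients $\lambda_k \in \Q + i\Q$; these are Borel sections because addition and scalar multiplication on $M$ are Borel.

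The main step, and the only nonroutine point, is to prove that $S_x := \{\gamma_x(\psi_m(x)) \mid m \in \N\}$, which is the set of $(\Q + i\Q)$-linear combinations of the vectors $\gamma_x(\vphi_n(x))$, is norm-dense in $H_x$. First, the restriction of $\gamma_x$ to $\ball M^x$ is continuous from the weak$^*$ topology to the weak topology on $H_x$: for fixed $b \in M^x$ the functional $a \mapsto \langle \gamma_x(a),\gamma_x(b)\rangle = \om_x(b^*a)$ lies in $M^x_*$, and since $\|\gamma_x(a)\| \leq \|\om_x\|^{1/2}$ for all $a \in \ball M^x$ while $\gamma_x(M^x)$ is dense in $H_x$, a standard $\eps/3$-approximation upgrades this to weak$^*$-continuity of $a \mapsto \langle \gamma_x(a),\xi\rangle$ for every $\xi \in H_x$. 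Hence $\{\gamma_x(\vphi_n(x)) \mid n\}$ is weakly dense in $\gamma_x(\ball M^x)$. Now the norm closure $L_x$ of $S_x$ is a closed $\C$-linear subspace of $H_x$; being convex it is weakly closed by Mazur's theorem, so $L_x$ equals the weak closure of $S_x$ and therefore contains $\gamma_x(\ball M^x)$. Since $L_x$ is a linear subspace and every element of $M^x$ is a scalar multiple of an element of $\ball M^x$, it contains $\gamma_x(M^x)$, which is norm-dense in $H_x$; as $L_x$ is closed, $L_x = H_x$. This verifies that $(M,\langle\cdot,\cdot\rangle_\om)$ is a Borel field of separable pre-Hilbert spaces, and the completion result gives the first assertion together with its uniqueness clause.

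Finally, for Borelness of $\theta$, equip $B(H) = (B(H_x))_{x \in X}$ with the standard Borel structure of Proposition \ref{prop.bounded-operators-banach} and note that $\eta_n := \gamma \circ \psi_n : X \to H$ is a total sequence of Borel sections. By point~5 of Proposition \ref{prop.bounded-operators-banach}, the map $\theta : M \to B(H)$ is Borel provided $\pi \circ \theta = \pi$ is Borel, which it is, and provided that for each $n$ the map $M \to H : a \mapsto \theta_{\pi(a)}(a)\,\eta_n(\pi(a))$ is Borel. But by definition of the GNS representation,
\[
\theta_{\pi(a)}(a)\,\eta_n(\pi(a)) = \theta_{\pi(a)}(a)\,\gamma_{\pi(a)}\bigl(\psi_n(\pi(a))\bigr) = \gamma_{\pi(a)}\bigl(a\,\psi_n(\pi(a))\bigr),
\]
and $a \mapsto a\,\psi_n(\pi(a))$ is a Borel map $M \to M$ since multiplication $M \times_\pi M \to M$ is Borel and $\psi_n$ is a Borel section, while $\gamma : M \to H$ is Borel; hence the composite is Borel, as needed. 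The only place where genuine work is required is the density claim in the third paragraph.
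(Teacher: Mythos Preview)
Your proof is correct and follows the same overall architecture as the paper: realize $(M,\langle\cdot,\cdot\rangle_\om)$ as a Borel field of pre-Hilbert spaces, take its separation-completion via the Hilbert space analogue of Corollary~\ref{cor.completion-normed}, and then verify that $\theta$ is Borel using the characterization of the Borel structure on $B(H)$.

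The one genuine difference is in the density step. The paper takes finite linear combinations of \emph{products} of the $\vphi_n$ and $\vphi_n(\cdot)^*$, so that for each $x$ one obtains a weak$^*$-dense $*$-subalgebra of $M^x$; density in the GNS norm then follows (implicitly) from Kaplansky's density theorem together with normality of $\om_x$. You instead take only $(\Q+i\Q)$-linear combinations of the $\vphi_n$ and supply a direct argument: weak$^*$-to-weak continuity of $\gamma_x$ on $\ball M^x$ plus Mazur's theorem. Your route is self-contained and avoids the appeal to Kaplansky; the paper's route is shorter to state but leaves the density justification to the reader. For the Borelness of $\theta$, the paper first shows that the action map $M \times_\pi H \to H$ is Borel via point~3 of Proposition~\ref{prop.completion} and then invokes Proposition~\ref{prop.bounded-operators-banach}, whereas you go straight to point~5 of Proposition~\ref{prop.bounded-operators-banach} with the total sequence $\gamma\circ\psi_n$; both are equally valid.
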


Note that with the Borel section $\xi : X \to H : x \mapsto \gamma_x(1_x)$, we have $\langle \theta_x(a) \xi(x),\xi(x)\rangle = \om_x(a)$ for all $x \in X$ and all $a \in M^x$.

\begin{proof}
For every $x \in X$, the map $\langle \cdot , \cdot \rangle_x : M^x \times M^x \to \C : (a,b) \mapsto \om_x(b^* a)$ is a positive semidefinite Hermitian form and $L^2(M^x,\om_x)$ is the separation-completion of $M^x$ with respect to the associated seminorm $a \mapsto \om_x(a^* a)^{1/2}$. We claim that in this way, $M$ is a Borel field of vector spaces with a positive semidefinite Hermitian form. Since all structure maps on $M$ are Borel, we only have to prove that there exists a dense sequence of Borel sections $X \to M$. By Proposition \ref{prop.dual-field-Banach}, there exists a weak$^*$-dense sequence of Borel sections $\vphi_n : X \to \ball M$. Taking linear combinations with coefficients in $\Q + i\Q$ of finite products of $\vphi_n$ and $\vphi_n(\cdot)^*$, we obtain a countable family of Borel sections $X \to M$ that are dense in the required norm $a \mapsto \om_x(a^* a)^{1/2}$.

By Corollary \ref{cor.completion-normed}, there is a unique standard Borel structure on $H = (H_x)_{x \in X}$ such that the maps $\pi : H \to X$ and $\gamma : M \to H$ are Borel and such that $H$ is a Borel field of separable Hilbert spaces.

We claim that the map $M \times_\pi H \to H : (a,\xi) \mapsto \theta(a) \xi$ is Borel. By Proposition \ref{prop.completion}, it suffices to prove that the map
\[
M \times_\pi M \to H : (a,b) \mapsto \theta_{\pi(a)} \gamma(b) = \gamma(ab)
\]
is Borel, which follows because the multiplication map $M \times_\pi M \to M$ and the map $\gamma : M \to H$ are Borel. So the claim is proven.

It follows from the claim that for every Borel section $\vphi : X \to H$, the map $M \to H : a \mapsto \theta(a)\vphi(\pi(a))$ is Borel. So by Proposition \ref{prop.bounded-operators-banach}, the map $\theta : M \to B(H)$ is Borel.
\end{proof}

\begin{lemma}\label{lem.positive-functionals}
Let $X$ be a standard Borel space and $M = (M^x)_{x \in X}$ an abstract Borel field of separable von Neumann algebras.
\begin{enumlist}
\item If we equip $\ball M^x$ with the weak topology, the strong topology or the strong$^*$ topology, $\ball M$ is a Borel field of Polish spaces.
\item $\ball M^+$ is a Borel subset of $\ball M$ and, with the weak or strong topology, becomes a Borel field of Polish spaces.
\item $M^+_* = ((M^x_*)^+)_{x \in X}$ is a Borel subset of $M_*$ and becomes in this way a Borel field of Polish spaces.
\end{enumlist}
\end{lemma}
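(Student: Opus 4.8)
The plan is to prove the three parts in the order 3, 1, 2, since the Borel field structure on $M_*^+$ supplies the sections needed for the other two. At the outset I would fix, applying Proposition \ref{prop.dual-field-Banach} to $M=(M_*)^*$, a weak$^*$-dense (equivalently $\sigma$-weakly dense) sequence of Borel sections $a_n\colon X\to\ball M$; recall that on the unit ball of a von Neumann algebra the weak operator, $\sigma$-weak and weak$^*$ topologies coincide, and that part~4 of Proposition \ref{prop.dual-field-Banach} already gives part~1 for the weak topology, with compatible Borel metric $d(a,b)=\sum_n 2^{-n}|\langle a-b,\vphi_n(\pi(a))\rangle|$ for a total sequence of Borel sections $\vphi_n\colon X\to M_*$.

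For part~3 I would use Proposition \ref{prop.selection-Effros}: since each $(M^x_*)^+$ is a nonempty norm-closed subset of $M^x_*$, it is enough to show that $\om\mapsto\operatorname{dist}(\om,(M^{\pi(\om)}_*)^+)$ is Borel on $M_*$, the distance being taken in the predual norm. The main step is the Hahn--Banach/minimax identity
\[
\operatorname{dist}\bigl(\om,(M^x_*)^+\bigr)=\sup\bigl\{\Re\langle\phi,\om\rangle\bigm|\phi\in\ball M^x,\ \tfrac12(\phi+\phi^*)\le0\bigr\}\qquad(\om\in M^x_*),
\]
obtained by writing $\|\om-\rho\|=\sup_{\phi\in\ball M^x}\Re\langle\phi,\om-\rho\rangle$, applying Sion's minimax to $(\phi,\rho)\mapsto\Re\langle\phi,\om-\rho\rangle$ on the (weak$^*$-compact) set $\ball M^x$ and the convex set $(M^x_*)^+$, and noting that the support function of the cone $(M^x_*)^+$ equals $0$ on $\{\phi:\tfrac12(\phi+\phi^*)\le0\}$ and $+\infty$ elsewhere. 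The set $N=\{\phi\in\ball M:\|1_{\pi(\phi)}+\tfrac12(\phi+\phi^*)\|\le1\}$ is Borel and weak$^*$-closed in each fibre, hence by Proposition \ref{prop.subfield-sigma-compact} is a Borel field of compact Polish spaces; choosing a weak$^*$-dense sequence of Borel sections $\phi_n\colon X\to N$, we get $\operatorname{dist}(\om,(M^{\pi(\om)}_*)^+)=\sup_n\Re\langle\phi_n(\pi(\om)),\om\rangle$, which is Borel. Proposition \ref{prop.selection-Effros} then shows that $M_*^+\subset M_*$ is Borel and is a Borel field of Polish spaces; in particular there is a norm-dense sequence of Borel sections $\rho_n\colon X\to M_*^+$.

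With the $\rho_n$ available, I would finish part~1 for the strong and strong$^*$ topologies. The section $\om\colon X\to M_*^+$ given by $\om_x=\sum_n2^{-n}\rho_n(x)/(1+\|\rho_n(x)\|)$ is Borel and fibrewise faithful, so Proposition \ref{prop.GNS} provides a Borel field $H=(H_x)_x$ of separable Hilbert spaces and a Borel family of faithful normal unital representations $\theta_x\colon M^x\to B(H_x)$ with $\theta\colon M\to B(H)$ Borel. Fixing a dense sequence of Borel sections $\xi_n\colon X\to H$, the maps $d_s(a,b)=\sum_n2^{-n}\|\theta(a)\xi_n-\theta(b)\xi_n\|$ and $d_{s*}(a,b)=d_s(a,b)+d_s(a^*,b^*)$ are Borel on $\ball M\times_\pi\ball M$ and restrict in each fibre to complete separable metrics inducing the strong, resp.\ strong$^*$, topology (completeness since $\theta_x(M^x)$ is closed in $B(H_x)$ for these topologies, which are Polish on $\ball B(H_x)$). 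For a dense sequence of Borel sections I would take all finite convex combinations with coefficients in $\Q\cap[0,1]$ of the $a_n$: the convex hull of the $\sigma$-weakly dense set $\{a_n(x)\}$ is a bounded convex set with weak-operator closure $\ball M^x$, hence its strong and strong$^*$ closures also equal $\ball M^x$.

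Finally, part~2: $\ball M^+$ is a Borel subset of $\ball M$, since $a\in\ball M^+$ iff $\|a-a^*\|=0$, $\|a\|\le1$ and $\|1_{\pi(a)}-a\|\le1$. In the weak topology $\ball M^+$ is weak$^*$-closed in each fibre, hence a Borel field of (compact) Polish spaces by Proposition \ref{prop.subfield-sigma-compact}. In the strong topology, $\ball M^+$ is strongly closed in $B(H_x)$ in each fibre (so the restriction of $d_s$ stays complete), and strongly dense Borel sections are obtained by applying the fibrewise strongly continuous surjection $a\mapsto a^2$ (Borel, as multiplication on $M$ is Borel) to the strongly dense rational convex combinations of the self-adjoint Borel sections $\tfrac12(a_n+a_n^*)$, which are strongly dense in each $\ball M^{\mathrm{sa}}_x$ by the same convexity argument. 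The genuine obstacle is part~3: with $M_*$ typically non-reflexive and $M_*^+$ not $\sigma$-compact, neither Proposition \ref{prop.subfield-Banach} nor Proposition \ref{prop.subfield-sigma-compact} produces everywhere-defined Borel sections of $M_*^+$, so one must rely on the duality formula for $\operatorname{dist}(\om,M_*^+)$ together with the Effros-structure selection of Proposition \ref{prop.selection-Effros}; everything else is bookkeeping around this point and around the GNS construction.
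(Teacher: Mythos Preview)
Your proof is correct but follows a genuinely different route from the paper's, and the comparison is instructive. The paper proves the three parts in the order 1, 2, 3: for part~1 it writes down the strong and strong$^*$ metrics directly from a norm-dense sequence $\om_n:X\to\ball M_*$ (no GNS needed), and obtains strong$^*$-dense Borel sections by applying the Kaplansky density theorem to the $*$-algebra generated by a weak$^*$-dense sequence, then cutting back into the ball. Part~2 then follows immediately since $\{\psi_n(x)^*\psi_n(x)\}$ is strongly dense in $\ball(M^x)^+$. Finally part~3 is deduced from part~2 via the Jordan-decomposition identity $d(\om,N_*^+)=\sup\{-\om(a):a\in\ball N^+\}$ for self-adjoint $\om$, together with Proposition~\ref{prop.selection-Effros}.

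Your ordering inverts this: you make part~3 self-contained by replacing the Jordan-decomposition formula with the Sion minimax identity over the weak$^*$-compact set $\{\phi\in\ball M^x:\tfrac12(\phi+\phi^*)\le0\}$, so that the needed dense Borel sections come from Proposition~\ref{prop.subfield-sigma-compact} without first knowing anything about $\ball M^+$. You then use part~3 to feed a faithful normal state into the GNS construction for the strong/strong$^*$ metrics. Two remarks are worth making. First, your GNS detour for the metric is unnecessary: the paper's metric $\sum_n 2^{-n}|\langle\om_n,(a-b)^*(a-b)\rangle|$ already works with an arbitrary dense sequence in $\ball M_*$, so part~1 does not actually depend on part~3. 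Second, your convexity/Mazur argument for strong$^*$-dense sections (rational convex combinations of a weak$^*$-dense sequence, using that on bounded convex sets the $\sigma$-weak and $\sigma$-strong$^*$ closures coincide) is slicker than the paper's Kaplansky argument and avoids the normalisation trick. In short: the paper keeps part~1 elementary and pays for part~3 with Jordan decomposition; you pay up front for part~3 with Sion's theorem and then everything else flows from it.
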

\begin{proof}
1.\ The weak topology on $\ball M^x$ is the weak$^*$ topology when viewing $M^x$ as the dual of $M^x_*$. So by Proposition \ref{prop.dual-field-Banach}, with the weak topology, $\ball M$ is a Borel field of Polish spaces.

Choose a dense sequence of Borel sections $\om_n : X \to \ball M_*$. Then the maps
\begin{align*}
& d : \ball M \times_\pi \ball M \to [0,+\infty) : (a,b) \mapsto \sum_n 2^{-n} |\langle \om_n(\pi(a)),(a-b)^*(a-b)\rangle| \; ,\\
& D : \ball M \times_\pi \ball M \to [0,+\infty) : (a,b) \mapsto d(a,b) + d(a^*,b^*)
\end{align*}
are Borel and restrict to separable complete metrics on $\ball M^x$ that induce, respectively, the strong and the strong$^*$ topology.

To conclude the proof of 1, it thus only remains to prove that there exists a strong$^*$ dense sequence of Borel sections $X \to \ball M$. By Proposition \ref{prop.dual-field-Banach}, we can choose a weak$^*$ dense sequence of Borel sections $\vphi_n : X \to \ball M$. Denote by $\cF$ the countable family of Borel sections $X \to M$ that are obtained as finite linear combinations with coefficients in $\Q + i \Q$ of finite products of sections $\vphi_n$ and $\vphi_n(\cdot)^*$. Define the map $F : [0,+\infty) \to [0,1]$ by $F(t) = t$ for all $t \in [0,1]$ and $F(t) = 1/t$ for all $t > 1$. Denote by $\cG$ the countable family of Borel sections $X \to \ball M$ of the form $x \mapsto F(\|\vphi(x)\|) \, \vphi(x)$ with $\vphi \in \cF$. We claim that this family is pointwise strong$^*$ dense.

Fix $x \in X$ and $a \in \ball M^x$. By the Kaplansky density theorem, we can choose a sequence $\psi_n \in \cF$ such that $\|\psi_n(x)\| < 1$ for all $n \in \N$ and $\psi_n(x) \to a$ strongly$^*$. Then the same holds for the corresponding sequence in $\cG$, so that the claim is proven.

2.\ By 1, we can fix a strong$^*$ dense sequence of Borel sections $\psi_n : X \to \ball M$. For every $x \in X$, we have that $\ball (M^x)^+$ is both the weak closure and the strong closure of the set $\{\psi_n(x)^* \psi_n(x) \mid n \in \N\}$. The conclusion the follows from point~2 of Proposition \ref{prop.selection-Effros}.

3.\ Choose a dense sequence of Borel sections $\om_n : X \to M_*$. Then $\om_n + \om_n^*$ provides a dense sequence of Borel sections for the self-adjoint part $M_*^{\text{sa}}$, so that $M_*^{\text{sa}}$ is a Borel field of Polish spaces. To prove 3, by Proposition \ref{prop.selection-Effros}, it suffices to prove that for every Borel section $\om : X \to M_*^{\text{sa}}$, the map $x \mapsto d(\om(x),(M^x_*)^+)$ is Borel.

Below we prove that for every von Neumann algebra $N$ and $\om \in N_*^{\text{sa}}$,
\begin{equation}\label{eq.nice}
d(\om,N_*^+) = \sup \{ -\om(a) \mid a \in \ball N^+ \} \; .
\end{equation}
By 2, we can choose a weak$^*$ dense sequence of Borel sections $a_n : X \to \ball M^+$. So by \eqref{eq.nice},
\[
x \mapsto d(\om(x),(M^x_*)^+) = \sup \{ - \langle \om(x), a_n(x) \rangle \mid n \in \N\}
\]
is indeed Borel, so that 3 is proven.

To prove \eqref{eq.nice}, fix $\om \in N_*^{\text{sa}}$. By \cite[Theorem III.4.2]{Tak79}, we find a projection $p \in N$ such that $p \om = \om p$ and, writing $q = 1-p$, we have
\[
\om = \om_1 - \om_2 \quad\text{where}\quad \om_1,\om_2 \in N_*^+ \;\; , \;\; \om_1 = p \om p \;\;,\;\; \om_2 = - q \om q \; .
\]
For every $a \in \ball N^+$, we have that $-\om(a) = -\om_1(a) + \om_2(a) \leq \om_2(a) \leq \|\om_2\| = - \om(q)$. So, the right hand side of \eqref{eq.nice} equals $\|\om_2\|$.

Since $\|\om_1 - \om\| = \|\om_2\|$, certainly $d(\om,N_*^+) \leq \|\om_2\|$. On the other hand, for every $\mu \in N_*^+$, we have that
\[
\|\mu - \om\| \geq \|q (\mu - \om) q\| = \|q \mu q - q \om q\| = \|q \mu q + \om_2\| = \mu(q) + \|\om_2\| \geq \|\om_2\| \; .
\]
So, $d(\om,N_*^+) = \|\om_2\|$ and \eqref{eq.nice} is proven.
\end{proof}

Given a Borel field $H = (H_x)_{x \in X}$ of separable Hilbert spaces, we equipped $B(H)$ with the standard Borel structure given by Proposition \ref{prop.bounded-operators-banach}. To prove the second statement in Proposition \ref{prop.equivalence-defs-Borel-field-vNalg}, we need to define a standard Borel structure on the family of Banach spaces $(B(H_x)_*)_{x \in X}$ and identify $B(H)$ with $(B(H)_*)^*$. Since for all Borel sections $\vphi : X \to H$ and $\psi : X \to H$, we can use the elements $T \mapsto \langle T \vphi(x),\psi(x)\rangle$ in $B(H_x)_*$ to find enough Borel sections, we leave the proof of the following result as an exercise.

\begin{proposition}\label{prop.predual-BH}
Let $H = (H_x)_{x \in X}$ be a Borel field of separable Hilbert spaces. There is a unique standard Borel structure on $B(H)_* = (B(H_x)_*)_{x \in X}$ such that $\pi : B(H)_* \to X$ and $B(H)_* \times_\pi B(H) \to \C$ are Borel maps. Then, $B(H)_*$ is a Borel field of separable Banach spaces and the standard Borel structures on $B(H)$ given by Propositions \ref{prop.dual-field-Banach} and \ref{prop.bounded-operators-banach} coincide.
\end{proposition}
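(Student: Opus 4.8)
The plan is to coordinatise $B(H)_*$ by the values of a normal functional on a fixed countable family of bounded operators, and then to recognise the resulting coordinate image as an ``$\ell^\infty$-completion'' of the image of an explicit countable family of normal functionals, which is what makes it Borel. Concretely, I would fix a dense sequence of Borel sections $\vphi_n : X \to H$; since every $H_x$ is a reflexive Banach space, point~4 of Proposition \ref{prop.bounded-operators-banach} supplies a strongly dense sequence of Borel sections $T_k : X \to \ball B(H)$, where $\ball B(H)$ carries the Borel structure induced from $B(H)$ (this is the structure of Proposition \ref{prop.bounded-operators-banach}(4), as its proof shows). For $n,m \in \N$ let $\psi_{n,m}(x) \in B(H_x)_*$ be the vector functional $T \mapsto \langle T\vphi_n(x),\vphi_m(x)\rangle$, and let $(\om_\al)_\al$ enumerate all $(\Q+i\Q)$-linear combinations of the $\psi_{n,m}$; since finite rank operators are trace-norm dense in $B(H_x)_* \cong S^1(H_x)$ and their vectors may be taken from the dense set $\{\vphi_n(x)\}$, the set $\{\om_\al(x) \mid \al\}$ is norm dense in $B(H_x)_*$ for every $x$. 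The key analytic input is that a normal functional $\om$ on $B(H_x)$ is $\sigma$-weakly, hence strongly, continuous on the set $\ball B(H_x)$, on which $\{T_k(x)\}$ is dense; therefore $\|\om\| = \sup_k|\om(T_k(x))|$, so $\om \mapsto (\om(T_k(x)))_k$ is a linear isometry $B(H_x)_* \to \ell^\infty(\N)$. In particular $\theta : B(H)_* \to X \times \C^\N$, $\theta(\om) = (\pi(\om),(\om(T_k(\pi(\om))))_k)$, is injective.

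The heart of the matter --- and what I expect to be the main obstacle --- is to show that $\theta(B(H)_*)$ is Borel in $X \times \C^\N$. Here the isometry, together with the density of $\{\om_\al(x)\}$ in the complete space $B(H_x)_*$, shows that $\theta(B(H)_*)$ consists exactly of the pairs $(x,z)$ for which $z$ is the sup-norm limit of the sequences $(\om_\al(x)(T_k(x)))_k$, i.e.
\[
\theta(B(H)_*) = \bigcap_{r \in \N}\bigcup_\al \bigl\{(x,z) \in X \times \C^\N \bigm| \forall k \in \N : |z_k - \om_\al(x)(T_k(x))| \le 1/r \bigr\} \; .
\]
Each $x \mapsto \om_\al(x)(T_k(x))$ is a finite $(\Q+i\Q)$-combination of the maps $x \mapsto \langle T_k(x)\vphi_n(x),\vphi_m(x)\rangle$, and these are Borel because $T_k : X \to B(H)$ is Borel, $(T,\xi) \mapsto T\xi$ is Borel by Proposition \ref{prop.bounded-operators-banach}, and the scalar product on $H$ is Borel; hence the displayed set is Borel. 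By the first part of Lemma \ref{lem.construction-uniqueness} (in the variant with target Polish space $\C$), applied with $V = \ball B(H)$ and $C(\om,T) = \om(T)$ (fibrewise strongly continuous, and separating distinct $\om$), there is a unique standard Borel structure on $B(H)_*$ making $\pi$ and the pairing $B(H)_* \times_\pi \ball B(H) \to \C$ Borel, and $\theta$ is a Borel isomorphism onto its image. Writing $\om(T) = \max(1,\|T\|)\cdot\om\bigl(T/\max(1,\|T\|)\bigr)$ and using that $T \mapsto T/\max(1,\|T\|)$ is a Borel map $B(H) \to \ball B(H)$ (the norm and scalar multiplication on $B(H)$ are Borel), the pairing extends to a Borel map on $B(H)_* \times_\pi B(H)$; conversely any standard Borel structure making the latter Borel makes the former Borel and hence equals ours, which gives the stated uniqueness.

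It remains to check that $B(H)_*$ is a Borel field of separable Banach spaces and that the two Borel structures on $B(H)$ agree. The first is routine once $\theta$ is in hand: $\pi$ is Borel; $\|\om\| = \sup_k|\theta(\om)_k|$ is Borel by the norm formula; addition and scalar multiplication become restrictions of the corresponding Borel operations on $X \times \C^\N$ since each $\om \mapsto (\om(T_k(x)))_k$ is linear; and the $\om_\al$ are a fibrewise norm-dense sequence of Borel sections, Borelness being precisely the measurability of $x \mapsto \om_\al(x)(T_k(x))$ already established. For the coincidence of structures, write $\cB_1$ for the structure of Proposition \ref{prop.bounded-operators-banach} and $\cB_2$ for that of Proposition \ref{prop.dual-field-Banach}, and identify $(B(H_x)_*)^* \cong B(H_x)$ isometrically via $\langle T,\om\rangle = \om(T)$. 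Since $\cB_2$ is, by Proposition \ref{prop.dual-field-Banach}, the \emph{unique} standard Borel structure on $B(H)$ making $\pi$ and this pairing $B(H) \times_\pi B(H)_* \to \C$ Borel, it suffices to check that $\cB_1$ does so as well. By the second part of Lemma \ref{lem.construction-uniqueness}, with $V = B(H)_*$ (a Borel field of Polish spaces by the above) and $C_T : \om \mapsto \om(T)$ norm continuous, this reduces to the $\cB_1$-measurability of $T \mapsto \om_\al(\pi(T))(T) = \sum_j q_j \langle T\vphi_{n_j}(\pi(T)),\vphi_{m_j}(\pi(T))\rangle$, which holds since $T \mapsto T(\vphi_n(\pi(T)))$ is $\cB_1$-Borel by point~5 of Proposition \ref{prop.bounded-operators-banach} and the scalar product on $H$ is Borel. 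Hence $\cB_1 = \cB_2$.
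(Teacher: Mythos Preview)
Your proof is correct and follows exactly the approach the paper intends: the paper leaves this proposition as an exercise, merely noting that the vector functionals $T \mapsto \langle T\vphi(x),\psi(x)\rangle$ provide enough Borel sections for $B(H)_*$, and this is precisely what you do with your family $(\om_\al)_\al$. Your execution is complete and careful, and in particular your use of point~4 of Proposition \ref{prop.bounded-operators-banach} (reflexivity of $H_x$) to obtain the strongly dense sections $T_k : X \to \ball B(H)$, together with the observation that normal functionals are strongly continuous on the ball, is the right way to make the coordinatisation $\theta$ an isometry into $\ell^\infty(\N)$ and thereby identify $\theta(B(H)_*)$ as a Borel set via the displayed $\bigcap\bigcup$-description.
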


We are now ready to prove Proposition \ref{prop.equivalence-defs-Borel-field-vNalg}.

\begin{proof}[{Proof of Proposition \ref{prop.equivalence-defs-Borel-field-vNalg}}]
1.\ By the third point in Lemma \ref{lem.positive-functionals}, we can choose a dense sequence of Borel sections $\om_n : X \to \ball M_*^+$. Define the Borel section
\[
\om : X \to M_*^+ : \om_x = \sum_{n \in \N} 2^{-n} \om_n(x) \; .
\]
Note that for every $x \in X$, $\om_x$ is a faithful normal positive functional on $M^x$.

By Proposition \ref{prop.GNS}, we find a Borel field $H = (H_x)_{x \in X}$ of separable Hilbert spaces and a family of normal unital $*$-homomorphisms $\theta_x : M^x \to B(H_x)$ such that the resulting map $\theta : M \to B(H)$ is Borel.

Since $\om_x$ is faithful, also $\theta_x$ is faithful and it follows that $\theta$ is an injective Borel map. So, $\theta(M)$ is a Borel subset of $B(H)$. We have thus proven that $(\theta_x(M^x) \subset B(H_x))_{x \in X}$ is a concrete Borel field of von Neumann algebras.

2.\ By Proposition \ref{prop.predual-BH}, we may as well view $B(H)$ as the dual of the Borel field of separable Banach spaces $B(H)_*$. By Proposition \ref{prop.duality}, there is a unique standard Borel structure on $M_*$ such that the maps $\pi : M_* \to X$ and $M_* \times_\pi M \to \C$ are Borel. Also, with this Borel structure, $M_*$ becomes a Borel field of separable Banach spaces and the identification of $M$ with $(M_*)^*$ is a Borel isomorphism. Since the adjoint and composition maps are Borel on $B(H)$, also their restrictions to the Borel set $M$ are Borel. So, $M$ is an abstract Borel field of separable von Neumann algebras.
\end{proof}

Having proven Proposition \ref{prop.equivalence-defs-Borel-field-vNalg}, there is no longer a need to distinguish between abstract and concrete Borel fields of separable von Neumann algebras and we will simply speak about Borel fields of separable von Neumann algebras.

Contrary to the delicate situation (and counterexamples) in Proposition \ref{prop.counterex-weak-star-closed-subfield}, we have the following result. As a consequence of this result, any reasonably explicit construction of a family of von Neumann algebras will actually give a Borel field of von Neumann algebras.

\begin{proposition}\label{prop.construction}
Let $H = (H_x)_{x \in X}$ be a Borel field of separable Hilbert spaces. Let $\vphi_n : X \to B(H)$ be a sequence of Borel sections, where $B(H)$ carries the standard Borel structure given by Proposition \ref{prop.bounded-operators-banach}. For every $x \in X$, define $M^x \subset B(H_x)$ as the von Neumann algebra generated by the set $\{\vphi_n(x) \mid n \in \N\}$. Then, $M \subset B(H)$ is a Borel set and thus, $(M^x \subset B(H_x))_{x \in X}$ is a concrete Borel field of separable von Neumann algebras.
\end{proposition}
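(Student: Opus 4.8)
The plan is to show that $M \subset B(H)$ is a Borel set by expressing it explicitly in terms of Borel operations, using the bicommutant theorem fiberwise together with the good behaviour of $\ball B(H)$ as a measured/Borel field of Polish spaces in the strong topology. The first step is a reduction: by von Neumann's bicommutant theorem, $M^x$ is the strong closure of the $*$-algebra generated by $\{\vphi_n(x)\mid n\in\N\}$ and $1_x$. I would first enlarge the sequence $(\vphi_n)$ to a sequence $(\psi_n)$ of Borel sections $X\to B(H)$ that, at every point $x$, generates the \emph{same} $*$-algebra (not just von Neumann algebra) over $\Q+i\Q$: concretely, take all finite words in the $\vphi_n(x)$ and $\vphi_n(x)^*$ and all $(\Q+i\Q)$-linear combinations of these together with $1_x$. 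Since multiplication, adjoint, addition and scalar multiplication are Borel on $B(H)$ by Proposition \ref{prop.bounded-operators-banach}, each $\psi_n$ is indeed a Borel section, and for every $x$ the countable set $\{\psi_n(x)\mid n\in\N\}$ is a unital $*$-subalgebra of $B(H_x)$ whose strong closure is $M^x$.

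The key point is then to describe the strong closure of $\{\psi_n(x)\}$ inside $B(H_x)$ in a way that is manifestly Borel in $(x,T)$. Here I would use that on each $\ball B(H_x)$ the strong operator topology is Polish, and that there is a compatible Borel metric: fixing a dense sequence of Borel sections $\xi_k:X\to \ball H$ (available since $H$ is a Borel field of Banach spaces, hence of Polish spaces), the map
\[
d:\ball B(H)\times_\pi \ball B(H)\to[0,+\infty):d(T,S)=\sum_{k\in\N}2^{-k}\,\|T\xi_k(\pi(T))-S\xi_k(\pi(S))\|
\]
is Borel and restricts on each fiber to a complete metric inducing the strong topology, exactly as in the proof of Proposition \ref{prop.bounded-operators-banach}. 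By Kaplansky density, $\ball M^x$ is the $d$-closure of $\bigcup_{r\in\Q,0\le r<1}\{r\,\psi_n(x)\mid n\in\N,\ \|\psi_n(x)\|\le 1\}$ — more cleanly, letting $F:[0,+\infty)\to[0,1]$ be $F(t)=t$ for $t\le1$ and $F(t)=1/t$ for $t>1$, the sections $x\mapsto F(\|\psi_n(x)\|)\,\psi_n(x)$ are Borel sections into $\ball B(H)$ and their pointwise closures are exactly $\ball M^x$ (cf.\ the argument in part~1 of Lemma \ref{lem.positive-functionals}). Hence
\[
\ball M=\bigl\{T\in\ball B(H)\bigm|\forall r\in\N\ \exists n\in\N:\ d\bigl(T,\,F(\|\psi_n(\pi(T))\|)\psi_n(\pi(T))\bigr)<1/r\bigr\}
\]
is a Borel subset of $\ball B(H)$. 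Finally, $M=\{T\in B(H)\mid F(\|T\|)T\in\ball M\}$, and since the norm on $B(H)$ is Borel and $T\mapsto F(\|T\|)T$ is a Borel map $B(H)\to\ball B(H)$, it follows that $M\subset B(H)$ is a Borel set. Then $(M^x\subset B(H_x))_{x\in X}$ is by Definition \ref{def.Borel-field-vNalg-concrete} a concrete Borel field of separable von Neumann algebras.

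The main obstacle is the second step: one must be careful that the ``closure'' description of $\ball M^x$ really is uniform and Borel in $x$, i.e.\ that a single sequence of Borel sections works simultaneously at all $x$ to generate a strongly dense subset of $\ball M^x$. This is where Kaplansky density (to stay inside the unit ball) and the bicommutant theorem (to pass from the generated $*$-algebra to its von Neumann algebra) enter, and where the Borel metric $d$ for the strong topology from Proposition \ref{prop.bounded-operators-banach} is essential; the rest is a routine unwinding of the fact that a countable intersection/union of Borel conditions is Borel. Note that, unlike in Proposition \ref{prop.counterex-weak-star-closed-subfield}, no pathology arises here precisely because we never need a \emph{dense sequence of everywhere-defined Borel sections of $M$ itself} — we only need $M$ to be a Borel subset of the ambient field $B(H)$, which the concrete definition allows.
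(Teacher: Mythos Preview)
Your proposal is correct and follows essentially the same approach as the paper: build, via Kaplansky density and the normalization trick from Lemma~\ref{lem.positive-functionals}, a sequence of Borel sections into $\ball B(H)$ that is pointwise dense in $\ball M^x$, and then conclude that $M$ is Borel. The only difference is cosmetic: the paper views $B(H)$ as the dual of $B(H)_*$ (Proposition~\ref{prop.predual-BH}) and then invokes Proposition~\ref{prop.duality} (implication 2~$\Rightarrow$~1) to pass from the dense sequence to ``$M$ is Borel'', whereas you carry out that closure argument by hand using the strong-topology metric---which is exactly what the proof of Proposition~\ref{prop.duality} does anyway.
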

\begin{proof}
By Proposition \ref{prop.predual-BH}, we may as well consider $B(H)$ as the dual of $B(H)_*$. As in the proof of point 1 of Lemma \ref{lem.positive-functionals}, using the Kaplansky density theorem, we find a sequence of Borel sections $\psi_n : X \to \ball B(H)$ such that for every $x \in X$, the set $\{\vphi_n(x) \mid n \in \N\}$ is weak$^*$ dense in $\ball M^x$. The conclusion then follows from Proposition \ref{prop.duality}.
\end{proof}

\begin{proposition}
Let $M = (M^x)_{x \in X}$ be a Borel field of separable von Neumann algebras. Then $\cU(M) = (\cU(M^x))_{x \in X}$ is a Borel subset of $M$. Viewing $\cU(M^x)$ as a Polish group with the strong topology, we get that $\cU(M)$ is a Borel field of Polish groups.
\end{proposition}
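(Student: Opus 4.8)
The plan is to reduce the statement to the construction of a dense sequence of Borel sections $X \to \cU(M)$; everything else is formal. First, $\cU(M) = \{u \in M \mid u^*u = uu^* = 1_{\pi(u)}\}$ is a Borel subset of $M$, because the multiplication and the adjoint on $M$ and the unit section $x \mapsto 1_x$ are Borel and the diagonal of the standard Borel space $M$ is Borel. On the group $\cU(M^x)$ the strong and the strong$^*$ topology coincide: if $u_n \to u$ strongly with all $u_n$ unitary, then $\|u_n^*\xi - u^*\xi\| = \|\xi - u_n u^*\xi\| \to 0$ because $u_n u^*\xi \to uu^*\xi = \xi$. Thus each $\cU(M^x)$ is a Polish group for the strong topology, and it is a strong$^*$-closed subset of the Polish space $(\ball M^x, \text{strong}^*)$; by point~1 of Lemma \ref{lem.positive-functionals}, $\ball M$ with the fiberwise strong$^*$ topology is a Borel field of Polish spaces. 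Once $\cU(M)$ is shown to be a Borel field of Polish spaces, it is automatically a Borel field of Polish groups: the unit section is Borel by hypothesis, inversion is the adjoint, and multiplication is the restriction of the multiplication on $M$, so all the group operations are Borel.

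It remains to build a strong$^*$-dense sequence of Borel sections $u_n : X \to \cU(M)$. By point~1 of Lemma \ref{lem.positive-functionals}, choose a strong$^*$-dense sequence of Borel sections $\psi_n : X \to \ball M$ and put $b_n(x) = \tfrac12(\psi_n(x) + \psi_n(x)^*)$; since $a \mapsto \tfrac12(a + a^*)$ is strong$^*$-continuous and restricts to the identity on $\{a \in \ball M^x \mid a = a^*\}$, the sequence $(b_n(x))_n$ is strong$^*$-dense in $\{a \in \ball M^x \mid a = a^*\}$. Define $u_n(x) = \exp(i\pi b_n(x)) \in \cU(M^x)$. To check that $u_n : X \to M$ is Borel, write $\exp(i\pi b_n(x))$ as the norm-convergent limit of the partial sums $\sum_{k=0}^N (i\pi b_n(x))^k / k!$, which are Borel sections $X \to M$ built from the Borel operations of multiplication, scalar multiplication and addition; since the Borel structure on $M = (M_*)^*$ is the one generated by $\pi$ and the pairings against a dense sequence of Borel sections $\om_k : X \to M_*$ (Proposition \ref{prop.dual-field-Banach}), pairing with $\om_k(x)$ turns this norm limit into a pointwise limit of Borel scalar functions, so each $u_n$ is Borel.

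Finally, by the spectral theorem the map $a \mapsto \exp(i\pi a)$ sends $\{a \in \ball M^x \mid a = a^*\}$ onto $\cU(M^x)$, and on this set it is continuous from the strong$^*$ topology to the strong$^*$ topology of $\cU(M^x)$: approximating $t \mapsto \exp(i\pi t)$ uniformly on $[-1,1]$ by polynomials and using that polynomial functional calculus is strongly continuous on norm-bounded sets yields strong continuity, which on the unitaries is the same as strong$^*$ continuity by the first paragraph. A continuous surjection carries a dense set to a dense set, so $\{u_n(x) \mid n \in \N\}$ is strong$^*$-dense in $\cU(M^x)$. We now apply Proposition \ref{prop.selection-Effros} with $V = \ball M$ (fiberwise strong$^*$ topology) and $C = \cU(M)$, a family of nonempty closed subsets: condition~2 of that proposition holds — witnessed by the sections $u_n$ — hence condition~1 holds, i.e.\ $\cU(M) \subset \ball M$ is Borel and, with the Borel structure restricted from $\ball M$ — equivalently from $M$ — it is a Borel field of Polish spaces. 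Combined with the first paragraph, $\cU(M)$ is a Borel field of Polish groups. The only genuinely delicate point is the existence of the everywhere-defined Borel sections $u_n$: in view of the counterexamples in Propositions \ref{prop.counterex-subfield} and \ref{prop.isom-group-Polish-group}, this cannot be taken for granted for a subfield cut out by an algebraic condition, and it is the exponential map — transporting the self-adjoint sections from Lemma \ref{lem.positive-functionals} to unitaries while preserving both measurability and fiberwise density — that makes the argument work.
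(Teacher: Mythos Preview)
Your proof is correct and follows essentially the same approach as the paper: show $\cU(M)$ is Borel via the algebraic conditions, then obtain a dense sequence of Borel sections by applying the exponential map $a \mapsto \exp(i\pi a)$ to a strong$^*$-dense sequence of self-adjoint Borel sections in $\ball M$ coming from Lemma~\ref{lem.positive-functionals}. Your write-up is considerably more explicit than the paper's (you spell out why the symmetrized sections are dense in $\ball M^{\text{sa}}$, why $\exp$ is fiberwise continuous and Borel, and why strong and strong$^*$ agree on unitaries), but the essential mechanism is identical.
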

\begin{proof}
Since $\cU(M)$ is the set of $a \in M$ satisfying $a^* a = 1 = a a^*$, it is immediate that $\cU(M)$ is a Borel subset of $M$. By Lemma \ref{lem.positive-functionals}, we find a strongly dense sequence of Borel sections $\vphi_n : X \to \ball M^{\text{sa}}$. Then $\psi_n : X \to \cU(M) : \psi_n(x) = \exp(\pi i \vphi_n(x))$ is a dense sequence of Borel sections, so that $\cU(M)$ is indeed a Borel field of Polish groups.
\end{proof}

\section{Borel fields of automorphism groups: a counterexample}\label{sec.aut-groups}

We still use the terminology \emph{separable von Neumann algebra} for a von Neumann algebra with separable predual. For any such separable von Neumann algebra $N$, the automorphism group $\Aut N$ becomes a Polish group with the topology given by $\al_n \to \al$ if and only if $\|\om \circ \al_n - \om \circ \al\| \to 0$ for all $\om \in N_*$.

\begin{proposition}\label{prop.aut-vNalg}
Let $M = (M^x)_{x \in X}$ be a Borel field of separable von Neumann algebras.
\begin{enumlist}
\item There is a unique standard Borel structure on $\Aut M = (\Aut M^x)_{x \in X}$ such that the maps $\pi : \Aut M \to X$ and $\Aut M \times_\pi M \to M : (\al,a) \mapsto \al(a)$ are Borel.
\item The composition $\Aut M \times_\pi \Aut M \to \Aut M$ and inverse $\Aut M \to \Aut M$ are Borel maps.
\item For every $\sigma$-finite measure $\mu$ on $X$, there exists a conull Borel set $X_0 \subset X$ such that $\Aut M \cap \pi^{-1}(X_0) = (\Aut M^x)_{x \in X_0}$ is a Borel field of Polish groups.
\item It may happen that $\Aut M$ is not a Borel field of Polish groups, because there may not exist a dense sequence of Borel sections $\al_n : X \to \Aut M$.
\end{enumlist}
\end{proposition}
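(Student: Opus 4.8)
The plan is to obtain parts 1--3 along the lines of Proposition~\ref{prop.isom-group-Polish-group}, and to build the counterexample in part~4 out of a Borel field of factors whose isomorphism relation is not Borel.

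For parts 1--3 I would realise $\Aut M^x$ inside $B(M^x_*)$ via the preadjoint: viewing $M^x = (M^x_*)^*$, the map $\al \mapsto \al_*$ sends $\Aut M^x$ bijectively and homeomorphically onto the set of surjective isometries $T$ of $M^x_*$ such that $(\al_*)^* = \al \in \ball B(M^x)$ is a unital $*$-homomorphism, equipped with the strong topology. The one small lemma needed is that $T \mapsto T^*$ is a Borel map $B(M_*) \to B(M)$; this follows from Lemma~\ref{lem.construction-uniqueness} because $T \mapsto \langle T^*(a_k(\pi(T))),\om(\pi(T))\rangle = \langle a_k(\pi(T)),T(\om(\pi(T)))\rangle$ is Borel for Borel sections $a_k : X \to M$ and $\om : X \to M_*$. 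Then the conditions ``$T$ is invertible in $\ball B(M_*)$ with inverse in $\ball B(M_*)$'' and ``$T^*$ is a unital $*$-homomorphism'' (the latter tested on a dense sequence of Borel sections of $M$) cut out a Borel set $L \subset \ball B(M_*) \times_\pi \ball B(M_*)$ projecting bijectively onto $\Aut M$, so $\Aut M$ is a Borel subset of $B(M_*)$, and parts 1 and 2 follow from Lemma~\ref{lem.construction-uniqueness} exactly as in Proposition~\ref{prop.isom-group-Polish-group}. For part 3, after discarding a $\mu$-null set so that $\ball B(M_*)$ is a genuine Borel field of Polish spaces, the traces on $\Aut M^x$ of the countably many basic balls of $\ball B(M^x_*)$ form a Borel-definable basis for the $u$-topology on the Polish group $\Aut M^x$; the argument of Proposition~\ref{prop.subfield} (whose proof uses only the Borel basis, not completeness of the ambient metric) then produces a further conull set carrying a dense sequence of Borel sections into $\Aut M$. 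I expect no real obstacle in parts 1--3 --- it is bookkeeping.

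The substance is part 4. Combining the existence of a universal Borel field of separable von Neumann algebras from Section~\ref{sec.universal} with the theorem of Sasyk and T\"ornquist that the isomorphism relation of separable type~II$_1$ factors is not Borel, I would fix a standard Borel space $Y$ and a Borel field $(N^y)_{y \in Y}$ of separable \emph{factors} for which $R = \{(y,z) \in Y \times Y \mid N^y \cong N^z\}$ is not Borel. Put $X = Y \times Y$ and let $M = (M^x)_{x \in X}$ be the Borel field with $M^{(y,z)} = N^y \oplus N^z$ (direct sums of Borel fields of von Neumann algebras are again Borel fields). Then $p : X \to M$, $p(y,z) = (1_{N^y},0)$, is a Borel section, and $p_x$ is a central projection of $M^x$ with $p_x M^x \cong N^y$ and $(1-p_x)M^x \cong N^z$. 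Since $N^y$ and $N^z$ are factors, $Z(M^x) = \C p_x \oplus \C(1-p_x)$, so every $\al \in \Aut M^x$ permutes the two minimal central projections and hence $\al(p_x) \in \{p_x,1-p_x\}$; moreover $\al(p_x) = 1-p_x$ is attained for some $\al$ if and only if $p_x M^x \cong (1-p_x)M^x$, i.e.\ $N^y \cong N^z$ --- for the ``if'' direction, an isomorphism $\be : N^y \to N^z$ gives the automorphism $\al(a,b) = (\be^{-1}(b),\be(a))$ of $M^x$ with $\al(p_x) = 1-p_x$.

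Now suppose, towards a contradiction, that $\Aut M$ is a Borel field of Polish groups, and fix a dense sequence of Borel sections $\al_n : X \to \Aut M$. Using that the evaluation map $\Aut M \times_\pi M \to M$ is Borel (part 1) and that the diagonal of $M \times_\pi M$ is Borel, the set
\[
S = \{x \in X \mid \forall n \in \N : \al_n(x)(p_x) = p_x\}
\]
is a Borel subset of $X$. If $x = (y,z) \in S$ but $N^y \cong N^z$, pick $\al \in \Aut M^x$ with $\al(p_x) = 1-p_x$; by density some subsequence $\al_{n_k}(x)$ converges to $\al$ in $\Aut M^x$, so $\al_{n_k}(x)(p_x) \to \al(p_x) = 1-p_x$ in $\sigma(M^x,M^x_*)$ (convergence in the $u$-topology forces pointwise $\sigma(M^x,M^x_*)$-convergence of the automorphisms), yet each $\al_{n_k}(x)(p_x) \in \{p_x,1-p_x\}$ and $p_x \neq 1-p_x$ are separated by a normal functional, forcing $\al_{n_k}(x)(p_x) = 1-p_x$ for large $k$ and contradicting $x \in S$. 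Conversely, if $N^y \not\cong N^z$ then every $\al \in \Aut M^x$ fixes $p_x$, so $x \in S$. Hence $S = (Y \times Y) \setminus R$, whence $R$ would be Borel --- a contradiction. So the only substantive input is the non-Borelness of the isomorphism relation of separable II$_1$ factors, and the main ``obstacle'' is exactly importing that result and verifying that the direct-sum field genuinely satisfies our definition of a Borel field; everything else is manipulation inside the framework developed above.
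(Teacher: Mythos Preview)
Your proposal is correct and follows essentially the same route as the paper: for parts 1--3 the paper also realises $\Aut M$ inside $B(M_*)\cong B_{w^*}(M)$, cuts out $\End M$ by Borel conditions, takes pairs $(S,T)$ with $ST=TS=\id$, and then invokes Proposition~\ref{prop.subfield} with the metric $d(\al,\be)=\sum 2^{-n}(\|\om_n\circ\al-\om_n\circ\be\|+\|\om_n\circ\al^{-1}-\om_n\circ\be^{-1}\|)$ and the obvious Borel basis; for part 4 the paper uses exactly your construction $M^{(y,z)}=N^y\oplus N^z$ from a Sasyk--T\"ornquist field of II$_1$ factors with non-Borel isomorphism relation, together with the observation that automorphisms of a direct sum of two factors either fix or swap the summands. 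Your write-up of part 4 is in fact more detailed than the paper's (which leaves the final Borel-set computation to the reader), and the only point to be slightly careful about in part 3 is that the compatible complete metric must come from the embedding into $\ball B(M_*)\times_\pi\ball B(M_*)$ (not just $\ball B(M_*)$), which is precisely what your set $L$ provides.
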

\begin{proof}
1.\ Consider the standard Borel structure on $B(M_*) = B(M_*,M_*)$ given by Proposition \ref{prop.bounded-operators-banach}. By duality, identify $B(M^x_*)$ with the space $B_{w^*}(M^x)$ of bounded weak$^*$ continuous linear maps from $M^x$ to $M^x$. Define a standard Borel structure on $B_{w^*}(M)$ through this identification. By construction, the natural maps $\pi : B_{w^*}(M) \to X$ and $B_{w^*}(M) \times_\pi M \to M$ are Borel.

Denote by $\End M^x$ the set of normal unital $*$-homomorphisms from $M^x$ to $M^x$. Fix a weak$^*$ dense sequence of Borel sections $\vphi_n : X \to \ball M$. Then,
\begin{multline*}
\End M = \{T \in B_{w^*}(M) \mid \forall n,k \in \N : T(\vphi_n(\pi(T))\vphi_k(\pi(T))) \\ = T(\vphi_n(\pi(T))) \, T(\vphi_k(T)) \;\;\text{and}\;\; T(\vphi_n(\pi(T))^*) = T(\vphi_n(\pi(T)))^* \} \; .
\end{multline*}
So, $\End M$ is a Borel subset of $B_{w^*}(M)$. Then
\[
\{(S,T) \in \End M \times_\pi \End M \mid S \circ T = \id = T \circ S \}
\]
is a Borel set and the map $(S,T) \mapsto S$ is a bijection with $\Aut M$. In this way, we define a standard Borel structure on $\Aut M$.

By construction, the maps $\pi : \Aut M \to X$ and $\Aut M \times_\pi M \to M : (\al,a) \mapsto \al(a)$ are Borel. The uniqueness follows by applying Lemma \ref{lem.construction-uniqueness}.

2.\ This follows from Lemma \ref{lem.construction-uniqueness}, in the same way as in the proof of point~2 of Proposition \ref{prop.bounded-operators-banach}.

3.\ Fix a dense sequence of Borel sections $\om_n : X \to \ball M_*$. Then
\begin{multline*}
d : \Aut M \times_\pi \Aut M \to [0,+\infty) : d(\al,\be) = \sum_{n \in \N} 2^{-n} \, \|\om_n(x) \circ \al - \om_n(x) \circ \be\| \\ + \sum_{n \in \N} 2^{-n} \, \|\om_n(x) \circ \al^{-1} - \om_n(x) \circ \be^{-1}\|
\end{multline*}
is a Borel map that, on each $\Aut M^x \times \Aut M^x$, is a separable complete metric inducing the topology of $\Aut M^x$.

Also, for every $r,k \in \N$ and $n,m \in \N^k$, we can define the Borel sets $U(k,n,m) \subset \Aut M$ by
\[
U(r,k,n,m) = \{\al \in \Aut M \mid \forall i=1,\ldots,k : \|\om_{n_i}(\pi(\al)) \circ \al - \om_{m_i}(\pi(\al))\| < 1/r \} \; .
\]
For every $x \in X$, the sets $U(r,k,n,m) \cap \pi^{-1}(x)$ form a basis for the Polish topology on $\Aut M^x$. The second statement thus follows from Proposition \ref{prop.subfield}.

4.\ By \cite[Corollary 15]{ST08} and using Proposition \ref{prop.construction}, we can choose a standard Borel space $Y$ and a Borel field of separable von Neumann algebras $(M^y)_{y \in Y}$ such that for every $y \in Y$, $M^y$ is a II$_1$ factor with separable predual and the set
\[
R = \{(y,z) \in Y \times Y \mid M^y \cong M^z \}
\]
is not Borel. We now use the same method as in the proof of Proposition \ref{prop.isom-group-Polish-group}: we write $X = Y \times Y$ and it follows that $(\Aut(M^y \oplus M^z))_{(y,z) \in X}$ is not a Borel field of Polish groups. To check this, it suffices to observe that any automorphism of a direct sum of factors will either preserve the direct sum decomposition or interchange the two direct summands, if they are isomorphic.
\end{proof}

\section{Borel fields of second countable locally compact spaces and groups}\label{sec.lc-groups}

The two main results of this section are the following, solving two problems left open in \cite[Section 3]{Sut85}. First we prove that for a Borel field $G = (G_x)_{x \in X}$ of locally compact second countable groups, the family of Pontryagin dual groups $\Ghat = (\Ghat_x)_{x \in X}$ canonically is a Borel field of locally compact second countable abelian groups. Then we prove that also the Haar measures on $G_x$ can be chosen in a Borel manner.

Recall that for a locally compact group $G$, the Pontryagin dual $\Ghat$ is the group of continuous homomorphisms $\om : G \to \T$. Equipped with the compact-open topology, $\Ghat$ is a locally compact abelian group.

\begin{proposition}\label{prop.Borel-dual-group}
Let $G = (G_x)_{x \in X}$ be a Borel field of Polish groups. Assume that every $G_x$ is locally compact. Consider the family $\Ghat = (\Ghat_x)_{x \in X}$ of their Pontryagin duals.

There is a unique standard Borel structure on $\Ghat$ such that the maps $\pi : \Ghat \to X$ and $\Ghat \times_\pi G \to \T : (\om,g) \mapsto \om(g)$ are Borel. In this way, $\Ghat$ is a Borel field of Polish groups.
\end{proposition}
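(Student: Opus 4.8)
The plan is to realise $\Ghat = (\Ghat_x)_{x \in X}$ as a Borel subfield of the Borel field $C(G) = (C(G_x))_{x \in X}$ of continuous functions $G_x \to \C$ with the compact-open topology. Since every $G_x$ is a locally compact Polish space, Proposition~\ref{prop.Borel-field-cont-functions-compact-open} equips $C(G)$ with a standard Borel structure for which $\pi : C(G) \to X$ and the evaluation map $C(G) \times_\pi G \to \C : (f,g) \mapsto f(g)$ are Borel and $C(G)$ is a Borel field of Polish spaces. For each $x$, the Pontryagin dual $\Ghat_x$ is, with the compact-open topology, precisely the set of $f \in C(G_x)$ satisfying $|f(g)| = 1$ and $f(gh) = f(g) f(h)$ for all $g, h \in G_x$; in particular $\Ghat_x$ is a \emph{closed} subset of $C(G_x)$ and its Pontryagin topology coincides with the subspace topology.

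First I would check that $\Ghat \subset C(G)$ is a Borel set. Fix a dense sequence of Borel sections $\vphi_k : X \to G$. Because the multiplication $G \times_\pi G \to G$ and the evaluation $C(G) \times_\pi G \to \C$ are Borel, for all $k, l$ the maps $f \mapsto |f(\vphi_k(\pi(f)))|$ and $f \mapsto f(\vphi_k(\pi(f)) \vphi_l(\pi(f))) - f(\vphi_k(\pi(f))) f(\vphi_l(\pi(f)))$ are Borel on $C(G)$. Using that $\{\vphi_k(x) : k \in \N\}$ is dense in $G_x$, hence $\{(\vphi_k(x), \vphi_l(x)) : k,l \in \N\}$ is dense in $G_x \times G_x$, and that the defining conditions of $\Ghat_x$ are closed, one gets $f \in \Ghat_{\pi(f)}$ if and only if $|f(\vphi_k(\pi(f)))| = 1$ for all $k$ and $f(\vphi_k(\pi(f)) \vphi_l(\pi(f))) = f(\vphi_k(\pi(f))) f(\vphi_l(\pi(f)))$ for all $k, l$. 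Thus $\Ghat$ is a countable intersection of Borel subsets of $C(G)$.

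The main obstacle — and precisely the point left open in \cite{Sut85} — is to produce an \emph{everywhere defined} dense sequence of Borel sections $X \to \Ghat$: a priori a Borel subfield of a Borel field of Polish spaces need not carry such sections, cf.\ Example~\ref{ex.counter1}. This is overcome by the classical fact that the Pontryagin dual of a second countable locally compact group is again a second countable locally compact abelian group (it is the dual of the abelianisation), and hence $\sigma$-compact. Therefore each $\Ghat_x$ is a $\sigma$-compact closed subset of the Polish space $C(G_x)$, and Proposition~\ref{prop.subfield-sigma-compact} (which rests on Theorem~\ref{thm.selection-sigma-compact}) applies: $\Ghat$ is a Borel field of Polish spaces, with a dense sequence of Borel sections $\om_n : X \to \Ghat$.

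It remains to identify the Borel structure and check the group operations. Applying the first part of Lemma~\ref{lem.construction-uniqueness} with $C(\om, g) = \om(g)$ — whose fibre maps are continuous and pairwise distinct, and whose associated map $\om \mapsto (\pi(\om), (\om(\vphi_n(\pi(\om))))_n)$ has Borel image, being the image of the Borel set $\Ghat$ under the Borel embedding of $C(G)$ — shows that the Borel structure on $\Ghat$ is the unique standard Borel structure making $\pi$ and $(\om, g) \mapsto \om(g)$ Borel. For the group operations, the unit section $x \mapsto 1_x$ (the constant function $1$) is Borel, and the maps $\om \mapsto \overline{\om}$ and $(\om, \eta) \mapsto \om \eta$ are continuous on each fibre for the compact-open topology; so by Lemma~\ref{lem.uniqueness}(2), together with the characterisation of Borel sections of $\Ghat$ through the evaluations at the $\vphi_k$, it suffices to note that $x \mapsto \overline{\om(\vphi_k(x))}$ and $x \mapsto \om(\vphi_k(x)) \eta(\vphi_k(x))$ are Borel, which is immediate. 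Hence $\Ghat$ is a Borel field of Polish groups, the essential work having been the $\sigma$-compact selection in the third step.
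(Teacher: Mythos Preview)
Your proof is correct and follows essentially the same route as the paper: embed $\Ghat$ as a Borel subset of the Borel field $C(G)$ from Proposition~\ref{prop.Borel-field-cont-functions-compact-open}, then invoke $\sigma$-compactness of each $\Ghat_x$ together with Proposition~\ref{prop.subfield-sigma-compact} to obtain the dense sequence of Borel sections. You add a bit more detail than the paper on the uniqueness of the Borel structure and on the Borelness of the group operations, which the paper leaves implicit (the latter being inherited from the Borel algebraic operations on $C(G)$).
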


Recall that up to multiplication with a positive real number, every locally compact group $G$ admits a unique measure $\lambda$ on its Borel $\sigma$-algebra with the following properties: $\lambda(K) < +\infty$ for every compact subset $K \subset G$ and $\lambda(g \cdot A) = \lambda(A)$ for every $g \in G$ and $A \subset G$ Borel. One calls $\lambda$ a left Haar measure on $G$.

\begin{proposition}\label{prop.Borel-Haar-measure}
Let $G = (G_x)_{x \in X}$ be a Borel field of Polish groups. Assume that every $G_x$ is locally compact. Up to multiplication by a Borel function $X \to (0,+\infty)$, there is a unique choice of left Haar measures $\lambda_x$ on $G_x$ such that for every Borel function $F : G \to [0,+\infty]$, the map
\[
X \to [0,+\infty] : x \mapsto \int_{G_x} F \; d\lambda_x \quad\text{is Borel.}
\]
\end{proposition}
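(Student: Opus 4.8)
The plan is to avoid building Haar measure ``by hand with a parameter'' and instead to exhibit $\lambda_x$ as the unique normalized point of a Borel field of invariant Radon measures, invoking the classical Haar theorem only as a black box. Recall first that each $G_x$ is Polish and locally compact, hence second countable and $\sigma$-compact; so a left Haar measure exists on $G_x$, is a $\sigma$-finite regular Borel measure, and is unique up to a positive scalar. Write $\cM^+(G_x)$ for the set of positive Radon measures on $G_x$ with the vague topology, which is a Polish space.

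The first step is to turn $\cM^+(G) = (\cM^+(G_x))_{x\in X}$ into a Borel field of Polish spaces. Using the left invariant Borel metric $D$ from Proposition \ref{prop.left-invariant-metric} and the dense Borel sections $\vphi_j : X \to G$, a routine argument (total boundedness tested against the $\vphi_j$, together with local compactness) shows that $\{x : \overline{B_{D_x}(e_x,1/n)}\text{ is compact}\}$ is Borel and exhausts $X$, giving a Borel $r : X \to (0,+\infty)$ with $\overline{B_{D_x}(e_x,r(x))}$ compact; by left invariance each $\overline{B_{D_x}(\vphi_j(x),r(x))}$ is then compact too. Setting $h_{j,k}(x,g) = \max(0,\,1 - k\,D(g,\vphi_j(x)))$ when $1/k \le r(x)$ and $h_{j,k}(x,g)=0$ otherwise produces a countable family of Borel sections $h_m : X \to C_c(G)$ of compactly supported bumps that is vaguely determining: for each $x$, a $\mu \in \cM^+(G_x)$ is determined by $(\mu(h_m^x))_m \in [0,+\infty]^\N$, and $\mu \mapsto (\mu(h_m^x))_m$ is a homeomorphism onto its image for the vague topology. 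The image of the resulting injection $\cM^+(G) \to X \times [0,+\infty]^\N$ is Borel — ``$(x,z)$ comes from a positive Radon measure'' is expressed by positivity of the induced functional on the rational span of the $h_m^x$, with the tests $\sum c_m h_m^x \ge 0$ replaced by the countably many tests at the points $\vphi_\ell(x)$ — so $\cM^+(G)$ is standard Borel; the associated vague metric together with the dense Borel sections given by rational combinations of the point masses $\delta_{\vphi_j(x)}$ exhibit it as a Borel field of Polish spaces. By construction (Lemma \ref{lem.construction-uniqueness}) the pairing $\cM^+(G)\times_\pi C_c(G) \to \C$ and the left translation action $G \times_\pi \cM^+(G) \to \cM^+(G) : (s,\mu)\mapsto s_*\mu$ are Borel, where $C_c(G)$ carries the Borel field structure from the earlier part of this section (or is built directly from $D$ as above).

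Now fix a Borel section $f_0 : X \to C_c(G)$ with $f_0^x \ge 0$, $f_0^x \ne 0$ (say $f_0^x(g) = \max(0,\,1-(2/r(x))\,D(g,e_x))$), and consider the Borel set
\[
Z = \bigl\{(x,\mu) \in \cM^+(G) \mid s_*\mu = \mu \text{ for all } s \in \{\vphi_j(x)\}_{j\in\N},\ \mu(f_0^x)=1\bigr\}.
\]
Since $s\mapsto s_*\mu$ is vaguely continuous, invariance under the dense set $\{\vphi_j(x)\}$ is equivalent to invariance under all of $G_x$; so by existence and uniqueness of Haar measure each fibre $Z_x$ is a singleton $\{\lambda_x\}$, with $\lambda_x$ the left Haar measure normalized by $\lambda_x(f_0^x)=1$. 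A Borel subset of $X\times\cM^+(G)$ with nonempty singleton fibres is the graph of a Borel map (each preimage is simultaneously analytic and coanalytic), so $x\mapsto\lambda_x \in \cM^+(G)$ is Borel; by the definition of the Borel structure on $\cM^+(G)$ this already gives that $x\mapsto\int_{G_x}F\,d\lambda_x$ is Borel for every Borel $F : G\to[0,+\infty)$ that restricts to a compactly supported continuous function on each $G_x$.

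Finally I would bootstrap to arbitrary Borel $F : G\to[0,+\infty]$ by a monotone class argument: the class of such $F$ with $x\mapsto\int_{G_x}F\,d\lambda_x$ Borel is closed under increasing pointwise limits and nonnegative rational combinations, contains the fibrewise‑$C_c^+$ sections, hence — writing each open subset of $G_x$ as an increasing union of the relatively compact Borel sets $\{g : D(g,\vphi_j(\pi(g)))<r(\pi(g))\}$ and using outer/inner regularity of $\lambda_x$ on the $\sigma$-compact group $G_x$ — it contains indicators of open sets, then of all Borel subsets of $G$, and then all Borel $F\ge 0$ by simple-function approximation; this proves existence. Uniqueness up to a Borel scalar is then immediate: if $(\lambda'_x)$ is another such choice, then $\lambda'_x = c(x)\lambda_x$ with $c(x)\in(0,+\infty)$ by uniqueness of Haar measure, and $c(x) = \int_{G_x}F_0\,d\lambda'_x$ with $F_0(g) = f_0^{\pi(g)}(g)$ is Borel, positive and finite by the hypothesis on $(\lambda'_x)$ and the normalization $\int_{G_x}F_0\,d\lambda_x = 1$. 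I expect the main obstacle to be the construction of the Borel field $\cM^+(G)$ — producing the countable vaguely-determining bump family Borel in $x$ without ever discarding a null set (this is where $D$ and local compactness are essential) and checking Borelness of the induced subset of $X\times[0,+\infty]^\N$ — with the monotone-class bookkeeping leading to all Borel $F$ a close second.
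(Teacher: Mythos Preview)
Your proof shares the paper's central idea: exhibit the normalized Haar measure as the unique point in each fiber of a Borel set, then extract it as a Borel section via the Borel-graph criterion. Where you diverge is in the ambient measure space. The paper first uses Lemma~\ref{lem.Borel-compact-filtration} to produce a Borel family $K = (K_x)_{x \in X}$ of compact sets with nonempty interior, and then works inside the already-established Borel field $\cM^1_X(K)$ of bounded-by-$1$ measures from Proposition~\ref{prop.Borel-field-prob-measures}. The point is that Haar measure on a noncompact $G_x$ is infinite, so one cannot land in $\cM^1(G_x)$; restricting to $K_x$ and normalizing to a probability measure fixes this. The paper then carves out the Borel set $H \subset \Prob_X(K)$ of measures satisfying the \emph{restricted} translation invariance $\mu(\vphi_n(x)^{-1} A) = \mu(A)$ for $A \subset \vphi_n(x) K_x \cap K_x$, after first arranging that $\{\vphi_n(x)\}$ is a dense sub\emph{group}. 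The fibers are singletons $\{\lambda_x|_{K_x}\}$, and the passage from $K$ to all of $G$ is by left invariance and the covering $G_x = \bigcup_n \vphi_n(x) K_x$.

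Your route---building the full Radon-measure field $\cM^+(G)$ and cutting out the invariant normalized measures there---is more direct once the setup is in place, and indeed makes the singleton step cleaner: no restricted-invariance bookkeeping on a compact slice, and no need to close the $\vphi_j$ under the group operations, since vague continuity of $s \mapsto s_*\mu$ handles density automatically. The cost is exactly the obstacle you name. Showing that the image of $\cM^+(G)$ in $X \times [0,+\infty)^\N$ is Borel needs the Riesz converse: a positive functional on the rational span of your bumps $h_m^x$ must extend to a Radon measure, which requires that span to be sup-norm dense in $C(L)$ for every compact $L \subset G_x$. This is true for tent functions over a dense set of centers, but it is a genuine lemma, not a triviality. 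Likewise, upgrading from ``$x \mapsto \lambda_x(h_m^x)$ is Borel'' to ``the pairing $\cM^+(G) \times_\pi C_c(G) \to \C$ is Borel'' needs the $h_m$ to be dense in the Borel field $C_c(G)$ of Proposition~\ref{prop.Borel-field-C0}, which again holds but should be said. None of this is wrong; the paper simply sidesteps all of it by reducing to Proposition~\ref{prop.Borel-field-prob-measures}, trading your global construction for some translation-on-a-compact-piece gymnastics.
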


Before proving Propositions \ref{prop.Borel-dual-group} and \ref{prop.Borel-Haar-measure}, we need a few general results on Borel fields of second countable locally compact spaces.

\begin{lemma}\label{lem.Borel-compact-filtration}
Let $X$ be a standard Borel space and $V = (V_x)_{x \in X}$ a Borel field of Polish spaces. Assume that every $V_x$ is locally compact.

There exists a sequence of Borel sets $K_n \subset V$ such that $V = \bigcup_{n \in \N} K_n$ and for every $x \in X$ and $n \in \N$, we have that $K_{n,x}$ is a nonempty compact subset of $V_x$ that is contained in the interior of $K_{n+1,x}$.
\end{lemma}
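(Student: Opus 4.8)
\textit{Strategy.} The plan is to realize each $K_n$ as a finite union of closures of open metric balls centred at the points $\vphi_k(x)$ of a fixed dense sequence of Borel sections, where the radii \emph{increase} with $n$; the increasing radii are exactly what make the nesting condition $K_{n,x}\subseteq\operatorname{int}K_{n+1,x}$ automatic, and local compactness is what makes these closed balls compact and collectively exhaustive. Throughout, fix a compatible Borel metric $d$ on $V$ and a dense sequence of Borel sections $\vphi_k:X\to V$, and for $v\in V_x$ and $s>0$ write $B(v,s)=\{w\in V_x:d(v,w)<s\}$, with closure $\overline{B(v,s)}$ taken inside $V_x$. The heart of the argument is the following measurability fact: for every rational $s>0$ the set
\[
G_s=\bigl\{v\in V:\overline{B(v,s)}\text{ is a compact subset of }V_{\pi(v)}\bigr\}
\]
is Borel, and for every $k\in\N$ and rational $r>0$ the set $\bigl\{v\in V:v\in\overline{B(\vphi_k(\pi(v)),r)}\bigr\}$ is Borel, as is the analogous set when $r$ is replaced by a Borel function of $\pi(v)$.

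To prove this, note that $\overline{B(v,s)}$, being closed in the complete space $V_{\pi(v)}$, is compact iff it is totally bounded, and that $\{\vphi_j(\pi(v)):d(v,\vphi_j(\pi(v)))<s\}$ is dense in $B(v,s)$ (hence has closure $\overline{B(v,s)}$). Consequently $\overline{B(v,s)}$ is compact iff for every $r\in\N$ there is a finite $F\subseteq\N$ such that for every $j\in\N$ with $d(v,\vphi_j(\pi(v)))<s$ there is $i\in F$ with $d(\vphi_j(\pi(v)),\vphi_i(\pi(v)))<1/r$. Here $v\mapsto d(v,\vphi_j(\pi(v)))$ is Borel by Lemma \ref{lem.Borel-field-Polish-some-properties}, $v\mapsto d(\vphi_j(\pi(v)),\vphi_i(\pi(v)))$ is Borel since $\pi$, $d$ and the $\vphi_j$ are, and all quantifiers run over countable sets, so $G_s$ is Borel. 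The second claim is similar, using that $v\in\overline{B(\vphi_k(\pi(v)),r)}$ iff for every $t\in\N$ there is $j\in\N$ with $d(v,\vphi_j(\pi(v)))<1/t$ and $d(\vphi_j(\pi(v)),\vphi_k(\pi(v)))<r$; a variable Borel radius is handled by splitting $V$ along the Borel level sets of that radius.

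By local compactness each $v$ admits $m$ with $\overline{B(v,2/m)}$ compact; let $m(v)$ be the least such $m$, which is Borel since $\{v:m(v)\le m_0\}$ is one of the sets $G_s$, and put $\rho(v)=1/m(v)$. Fix an enumeration $(k_i,m_i)_{i\in\N}$ of $\N\times\N$ and set $Y_i=\vphi_{k_i}^{-1}(G_{1/m_i})$, a Borel subset of $X$ consisting of those $x$ for which $\overline{B(\vphi_{k_i}(x),1/m_i)}$ is compact. Choosing, for a given $v\in V_x$, first $m$ with $\overline{B(v,2/m)}$ compact and then $k$ with $d(\vphi_k(x),v)<1/m$, one sees $v\in B(\vphi_k(x),1/m)$ and $\overline{B(\vphi_k(x),1/m)}\subseteq\overline{B(v,2/m)}$ is compact, so for each $x$ the open balls $B(\vphi_{k_i}(x),1/m_i)$ with $x\in Y_i$ already cover $V_x$. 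Define, for $n\ge1$,
\[
K_n\cap V_x=\overline{B\!\Bigl(\vphi_1(x),\bigl(1-\tfrac1{n+1}\bigr)\rho(\vphi_1(x))\Bigr)}\ \cup\ \bigcup_{\substack{1\le i\le n\\ x\in Y_i}}\overline{B\!\Bigl(\vphi_{k_i}(x),\bigl(1-\tfrac1{n-i+2}\bigr)\tfrac1{m_i}\Bigr)}.
\]
Each $K_n$ is a Borel subset of $V$ by the measurability facts above (the $i$-th term is $\{v:v\in\overline{B(\vphi_{k_i}(\pi v),(1-\tfrac1{n-i+2})/m_i)}\}\cap\pi^{-1}(Y_i)$). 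Each fibre $K_{n,x}$ is nonempty (it contains $\vphi_1(x)$) and compact, being a finite union of closed balls $\overline{B(c,r)}$ each of which is a closed subset of a compact $\overline{B(c,r')}$ with $r'>r$ — take $r'=2/m(\vphi_1(x))$ at the centre $c=\vphi_1(x)$, and $r'=1/m_i$ at a centre $c=\vphi_{k_i}(x)$ with $x\in Y_i$. The $K_n$ are increasing, and $\bigcup_n K_{n,x}=V_x$ because the radii $(1-\tfrac1{n-i+2})/m_i$ increase to $1/m_i$, so any $v$ lying in a covering ball $B(\vphi_{k_i}(x),1/m_i)$ lies in $K_{n,x}$ once $n\ge i$ is large enough. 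Finally, every centre $c$ occurring at stage $n$ also occurs at stage $n+1$ with a strictly larger radius $s'>s$, so $\overline{B(c,s)}\subseteq B(c,s')$; hence $K_{n,x}$ is contained in the union of the open balls occurring at stage $n+1$, which is an open subset of $K_{n+1,x}$, and therefore $K_{n,x}\subseteq\operatorname{int}K_{n+1,x}$.

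The step I expect to be the main obstacle is the measurability fact about $G_s$: one must reduce ``the closed ball is compact'' to a countable-quantifier condition, and the delicate point is that what the dense sections detect (through total boundedness) is the compactness of the \emph{closure of the open ball} $\overline{B(v,s)}$, not of the metric closed ball $\{w:d(v,w)\le s\}$, which can be strictly larger and need not be compact; so one must work consistently with $\overline{B(v,s)}$. The increasing-radius device in the definition of $K_n$ is precisely what lets us avoid the alternative, more painful route of selecting, in a Borel way, a ``thickening radius'' $\sigma_n(x)>0$ around the already-built compact fibre $K_{n,x}$ for which $\{w:d(w,K_{n,x})\le\sigma_n(x)\}$ is again compact.
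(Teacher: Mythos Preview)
Your proof is correct and follows essentially the same route as the paper's: both establish that the set $\{v:\overline{B(v,s)}\text{ is compact}\}$ is Borel by rewriting compactness as total boundedness of the dense sections, and both build $K_n$ as a finite union of closed balls centred at the $\vphi_k(x)$'s with radii that strictly increase in $n$ to force $K_{n,x}\subset\operatorname{int}K_{n+1,x}$. The only difference is bookkeeping: the paper packages the compactness information into a single Borel ``compactness radius'' $C(v)=\sup\{r:\overline{B(v,r)}\text{ compact}\}$ (truncated at $1$) and sets $K_n=\bigcup_{i\le n}\{v:d(v,\vphi_i(\pi(v)))\le\tfrac{n}{n+1}C(\vphi_i(\pi(v)))\}$, whereas you enumerate pairs $(k_i,m_i)$ and keep track of which $x$'s lie in each $Y_i$; your discrete-valued $\rho$ and the added base ball around $\vphi_1(x)$ are minor variants of the same device.
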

\begin{proof}
Choose a compatible Borel metric $d : V \times_\pi V \to [0,+\infty)$ and choose a dense sequence of Borel sections $\vphi_n : X \to V$. Replacing $d$ by $\min\{d,1\}$, we may assume that $d(v,w) \leq 1$ for all $v,w \in V \times_\pi V$.

For every $v \in V_x$ and $r > 0$, denote by $B(v,r)$ the open ball with radius $r$ and center $v$ in $V_x$. Define for every $r > 0$,
\[
D_r = \bigl\{v \in V \bigm| \overline{B(v,r)} \;\;\text{is compact in $V_{\pi(v)}$} \bigr\} \; .
\]
We claim that $D_r$ is a Borel subset of $V$ for all $r > 0$. Since a closed subset of $V_x$ is compact if and only if it is totally bounded, we get that
\begin{align*}
D_r &= \Bigl\{ v \in V \Bigm| \forall n \in \N , \exists k \in \N , \forall j \in \N : \\ &\hspace{3cm}\text{if $d(\vphi_j(\pi(v)),v)<r$ then}\; \vphi_j(\pi(v)) \in \bigcup_{i=1}^k B(\vphi_i(\pi(v)),1/n) \Bigr\} \\
&= \Bigl\{ v \in V \Bigm| \forall n \in \N , \exists k \in \N , \forall j \in \N : \\ &\hspace{3cm}\text{if $d(\vphi_j(\pi(v)),v)<r$, there exists $i \in \{1,\ldots,k\}$}\\
&\hspace{3cm}\text{with $d(\vphi_j(v),\vphi_i(\pi(v)))<1/n$} \Bigr\} \; .
\end{align*}
So, the claim that $D_r \subset V$ is a Borel set follows. Define the function
\[
C' : V \to [0,+\infty] : C'(v) = \sup \{r > \mid v \in D_r \} \; .
\]
For $r > 0$, we have that $C'(v) \leq r$ if and only if
$$v \in \bigcap_{s \in \Q \cap (r,+\infty)} (V \setminus D_s) \; ,$$
so that $C'$ is Borel. Because each $V_x$ is locally compact, we have that $C'(v) > 0$ for all $v \in V$. Because $d \leq 1$, the following holds: if $C'(v) > 1$, then $V_{\pi(v)}$ is compact and thus $C'(v) = +\infty$. We define the adapted Borel function
\[
C : V \to (0,1] : C(v) = \begin{cases} C'(v) &\quad\text{if $C'(v) \leq 1$,}\\ 1 &\quad\text{if $C'(v) > 1$.}\end{cases}
\]
We now define
\[
K_n = \Bigl\{ v \in V \Bigm| \exists i \in \{1,\ldots,n\} : d(v,\vphi_i(\pi(v))) \leq \frac{n}{n+1}C(\vphi_i(\pi(v))) \Bigr\} \; .
\]
Note that $K_n$ is a Borel set for every $n \in \N$. By definition of the function $C$, $K_{n,x}$ is a compact subset of $V_x$ for all $x \in X$ and $n \in \N$. By construction, $K_{n,x}$ is contained in the interior of $K_{n+1,x}$.

It remains to prove that $V = \bigcup_{n \in \N} K_n$. Fix $v \in V$ and write $x = \pi(v)$. Since $V_x$ is locally compact, we can take $0 < \eps < 1/3$ such that $\overline{B(v,3\eps)}$ is compact. Choose $i \in \N$ such that $d(v,\vphi_i(x)) < \eps$. Then $\overline{B(\vphi_i(x),2\eps)} \subset \overline{B(v,3\eps)}$ is compact, so that $C(\vphi_i(x)) \geq 2\eps$. Then take $n \in \N$ large enough such that $n \geq i$ and
\[
\frac{n}{n+1} C(\vphi_i(x)) > \eps \; .
\]
By construction, $v \in K_n$.
\end{proof}

\begin{proposition}\label{prop.Borel-field-cont-functions-compact-open}
Let $V = (V_x)_{x \in X}$ be a Borel field of Polish spaces. Assume that every $V_x$ is locally compact. Consider the family $C(V) = C(V_x)_{x \in X}$ of continuous functions from $V_x$ to $\C$, where we equip each $C(V_x)$ with the compact-open topology, so that each $C(V_x)$ is a Polish space.

There is a unique standard Borel structure on $C(V)$ such that the maps $\pi : C(V) \to X$ and $C(V) \times_\pi V \to \C : (F,v) \mapsto F(v)$ are Borel. In this way, $C(V)$ is a Borel field of Polish spaces. Addition, multiplication and complex conjugation on $C(V)$ are Borel maps.
\end{proposition}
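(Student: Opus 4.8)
The plan is to follow the pattern of Proposition~\ref{prop.Borel-field-CK}, the new ingredient being a Borel compact exhaustion of $V$ adapted to the dense sections. First I would fix a dense sequence of Borel sections $\vphi_j : X \to V$ and a compatible Borel metric $d$, and invoke Lemma~\ref{lem.Borel-compact-filtration} to get Borel sets $K_n \subset V$ with $K_{n,x}$ nonempty compact, $K_{n,x}\subset\operatorname{int}(K_{n+1,x})$ and $\bigcup_n K_{n,x}=V_x$ (and, by the explicit construction there, $\vphi_i(x)\in K_{n,x}$ for $i\le n$). I would then pass to the ``$\vphi$-saturated'' sets
\[
M_n = \{v\in V \mid \forall m\in\N\ \exists j\in\N : \vphi_j(\pi(v))\in K_{n,\pi(v)}\text{ and } d(v,\vphi_j(\pi(v)))<1/m\} \; ,
\]
which are Borel, satisfy $M_{n,x}=\overline{\{\vphi_j(x)\mid \vphi_j(x)\in K_{n,x}\}}$ inside $V_x$, and --- using that $\{\vphi_j(x)\}$ is dense in the open set $\operatorname{int}(K_{n+1,x})$ and that $M_{n,x}\subseteq K_{n,x}\subseteq\operatorname{int}(K_{n+1,x})\subseteq M_{n+1,x}$ --- form a compact exhaustion $M_{n,x}\subseteq\operatorname{int}(M_{n+1,x})$ of $V_x$ with the extra property that $\{\vphi_j(x)\}\cap M_{n,x}$ is dense in $M_{n,x}$ for all $x,n$. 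In particular every compact subset of $V_x$ lies in some $M_{n,x}$.

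For the Borel structure I would apply the constant-field variant of Lemma~\ref{lem.construction-uniqueness} to the map $C : C(V)\times_\pi V \to \C$, $(F,v)\mapsto F(v)$, via $\theta(F)=(\pi(F),(F(\vphi_j(\pi(F))))_{j\in\N})$. The crux is that $\theta(C(V))\subseteq X\times\C^\N$ is Borel: I would check that it equals the set of pairs $(x,z)$ such that for all $n,m$ there is $s$ with the property that $|z_j-z_{j'}|<1/m$ whenever $\vphi_j(x),\vphi_{j'}(x)\in K_{n,x}$ and $d(\vphi_j(x),\vphi_{j'}(x))<1/s$. Indeed this condition says precisely that $\vphi_j(x)\mapsto z_j$ is uniformly continuous on the dense subset $\{\vphi_j(x)\in K_{n,x}\}$ of the compact set $M_{n,x}$, hence extends to some $\bar F_n\in C(M_{n,x})$; these agree on overlaps by density, and since $M_{n,x}\subseteq\operatorname{int}(M_{n+1,x})$ they glue to a continuous function $F$ on $V_x=\bigcup_n M_{n,x}$ with $F(\vphi_j(x))=z_j$, while the converse implication is immediate. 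Being a countable Boolean combination of Borel conditions, this set is Borel, so Lemma~\ref{lem.construction-uniqueness} yields the unique standard Borel structure on $C(V)$ making $\pi$ and evaluation Borel, identifies the $\sigma$-algebra as the smallest one making $\pi$ and the maps $F\mapsto F(\vphi_j(\pi(F)))$ measurable, and shows $\theta$ is a Borel isomorphism onto its image.

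It then remains to provide a compatible Borel metric and dense Borel sections. For the metric I would take $d'(F,G)=\sum_n 2^{-n}\min\{1,\rho_n(F,G)\}$ with $\rho_n(F,G)=\sup\{|F(\vphi_j(\pi(F)))-G(\vphi_j(\pi(F)))|\mid \vphi_j(\pi(F))\in K_{n,\pi(F)}\}$; this is Borel by the description of the $\sigma$-algebra, and by density of $\{\vphi_j(x)\}\cap M_{n,x}$ in $M_{n,x}$ it restricts on each fibre to $\sum_n 2^{-n}\min\{1,\|F-G\|_{M_{n,x}}\}$, a complete separable metric inducing the compact-open topology since the $M_{n,x}$ exhaust $V_x$ by compacts and absorb every compact set. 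For the sections I would imitate Proposition~\ref{prop.Borel-field-CK}: the sections $G_n(x)=[\,v\mapsto d(v,\vphi_n(x))\,]$ are Borel because their evaluations at $\vphi_m$ are $x\mapsto d(\vphi_m(x),\vphi_n(x))$, and the countable family of all polynomials with coefficients in $\Q+i\Q$ in the $G_n$ (again Borel sections, as sums and products of Borel sections are Borel by the $\sigma$-algebra description) is, by Stone--Weierstrass applied on each locally compact $V_x$ --- the functions $d(\cdot,\vphi_n(x))$ separate points of $V_x$ and with the constants generate a self-adjoint unital subalgebra --- fibrewise dense in $C(V_x)$ for the compact-open topology. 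Finally, addition, multiplication and complex conjugation on $C(V)$ are Borel since their compositions with every evaluation $F\mapsto F(\vphi_j(\pi(F)))$ are Borel, so Lemma~\ref{lem.construction-uniqueness} applies.

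I expect the main obstacle to be the construction of the saturated exhaustion $(M_n)$ and the verification that $\theta(C(V))$ is Borel: one must arrange the compact exhaustion so that ``restriction to a compact set'' and ``uniform continuity on a compact set'' become expressible through the countably many sections $\vphi_j$, which is exactly what forces the passage from the $K_n$ of Lemma~\ref{lem.Borel-compact-filtration} to the $M_n$. Everything afterwards is routine bookkeeping parallel to Proposition~\ref{prop.Borel-field-CK}.
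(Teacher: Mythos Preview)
Your proof is correct, and the overall architecture---exhaust by compacts, encode via evaluations, produce a metric from sup norms on the exhausting compacts, and finish with Stone--Weierstrass---matches the paper's. The one genuine difference is in how the standard Borel structure is defined. The paper bypasses your saturated exhaustion $M_n$ entirely: having already established in Proposition~\ref{prop.Borel-field-CK} that $C(K_n)=(C(K_{n,x}))_{x\in X}$ is a Borel field of separable Banach spaces, it embeds $C(V)$ as the Borel set
\[
Z\subset X\times\prod_{n\in\N}C(K_n)
\]
of compatible families $(x,(F_n)_n)$ with $F_m|_{K_{n,x}}=F_n$ for $n<m$, via $F\mapsto(\pi(F),(F|_{K_{n,\pi(F)}})_n)$. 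This is more modular---it reuses the compact case as a black box---whereas you redo the uniform-continuity characterization from scratch in $\C^\N$, which forces the passage from $K_n$ to the $\vphi$-saturated $M_n$ so that ``uniformly continuous on a dense subset of $M_{n,x}$'' becomes expressible through the countably many $\vphi_j$. Your route is self-contained but buys that at the price of the extra $M_n$ bookkeeping; the paper's route leans on Proposition~\ref{prop.Borel-field-CK} and avoids it. The remaining steps (the metric $D$ built from $\|\,\cdot\,\|_{K_{n,x}}$ and the dense sections from polynomials in $v\mapsto d(v,\vphi_n(x))$) are essentially identical in both proofs.
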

\begin{proof}
Fix $K_n \subset V$ satisfying the conclusions of Lemma \ref{lem.Borel-compact-filtration}. By Proposition \ref{prop.subfield-sigma-compact} (and actually by construction in the proof of Lemma \ref{lem.Borel-compact-filtration}), every $K_n = (K_{n,x})_{x \in X}$ is a Borel field of compact Polish spaces. Consider the associated Borel field $C(K_n) = C(K_{n,x})_{x \in X}$ consisting of the separable Banach spaces of continuous functions from $K_{n,x}$ to $\C$, given by Proposition \ref{prop.Borel-field-CK}.

Define the Borel set
\begin{align*}
Z \subset X \times \prod_{n \in \N} C(K_n) \quad & \text{with $(x,(F_n)_{n \in \N})$ in $Z$ if and only if}\\
&\text{for all $n \in \N$, $\pi(F_n) = x$ and for all $n < m$, $F_m|_{K_{n,x}} = F_n$}\; .
\end{align*}
Then the map $\theta : C(V) \to Z : F \mapsto \bigl(\pi(F),(F|_{K_{n,\pi(F)}})_{n \in \N}\bigr)$ is a bijection and we define a standard Borel structure on $C(V)$ such that $\theta$ is a Borel isomorphism.

By construction, the map $\pi : C(V) \to X$ is Borel. By construction, for every $n \in \N$, the restriction of the map $C(V) \times_\pi V \to \C$ to $C(V) \times_\pi K_n$ is Borel. So also $C(V) \times_\pi V \to \C$ is Borel. The uniqueness statement then follows from Lemma \ref{lem.construction-uniqueness}. Fixing a dense sequence of Borel sections $\vphi_n : X \to V$, it also follows from Lemma \ref{lem.construction-uniqueness} that the Borel $\sigma$-algebra on $C(V)$ is the smallest $\sigma$-algebra that makes the maps $\pi : C(V) \to X$ and $C(V) \to \C : F \mapsto F(\vphi_n(\pi(F)))$ measurable.

To prove that addition $C(V) \times_\pi C(V) \to C(V)$ is Borel, it thus suffices to check that the maps $C(V) \times_\pi C(V) \to \C : (F,G) \mapsto F(\vphi_n(\pi(F))) + G(\vphi_n(\pi(G)))$ are Borel, which is the case as the sum of two Borel maps. The same argument works for the other algebraic operations on $C(V)$.

Define the Borel map
\[
D : C(V) \times_\pi C(V) \to [0,+\infty) : d(F,F') = \sum_{n \in \N} 2^{-n} \min\bigl\{1,\bigl\|(F-F')|_{K_{n,\pi(F)}}\bigr\|_\infty\bigr\} \; .
\]
Since for every $x \in X$ and $n \in \N$, the compact set $K_{n,x}$ is contained in the interior of $K_{n+1,x}$, the restriction of $D$ to $C(V_x) \times C(V_x)$ is a separable complete metric that induces the compact-open topology.

Fix a compatible Borel metric $d : V \times_\pi V \to [0,+\infty)$. Whenever $\vphi : X \to V$ is a Borel section, also
\[
F_\vphi : X \to C(V) : F_\vphi(x) = (v \mapsto d(v,\vphi(x)))
\]
is a Borel section. By taking finite linear combinations with coefficients in $\Q + i \Q$ of finite products of the Borel sections $F_{\vphi_n}$ we find a sequence of Borel sections $F_k : X \to C(V)$ with the property that for every $x \in X$ and every $K \subset V_x$ compact, $\{F_k(x)|_K \mid k \in \N\}$ is uniformly dense in $C(K)$. Thus, $F_k$ is a dense sequence of Borel sections and $C(V)$ is a Borel field of Polish spaces.
\end{proof}

We are now ready to prove Proposition \ref{prop.Borel-dual-group}.

\begin{proof}[{Proof of Proposition \ref{prop.Borel-dual-group}}]
Choose a dense sequence of Borel sections $\vphi_n : X \to G$. Consider the Borel field $C(G)$ of the Polish spaces $C(G_x)$ of continuous functions from $G_x$ to $\C$ with the compact-open topology. Since
\begin{align*}
\Ghat = \bigl\{F \in C(G) \bigm| & \;\forall n \in \N : |F(\vphi_n(\pi(F)))| = 1 \;\;\text{and}\\
& \;\forall n,m \in \N : F(\vphi_n(\pi(F))\vphi_m(\pi(F))) = F(\vphi_n(\pi(F))) \, F(\vphi_m(\pi(F))) \\
& \hspace{2.1cm} \text{and}\;\; F(\vphi_n(\pi(F))^{-1}) = \overline{F(\vphi_n(\pi(F)))} \bigr\} \; ,
\end{align*}
we see that $\Ghat \subset C(G)$ is a Borel set. Since for every $x \in X$, $\Ghat_x$ is locally compact second countable, we see that every $\Ghat_x$ is a $\sigma$-compact closed subset of $C(G)$. It follows from Proposition \ref{prop.subfield-sigma-compact} that $\Ghat$ is a Borel field of Polish groups.
\end{proof}

For a Polish space $V$, we denote by $\Prob V$ the set of probability measures on the Borel $\sigma$-algebra of $V$ and we denote by $\cM^1(V)$ the set of measures $\mu$ on the Borel $\sigma$-algebra of $V$ satisfying $\mu(V) \leq 1$. By \cite[Theorem 17.23]{Kec95}, the set $\cM^1(V)$ is a Polish space in the weak topology, in which $\mu_n \to \mu$ if and only if for every bounded continuous $F \in C_b(V)$, we have that
\[
\int_V F \, d\mu_n \to \int_V F \, d\mu \; .
\]
Clearly, $\Prob V$ is a closed subset of $\cM^1(V)$ and also is a Polish space. When $V$ is compact, we may identify $\cM^1(V)$ with the positive part of the unit ball of $C(V)^*$.

\begin{proposition}\label{prop.Borel-field-prob-measures}
Let $V = (V_x)_{x \in X}$ be a Borel field of Polish spaces. The family $\cM^1_X(V) = \cM^1(V_x)_{x \in X}$ of bounded-by-$1$ measures on $V_x$ admits a unique standard Borel structure such that the map $\pi : \cM^1_X(V) \to X$ and, with the notation of Proposition \ref{prop.Borel-field-cont-functions-compact-open}, the map
\begin{equation}\label{eq.coupling}
\cM^1_X(V) \times_\pi \ball C(V) \to \C : (\mu,F) \mapsto \int_{V_{\pi(\mu)}} F \, d\mu
\end{equation}
is Borel. In this way, $\cM^1_X(V)$ is a Borel field of Polish spaces.

Then for every bounded Borel function $F : V \to \R$, also the map
\begin{equation}\label{eq.IF}
I_F : \cM^1_X(V) \to \R : \mu \mapsto \int_{V_{\pi(\mu)}} F \, d\mu \quad\text{is Borel.}
\end{equation}

If every $V_x$ is compact, the map $\cM^1_X(V) \to C(V)^*$ is Borel.
\end{proposition}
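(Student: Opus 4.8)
The plan is to build the standard Borel structure on $\cM^1_X(V)$ by transporting every $\cM^1(V_x)$ into one fixed compact metrizable space, and then to read off all the required properties there. First I would fix a compatible Borel metric $d$ on $V$, which after replacing $d$ by $\min\{d,1\}$ we may take to satisfy $d \le 1$, together with a dense sequence of Borel sections $\vphi_n : X \to V$. Exactly as in the proof of Proposition~\ref{prop.subfield-sigma-compact}, the map $\iota : V \to X \times [0,1]^\N$, $\iota(v) = \bigl(\pi(v),(d(v,\vphi_n(\pi(v))))_n\bigr)$, is injective and Borel, its image $C = \iota(V)$ is Borel, and each $\iota_x : V_x \to C_x$ is a homeomorphism onto the Borel set $C_x \subset [0,1]^\N$. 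Since $[0,1]^\N$ is compact metrizable, $\cM^1([0,1]^\N)$ is a compact metrizable space by \cite[Theorem~17.23]{Kec95}, and push-forward along $\iota_x$ followed by extension by $0$ identifies $\cM^1(V_x)$ with $\widetilde\cM_x := \{\nu \in \cM^1([0,1]^\N) \mid \nu(C_x) = \nu([0,1]^\N)\}$. A $\lambda$-system argument shows that the family of Borel $D \subset X \times [0,1]^\N$ for which $(x,\nu) \mapsto \nu(D_x)$ is Borel is a $\sigma$-algebra containing all rectangles, hence all Borel sets; in particular $(x,\nu) \mapsto \nu(C_x)$ is Borel, so $\widetilde\cM := \{(x,\nu) \mid \nu \in \widetilde\cM_x\}$ is a Borel subset of the standard Borel space $X \times \cM^1([0,1]^\N)$. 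I equip $\cM^1_X(V)$ with the standard Borel structure making $\mu \mapsto \bigl(\pi(\mu),(\iota_{\pi(\mu)})_*\mu\bigr)$ a Borel isomorphism onto $\widetilde\cM$; then $\pi : \cM^1_X(V) \to X$ is Borel.

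Next I would fix a countable family $(P_k)_k$ of $\Q$-valued piecewise-linear functions $[0,1]^\N \to [-1,1]$, each depending on only finitely many coordinates, which is uniformly dense in $\ball C([0,1]^\N)$, and set $h_k(x) = P_k \circ \iota_x \in \ball C(V_x)$. One checks that each $h_k$ is a Borel section of $\ball C(V)$ — since $x \mapsto h_k(x)(\vphi_n(x)) = P_k\bigl((d(\vphi_n(x),\vphi_m(x)))_m\bigr)$ is Borel — and, using the Tietze extension theorem on $[0,1]^\N$, that the $h_k(x)$ are dense in $\ball C(V_x)$ for the compact-open topology; hence $\ball C(V)$ is a Borel field of Polish spaces with $(h_k)_k$ a dense sequence of Borel sections. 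For $\mu \in \cM^1(V_x)$ one has $\int_{V_x} h_k(x)\,d\mu = \int_{[0,1]^\N} P_k\,d(\iota_x)_*\mu$, and since $\nu \mapsto \int P_k\,d\nu$ is weak$^*$-continuous on $\cM^1([0,1]^\N)$, the map $\mu \mapsto \int h_k(\pi(\mu))\,d\mu$ is Borel. The second part of Lemma~\ref{lem.construction-uniqueness} (with $Y = \cM^1_X(V)$, the field $\ball C(V)$, and the constant field $\C$) then shows the coupling map \eqref{eq.coupling} is Borel. Moreover, pushing forward and using that the $P_k$ are uniformly dense in $C([0,1]^\N)$ shows that the $h_k(x)$ separate the measures on $V_x$ and determine their weak topology, so $\widetilde\cM \to X \times \C^\N$, $(x,\nu) \mapsto (x,(\int P_k\,d\nu)_k)$, is injective and Borel and hence has Borel image; by the first part of Lemma~\ref{lem.construction-uniqueness} the standard Borel structure making $\pi$ and \eqref{eq.coupling} Borel is unique and equals the one above. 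Finally, $d_\cM(\mu,\mu') = \sum_k 2^{-k}\min\{1,|\int h_k(\pi(\mu))\,d\mu - \int h_k(\pi(\mu))\,d\mu'|\}$ is a Borel map whose fiberwise restrictions induce the weak topology; replacing it if necessary by the Prokhorov metric built from the fiberwise complete metric $d$ gives a complete compatible Borel metric, and together with the dense sequence of Borel sections furnished by the rational sub-convex combinations $\sum_i q_i\,\delta_{\vphi_{n_i}(x)}$ of the Borel sections $x \mapsto \delta_{\vphi_n(x)}$, this shows $\cM^1_X(V)$ is a Borel field of Polish spaces.

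For \eqref{eq.IF}: if $F : V \to \R$ is bounded, Borel and fiberwise continuous, then, after dividing by $\sup_v |F(v)|$, the section $x \mapsto F|_{V_x}$ is a Borel section of $\ball C(V)$ (as $x \mapsto F(\vphi_n(x))$ is Borel), so $I_F$ is Borel by the Borel-ness of \eqref{eq.coupling}. Since the bounded fiberwise continuous Borel functions on $V$ include the maps $v \mapsto \min\{1,d(v,\vphi_n(\pi(v)))\}$ and the pull-backs $g \circ \pi$ of bounded Borel $g : X \to \R$, they generate the Borel $\sigma$-algebra of $V$ by Lemma~\ref{lem.Borel-field-Polish-some-properties}; as the class of bounded Borel $F$ with $I_F$ Borel is a vector space stable under uniformly bounded pointwise limits (dominated convergence gives $I_{F_n} \to I_F$ pointwise on $\cM^1_X(V)$), a monotone-class argument yields $I_F$ Borel for all bounded Borel $F$. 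For the last assertion, when every $V_x$ is compact, $C(V)$ is a Borel field of separable Banach spaces by Proposition~\ref{prop.Borel-field-CK}, and by Proposition~\ref{prop.dual-field-Banach} the Borel structure on $C(V)^*$ is the smallest one making $\pi$ and the evaluations $\om \mapsto \om(\psi_k(\pi(\om)))$ Borel for a total sequence of Borel sections $\psi_k : X \to C(V)$; pulling these back along the inclusion $\cM^1_X(V) \hookrightarrow C(V)^*$ gives the maps $\mu \mapsto \int \psi_k(\pi(\mu))\,d\mu$ (Borel by the coupling-map statement, after rescaling the $\psi_k$ into $\ball C(V)$) together with $\pi$, so the inclusion is Borel.

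I expect the main obstacle to be making the coupling map Borel \emph{uniformly in $x$}. The embedding into the single compact space $\cM^1([0,1]^\N)$ is precisely what turns the fiberwise integrals $\int h_k(x)\,d\mu$, whose integrands a priori depend on $x$, into integrals $\int P_k\,d\nu$ of fixed continuous functions, for which weak$^*$-continuity makes measurability transparent; choosing the dense Borel sections of $\ball C(V)$ to be pulled back from fixed functions on $[0,1]^\N$ is the key trick. The remaining points — the compact-open density of the $h_k(x)$ in $\ball C(V_x)$ via Tietze, the measurability of $(x,\nu) \mapsto \nu(C_x)$, and the completeness of the fiberwise metric — are routine but somewhat technical.
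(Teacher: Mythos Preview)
Your approach is essentially the paper's: both embed $V$ into $X$ times a fixed Polish space (you use the compact $[0,1]^\N$, the paper uses $\R^\N$ and then works inside $\cM^1(X\times\R^\N)$), identify $\cM^1(V_x)$ with the measures concentrated on the image, and verify that this yields a Borel set. The one genuine structural difference is the order of the two main claims. The paper first proves $I_F$ is Borel for every bounded Borel $F$ by directly invoking \cite[Theorem~17.24]{Kec95} in the ambient space, and then deduces that the coupling map \eqref{eq.coupling} is Borel by noting that for any Borel section $\vphi:X\to\ball C(V)$ the function $v\mapsto \vphi(\pi(v))(v)$ is bounded Borel on $V$. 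You reverse this: you first manufacture dense Borel sections $h_k$ of $\ball C(V)$ pulled back from fixed $P_k\in C([0,1]^\N)$, prove the coupling map is Borel via Lemma~\ref{lem.construction-uniqueness}, and only then recover $I_F$ by a monotone class argument. Your route costs the extra Tietze/compact-open density verification for the $h_k(x)$, but avoids quoting \cite[Theorem~17.24]{Kec95} as a black box (your $\lambda$-system argument is essentially a proof of it).

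One point deserves care. Your metric $d_\cM$ does induce the weak topology on each $\cM^1(V_x)$: if $(\iota_x)_*\mu_n\to(\iota_x)_*\mu$ in $\cM^1([0,1]^\N)$ with all measures concentrated on $C_x$, then for every closed $F\subset V_x$ one has $F=\iota_x^{-1}(\overline{\iota_x(F)})$ and $\limsup\mu_n(F)=\limsup(\iota_x)_*\mu_n(\overline{\iota_x(F)})\le(\iota_x)_*\mu(\overline{\iota_x(F)})=\mu(F)$, and the total masses converge, so Portmanteau gives $\mu_n\to\mu$ weakly. However $d_\cM$ is \emph{not} complete in general, since $\widetilde\cM_x$ is not closed in the compact $\cM^1([0,1]^\N)$ when $C_x$ is not closed. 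Your hedge towards the Prokhorov metric is the right instinct, but you should actually verify it is Borel on $\cM^1_X(V)\times_\pi\cM^1_X(V)$; one clean way is to use instead the bounded-Lipschitz (Dudley) metric $\sup\{|\int f\,d(\mu-\mu')|:\|f\|_\infty+\mathrm{Lip}(f)\le 1\}$ built from the complete metric $d$, which is known to be complete and compatible, and for which the supremum can be taken over a countable family of Borel sections of $\ball C(V)$ of the form $v\mapsto q_0+\sum_i q_i\min\{1,d(v,\vphi_{n_i}(\pi(v)))\}$ with rational coefficients. The paper is equally silent on this point, so this is not a divergence from the paper so much as a gap both arguments should close.
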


The notation $\cM^1_X(V)$ in Proposition \ref{prop.Borel-field-prob-measures} may be confusing: by definition, $\cM^1_X(V)$ is the disjoint union of the $\cM^1(V_x)$, $x \in X$. So every $\mu \in \cM^1_X(V)$ is a measure on $V_x$ where $x = \pi(\mu)$. We do not use the notation $\cM^1(V)$, which would have been more compatible with earlier notations in this paper, because $V$ is itself a standard Borel space and we might then think of $\cM^1(V)$ as the set of bounded-by-$1$ measures on $V$.

\begin{proof}
We may assume that $X$ itself is a Polish space with its Borel $\sigma$-algebra. We also consider the Polish space $S = \R^\N$ with the topology of pointwise convergence.
Choose a dense sequence of Borel sections $\vphi_n : X \to V$ and define the injective Borel map
\[
\theta : V \to X \times S : \theta(v) = (\pi(v),(d(v,\vphi_n(\pi(v))))_{n \in \N}) \; .
\]
Then $Z = \theta(V)$ is a Borel subset of $X \times S$. By \cite[Theorem 17.23]{Kec95}, the set $\cM^1(X \times S)$ of bounded-by-$1$ measures on $X \times S$ is a Polish space when equipped with the weak topology. By \cite[Theorem 17.24]{Kec95}, for every bounded Borel function $F : X \times S \to \R$, the map
\begin{equation}\label{eq.map-JF}
J_F : \cM^1(X \times S) \to \R : \mu \mapsto \int_{X \times S} F \; d\mu \quad\text{is Borel.}
\end{equation}
In particular, for every Borel set $U \subset X \times S$, the map $\cM^1(X \times S) \to \R : \mu \mapsto \mu(U)$ is Borel.

We claim that
\[
Y_1 \subset X \times \cM^1(X \times S) : Y_1 = \{(x,\mu) \mid \;\text{$\mu$ is supported on $\{x\} \times S$}\;\}
\]
is a Borel set. Choosing a compatible metric $d$ on $X$ and a dense sequence $(x_n)_{n \in \N}$ in $X$, this follows because
\[
Y_1 = \{(x,\mu) \mid \forall k,n \in \N : \text{if $d(x,x_n) \geq 1/k$, then $\mu(B(x_n,1/k) \times S)=0$}\;\} \; .
\]
Then also
\[
Y \subset X \times \cM^1(X \times S) : Y = \{(x,\mu) \mid (x,\mu) \in Y_1 \;\text{and}\; \mu((X \times S) \setminus Z)=0\}
\]
is a Borel set.

For every $x \in X$, define the injective continuous map $\theta_x : V_x \to S : \theta_x(v) = (d(v,\vphi_n(x)))_{n \in \N}$. Then, $\theta_x(V_x) = Z_x$. Since
\[
Y = \{(x,\delta_x \times \eta) \mid \eta \in \cM^1(S) \;\text{and}\; \eta(S \setminus Z_x)=0 \} \; ,
\]
we find that
\[
\Theta : \cM^1_X(V) \to Y : \mu \mapsto (\pi(\mu),\delta_{\pi(\mu)} \times (\theta_{\pi(\mu)})_*(\mu))
\]
is a bijection. We define the standard Borel structure on $\cM^1_X(V)$ so that $\Theta$ is a Borel isomorphism.

Let $F : V \to \R$ be a bounded Borel function. Define the bounded Borel function $F' : X \times S \to \R$ by $F'(x,s) = 0$ if $(x,s) \not\in Z$ and $F'(x,s) = F(\theta^{-1}(x,s))$ if $(x,s) \in Z$. Consider the map $J_{F'}$ given by \eqref{eq.map-JF}. Then also
\[
J : X \times \cM^1(X \times S) \to \R : (x,\mu) \mapsto J_{F'}(\mu)
\]
is Borel. Using the notation of \eqref{eq.IF}, we have that $I_F = J \circ \Theta$. Thus, $I_F$ is Borel.

Whenever $\vphi : X \to \ball C(V)$ is a Borel section, the map $F : V \to \C : F(v) = \vphi(\pi(v))(v)$ is a bounded Borel function, because the natural map $C(V) \times_\pi V \to \C$ is Borel. Since $I_F$ is Borel, we deduce that the map in \eqref{eq.coupling} is Borel. The uniqueness of the standard Borel structure making \eqref{eq.coupling} Borel then follows from Lemma \ref{lem.construction-uniqueness}.

Let $\vphi_n : X \to V$ be a dense sequence of Borel sections. For every $n \in \N$, define $\om_n : X \to \cM^1_X(V) : x \mapsto \delta_{\vphi_n(x)}$. Then every $\om_n$ is a Borel section. By \cite[Theorem 17.19]{Kec95}, the countable family of Borel sections of the form $\sum_{k=1}^n t_k \om_k$, with $t_k \in \Q^+$ and $\sum_{k=1}^n t_k \leq 1$ provides a dense sequence of Borel sections. So, $\cM^1_X(V)$ is a Borel field of Polish spaces.
\end{proof}

We are now ready to prove Proposition \ref{prop.Borel-Haar-measure}.

\begin{proof}[{Proof of Proposition \ref{prop.Borel-Haar-measure}}]
By Lemma \ref{lem.Borel-compact-filtration}, we can choose a Borel subset $K = (K_x)_{x \in X}$ of $G = (G_x)_{x \in X}$ such that for every $x \in X$, $K_x$ is a compact subset of $G_x$ with nonempty interior. By Proposition \ref{prop.subfield-sigma-compact} (and actually by construction in the proof of Lemma \ref{lem.Borel-compact-filtration}), $K$ is a Borel field of compact Polish spaces. We consider the corresponding Borel field $\cM^1_X(K)$ of the Polish spaces $\cM^1(K_x)$ of bounded-by-$1$ measures on $K_x$, given by Proposition \ref{prop.Borel-field-prob-measures}.

Whenever $\mu \in \cM^1(K_x)$ and $U \subset K_x$ is a Borel set, we denote by $\mu|_U \in \cM^1(K_x)$ the restriction of $\mu$ to $U$, i.e.\ the measure given by $\mu|_U(A) = \mu(A \cap U)$. We claim that whenever $U = (U_x)_{x \in X}$ is a Borel subset of $K = (K_x)_{x \in X}$, the maps
\[
\cM^1(K_x) \to \cM^1(K_x) : \mu \mapsto \mu|_{U_x}
\]
define together a Borel map $\res_U : \cM^1_X(K) \to \cM^1_X(K)$. To prove this claim, by statement 1.c) in Lemma \ref{lem.construction-uniqueness},
it suffices to check that for every bounded Borel function $F : K \to \R$, the map $I_F \circ \res_U$ is Borel. But, for $\mu \in \cM^1(K_x)$, we have that
\[
(I_F \circ \res_U)(\mu) = \int_{K_x} F \, d\mu|_{U_x} = \int_{K_x} F 1_U \, d\mu = I_{F 1_U}(\mu) \; .
\]
So, $I_F \circ \res_U = I_{F 1_U}$ is Borel.

Choose a dense sequence of Borel sections $\vphi_n : X \to G$. Taking finite products of $\vphi_n$ and $\vphi_n(\cdot)^{-1}$, we may assume that for every $x \in X$, the set $\{\vphi_n(x) \mid n \in \N\}$ is a subgroup of $G$. Since for every $n \in \N$, the map $G \to G : g \mapsto \vphi_n(\pi(g)) \cdot g$ is a Borel isomorphism, the sets
\[
U_n = (K_x \cap \vphi_n(x) \cdot K_x)_{x \in X}
\]
are Borel subsets of $K$. We write $\al_n = \res_{U_n}$.

Whenever $\mu \in \cM^1(K_x)$ and $g \in G_x$, we can consider the element $g \cdot (\mu|_{g^{-1}\cdot K_x \cap K_x})$ of $\cM^1(K_x)$ given by $A \mapsto \mu(g^{-1}\cdot A \cap K_x)$ for every Borel set $A \subset K_x$. As above, the maps
\[
\cM^1(K_x) \to \cM^1(K_x) : \mu \mapsto \vphi_n(x) \cdot (\mu|_{\vphi_n(x)^{-1}\cdot K_x \cap K_x})
\]
then combine into a Borel map $\be_n : \cM^1_X(K) \to \cM^1_X(K)$.

We define the Borel set $H \subset \Prob_X(K)$ by
\[
H = \{\mu \in \Prob_X(K) \mid \forall n \in \N : \al_n(\mu) = \be_n(\mu)\} \; .
\]
By construction, $H = (H_x)_{x \in X}$ where $\mu \in H_x$ if and only if $\mu$ is a probability measure on $K_x$ satisfying
\[
\mu(\vphi_n(x)^{-1} \cdot A) = \mu(A) \quad\text{for all $n \in \N$ and all Borel sets $A \subset \vphi_n(x)\cdot K_x \cap K_x$.}
\]
This means that $H_x$ is a singleton: $H_x = \{\lambda_x|_{K_x}\}$, where $\lambda_x$ is the unique left Haar measure on $G_x$ satisfying $\lambda_x(K_x) = 1$. Since $H$ is a Borel set, the map $X \to \Prob_X(K) : x \mapsto \lambda_x|_{K_x}$ is a Borel section. Whenever $F : K \to \R$ is a bounded Borel function, the composition of this Borel section with $I_F$ is a Borel map. This precisely means that
\[
x \mapsto \int_{K_x} F \; d\lambda_x
\]
is Borel for every bounded Borel function $F : K \to \R$. By the monotone convergence theorem, the same holds for all Borel functions $F : K \to [0,+\infty]$.

Because for every $x \in X$, we have that $G_x = \bigcup_n \vphi_n(x) \cdot K_x$, using left invariance of the Haar measure, it then also follows that
\begin{equation}\label{eq.this-is-Borel}
X \to [0,+\infty] : x \mapsto \int_{G_x} F \; d\lambda_x
\end{equation}
is Borel for every Borel function $F : G \to [0,+\infty]$.

Assume that $(\lambda'_x)_{x \in X}$ is another choice of left Haar measures such that the maps in \eqref{eq.this-is-Borel} are Borel. Taking $F = 1_K$, it follows that $x \mapsto \lambda_x(K_x)$ and $x \mapsto \lambda'_x(K_x)$ are both strictly positive Borel functions. Taking their quotient, we find a Borel function $r : X \to (0,+\infty)$ such that $\lambda'_x = r(x) \lambda_x$ for all $x \in X$.
\end{proof}

We conclude this section by proving the following result. This can be used as the starting point to prove that all kinds of functional analytic and operator algebraic structures associated with a locally compact group $G$ can be turned into a Borel field.

In particular, we define in Proposition \ref{prop.Borel-field-LpG} the Borel field of separable Banach spaces $L^p(G) = (L^p(G_x))_{x \in X}$. We do this by completing $C_c(G)$ and thus first need the following result.

\begin{proposition}\label{prop.Borel-field-C0}
Let $V = (V_x)_{x \in X}$ be a Borel field of Polish spaces. Assume that every $V_x$ is locally compact. The family $C_0(V) = C_0(V_x)_{x \in X}$ of continuous functions from $V_x$ to $\C$ tending to zero at infinity admits a unique standard Borel structure such that the maps $\pi : C_0(V) \to X$ and $C_0(V) \times_\pi V \to \C : (F,v) \mapsto F(v)$ are Borel. In this way, $C_0(V)$ is a Borel field of the separable C$^*$-algebras $C_0(V_x)$ with the supremum norm.

Also, $C_c(V) = C_c(V_x)_{x \in X}$ is a Borel subset of $C_0(V)$ and becomes a Borel field of separable normed spaces.
\end{proposition}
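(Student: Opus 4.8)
The plan is to realize $C_0(V)$ and $C_c(V)$ as Borel subsets of the Borel field $C(V)$ of continuous functions with the compact--open topology from Proposition \ref{prop.Borel-field-cont-functions-compact-open}, and equip them with the restricted Borel structure. Fix a dense sequence of Borel sections $\vphi_m : X \to V$, a compatible Borel metric $d$, and a Borel exhaustion $K_n \subset V$ as in Lemma \ref{lem.Borel-compact-filtration}, so that each $K_{n,x}$ is a nonempty compact subset, $K_{n,x} \subset \operatorname{int}(K_{n+1,x})$, and $V_x = \bigcup_n K_{n,x}$; recall that each $K_n = (K_{n,x})_{x \in X}$ is itself a Borel field of compact Polish spaces (Proposition \ref{prop.subfield-sigma-compact}). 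Since every compact subset of $V_x$ is contained in some $K_{n,x}$, a function $F \in C(V_x)$ lies in $C_0(V_x)$ if and only if for every $r \in \N$ there is an $n \in \N$ with $|F| \le 1/r$ off $K_{n,x}$; because $F$ is continuous and $\{\vphi_m(x)\}$ is dense in $V_x$, this condition can be rewritten purely through the Borel maps $F \mapsto F(\vphi_m(\pi(F)))$ and the Borel sets $\vphi_m^{-1}(K_n) \subset X$, so $C_0(V)$ is a Borel subset of $C(V)$. Replacing ``$|F| \le 1/r$'' by ``$F = 0$'' in the same description shows $C_c(V)$ is a Borel subset of $C_0(V)$.

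On $C_0(V)$ with the restricted structure, $\pi$ is Borel and $\|F\| = \sup_m |F(\vphi_m(\pi(F)))|$ shows the norm is Borel; addition, scalar multiplication, multiplication and complex conjugation restrict from $C(V)$ (using that $C_0$ is a $*$-subalgebra of $C_b$ closed under these operations), hence are Borel. For the uniqueness statement, any standard Borel structure on $C_0(V)$ making $\pi$ and $(F,v)\mapsto F(v)$ Borel must make $F \mapsto \bigl(\pi(F),(F(\vphi_m(\pi(F))))_m\bigr)$ an injective Borel map, hence (Lemma \ref{lem.construction-uniqueness}) a Borel isomorphism onto its necessarily Borel image, so the structure is determined; and the structure restricted from $C(V)$ is of this kind.

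The main work is to produce a dense sequence of Borel sections $X \to C_0(V)$ \emph{for the supremum norm}. Here the closed-subfield machinery of Propositions \ref{prop.selection-Effros} and \ref{prop.subfield-sigma-compact} does not apply, because $C_0(V_x)$ is neither closed in $C(V_x)$ nor Polish for the induced compact--open topology; instead one must build compactly supported ``bump'' sections adapted to the exhaustion. For each $n$ the closed sets $K_{n,x}$ and $V_x \setminus \operatorname{int}(K_{n+1,x})$ are disjoint, so the Urysohn function
\[
\chi_n(x)\colon v \longmapsto \frac{\beta_n(v)}{\beta_n(v)+\rho_n(v)}, \qquad \rho_n(v) = d\bigl(v,K_{n,\pi(v)}\bigr),\quad \beta_n(v) = d\bigl(v, V_{\pi(v)} \setminus \operatorname{int}(K_{n+1,\pi(v)})\bigr),
\]
lies in $C(V_x)$, equals $1$ on $K_{n,x}$ and vanishes off $\operatorname{int}(K_{n+1,x})$, so $\chi_n(x) \in C_c(V_x)$ with support inside $K_{n+1,x}$. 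Both $\rho_n$ and $\beta_n$ are Borel on $V$: for $\rho_n$ this is Proposition \ref{prop.selection-Effros} applied to the Borel field $K_n \subset V$, and for $\beta_n$ it follows since, for rational $s \ge 0$, one has $\beta_n(v) > s$ exactly when there is a rational $r > s$ such that every $\vphi_m(\pi(v))$ with $d(v,\vphi_m(\pi(v))) < r$ lies in $K_{n+1,\pi(v)}$. Hence $x \mapsto \chi_n(x)$ is a Borel section of $C_0(V)$. Now take the countable family $\cA$ obtained from all finite products of the sections $\chi_n$ and $\chi_n \cdot F_k$ --- where $F_k : X \to C(V)$ are the dense Borel sections constructed in the proof of Proposition \ref{prop.Borel-field-cont-functions-compact-open} --- together with their linear combinations with coefficients in $\Q + i\Q$; every element of $\cA$ is a Borel section of $C(V)$ taking values in $C_c(V)$, hence a Borel section of $C_0(V)$. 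For fixed $x$, the set $\cA(x)$ is a subalgebra of $C_c(V_x)$ closed under complex conjugation that vanishes nowhere (if $v \in K_{n,x}$ then $\chi_n(x)(v) = 1$) and separates points (given $v \ne w$, pick $N$ with $v,w \in K_{N,x}$; on $K_{N,x}$ one has $\chi_N(x) \equiv 1$ while $\{F_k(x)|_{K_{N,x}}\}$ is uniformly dense in $C(K_{N,x})$ and so separates $v$ and $w$). By the Stone--Weierstrass theorem for $C_0$ of a locally compact space, $\cA(x)$ is dense in $C_0(V_x)$ in the supremum norm.

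This completes the argument: $C_0(V)$ is a standard Borel space with $\pi$, the norm, and the linear and $*$-algebra operations all Borel, and $\cA$ is a dense sequence of Borel sections, so $C_0(V)$ is a Borel field of separable C$^*$-algebras; since $\cA$ already consists of sections valued in $C_c(V)$ and $C_c(V_x)$ is dense in $C_0(V_x)$ for the supremum norm, the restricted Borel structure on the Borel set $C_c(V) \subset C_0(V)$ makes it a Borel field of separable (semi)normed spaces (and, via Corollary \ref{cor.completion-normed}, its separation-completion is $C_0(V)$). The only delicate point is the construction of the bump sections $\chi_n$, and in particular the Borel-ness of the ``room'' function $\beta_n$; everything else is a routine transcription of the classical facts about $C_0$ and $C_c$ into the Borel-field language.
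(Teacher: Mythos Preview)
Your proof is correct and follows the same overall architecture as the paper's: realize $C_0(V)$ and $C_c(V)$ as Borel subsets of the field $C(V)$ from Proposition \ref{prop.Borel-field-cont-functions-compact-open}, inherit the algebraic operations and the norm, and build a dense sequence of compactly supported Borel sections from bump functions adapted to the exhaustion $K_n$. The difference lies in the construction of the bump sections and the density argument. The paper uses the single distance function $s_{n,x}(v) = d(v, V_x \setminus K_{n,x})$ (with the convention $d(\cdot,\emptyset)=1$ after truncating $d\le 1$) and proves density of the family $f_i \cdot s_n^{1/k}$ by an explicit $\eps$-estimate, splitting into the three regions $K_{n-1,x}$, $K_{n,x}\setminus K_{n-1,x}$, and $V_x\setminus K_{n,x}$. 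You instead build Urysohn functions $\chi_n = \beta_n/(\beta_n+\rho_n)$ and invoke Stone--Weierstrass for $C_0$, which is cleaner conceptually and avoids the $1/k$-powers and the case analysis; the trade-off is that you must verify the Borelness of the ``inner distance'' $\beta_n(v)=d(v,V_x\setminus\operatorname{int}(K_{n+1,x}))$, which is slightly less direct than the paper's $s_n$ (and you should note the convention when $V_x\setminus\operatorname{int}(K_{n+1,x})=\emptyset$, i.e.\ when $V_x$ is compact and equals $K_{n+1,x}$, so that $\chi_n(x)\equiv 1$). Both routes yield the same conclusion; yours packages the approximation step into a standard theorem, the paper's keeps everything elementary and self-contained.
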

\begin{proof}
Consider the standard Borel structure on $C(V) = C(V_x)_{x \in X}$ given by Proposition \ref{prop.Borel-field-cont-functions-compact-open}. Fix a dense sequence of Borel sections $\vphi_i : X \to V$. Fix a sequence of Borel sets $K_n \subset V$ satisfying the conclusion of Lemma \ref{lem.Borel-compact-filtration}. Define the Borel sets $W_{i,n} \subset X$ by $W_{i,n} = \vphi_i^{-1}(V \setminus K_n)$. Since the maps $C(V) \to \R : F \mapsto |F(\vphi_i(\pi(F)))|$ are Borel, for every $r \geq 0$, the sets
\[
T_{i,r} = \{F \in C(V) \mid |F(\vphi_i(\pi(F)))| \leq r\} \quad\text{are Borel.}
\]
So, for every $r \geq 0$ and $n \in \N$, we conclude that
\begin{align*}
S_{r,n} &= \bigcap_{i \in \N} \bigl( (\pi^{-1}(W_{i,n}) \cap T_{i,r}) \cup (C(V) \setminus \pi^{-1}(W_{i,n}))\bigr) \\
&= \bigl\{F \in C(V) \bigm| \text{with $x = \pi(F)$, we have that $\|F|_{V_x \setminus K_{n,x}}\|_\infty \leq r$}\bigr\}
\end{align*}
is Borel. Writing $\Q^+ = \Q \cap (0,+\infty)$, it follows that
\[
C_c(V) = \bigcup_{n \in \N} S_{0,n} \quad\text{and}\quad C_0(V) = \bigcap_{r \in \Q^+} \bigcup_{n \in \N} S_{r,n}
\]
are Borel subsets of $C(V)$. By restricting the standard Borel structure on $C(V)$ to $C_0(V)$, we find a standard Borel structure on $C_0(V)$ such that the maps $\pi : C_0(V) \to X$ and $C_0(V) \times_\pi V \to \C$ are Borel. The uniqueness of such a standard Borel structure follows from Lemma \ref{lem.construction-uniqueness}.

Since addition and (scalar) multiplication are Borel on $C(V)$ (see Proposition \ref{prop.Borel-field-cont-functions-compact-open}), the same is true on $C_0(V)$ and $C_c(V)$. Also note that
\[
C_0(V) \to [0,+\infty) : F \mapsto \|F\|_\infty = \sup \{|F(\vphi_i(\pi(F)))| \mid i \in \N\}
\]
is Borel.

It thus only remains to prove that there exists a dense sequence of Borel sections $F_n : X \to C_c(V)$. Fix a compatible Borel metric $d : V \times_\pi V \to [0,+\infty)$. Replacing $d$ by $\min \{1,d\}$, we may assume that $d(v,w) \leq 1$ for all $(v,w) \in V \times_\pi V$. We claim that for every $n \in \N$,
\[
s_n : X \to C_c(V) : s_{n,x}(v) = d(v,V_x \setminus K_{n,x}) \;\;\text{for $v \in V_x$}
\]
is a Borel section. By convention, we write $d(v,\emptyset) = 1$, so that $s_{n,x}(v) = 1$ for all $v \in V_x$ when $V_x = K_{n,x}$. Note that $s_{n,x}(v) = 0$ for $v \in V_x \setminus K_{n,x}$, so that $s_{n,x} \in C_c(V_x)$. Also note that $s_{n,x}(v) > 0$ for all $v$ in the interior of $K_{n,x}$. To prove the claim, it suffices to prove that $X \to [0,1] : x \mapsto s_{n,x}(\vphi_k(x))$ is Borel for all $n,k \in \N$. For every $0 < r < 1$, we have that
\[
s_{n,x}(\vphi_k(x)) < r \quad\text{if and only if}\quad \exists i \in \N : \vphi_i(x) \in V \setminus K_n \;\;\text{and}\;\; d(\vphi_k(x),\vphi_i(x)) < r \; ,
\]
so that the claim follows.

By Proposition \ref{prop.Borel-field-cont-functions-compact-open}, we can fix a sequence of Borel sections $f_i : X \to C(V)$ that is dense in the compact-open topology. We conclude the proof by showing that the countable family of Borel sections $X \to C_c(V) : x \mapsto f_{i,x} \, s_{n,x}^{1/k}$ with $i,n,k \in \N$, is dense. Fix $x \in X$, $F \in C_c(V_x)$ and $\eps > 0$. Take $n \geq 2$ such that $F(v) = 0$ for all $v \in V_x \setminus K_{n-1,x}$. When $k \to +\infty$, the sequence $s_{n,x}^{1/k}$ tends to $1$ uniformly on compact subsets of the interior of $K_{n,x}$. So we can take $k \in \N$ such that
\[
\|(s_{n,x}^{1/k} - 1)|_{K_{n-1,x}}\|_\infty < (1 + 2\|F\|_\infty)^{-1} \, \eps \; .
\]
Since $\{f_{i,x} \mid i \in \N\} \subset C(V_x)$ is dense in the compact-open topology, we can take $i \in \N$ such that $\|(f_{i,x} - F)|_{K_{n,x}}\|_\infty < \eps/2$. To conclude the proof, it suffices to show that
\begin{equation}\label{eq.goal-v}
|f_{i,x}(v) \, s_{n,x}^{1/k}(v) - F(v)| < \eps \quad\text{for all $v \in V_x$.}
\end{equation}
\begin{itemlist}
\item If $v \in K_{n-1,x}$, then $|s_{n,x}^{1/k}(v) - 1| < (1 + 2\|F\|_\infty)^{-1} \, \eps$ and $|f_{i,x}(v) - F(v)| < \eps/2$ so that \eqref{eq.goal-v} holds.
\item If $v \in K_{n,x} \setminus K_{n-1,x}$, then $|s_{n,x}^{1/k}(v)| \leq 1$, $F(v) = 0$ and $f_{i,x}(v) - F(v)| < \eps/2$, so that $|f_{i,x}(v)| < \eps/2$ and again \eqref{eq.goal-v} holds.
\item If $v \in V_x \setminus K_{n,x}$ then $s_{n,x}^{1/k}(v) = 0$ and $F(v)=0$ so that \eqref{eq.goal-v} holds trivially.\qedhere
\end{itemlist}
\end{proof}

\begin{proposition}\label{prop.Borel-field-LpG}
Let $G = (G_x)_{x \in X}$ be a Borel field of Polish groups that are all locally compact. Let $(\lambda_x)_{x \in X}$ be a choice of left Haar measures satisfying the conclusion of Proposition \ref{prop.Borel-Haar-measure}. Let $1 \leq p < +\infty$.

Then $L^p(G) = L^p(G_x,\lambda_x)_{x \in X}$ admits a unique standard Borel structure such that $\pi : L^p(G) \to X$ is Borel and the map
\begin{equation}\label{eq.Lp-pairing}
L^p(G) \times_\pi C_c(G) \to \C : (\xi,F) \mapsto \int_{\pi(\xi)} \xi \, F \, d\lambda_{\pi(\xi)} \quad\text{is Borel.}
\end{equation}
Here, the Borel structure on the family $C_c(G) = C_c(G_x)_{x \in X}$ of spaces of continuous compactly supported functions is introduced in Proposition \ref{prop.Borel-field-C0}. In this way, $L^p(G)$ is a Borel field of separable Banach spaces.

Also, for $p=1$, the map
\begin{equation}\label{eq.L1-integral}
L^1(G) \to \C : \xi \mapsto \int_{\pi(\xi)} \xi \, d\lambda_{\pi(\xi)} \quad\text{is Borel.}
\end{equation}
\end{proposition}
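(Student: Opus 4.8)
The plan is to realise $L^p(G_x,\lambda_x)$ as the separation-completion of $C_c(G_x)$ equipped with the $L^p$-seminorm, and then to invoke Corollary~\ref{cor.completion-normed}. First I would start from the Borel field $C_c(G) = C_c(G_x)_{x\in X}$ of Proposition~\ref{prop.Borel-field-C0}, with its standard Borel structure and the dense sequence of Borel sections $\vphi_n\colon X\to C_c(G)$ constructed there; since by construction each $\vphi_n(x)$ is supported in some compact $K_{n,x}$ of finite $\lambda_x$-measure, the $\vphi_n$ remain $\|\cdot\|_p$-dense in $C_c(G_x)$, hence $L^p$-dense in $L^p(G_x)$. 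I would equip $C_c(G)$ with the seminorm $N_p(H) = \bigl(\int_{G_{\pi(H)}}|H|^p\,d\lambda_{\pi(H)}\bigr)^{1/p}$; once $N_p$ is Borel, $C_c(G)$ is a Borel field of separable seminormed spaces (Definition~\ref{def.Borel-field-normed-spaces}), and Corollary~\ref{cor.completion-normed} endows $L^p(G)$ with a standard Borel structure $\cB$ making $\pi$ and the inclusion $\gamma\colon C_c(G)\to L^p(G)$ Borel and turning $L^p(G)$ into a Borel field of separable Banach spaces.

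The main obstacle is the Borel-ness of $N_p$ on $C_c(G)$: as $C_c(G_x)$ is not complete for the compact-open topology, $N_p$ is not fibrewise continuous, so point~2 of Lemma~\ref{lem.uniqueness} cannot be applied directly. I would get around this with the compact exhaustion $K_n\subset G$ of Lemma~\ref{lem.Borel-compact-filtration}, writing $N_p(H)^p = \sup_{n}\int_{K_{n,\pi(H)}}|H|^p\,d\lambda_{\pi(H)}$ and noting that $H\mapsto\int_{K_{n,\pi(H)}}|H|^p\,d\lambda$ factors through the Borel restriction map $C(G)\to C(K_n)$ of Proposition~\ref{prop.Borel-field-cont-functions-compact-open} and, on the Borel field $C(K_n)$ (whose fibres $C(K_{n,x})$ are complete, as $K_{n,x}$ is compact), is fibrewise continuous; its Borel-ness along a dense sequence of Borel sections of $C(K_n)$ is exactly what Proposition~\ref{prop.Borel-Haar-measure} gives, applied to the nonnegative Borel function $g\mapsto 1_{K_n}(g)\,|\psi_k(\pi(g))(g)|^p$ on $G$. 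Hence each term is Borel by point~2 of Lemma~\ref{lem.uniqueness}, and $N_p$ is Borel as a countable supremum. The same device (splitting into real and imaginary, positive and negative parts) shows that $\iota\colon C_c(G)\to\C$, $K\mapsto\int_{G_{\pi(K)}}K\,d\lambda_{\pi(K)}$, is Borel; composing $\iota$ with the Borel pointwise multiplication $C_c(G)\times_\pi C_c(G)\to C_c(G)$ of Proposition~\ref{prop.Borel-field-C0}, the pairing $\Phi\colon C_c(G)\times_\pi C_c(G)\to\C$, $(F,H)\mapsto\int_{G_\pi}FH\,d\lambda$, is Borel.

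I would then pass from $\Phi$ to the pairing \eqref{eq.Lp-pairing} on the completion. For fixed $F\in C_c(G_x)$ the functional $\xi\mapsto\int_{G_x}\xi F\,d\lambda_x$ is continuous on $L^p(G_x)$ by Hölder (paired with $\|F\|_q$, or $\|F\|_\infty$ when $p=1$), so point~3 of Proposition~\ref{prop.completion} — applied with $Y=C_c(G)$ (a measurable space for which $\pi$ is Borel), $W=\C$ the constant field, and $C$ the pairing $C_c(G)\times_\pi L^p(G)\to\C$ — reduces the Borel-ness of \eqref{eq.Lp-pairing} to that of $(F,H)\mapsto C(F,\gamma(H))=\Phi(F,H)$, already proved. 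Statement \eqref{eq.L1-integral} follows in the same spirit: $\xi\mapsto\int_{G_\pi}\xi\,d\lambda$ is fibrewise continuous on the Borel field of Polish spaces $L^1(G)$ and agrees with $\iota$ on the dense Borel sections $\gamma\circ\vphi_n$, so it is Borel by point~2 of Lemma~\ref{lem.uniqueness}.

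It remains to prove uniqueness. By point~2 of Proposition~\ref{prop.completion}, the structure $\cB$ on $L^p(G)$ is the smallest making $\pi$ and the maps $\xi\mapsto\|\xi-\gamma(\vphi_n(\pi(\xi)))\|_p$ Borel. Running the construction above with the conjugate exponent $q$ in place of $p$ (so $N_q$ is Borel) and normalising via $t\mapsto t/\max(1,t)$, I would choose Borel sections $\eta_j\colon X\to C_c(G)$ with $\|\eta_j(x)\|_q\le 1$ whose values are $\|\cdot\|_q$-dense in the unit ball of $C_c(G_x)$; combining the duality $L^p(G_x)^*=L^q(G_x)$ with the $L^q$-density of $C_c(G_x)$ in $L^q(G_x)$ when $p>1$, resp.\ the fact that the unit ball of $C_c(G_x)$ norms $L^1(G_x)$ when $p=1$, one gets $\|\zeta\|_p=\sup_j|\langle\zeta,\eta_j(x)\rangle|$ for every $\zeta\in L^p(G_x)$. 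Therefore each generator of $\cB$ equals $\sup_j\bigl|\langle\xi,\eta_j(\pi(\xi))\rangle-\Phi(\vphi_n(\pi(\xi)),\eta_j(\pi(\xi)))\bigr|$, which is measurable for any standard Borel structure on $L^p(G)$ for which $\pi$ and \eqref{eq.Lp-pairing} are Borel; so $\cB$ is contained in every such structure, and since any two such are standard Borel structures on the same underlying set, \cite[Corollary~15.2]{Kec95} forces them to coincide.
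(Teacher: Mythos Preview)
Your proof is correct and follows essentially the same route as the paper's: both realise $L^p(G)$ as the separation-completion of $(C_c(G),\|\cdot\|_p)$ via Corollary~\ref{cor.completion-normed}, and both establish the Borel-ness of the relevant integrals by passing through the compact exhaustion $K_n$ and the Borel fields $C(K_n)$, where the integration functionals become fibrewise continuous and Lemma~\ref{lem.uniqueness} applies. The paper packages this slightly more efficiently by first proving that the single map $I\colon C_c(G)\to\C$, $F\mapsto\int F\,d\lambda$, is Borel and then reading off $\|F\|_p=I(|F|^p)^{1/p}$, whereas you attack $N_p$ directly; the content is the same.

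One point worth noting: the paper's own proof only invokes the uniqueness clause of Corollary~\ref{cor.completion-normed}, which characterises the Borel structure on $L^p(G)$ through $\pi$ and the inclusion $C_c(G)\hookrightarrow L^p(G)$, not through $\pi$ and the pairing~\eqref{eq.Lp-pairing} as stated in the proposition. Your final paragraph actually closes this gap by exhibiting $\|\cdot\|_q$-dense Borel sections $\eta_j$ in $\ball C_c(G)$ and expressing the generators of $\cB$ as countable suprema of pairings, so that any standard Borel structure making $\pi$ and \eqref{eq.Lp-pairing} Borel must contain $\cB$ and hence equal it. In that respect your argument is more complete than the paper's.
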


\begin{proof}
We start by proving that the map
\begin{equation}\label{eq.map-I}
I : C_c(G) \to \C : F \mapsto \int_{G_{\pi(F)}} F \, d\lambda_{\pi(F)}
\end{equation}
is Borel. Take $K_n \subset G$ satisfying the conclusions of Lemma \ref{lem.Borel-compact-filtration}. As explained in the proof of Proposition \ref{prop.Borel-field-cont-functions-compact-open}, each $C(K_n) = C(K_{n,x})_{x \in X}$ is a Borel field of separable Banach spaces and the restriction maps $C_c(G_x) \to C(K_{n,x})$ combine into a Borel map $\res_n : C_c(G) \to C(K_n)$. We claim that the maps
\[
I_n : C(K_n) \to \C : F \mapsto \int_{K_{n,\pi(F)}} F \, d\lambda_{\pi(F)}
\]
are Borel. Since for every $x \in X$, integration w.r.t.\ $\lambda_x$ defines a bounded linear functional on $C(K_{n,x})$, it suffices to prove that for every Borel section $\psi : X \to C(K_n)$, the map $I_n \circ \psi$ is Borel. To prove this, note that the map $H : G \to \C$ defined by $H(g) = 0$ if $g\not\in K_n$ and $H(g) = \psi(\pi(g))(g)$ if $g \in K_n$ is Borel, and that
$$(I_n \circ \psi)(x) = \int_{G_x} H \, d\lambda_x \; .$$
It thus follows from Proposition \ref{prop.Borel-Haar-measure} that $I_n \circ \psi$ is Borel. Since for every $F \in C_c(G)$, $I_n(\res_n(F)) \to I(F)$, it follows that $I$ is a Borel map.

Fix $p \in [1,+\infty)$. Since $I$ is a Borel map, also
\[
\|\cdot\|_p : C_c(G) \to [0,+\infty) : F \mapsto I(|F|^p)^{1/p}
\]
is Borel. Using the specific dense sequence of Borel sections $X \to C_c(G)$ constructed in the proof of Proposition \ref{prop.Borel-field-C0}, it follows that there also exists a dense sequence of Borel sections for $(C_c(G),\|\cdot\|_p)$, which is thus a Borel field of the separable normed spaces $(C_c(G_x),\|\cdot\|_p)_{x \in X}$.

We realize their completion as $L^p(G_x,\lambda_x)$, with the obvious inclusion $C_c(G_x) \subset L^p(G_x,\lambda_x)$. By Corollary \ref{cor.completion-normed}, there is a unique standard Borel structure on $L^p(G)$ such that $\pi : L^p(G) \to X$ and the inclusion $C_c(G) \hookrightarrow L^p(G)$ are Borel and such that $L^p(G)$ becomes a Borel field of separable Banach spaces.

To prove that \eqref{eq.Lp-pairing} is Borel, note that for every $x \in X$ and $F \in C_c(G_x)$, the map
\[
L^p(G_x) \to \C : \xi \mapsto \int_{G_x} \xi F \, d\lambda_x
\]
is continuous. By Proposition \ref{prop.completion}, it thus suffices to prove that the restriction of \eqref{eq.Lp-pairing} to $C_c(G) \times_\pi C_c(G)$ is Borel, which follows because the map $I$ in \eqref{eq.map-I} is Borel. In the same way, one proves that \eqref{eq.L1-integral} is Borel.
\end{proof}

\section{Tensor products and crossed products}\label{sec.tensor-crossed}

\begin{proposition}\label{prop.tensor-product-banach}
Let $(V_x)_{x \in X}$ and $(W_x)_{x \in X}$ be Borel fields of separable Banach spaces. Consider the injective tensor products $V \otvee W := (V_x \otvee W_x)_{x \in X}$ and projective tensor products $V \othat W = (V_x \othat W_x)_{x \in X}$. Then $V \otvee W$ and $V \othat W$ have a unique standard Borel structure such that they are Borel fields of separable Banach spaces and the canonical maps $V \times_\pi W \to V \otvee W$ and $V \times_\pi W \to V \othat W$ sending $(v,w)$ to $v \ot w$ are Borel.

The quotient map $V \othat W \to V \otvee W$ is Borel.
\end{proposition}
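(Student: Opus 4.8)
The plan is to deduce that the canonical quotient map $q:V\othat W\to V\otvee W$ is Borel from point~2 of Lemma~\ref{lem.uniqueness}, which reduces Borelness of a fiberwise continuous map to Borelness of its compositions with a dense sequence of Borel sections.

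First I would set the stage. On each fiber, $q_x:V_x\othat W_x\to V_x\otvee W_x$ is the continuous extension of the identity map on the algebraic tensor product $V_x\ot W_x$; it exists and has norm $\le 1$ because the injective cross norm is dominated by the projective cross norm. Thus $q$ is a genuine map with $\pi\circ q=\pi$ and every $q_x$ continuous. By the first part of the proposition, $V\othat W$ and $V\otvee W$ are Borel fields of separable Banach spaces, hence Borel fields of Polish spaces (with compatible Borel metric $(u,u')\mapsto\|u-u'\|$), so Lemma~\ref{lem.uniqueness} will apply once a suitable dense sequence of Borel sections of $V\othat W$ is in hand.

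Next I would build that sequence out of elementary tensors. Fix dense sequences of Borel sections $\vphi_n:X\to V$ and $\psi_m:X\to W$. Since the canonical map $V\times_\pi W\to V\othat W$ is Borel and the vector space operations on $V\othat W$ are Borel, for every finite tuple $t=(k;\al_1,\dots,\al_k;n_1,\dots,n_k;m_1,\dots,m_k)$ with $\al_i\in\Q+i\Q$ the section $\Phi_t:x\mapsto\sum_{i=1}^k\al_i\,\vphi_{n_i}(x)\ot\psi_{m_i}(x)$ is Borel. For each $x$ the set $\{\Phi_t(x):t\}$ is the $\Q+i\Q$-linear span of $\{\vphi_n(x)\ot\psi_m(x):n,m\in\N\}$, and I would check that this is $\|\cdot\|_\wedge$-dense in $V_x\othat W_x$: the algebraic tensor product is dense by definition of the projective completion, and an elementary tensor $v\ot w$ is $\|\cdot\|_\wedge$-approximated by $\vphi_{n_k}(x)\ot\psi_{m_k}(x)$ whenever $\vphi_{n_k}(x)\to v$ and $\psi_{m_k}(x)\to w$, using $\|v\ot w-v'\ot w'\|_\wedge\le\|v-v'\|\,\|w\|+\|v'\|\,\|w-w'\|$. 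So $(\Phi_t)_t$, ranged over the countable index set of admissible tuples, is a dense sequence of Borel sections of $V\othat W$.

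Finally I would verify the images under $q$. By linearity of $q_x$ on $V_x\ot W_x$, $q(\Phi_t(x))=\sum_{i=1}^k\al_i\,\vphi_{n_i}(x)\ot\psi_{m_i}(x)$, now read inside $V_x\otvee W_x$; this is the value at $x$ of the corresponding $\Q+i\Q$-combination of the sections $x\mapsto\vphi_n(x)\ot\psi_m(x)$ of $V\otvee W$, and is therefore Borel because the canonical map $V\times_\pi W\to V\otvee W$ and the operations on $V\otvee W$ are Borel. Hence $q\circ\Phi_t$ is Borel for every $t$, and point~2 of Lemma~\ref{lem.uniqueness} gives that $q$ is Borel. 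The only step needing an actual (but elementary) argument is the fiberwise density of the elementary-tensor span in the projective tensor product; everything else is bookkeeping that relies only on the Borelness of the canonical bilinear maps and of the algebraic operations, already established in the first part.
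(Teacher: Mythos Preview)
Your proposal addresses only the final sentence of the proposition---Borelness of the quotient map $q:V\othat W\to V\otvee W$---taking the existence and uniqueness of the Borel field structures on $V\othat W$ and $V\otvee W$ as already established. For that final claim your argument is correct: the $q_x$ are contractive and hence continuous, the countable family $\Phi_t$ of $(\Q+i\Q)$-combinations of elementary tensors is indeed a dense sequence of Borel sections of $V\othat W$, and each $q\circ\Phi_t$ is Borel because it factors through the canonical map $V\times_\pi W\to V\otvee W$ and the Borel vector space operations; Lemma~\ref{lem.uniqueness}(2) then finishes.

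The paper arrives at the same conclusion by a different, shorter route. It first constructs the Borel structure on $V\othat W$ explicitly (via an embedding into $X\times\C^\N$ using a dense sequence of Borel sections of $\ball B(V,W^*)$), then shows that the map $u\mapsto\|u\|_\vee$ is Borel on $V\othat W$, and finally \emph{defines} the Borel structure on $V\otvee W$ as the separation-completion of $(V\othat W,\|\cdot\|_\vee)$ via Corollary~\ref{cor.completion-normed}. With that construction the quotient map is precisely the canonical map $\gamma$ of Proposition~\ref{prop.completion}, so its Borelness is built in and needs no separate verification. Your approach has the virtue of being modular---it uses only the characterizing property of the Borel structure on $V\otvee W$ (that $V\times_\pi W\to V\otvee W$ is Borel) rather than the specific construction---but it is slightly longer and, in the paper's logical order, redundant. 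If you were meant to prove the full proposition, the substantial work (constructing the Borel structures and proving their uniqueness) is missing from your proposal.
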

\begin{proof}
Mimicking the proof of Proposition \ref{prop.dual-field-Banach}, there is a unique standard Borel structure on $B(V,W^*) = B(V_x,W_x^*)_{x \in X}$ such that the map
$$V \times_\pi W \times_\pi B(V,W^*) \to \C : (v,w,S) \mapsto \langle S(v),w\rangle$$
is Borel. With the point-weak$^*$ topology, every $\ball B(V_x,W_x^*)$ is compact. It also follows in the same way as in Proposition \ref{prop.dual-field-Banach} that $\ball B(V,W^*)$ is a Borel field of compact Polish spaces. Choose a point-weak$^*$ dense sequence of Borel sections $S_n : X \to \ball B(V,W^*) : x \mapsto S_{n,x}$.

Choose dense sequences of Borel sections $\vphi_k : X \to V$ and $\psi_l : X \to W$. Define the Borel maps $\theta_{k,l} : X \to \C^\N : \theta_{k,l}(x) = (\langle S_{n,x}(\vphi_k(x)),\psi_l(x)\rangle)_{n \in \N}$. Let $\Theta$ be the countable set of Borel maps $\theta : X \to \C^\N$ consisting of all finite linear combinations of $\theta_{k,l}$ with coefficients in $\Q + i \Q$. Then define the Borel set $Z \subset X \times \C^\N$ of elements $(x,\rho)$ satisfying
$$\forall r \geq 1, \exists \theta \in \Theta, \forall n \in \N : |\rho_n - \theta(x)_n| < 1/r \; .$$
Defining $\pi(x,\rho) = x$ and $\|(x,\rho)\| = \sup_{n \in \N} |\rho_n|$, we get that $Z$ is a Borel field of separable Banach spaces.

By construction, there is a canonical bijection $\Psi : V \othat W \to Z$ such that $\pi \circ \Psi = \pi$, such that $\Psi(v \ot w) = (\pi(v),(\langle S_n(v),w\rangle)_{n \in \N})$ for all $(v,w) \in V \times_\pi W$ and such that for every $x \in X$, $\Psi_x : V_x \othat W_x \to Z_x$ is a bijective isometry. We define a standard Borel structure on $V \othat W$ such that $\Psi$ is a Borel isomorphism. So, $V \othat W$ is a Borel field of separable Banach spaces and the natural map $V \times_\pi W \to V \othat W$ is Borel. The uniqueness follows from Lemma \ref{lem.uniqueness}.

By Lemma \ref{lem.uniqueness}, the canonical pairing $(V \othat W) \times_\pi B(V,W^*) \to \C$ is Borel. Choose weak$^*$ dense sequences of Borel sections $\om_k : X \to \ball V^*$ and $\eta_l : X \to \ball W^*$. Define $\zeta_{k,l} : X \to B(V,W^*)$ by $\langle\zeta_{k,l}(x)(v),w\rangle = \om_k(x)(v) \, \eta_l(x)(w)$. Since for every $u \in V \othat W$,
$$\|u\|_\vee = \sup_{k,l \in \N} |\langle u,\zeta_{k,l}(\pi(u))\rangle| \; ,$$
it follows that the map $V \othat W \to [0,+\infty) : u \mapsto \|u\|_\vee$ is Borel. Taking separation-completion, it follows from Corollary \ref{cor.completion-normed} that $V \otvee W$ is a well-defined Borel field of separable Banach spaces. Again uniqueness follows from Lemma \ref{lem.uniqueness}.
\end{proof}

We use Proposition \ref{prop.tensor-product-banach} to also construct the tensor product of two Borel fields of separable Hilbert spaces.

\begin{corollary}\label{cor.tensor-product-Hilbert}
Let $(H_x)_{x \in X}$ and $(K_x)_{x \in X}$ be Borel fields of separable Hilbert spaces. Then the Hilbert space tensor product $H \ot K = (H_x \ot K_x)_{x \in X}$ has a unique standard Borel structure such that $H \ot K$ becomes a Borel field of separable Hilbert spaces and the map $H \times_\pi K \to H \ot K : (\xi,\eta) \mapsto \xi \ot \eta$ is Borel.
\end{corollary}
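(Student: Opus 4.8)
The plan is to obtain $H \ot K$ as the separation-completion of a Borel field of separable normed spaces, building on the projective tensor product of Proposition~\ref{prop.tensor-product-banach}. By that proposition, $H \othat K = (H_x \othat K_x)_{x \in X}$ is a Borel field of separable Banach spaces for which the map $H \times_\pi K \to H \othat K$, $(a,b) \mapsto a \ot b$, is Borel. On an algebraic tensor product of Hilbert spaces one has $\|\cdot\|_\vee \leq \|\cdot\|_2 \leq \|\cdot\|_\wedge$, where $\|\cdot\|_2$ denotes the Hilbert norm; hence fibrewise the canonical contraction $H_x \othat K_x \to H_x \ot K_x$ has dense range, and it is injective because Hilbert spaces have the approximation property (so that $H_x \othat K_x \to H_x \otvee K_x$ is injective). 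Consequently $H_x \ot K_x$ is exactly the completion of $H_x \othat K_x$ for the restriction of $\|\cdot\|_2$. So it suffices to equip $H \othat K$ with this Hilbert norm, check that it becomes a Borel field of separable normed spaces, and apply Corollary~\ref{cor.completion-normed}: this yields the standard Borel structure on $H \ot K$, the uniqueness statement, and the fact that $H \ot K$ is a Borel field of separable Banach spaces; each fibre being a Hilbert space, the inner product is then Borel by polarization, and the tensor map $H \times_\pi K \to H \ot K$ is Borel as the composition of the Borel tensor map into $H \othat K$ with the Borel inclusion $H \othat K \hookrightarrow H \ot K$.

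The real work is to show that $u \mapsto \|u\|_2$ is a Borel map $H \othat K \to [0,+\infty)$. First I would make a Borel choice of orthonormal sections: starting from dense sequences of Borel sections of $H$ and $K$ and running Gram--Schmidt --- at step $k$, orthogonalising each section against $e_1,\dots,e_{k-1}$ (a finite combination of Borel sections with Borel inner-product coefficients), normalising the first one with nonzero component, and setting $e_k(x) = 0$ on the Borel set where $\dim H_x \leq k-1$ --- I obtain Borel sections $e_i : X \to H$ and $f_j : X \to K$ whose nonzero values form orthonormal bases of $H_x$, resp.\ $K_x$. Then the nonzero $e_i(x) \ot f_j(x)$ form an orthonormal basis of $H_x \ot K_x$, so by Parseval $\|u\|_2^2 = \sum_{i,j} |\langle u, e_i(\pi(u)) \ot f_j(\pi(u))\rangle_2|^2$ for every $u \in H_{\pi(u)} \othat K_{\pi(u)}$. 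It thus remains to prove that the map $(u,a,b) \mapsto \langle u, a \ot b\rangle_2$, defined for $u \in H_x \othat K_x$ and $(a,b) \in H_x \times K_x$, is Borel. For fixed $a,b$ this is the value at $u$ of the bounded functional $S_{a,b} \in (H_x \othat K_x)^* = B(H_x,K_x^*)$ given by $v \mapsto \langle v, a\rangle \,\langle \cdot, b\rangle$. By the proof of Proposition~\ref{prop.tensor-product-banach} the canonical pairing $(H \othat K) \times_\pi B(H,K^*) \to \C$ is Borel, and using Proposition~\ref{prop.dual-field-Banach} (so that $b \mapsto \langle \cdot, b\rangle$ is a Borel map $K \to K^*$) together with Proposition~\ref{prop.bounded-operators-banach} one checks that $(a,b) \mapsto S_{a,b}$ is a Borel map $H \times_\pi K \to B(H,K^*)$; composing these, $(u,a,b) \mapsto \langle u, a \ot b\rangle_2 = \langle u, S_{a,b}\rangle$ is Borel, hence so is $u \mapsto \|u\|_2$.

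Granting this, $(H \othat K, \|\cdot\|_2)$ is a Borel field of separable normed spaces: $\pi$, addition and scalar multiplication are inherited from $H \othat K$, the norm is Borel by the previous step, and a $\|\cdot\|_\wedge$-dense sequence of Borel sections of $H \othat K$ is a fortiori $\|\cdot\|_2$-dense. Corollary~\ref{cor.completion-normed} then produces the unique standard Borel structure on $H \ot K$ for which $\pi$ and the inclusion $\gamma : H \othat K \to H \ot K$ are Borel and $H \ot K$ is a Borel field of separable Banach spaces; as explained above, it is a Borel field of separable Hilbert spaces and $H \times_\pi K \to H \ot K$ is Borel. For uniqueness, if $\cB'$ is any standard Borel structure making $H \ot K$ a Borel field of separable Hilbert spaces with the tensor map Borel, then using the $\|\cdot\|_\wedge$-dense sequence of Borel sections of $H \othat K$ formed by rational combinations of the sections $x \mapsto e_i(x) \ot f_j(x)$ one sees that $\gamma$ is $\cB'$-Borel (via the normed-space analogue of Lemma~\ref{lem.construction-uniqueness}), so $\cB' = \cB$ by the uniqueness clause of Corollary~\ref{cor.completion-normed}. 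I expect the Borelness of $\|\cdot\|_2$ --- the Borel Gram--Schmidt selection and the identification of pairing against a simple tensor with a Borel section of $B(H,K^*)$ --- to be the only genuinely nontrivial point, everything else being routine transfer through the lemmas of Sections~\ref{sec.constructions-new-fields} and~\ref{sec.completion}.
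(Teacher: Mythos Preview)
Your proof is correct and follows the same overall architecture as the paper: start from the Borel field $H \othat K$ of Proposition~\ref{prop.tensor-product-banach}, equip it with the Hilbert (semi)norm, and take separation-completion via Corollary~\ref{cor.completion-normed}. The difference lies in how Borelness of the Hilbert structure on $H \othat K$ is established. The paper bypasses Gram--Schmidt, Parseval, and the detour through $B(H,K^*)$ entirely: it simply observes that the sesquilinear map $(H \othat K) \times_\pi (H \othat K) \to \C : (v,w) \mapsto \langle \theta(v),\theta(w)\rangle$ is Borel by a single application of Lemma~\ref{lem.uniqueness}, since on the dense sections given by rational linear combinations of $\vphi_k \ot \psi_l$ this pairing is a finite sum of products $\langle \vphi_k(x),\vphi_{k'}(x)\rangle \, \langle \psi_l(x),\psi_{l'}(x)\rangle$, each Borel by polarization. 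Your route through Borel orthonormal bases and the identification of the pairing with a simple tensor as an element of $B(H,K^*)$ works, but it is noticeably heavier; the paper's argument fits in four lines. Your remark on injectivity of $H_x \othat K_x \to H_x \ot K_x$ via the approximation property is correct but unnecessary, since Corollary~\ref{cor.completion-normed} handles seminorms and the separation step absorbs any kernel.
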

\begin{proof}
Denote by $\theta : H \othat K \to H \ot K$ the map where for every $x \in X$, $\theta_x : H_x \othat K_x \to H_x \ot K_x$ is the unique contractive linear map satisfying $\theta_x(\xi \ot \eta) = \xi \ot \eta$ for all $\xi \in H_x$ and $\eta \in K_x$. By Lemma \ref{lem.uniqueness}, the map
$$(H \othat K) \times_\pi (H \othat K) \to \C : (v,w) \mapsto \langle \theta(v),\theta(w)\rangle$$
is Borel. Taking separation-completion, it follows from Corollary \ref{cor.completion-normed} that $H \ot K$ is a well-defined Borel field of separable Hilbert spaces. By construction, the map $(\xi,\eta) \mapsto \xi \ot \eta$ is Borel. As before, uniqueness follows from Lemma \ref{lem.uniqueness}.
\end{proof}

In order to also define the minimal and maximal tensor product of Borel fields of C$^*$-algebras, we need two lemmas that will also be useful to define Borel fields of crossed products.

For the first lemma, given a $*$-algebra $A$ with norm $\|\,\cdot\,\|$ satisfying $\|ab\| \leq \|a\| \, \|b\|$ and $\|a^*\| = \|a\|$ for all $a,b \in A$, we define
\begin{align*}
\|a\|_u = \sup \bigl\{ \|\theta(a)\| \bigm| \; &\text{$\theta : A \to B(H)$ is a $*$-representation on a Hilbert space satisfying}\\
& \text{$\|\theta(b)\| \leq \|b\|$ for all $b \in A$}\,\bigr\} \; .
\end{align*}
We denote by $A^u$ the separation-completion of $(A,\|\,\cdot\,\|_u)$, with the canonical map $\theta^u : A \to A^u$.

\begin{lemma}\label{lem.universal-Cstar-algebra}
Let $A = (A_x)_{x \in X}$ be a Borel field of separable normed $*$-algebras and consider the C$^*$-algebras $A^u = (A^u_x)_{x \in X}$. There is unique standard Borel structure on $A^u$ such that $\pi : A^u \to X$ and $\theta^u : A \to A^u$ are Borel and such that $A^u$ becomes a Borel field of separable C$^*$-algebras.
\end{lemma}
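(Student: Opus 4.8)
The plan is to apply Corollary \ref{cor.completion-normed} to the field of seminormed spaces $(A_x,\|\,\cdot\,\|_u)_{x\in X}$. Its addition and scalar multiplication are already Borel because they are Borel on $A$, a dense sequence of Borel sections $\vphi_n:X\to A$ is part of the data, and $\|a\|_u\le\|a\|$ for all $a$ (take $\theta$ itself in the definition of $\|\,\cdot\,\|_u$), so $\|\,\cdot\,\|_u$ is finite-valued. Hence the only real point is to prove that $a\mapsto\|a\|_u$ is a Borel map $A\to[0,+\infty)$. Granting this, Corollary \ref{cor.completion-normed} produces the unique standard Borel structure on $A^u$ for which $\pi:A^u\to X$ and $\theta^u:A\to A^u$ are Borel and $A^u$ is a Borel field of separable Banach spaces, together with the uniqueness clause. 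That multiplication and the adjoint extend from $A$ to $A^u$ and remain Borel then follows from two applications of Proposition \ref{prop.completion}: one-sided multiplication by a fixed bounded element, and the adjoint, are norm-continuous on each $A^u_x$, and on the dense set $\theta^u(A)$ the products and adjoints are $\theta^u$ of the corresponding operations in $A$, where they are Borel.

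To compute $\|\,\cdot\,\|_u$ I would pass through a Borel field of states. Consider $A^*=(A_x^*)_{x\in X}$ with the standard Borel structure of Proposition \ref{prop.dual-field-Banach}, so that $\ball A^*$ is a Borel field of compact Polish spaces in the weak$^*$ topology. Inside it let $\cS=(\cS_x)_{x\in X}$ be the family of $\omega\in A_x^*$ with
\[
\omega(a^*a)\ge 0\,,\qquad |\omega(a)|^2\le\omega(a^*a)\,,\qquad \omega(b^*a^*ab)\le\|a\|^2\,\omega(b^*b)\qquad(a,b\in A_x)\,.
\]
Since multiplication and the adjoint on $A$ are Borel and the pairing $A^*\times_\pi A\to\C$ is Borel, and since by continuity and density it suffices to impose these inequalities for $a,b\in\{\vphi_n(x)\mid n\in\N\}$, the set $\cS\subset A^*$ is Borel; a short manipulation of the three inequalities shows $\cS_x\subseteq\ball A_x^*$, and each $\cS_x$ is a weak$^*$-closed (hence weak$^*$-compact), nonempty ($0\in\cS_x$) subset. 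Proposition \ref{prop.subfield-sigma-compact} then yields a weak$^*$-dense sequence of Borel sections $\omega_k:X\to\cS$.

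The heart of the proof is the identity $\|a\|_u^2=\|a^*a\|_u=\sup\{\omega(a^*a)\mid\omega\in\cS_{\pi(a)}\}$ for every $a\in A_x$. The inequality ``$\ge$'' is easy: every vector functional $\omega_{\theta,\eta}=\langle\theta(\,\cdot\,)\eta,\eta\rangle$ with $\theta:A_x\to B(H)$ a contractive $*$-representation and $\|\eta\|\le1$ satisfies all three conditions, and $\sup_{\theta}\|\theta(a^*a)\|=\sup_{\theta,\eta}\omega_{\theta,\eta}(a^*a)$. For ``$\le$'' I would run the GNS construction for $\omega\in\cS_x$: the third inequality makes $\pi_\omega$ a contractive $*$-representation on the completion $H_\omega$ of $A_x/N_\omega$, the second makes $\hat a\mapsto\omega(a)$ a functional of norm $\le1$ on $H_\omega$ with Riesz vector $\xi$, $\|\xi\|\le1$, and a computation using $\pi_\omega(b)^*=\pi_\omega(b^*)$ gives $\pi_\omega(b)\xi=\hat b$ for all $b$, whence $\omega(a^*a)=\|\pi_\omega(a)\xi\|^2\le\|\pi_\omega(a)\|^2\le\|a\|_u^2$. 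Since $\omega\mapsto\omega(a^*a)$ is weak$^*$-continuous and the $\omega_k$ are weak$^*$-dense, we get $\|a\|_u=\bigl(\sup_k\omega_k(\pi(a))(a^*a)\bigr)^{1/2}$, which is Borel in $a$ because $a\mapsto a^*a$ is Borel on $A$ and the pairing $A^*\times_\pi A\to\C$ is Borel; this finishes the reduction.

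The main obstacle is precisely this GNS step and, underlying it, the correct choice of $\cS_x$: one must resist defining $\cS_x$ as the set of positive functionals of norm $\le1$, since for a normed $*$-algebra that is not a C$^*$-algebra this set is too large and $\sup_{\cS_x}\omega(a^*a)$ overshoots $\|a\|_u^2$ (already visible on a two-dimensional commutative algebra equipped with a non-C$^*$ norm). The extra constraint $|\omega(a)|^2\le\omega(a^*a)$ is exactly what isolates the functionals coming from contractive representations, and verifying that it does so — in particular obtaining the reconstruction $\omega=\omega_{\pi_\omega,\xi}$ without assuming that $A_x$ has a unit or a bounded approximate unit — is the one genuinely delicate point. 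An alternative route, parametrizing a universal field of contractive representations on a fixed separable Hilbert space by the images of the $\vphi_n$, makes $\|\,\cdot\,\|_u$ an analytic function of $a$ and yields only the measured statement, since that field of representations need not admit a dense sequence of Borel sections; the state-space route, where the fibres are compact, is what gives the full Borel conclusion.
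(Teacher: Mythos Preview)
Your proof is correct and follows the same overall strategy as the paper: reduce to showing that $a\mapsto\|a\|_u$ is Borel, exhibit $\|a\|_u$ as a supremum over a Borel field of weak$^*$-compact sets of functionals, and invoke Proposition~\ref{prop.subfield-sigma-compact} to get a dense sequence of Borel sections over which the supremum can be computed.

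The one noteworthy difference is in the choice of the functional set. The paper takes $S_x=\{\om\in\ball A_x^*\mid \om(a^*a)\ge0,\ \om(b^*a^*ab)\le\|a\|^2\om(b^*b)\}$ and then, because an $\om\in S_x$ need not itself be a vector state of a contractive representation, must pass to the GNS vectors $\widehat{\vphi_k}$ and arrive at the three-parameter formula
\[
\|a\|_u^2=\sup_{n,k,r}\frac{\langle\om_n(x),\vphi_k(x)^*a^*a\,\vphi_k(x)\rangle}{\langle\om_n(x),\vphi_k(x)^*\vphi_k(x)\rangle+1/r}\,.
\]
Your extra constraint $|\om(a)|^2\le\om(a^*a)$ singles out exactly the functionals that \emph{are} vector states of contractive representations (your Riesz-vector argument recovering $\xi$ with $\pi_\om(b)\xi=\hat b$ is correct and neatly sidesteps any approximate-unit issue), so you get the simpler formula $\|a\|_u^2=\sup_k\om_k(\pi(a))(a^*a)$. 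Your ``short manipulation'' showing $\cS_x\subset\ball A_x^*$ also goes through: from $\om(a^*a)^2\le\om((a^*a)^2)\le\|a\|^2\om(a^*a)$ (the last inequality being your third condition with $b=a$ and $a$ replaced by $a^*$) one gets $\om(a^*a)\le\|a\|^2$, whence $|\om(a)|\le\|a\|$. So your route is a mild streamlining of the paper's.
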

\begin{proof}
By Corollary \ref{cor.completion-normed} and point~3 of Proposition \ref{prop.completion}, it suffices to prove that the map $A \to [0,+\infty) : a \mapsto \|a\|_u$ is Borel.

Denote by $A^* = (A_x^*)_{x \in X}$ the dual Banach spaces of $A_x$. As in Proposition \ref{prop.dual-field-Banach}, there is a unique standard Borel structure on $A^*$ such that the map $\pi : A^* \to X$ and the pairing $A^* \times_\pi A \to \C$ are Borel. Also as in Proposition \ref{prop.dual-field-Banach}, $\ball A^*$ with the weak$^*$ topology on each $\ball A^*_x$ is a Borel field of compact Polish spaces.

For every $x \in X$, define the weak$^*$ closed subset $S_x \subset \ball A^*_x$ of $\om \in A^*_x$ satisfying $\|\om\| \leq 1$, $\om(a^*a) \geq 0$ and $\om(b^* a^* a b) \leq \|a\|^2 \, \om(b^* b)$ for all $a,b \in A_x$. Write $S = (S_x)_{x \in X}$ so that $S \subset \ball A^*$. Since $\om \in \ball A^*$ with $x = \pi(\om)$ belongs to $S$ if and only if
$$\om(\vphi_n(x)^* \vphi_n(x)) \geq 0 \quad\text{and}\quad \om(\vphi_k(x)^* \vphi_n(x)^* \vphi_n(x) \vphi_k(x)) \leq \|\vphi_n(x)\|^2 \, \om(\vphi_k(x)^* \vphi_k(x))$$
for all $n,k \in \N$, it follows that $S \subset \ball A^*$ is Borel. By Theorem \ref{thm.selection-sigma-compact}, $S$ is a Borel field of Polish spaces. Fix a weak$^*$ dense sequence of Borel sections $\om_n : X \to S$.

By construction,
$$\|a\|_u^2 = \sup\Bigl\{ \frac{\langle \om_n(x) , \vphi_k(x)^* a^* a \vphi_k(x) \rangle}{\langle \om_n(x) , \vphi_k(x)^* \vphi_k(x) \rangle + 1/r} \Bigm| x = \pi(a) , n,k,r \in \N \Bigr\} \; .$$
It follows that $a \mapsto \|a\|_u$ is Borel and the lemma is proven.
\end{proof}

As we state more explicitly in Proposition \ref{prop.universal-field-Cstar} below, the following lemma shows that our notion of Borel fields of separable C$^*$-algebras is compatible with the Borel parametrization of separable C$^*$-algebras defined in \cite{Kec96}.

\begin{lemma}\label{lem.GNS-field-Cstar}
Let $A = (A_x)_{x \in X}$ be a Borel field of separable C$^*$-algebras and let $H$ be a separable infinite dimensional Hilbert space. There exists a Borel map $\theta : A \to B(H)$ such that for every $x \in X$, the restriction of $\theta$ to $A_x$ is a faithful, nondegenerate $*$-homomorphism.
\end{lemma}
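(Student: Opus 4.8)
The plan is to run a fieldwise GNS construction, choosing states in a Borel way and assembling the resulting representations into a single Borel field of Hilbert spaces, then identifying that field with the constant field on $H$. First I would use the dual field $A^* = (A_x^*)_{x \in X}$ with the standard Borel structure from (the Banach-space analogue of) Proposition \ref{prop.dual-field-Banach}, inside which the state space $S(A_x) = \{\om \in \ball A_x^* \mid \om \geq 0,\ \|\om\|=1\}$ is a weak$^*$ closed subset; since $\ball A^*$ is a Borel field of compact Polish spaces and the conditions $\langle\om,\vphi_n(x)^*\vphi_n(x)\rangle \geq 0$ together with a normalization cut out $S(A) = (S(A_x))_{x \in X}$ as a Borel subset, Theorem \ref{thm.selection-sigma-compact} gives a weak$^*$ dense sequence of Borel sections $\om_n : X \to S(A)$. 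Forming $\om_x = \sum_n 2^{-n}\om_n(x)$ yields a Borel section $\om : X \to A^*$ with each $\om_x$ a faithful state on $A_x$ (faithfulness because the $\om_n(x)$ are weak$^*$ dense in the state space, so their sum is faithful on the separable C$^*$-algebra $A_x$). Here I need the C$^*$-algebraic GNS machinery adapted to fields: treat $(A_x, \langle a,b\rangle_x = \om_x(b^*a))$ as a Borel field of separable pre-Hilbert spaces (the structure maps are Borel since multiplication and adjoint on $A$ are Borel, and a dense sequence of Borel sections is obtained from a dense sequence for $A$), apply the Hilbert-space analogue of Corollary \ref{cor.completion-normed}/Proposition \ref{prop.completion} to get a Borel field $H = (H_x)_{x \in X}$ of separable Hilbert spaces with canonical Borel map $\gamma : A \to H$, and then as in Proposition \ref{prop.GNS} obtain the Borel GNS representation $\theta_0 : A \to B(H)$, $\theta_{0,x} : A_x \to B(H_x)$ faithful (by faithfulness of $\om_x$) and nondegenerate.

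The remaining, and main, obstacle is that each $H_x$ has an a priori unpredictable dimension (it could be finite, and it certainly varies with $x$), whereas we must land in $B(H)$ for one fixed separable infinite-dimensional $H$. I would fix this by stabilizing: replace $\theta_0$ by $\theta_1 = \theta_0 \otimes 1$ acting on $H_x \otimes \ell^2(\N)$, which is still faithful and nondegenerate, and is Borel by Corollary \ref{cor.tensor-product-Hilbert} applied to the Borel field $H$ and the constant field $\ell^2(\N)$. Now every $H_x \otimes \ell^2(\N)$ is infinite-dimensional and separable, so the Borel field $H \otimes \ell^2(\N)$ is \emph{isomorphic to the constant field} $(H)_{x \in X}$: concretely, pick a dense sequence of Borel sections $\xi_n : X \to H \otimes \ell^2(\N)$, run a Borel Gram–Schmidt process (the inner product section being Borel) to produce a Borel section of orthonormal bases $e_n : X \to H \otimes \ell^2(\N)$ — discarding indices where a vector becomes zero, which does not happen here since each fiber is infinite-dimensional — and fix once and for all an orthonormal basis $(f_n)_n$ of $H$; the map $U_x : H \to H_x \otimes \ell^2(\N)$, $f_n \mapsto e_n(x)$, is a unitary depending Borel-measurably on $x$, i.e. $U : X \times H \to H \otimes \ell^2(\N)$ is Borel. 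Finally set $\theta(x,\cdot) = U_x^* \,\theta_{1,x}(\cdot)\, U_x : A_x \to B(H)$. Then $\theta : A \to B(H)$ is Borel (a composition of the Borel maps $\theta_1$, and conjugation by the Borel unitary field $U$, using that composition of bounded operators and the pairing are Borel by Proposition \ref{prop.bounded-operators-banach}), and for each $x$ its restriction to $A_x$ is, like $\theta_{1,x}$, a faithful nondegenerate $*$-homomorphism. This completes the proof.

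Two technical points I would be careful about when writing the details: first, that the Borel Gram–Schmidt map is genuinely Borel — this follows because on a Borel field of Hilbert spaces the norm and inner product are Borel, and the arithmetic operations producing the orthonormalized vectors are continuous in the fiber and Borel in $x$, so Lemma \ref{lem.construction-uniqueness} (or Lemma \ref{lem.Borel-field-Polish-some-properties}(3) for the limiting step) applies; and second, that "conjugation by a Borel field of unitaries is Borel" should be reduced, via Lemma \ref{lem.construction-uniqueness}, to checking that for each Borel section $\vphi : X \to H$ the section $x \mapsto U_x^* \theta_{1,x}(\vphi(x)\text{-image}) U_x(f)$ is Borel for fixed $f \in H$, which unwinds to compositions of maps already known to be Borel. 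I expect no essential difficulty beyond the dimension-stabilization step, which is the one genuinely new idea relative to the earlier GNS results in the paper.
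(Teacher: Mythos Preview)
Your approach is essentially the same as the paper's: select a faithful positive functional by averaging a weak$^*$ dense sequence in the positive part of the dual ball, perform the fieldwise GNS construction, stabilize by tensoring with $\ell^2(\N)$, and then trivialize the resulting Borel field of infinite-dimensional Hilbert spaces. The paper simply cites \cite[Lemma IV.8.12]{Tak79} for this last step, whose proof is precisely the Borel Gram--Schmidt argument you outline.

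One technical correction: you assert that the state space $S(A_x) = \{\om \in \ball A_x^* \mid \om \geq 0,\ \|\om\|=1\}$ is weak$^*$ closed in $\ball A_x^*$, but this fails when $A_x$ is non-unital (zero lies in its weak$^*$ closure), so you cannot directly apply Theorem \ref{thm.selection-sigma-compact}. The fix is exactly what the paper does: work instead with the quasi-state space $S_x = \{\om \in \ball A_x^* \mid \om \geq 0\}$, which \emph{is} weak$^*$ closed (hence compact), select a weak$^*$ dense sequence $\om_n : X \to S$, and set $\rho_x = \sum_n 2^{-n}\om_n(x)$. This $\rho_x$ is a faithful positive functional (not necessarily a state), which is all the GNS construction requires; faithfulness follows because any positive $a \in A_x$ with $\om_n(x)(a)=0$ for all $n$ satisfies $\om(a)=0$ for every positive functional $\om$ by density, hence $a=0$.
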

\begin{proof}
Consider the standard Borel structure on $A^* = (A_x^*)_{x \in X}$ given by Proposition \ref{prop.dual-field-Banach}, so that $\ball A^*$, with the weak$^*$ topology on each $\ball A_x^*$, is a Borel field of compact Polish spaces. For every $x \in X$, consider the weak$^*$ closed subset $S_x \subset \ball A_x^*$ of positive functionals of norm at most $1$. Write $S = (S_x)_{x \in X}$, so that $S \subset \ball A^*$.

Let $\vphi_n : X \to A$ be a dense sequence of Borel sections. Since $\om \in \ball A^*$ with $x = \pi(\om)$ belongs to $S$ if and only if $\om(\vphi_n(x)^* \vphi_n(x)) \geq 0$ for all $n \in \N$, it follows that $S \subset \ball A^*$ is a Borel set. By Theorem \ref{thm.selection-sigma-compact}, $S$ is a Borel field of Polish spaces. Fix a weak$^*$ dense sequence of Borel sections $\om_n : X \to S$ and define
$$\rho : X \to S : x \mapsto \rho_x \in A_x^* : \rho_x(a) = \sum_{n=1}^\infty 2^{-n} \om_n(a) \; .$$
By construction, each $\rho_x$ is a faithful positive bounded functional on $A_x$.

As in the proof of Proposition \ref{prop.equivalence-defs-Borel-field-vNalg}, we can define a Borel field of separable Hilbert spaces $K = (K_x)_{x \in X}$ by completion of $(A_x,\rho_x)$, together with a Borel map $\psi : A \to B(K)$ such that every $\psi_x : A_x \to B(K_x)$ is the GNS-representation, which is a faithful nondegenerate $*$-homomorphism. Replace $K_x$ by $K_x \ot \ell^2(\N)$ and $\psi_x(a)$ by $\psi_x(a) \ot 1$. Then it follows from \cite[Lemma IV.8.12]{Tak79} that the field $(K_x)_{x \in X}$ is isomorphic with the constant field $H$, so that the lemma follows.
\end{proof}

\begin{proposition}\label{prop.min-max-tensor-product}
Let $A = (A_x)_{x \in X}$ and $B = (B_x)_{x \in X}$ be Borel fields of separable C$^*$-algebras. Consider their minimal and maximal tensor products $A \otmin B = (A_x \otmin B_x)_{x \in X}$ and $A \otmax B = (A_x \otmax B_x)_{x \in X}$. Then $A \otmin B$ and $A \otmax B$ have a unique standard Borel structure such that they are Borel fields of separable C$^*$-algebras and the canonical maps $A \times_\pi B \to A \otmin B$ and $A \times_\pi B \to A \otmax B$ sending $(a,b)$ to $a \ot b$ are Borel.

The quotient map $A \otmax B \to A \otmin B$ is Borel.
\end{proposition}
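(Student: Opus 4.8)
The plan is to build $A\otmin B$ concretely on a Hilbert space and $A\otmax B$ through the universal C$^*$-algebra construction of Lemma \ref{lem.universal-Cstar-algebra}, and then to obtain the quotient map from the fiberwise-continuity criterion in point~2 of Lemma \ref{lem.uniqueness}. Throughout, fix dense sequences of Borel sections $\vphi_k : X\to A$ and $\psi_l : X\to B$.

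For $A\otmin B$, fix a separable infinite-dimensional Hilbert space $H$ and use Lemma \ref{lem.GNS-field-Cstar} to obtain Borel maps $\theta^A : A\to B(H)$ and $\theta^B : B\to B(H)$ whose fibers are faithful $*$-homomorphisms. Equip $B(H\ot H)$ with its standard Borel structure from Proposition \ref{prop.bounded-operators-banach}; by point~5 of that proposition a map into $B(H\ot H)$ is Borel as soon as the maps into $H\ot H$ obtained by evaluating at the vectors $e_i\ot e_j$, for a fixed orthonormal basis $(e_i)_i$ of $H$, are Borel, and this applies to $T : A\times_\pi B\to B(H\ot H) : (a,b)\mapsto \theta^A(a)\ot\theta^B(b)$, since $(a,b)\mapsto\theta^A(a)e_i\ot\theta^B(b)e_j$ is Borel (the two factors are Borel by Borelness of $\theta^A$, $\theta^B$ and of evaluation, and $(\xi,\eta)\mapsto\xi\ot\eta$ is Borel by Corollary \ref{cor.tensor-product-Hilbert}). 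Let $(\rho_n)_{n\in\N}$ enumerate all $(\Q+i\Q)$-linear combinations of the Borel sections $x\mapsto T(\vphi_k(x),\psi_l(x))$. For each $x\in X$ the closure $C_x$ of $\{\rho_n(x)\mid n\in\N\}$ in $B(H\ot H)$ is, by faithfulness of $\theta^A_x$, $\theta^B_x$ and the definition of the spatial norm, canonically isometrically isomorphic to $A_x\otmin B_x$. Since the operator norm and subtraction on $B(H\ot H)$ are Borel, the set $C=\{(x,S)\in X\times B(H\ot H)\mid \inf_n\|S-\rho_n(x)\|=0\}$ is Borel, hence standard Borel; as $\pi$, the norm, addition, scalar multiplication, multiplication and adjoint on $B(H\ot H)$ are Borel and restrict to $C$, and $(\rho_n)$ is a dense sequence of Borel sections of $C$, the field $A\otmin B:=C$ is a Borel field of separable C$^*$-algebras and the canonical map $A\times_\pi B\to A\otmin B$, namely $T$, is Borel.

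For $A\otmax B$, I first upgrade the projective tensor product. By Proposition \ref{prop.tensor-product-banach}, $A\othat B$ is a Borel field of separable Banach spaces with $A\times_\pi B\to A\othat B$ Borel, and each $A_x\othat B_x$ carries its canonical Banach $*$-algebra structure, namely a submultiplicative norm with isometric involution, determined on elementary tensors by $(a\ot b)(a'\ot b')=aa'\ot bb'$ and $(a\ot b)^*=a^*\ot b^*$. Borelness of the fiberwise multiplication and adjoint reduces, via Lemma \ref{lem.uniqueness}, to Borelness of the maps $x\mapsto(\vphi_k(x)\vphi_{k'}(x))\ot(\psi_l(x)\psi_{l'}(x))$ and $x\mapsto\vphi_k(x)^*\ot\psi_l(x)^*$, which holds because multiplication and adjoint are Borel on $A$ and $B$ and $A\times_\pi B\to A\othat B$ is Borel; so $A\othat B$ is a Borel field of separable normed $*$-algebras. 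Applying Lemma \ref{lem.universal-Cstar-algebra} yields a Borel field of separable C$^*$-algebras $(A\othat B)^u$ with a Borel map $\theta^u : A\othat B\to (A\othat B)^u$. Since the involution on $A_x\othat B_x$ is isometric, every $*$-representation of $A_x\othat B_x$ is contractive; since moreover every $*$-representation of the dense $*$-subalgebra $A_x\odot B_x$ is contractive for the projective norm and extends to $A_x\othat B_x$, the norm $\|\cdot\|_u$ restricted to $A_x\odot B_x$ equals the maximal C$^*$-norm, so $(A_x\othat B_x)^u=A_x\otmax B_x$. Hence $A\otmax B:=(A\othat B)^u$ is a Borel field of separable C$^*$-algebras, and the canonical map $A\times_\pi B\to A\otmax B$, being $\theta^u$ composed with $A\times_\pi B\to A\othat B$, is Borel.

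In both cases uniqueness follows from Lemma \ref{lem.uniqueness}: any standard Borel structure turning the field into a Borel field of separable C$^*$-algebras for which the elementary-tensor map is Borel makes all $(\Q+i\Q)$-linear combinations of the Borel sections $x\mapsto\vphi_k(x)\ot\psi_l(x)$ Borel, and these form a dense sequence of Borel sections, so together with Borelness of the norm and of subtraction they determine the Borel $\sigma$-algebra. The quotient map $q : A\otmax B\to A\otmin B$ is the fiberwise surjective $*$-homomorphism extending $a\ot b\mapsto a\ot b$, hence fiberwise continuous, so point~2 of Lemma \ref{lem.uniqueness} applies: taking the dense sequence of Borel sections of $A\otmax B$ given by the $(\Q+i\Q)$-combinations of $x\mapsto\theta^u_x(\vphi_k(x)\ot\psi_l(x))$, whose images under $q$ are the corresponding combinations of the Borel sections $x\mapsto\vphi_k(x)\ot\psi_l(x)$ of $A\otmin B$, we conclude that $q$ is Borel. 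The one genuinely technical step in all of this is verifying that multiplication and adjoint on $A\othat B$ are Borel; a minor point worth flagging is that in the minimal-tensor-product construction the ambient field $X\times B(H\ot H)$ is not a Borel field of separable Banach spaces, since $B(H\ot H)$ is not norm-separable, but this is harmless, as we only use that it is standard Borel with Borel norm and algebraic operations and that the closed linear span of a countable family of Borel sections automatically carries a dense sequence of Borel sections.
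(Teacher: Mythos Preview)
Your proof is correct and follows essentially the same route as the paper: both build $A \otmax B$ by equipping $A \othat B$ with its Banach $*$-algebra structure and invoking Lemma \ref{lem.universal-Cstar-algebra}, and both realize $A \otmin B$ via the spatial representation coming from Lemma \ref{lem.GNS-field-Cstar}, the only difference being that the paper pulls the minimal norm back to $A \othat B$ and takes a separation-completion, whereas you carve $A \otmin B$ out directly as a Borel subset of $X \times B(H \ot H)$. One small remark: your assertion that ``since the involution on $A_x \othat B_x$ is isometric, every $*$-representation of $A_x \othat B_x$ is contractive'' is neither justified nor needed---the second clause of that sentence (every $*$-representation of $A_x \odot B_x$ is $\|\cdot\|_\wedge$-contractive and hence extends) already suffices to identify $\|\cdot\|_u$ with $\|\cdot\|\nmax$ on $A_x \odot B_x$, which is all you use.
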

\begin{proof}
By Proposition \ref{prop.tensor-product-banach}, we consider the Borel field $C = A \othat B$ of separable Banach spaces. Note that every $C_x$ naturally is a Banach $*$-algebra. Using Lemma \ref{lem.uniqueness}, one checks that the product and adjoint maps are Borel, so that $C$ is a Borel field of separable Banach $*$-algebras. Since $\|u\|\nmax \leq \|u\|_\wedge$ for all $u \in A_x \otalg B_x$, the statement about $A \otmax B$ now follows immediately from Lemma \ref{lem.universal-Cstar-algebra}.

Let $H$ and $K$ be separable infinite dimensional Hilbert spaces. By Lemma \ref{lem.GNS-field-Cstar}, we can choose Borel maps $\theta_A : A \to B(H)$ and $\theta_B : B \to B(K)$ such that the restrictions to $A_x$, resp.\ $B_x$ are faithful, nondegenerate $*$-homomorphisms. For every $x \in X$, denote by $\theta_x : C_x \to B(H \ot K)$ the unique $*$-homomorphism satisfying $\theta_x(a \ot b) = \theta_A(a) \ot \theta_B(b)$ for all $a \in A_x$ and $b \in B_x$. By Lemma \ref{lem.uniqueness}, the map $\theta : C \to B(H \ot K)$ is Borel.

For every $c \in C$, define $\|c\|_{\text{\rm min}} = \|\theta(c)\|$. It follows that $c \mapsto \|c\|_{\text{\rm min}}$ is Borel. By separation-completion and Corollary \ref{cor.completion-normed}, also the statement about $A \otmin B$ follows.
\end{proof}

In the same spirit as for Proposition \ref{prop.min-max-tensor-product}, we consider reduced and full crossed products. Since the arguments are somehow repetitive, we only sketch the proof. Recall that we defined the standard Borel structure on the multiplier algebras of a Borel field of separable C$^*$-algebras in Proposition \ref{prop.multiplier-field}.

\begin{proposition}\label{prop.full-and-reduced-crossed-product}
Let $A = (A_x)_{x \in X}$ be a Borel field of separable C$^*$-algebras and $G = (G_x)_{x \in X}$ a Borel field of locally compact Polish groups. Assume that $\al : G \times_\pi A \to A$ is a Borel map such that $\pi \circ \al = \al$ and such that for every $x \in X$, the restriction of $\al$ to $G_x \times A_x$ is a continuous action $G_x \actson A_x$ by $*$-automorphisms. Consider the reduced and full crossed products $A \rtimesred G = (A_x \rtimesred G_x)_{x \in X}$ and $A \rtimesf G = (A_x \rtimesf G_x)_{x \in X}$.

There are unique standard Borel structures on $A \rtimesred G$ and $A \rtimesf G$ such that they become Borel fields of separable C$^*$-algebras and such that the canonical maps from $A$ and $G$ to the multiplier algebra are Borel.

The quotient map $A \rtimesf G \to A \rtimesred G$ is Borel.
\end{proposition}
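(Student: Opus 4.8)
The plan is to realise both crossed products by completing the twisted convolution $*$-algebra $C_c(G,A) = (C_c(G_x,A_x))_{x\in X}$, invoking Lemma \ref{lem.universal-Cstar-algebra} for the full crossed product and an explicitly constructed regular covariant representation, together with Corollary \ref{cor.completion-normed}, for the reduced one. First I would establish the vector-valued analogue of the results of Section \ref{sec.lc-groups}: the family $C_c(G,A)$ carries a canonical standard Borel structure making it a Borel field of separable normed spaces, with $\pi : C_c(G,A) \to X$ and $C_c(G,A) \times_\pi G \to A : (f,g) \mapsto f(g)$ Borel. This is proved exactly as Propositions \ref{prop.Borel-field-cont-functions-compact-open} and \ref{prop.Borel-field-C0}, replacing the target $\C$ by the Borel field $A$ and using $C(K_x,A_x) \cong C(K_x) \otvee A_x$ together with Proposition \ref{prop.tensor-product-banach} at the point where Proposition \ref{prop.Borel-field-CK} was used, and using the compact exhaustion $(K_n)_n$ of $G$ from Lemma \ref{lem.Borel-compact-filtration} to pass from $C(K,A)$ to $C_c(G,A)$. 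One also needs that the modular functions $\Delta_x : G_x \to (0,+\infty)$ combine into a Borel map $\Delta : G \to (0,+\infty)$: fixing a Borel section $F : X \to C_c(G)$ with $F_x \geq 0$ and $\int_{G_x} F_x \, d\lambda_x = 1$, one has $\Delta_x(g) = \int_{G_x} F_x(sg)\, d\lambda_x(s)$, which is Borel by Proposition \ref{prop.Borel-Haar-measure}.

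Next I would equip $C_c(G,A)$ with the twisted convolution $(f * h)(g) = \int_{G_x} f(s)\, \alpha_s\bigl(h(s^{-1}g)\bigr)\, d\lambda_x(s)$, the involution $f^*(g) = \Delta_x(g)^{-1} \alpha_g\bigl(f(g^{-1})^*\bigr)$ and the submultiplicative norm $\|f\|_1 = \int_{G_x} \|f(g)\|\, d\lambda_x(g)$. That all three are Borel is checked through Lemma \ref{lem.uniqueness}: it suffices to verify Borelness on a dense sequence of Borel sections, and by density of finite sums of sections of the form $x \mapsto \phi_x \cdot a_x$ ($\phi$ a Borel section of $C_c(G)$, $a$ a Borel section of $A$) this reduces to the Borelness of the relevant $\lambda_x$-integrals, which follows from Proposition \ref{prop.Borel-Haar-measure} combined with the uniform continuity of the integrands on the compact sets $K_{n,x}$, exactly as integration over $K_{n,x}$ was handled in the proof of Proposition \ref{prop.Borel-field-LpG}. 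Thus $(C_c(G,A),\|\cdot\|_1)$ is a Borel field of separable normed $*$-algebras. Since the $\|\cdot\|_1$-bounded $*$-representations of $C_c(G_x,A_x)$ are precisely the integrated forms of the covariant representations of $(A_x,G_x,\alpha)$, we have $A_x \rtimesf G_x = (C_c(G_x,A_x),\|\cdot\|_1)^u$ in the notation of Lemma \ref{lem.universal-Cstar-algebra}, which therefore produces the required standard Borel structure on $A \rtimesf G$ and a Borel map $C_c(G,A) \to A \rtimesf G$.

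For the reduced crossed product I would build the regular covariant representation in a Borel way. By Lemma \ref{lem.GNS-field-Cstar} fix a Borel map $\rho : A \to B(H)$, $H$ a fixed separable infinite-dimensional Hilbert space, with each $\rho_x$ faithful and nondegenerate. Let $\mathcal H = L^2(G) \ot H$, a Borel field of separable Hilbert spaces by Proposition \ref{prop.Borel-field-LpG} (with $p = 2$) and Corollary \ref{cor.tensor-product-Hilbert}, and on $\mathcal H_x = L^2(G_x) \ot H$ set $(\widetilde\pi_x(a)\xi)(g) = \rho_x(\alpha_{g^{-1}}(a))\xi(g)$ and $(\widetilde\lambda_x(s)\xi)(g) = \xi(s^{-1}g)$. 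Arguing as before (checking on $C_c$-sections and using Borelness of Haar integration) one sees that $\widetilde\pi : A \to B(\mathcal H)$, $\widetilde\lambda : G \to B(\mathcal H)$, and hence $\Lambda : C_c(G,A) \to B(\mathcal H)$ given by $\Lambda_x(f) = \int_{G_x} \widetilde\pi_x(f(g))\, \widetilde\lambda_x(g)\, d\lambda_x(g)$, are Borel; by Proposition \ref{prop.bounded-operators-banach} the map $f \mapsto \|\Lambda_{\pi(f)}(f)\| =: \|f\|\red$ is then Borel, and since each $\rho_x$ is faithful it is the reduced norm on $C_c(G_x,A_x)$. By Corollary \ref{cor.completion-normed} the separation-completion $A \rtimesred G$ becomes a Borel field of separable Banach spaces; it is a Borel field of separable C$^*$-algebras because the product and adjoint extend continuously from $C_c(G,A)$ and are hence Borel by point 3 of Proposition \ref{prop.completion}.

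It remains to verify Borelness of the canonical maps $A \to M(A \rtimes G)$ and $G \to M(A \rtimes G)$ for the multiplier structure of Proposition \ref{prop.multiplier-field}, and the uniqueness statement; the quotient map $A \rtimesf G \to A \rtimesred G$ is the fibrewise-continuous extension of the identity of $C_c(G,A)$ and is therefore Borel by point 3 of Proposition \ref{prop.completion}. For the multiplier maps, Proposition \ref{prop.multiplier-field} and Lemma \ref{lem.construction-uniqueness} reduce the task to showing that the module actions $A \times_\pi C_c(G,A) \to C_c(G,A)$, $(a,f) \mapsto (g \mapsto af(g))$ and $(g \mapsto f(g)\alpha_g(a))$, and $G \times_\pi C_c(G,A) \to C_c(G,A)$, $(s,f) \mapsto (g \mapsto \alpha_s(f(s^{-1}g)))$, are Borel --- yet another instance of the computations above. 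Uniqueness then follows as in Propositions \ref{prop.multiplier-field} and \ref{prop.min-max-tensor-product}: $C_c(G,A)$ is a dense Borel subfield of $A \rtimes G$ and its Borel structure is generated by the sections $x \mapsto i_A(a_x)\, i_G(\phi_x)$, with $i_G(\phi_x) = \int_{G_x} \phi_x(g)\, u_g\, d\lambda_x(g)$ and $a,\phi$ ranging over dense sequences of Borel sections of $A$, resp.\ $C_c(G)$, so these sections are forced to be Borel once the canonical maps are. I expect the main obstacle to lie in the recurring ``the fibrewise $\lambda_x$-integral is Borel on the total space'' verifications (for $*$, the involution, $\Lambda$, and the module actions): each should reduce to the Borel-partition and Riemann-sum technique already used for Haar integration in Propositions \ref{prop.Borel-Haar-measure} and \ref{prop.Borel-field-LpG}, but the non-unimodular correction $\Delta_x$ and the bookkeeping of the vector-valued integrands will require care.
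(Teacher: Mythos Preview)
Your proposal is correct and follows essentially the same route as the paper: build $C_c(G,A)$ as a Borel field of separable normed $*$-algebras with the $L^1$-norm, invoke Lemma \ref{lem.universal-Cstar-algebra} for the full crossed product, realise the reduced norm via a Borel regular representation on $L^2(G,K)$ obtained from Lemma \ref{lem.GNS-field-Cstar}, and derive uniqueness from the fact that the canonical maps force the $C_c(G,A)$-sections to be Borel via an integral formula. The paper isolates your recurring ``fibrewise $\lambda_x$-integral is Borel'' step as a single observation at the start (for any weakly integrable Borel $F : G \to V$ into a Borel field of normed spaces, $x \mapsto \int_{G_x} F\,d\lambda_x$ is Borel, proved by pairing against Borel sections of $V^*$ and applying Proposition \ref{prop.Borel-Haar-measure}), which streamlines exactly the verifications you flag as the main obstacle.
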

\begin{proof}
Choose Haar measures $\lambda_x$ on $G_x$ satisfying Proposition \ref{prop.Borel-Haar-measure}. Throughout the proof, we use the following observation. If $V = (V_x)_{x \in X}$ is any Borel field of separable normed spaces and $F : G \to V$ is a Borel function that satisfies $\pi \circ F = \pi$ and that is weakly integrable, in the sense that there exists a map $H : X \to V$ such that for every $x \in X$ and $\om \in V_x^*$, the map $G_x \to \C : g \mapsto \om(F(g))$ is $\lambda_x$-integrable with integral $\om(H(x))$, then $H$ is Borel. To prove this, consider the dual Banach spaces $V^* = (V_x^*)_{x \in X}$ and use the standard Borel structure on $V^*$ given by Proposition \ref{prop.dual-field-Banach}. It suffices to prove that for every Borel section $\om : X \to V^*$, the map $X \to \C : x \mapsto \om_x(H(x))$ is Borel. But,
$$\om_x(H(x)) = \int_{G_x} \om_x(F(g)) \, d\lambda_x(g) \; .$$
Since the map $g \mapsto \om_{\pi(g)}(F(g))$ is Borel, it follows from Proposition \ref{prop.Borel-Haar-measure} that $x \mapsto \om_x(H(x))$ is Borel.

As in Proposition \ref{prop.Borel-field-C0}, there is a unique standard Borel structure on $C_c(G,A) = C_c(G_x,A_x)_{x \in X}$ such that the maps $\pi : C_c(G,A) \to X$ and $C_c(G,A) \times G \to A : (F,g) \mapsto F(g)$ are Borel and in this way, $C_c(G,A)$ becomes a Borel field of separable normed spaces, with the supremum norm on each $C_c(G_x,A_x)$. With the usual $L^1$-norm, adjoint and convolution product, $C_c(G,A)$ is a Borel field of separable normed $*$-algebras. The norm on $C_c(G,A)$ given by $A \rtimesf G$ is bounded above by the $L^1$-norm. So, by Lemma \ref{lem.universal-Cstar-algebra}, there is a unique standard Borel structure on $A \rtimesf G$ such that $\pi : A \rtimesf G \to X$ and $\theta\full : C_c(G,A) \to A \rtimesf G$ are Borel and $A \rtimesf G$ is a Borel field of separable C$^*$-algebras.

Let $K$ be a separable infinite dimensional Hilbert space. By Lemma \ref{lem.GNS-field-Cstar}, we can choose a Borel map $\gamma : A \to B(K)$ such that for every $x \in X$, the restriction of $\gamma$ to $A_x$ is a faithful $*$-homomorphism. As in Proposition \ref{prop.Borel-field-LpG}, we canonically define the Borel field of separable Hilbert spaces $L^2(G,K) = L^2(G_x,K)_{x \in X}$. The formula
$$\theta\red : C_c(G,A) \to B(L^2(G,K)) : (\theta\red(F) \xi)(h) = \int_{G_{\pi(h)}} \gamma(\al_{h^{-1}}(F(g))) \, \xi(g^{-1} h) \, d\lambda_{\pi(h)}(g)$$
defines a Borel map that realizes the reduced C$^*$-norm on every $C_c(G_x,A_x)$. So we find a unique standard Borel structure on $A \rtimesred G$ such that $\pi : A \rtimesred G \to X$ and $\theta\red : C_c(G,A) \to A \rtimesred G$ are Borel and $A \rtimesred G$ is a Borel field of separable C$^*$-algebras.

To prove the unique characterization statement in the proposition, let $\rtimes$ be either the full or reduced crossed product. Since the (left and right) multiplication maps $G \times_\pi C_c(G,A) \to C_c(G,A)$ and $A \times_\pi C_c(G,A) \to C_c(G,A)$ are Borel, it follows from Proposition \ref{prop.completion} that the natural maps $A \to M(A \rtimes G)$ and $G \to M(A \rtimes G)$ are Borel.

If $A \rtimes G$ is in some other way a Borel field of separable C$^*$-algebras such that $A \to M(A \rtimes G)$ and $G \to M(A \rtimes G) : g \mapsto u_g$ are Borel, to conclude the proof of the proposition, we have to show that $\theta : C_c(G,A) \to A \rtimes G$ is Borel. It suffices to prove that for all Borel sections $F : X \to C_c(G,A)$ and $d : X \to A \rtimes G$, the map $X \to A \rtimes G : x \mapsto \theta(F_x) d_x$ is Borel. But for every $x \in X$, we have
\begin{equation}\label{eq.formula-mult}
\theta(F_x) d_x = \int_{G_x} F_x(g) \, u_g \, d_x \, d\lambda_x(g) \; .
\end{equation}
Since $G \to M(A \rtimes G) : g \mapsto u_g$ is Borel, the map $G \to A \rtimes G : g \mapsto u_g \, d_{\pi(g)}$ is Borel. Since $A \to M(A \rtimes G)$ is Borel, the multiplication map $A \times_\pi (A \rtimes G) \to A \rtimes G$ is Borel. Since $G \to A : g \mapsto F_{\pi(g)}(g)$ is Borel, it follows that $G \to A \rtimes G : g \mapsto F_{\pi(g)} \, u_g \, d_{\pi(g)}$ is Borel. Using the observation from the first paragraph of the proof and using \eqref{eq.formula-mult}, it follows that $x \mapsto \theta(F_x) d_x$ is Borel.
\end{proof}

With essentially the same argument as the one for the reduced crossed product in Proposition \ref{prop.full-and-reduced-crossed-product}, we obtain the following. We leave the proof as an exercise.

\begin{proposition}
Let $P = (P_x)_{x \in X}$ be a Borel field of von Neumann algebras with separable predual and let $G = (G_x)_{x \in X}$ be a Borel field of locally compact Polish groups. Assume that $\al : G \times_\pi P \to P$ is a Borel map such that $\pi \circ \al = \al$ and such that for every $x \in X$, the restriction of $\al$ to $G_x \times P_x$ is a continuous action $G_x \actson P_x$ by $*$-automorphisms.

There is a unique standard Borel structure on $(P \rtimes G)_* = ((P_x \rtimes G_x)_*)_{x \in X}$ such that $P \rtimes G$ becomes a Borel field of separable von Neumann algebras and such that the canonical maps $P \to P \rtimes G$ and $G \to P \rtimes G$ are Borel.
\end{proposition}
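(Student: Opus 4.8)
\emph{Proof proposal.} The plan is to mimic the construction of the reduced crossed product $A \rtimesred G$ in Proposition~\ref{prop.full-and-reduced-crossed-product}, the only change being that here I realise the von Neumann crossed product on a Borel field of Hilbert spaces and then invoke Proposition~\ref{prop.construction} (rather than completing a Borel field of normed $*$-algebras as in Lemma~\ref{lem.universal-Cstar-algebra}). So the three ingredients will be: a Borel concrete realisation of $P$, the Borel field $L^2(G,\cH)$ as in the proof of Proposition~\ref{prop.full-and-reduced-crossed-product}, and the regular representation generating $P \rtimes G$ fibrewise.

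First I would fix a concrete realisation of $P$. Choosing a dense sequence of Borel sections of $\ball P_*^+$ by Lemma~\ref{lem.positive-functionals} and summing with weights $2^{-n}$ gives a Borel section $x \mapsto \om_x$ of faithful normal states on $P_x$; the GNS construction then produces, exactly as in the proof of Proposition~\ref{prop.equivalence-defs-Borel-field-vNalg}, a Borel field of separable Hilbert spaces with faithful normal unital $*$-homomorphisms into its field of bounded operators, and after tensoring with $\ell^2(\N)$ and applying \cite[Lemma~IV.8.12]{Tak79} as in the proof of Lemma~\ref{lem.GNS-field-Cstar} we may take this field to be the constant field $\cH$, i.e.\ we obtain a Borel map $\theta : P \to B(\cH)$ with every $\theta_x : P_x \to B(\cH)$ a faithful normal unital $*$-homomorphism. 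Fixing Haar measures $\lambda_x$ as in Proposition~\ref{prop.Borel-Haar-measure}, I consider the Borel field of separable Hilbert spaces $L^2(G,\cH) = L^2(G_x,\lambda_x;\cH)_{x \in X}$ built exactly as $L^2(G,K)$ in the proof of Proposition~\ref{prop.full-and-reduced-crossed-product} (following Proposition~\ref{prop.Borel-field-LpG}), together with its Borel subfield $C_c(G,\cH)$ of continuous compactly supported sections (as in Proposition~\ref{prop.Borel-field-C0}). For $a \in P_x$ and $g \in G_x$ let $\pi_x(a), u_x(g) \in B(L^2(G_x,\lambda_x;\cH))$ be given by $(\pi_x(a)\xi)(h) = \theta_x(\al_{h^{-1}}(a))\,\xi(h)$ and $(u_x(g)\xi)(h) = \xi(g^{-1}h)$, so that by definition $P_x \rtimes_\al G_x$ is the von Neumann subalgebra of $B(L^2(G_x,\lambda_x;\cH))$ generated by $\pi_x(P_x) \cup u_x(G_x)$.

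The main obstacle, and the technical heart of the argument, is to show that the sections $x \mapsto \pi_x(\psi_m(x))$ and $x \mapsto u_x(\vphi_n(x))$ are Borel sections of $B(L^2(G,\cH))$, where $\vphi_n : X \to G$ is a dense sequence of Borel sections and $\psi_m : X \to \ball P$ is a strong$^*$-dense sequence of Borel sections (Lemma~\ref{lem.positive-functionals}); note that by normality of $\pi_x$ and strong continuity of $g \mapsto u_x(g)$ these countably many operators already generate $P_x \rtimes_\al G_x$ fibrewise. By Proposition~\ref{prop.bounded-operators-banach} and Lemma~\ref{lem.construction-uniqueness} it suffices to evaluate against a dense sequence of Borel sections $\xi : X \to C_c(G,\cH) \subset L^2(G,\cH)$. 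For $\pi_x(\psi_m(x))$ one checks, using continuity of the action $G_x \actson P_x$ and normality of $\theta_x$, that $h \mapsto \theta_x(\al_{h^{-1}}(\psi_m(x)))\,\xi(x)(h)$ is again continuous and compactly supported, and that for every Borel section $\eta : X \to C_c(G,\cH)$ the scalar $\langle \pi_x(\psi_m(x))\xi(x), \eta(x)\rangle = \int_{G_x} \langle \theta_x(\al_{h^{-1}}(\psi_m(x)))\xi(x)(h), \eta(x)(h)\rangle \, d\lambda_x(h)$ depends on $x$ in a Borel way — the integrand is a fibrewise continuous, genuinely Borel function on $G$ because $\al : G \times_\pi P \to P$, $\theta$ and the evaluation maps are Borel, so Borelness of the integral follows from Proposition~\ref{prop.Borel-Haar-measure}; this is precisely the ``weak integrability'' device used for $\theta\red$ in Proposition~\ref{prop.full-and-reduced-crossed-product}. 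The case of $u_x(\vphi_n(x))$ is easier and only uses Borelness of group multiplication and inversion and of the evaluation $C_c(G,\cH) \times_\pi G \to \cH$. Once these Borel sections are in hand, Proposition~\ref{prop.construction} shows that $P \rtimes G = (P_x \rtimes_\al G_x)_{x \in X}$ is a Borel subset of $B(L^2(G,\cH))$, hence a concrete Borel field of separable von Neumann algebras in the sense of Definition~\ref{def.Borel-field-vNalg-concrete}; Proposition~\ref{prop.equivalence-defs-Borel-field-vNalg} then endows $(P \rtimes G)_* = ((P_x \rtimes G_x)_*)_{x \in X}$ with a standard Borel structure making it the required abstract Borel field of separable von Neumann algebras. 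The canonical maps $P \to P \rtimes G$, $a \mapsto \pi_{\pi(a)}(a)$, and $G \to P \rtimes G$, $g \mapsto u_{\pi(g)}(g)$, are Borel by the very same evaluation-against-$C_c(G,\cH)$-sections computation (now with $a$, resp.\ $g$, a free variable), via Lemma~\ref{lem.construction-uniqueness}.

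Finally, uniqueness is obtained in the same spirit as in Proposition~\ref{prop.full-and-reduced-crossed-product}. Given any standard Borel structure $\mathcal{B}$ on $(P \rtimes G)_*$ with the stated properties, the canonical maps being $\mathcal{B}$-Borel forces each $x \mapsto \pi_x(\psi_m(x))$ and $x \mapsto u_x(\vphi_n(x))$, and hence every $\cQ + i\cQ$-linear combination of finite products and adjoints of these, to be a $\mathcal{B}$-Borel section of $P \rtimes G$; after rescaling, these form a weak$^*$-dense sequence of Borel sections of $\ball(P \rtimes G)$ (Kaplansky density), and since $\ball(P\rtimes G)$ with the weak$^*$ topology is a Borel field of Polish spaces for any valid structure (Lemma~\ref{lem.positive-functionals}) while weak$^*$-limits of Borel sections remain Borel (Lemma~\ref{lem.Borel-field-Polish-some-properties}), one concludes, via Lemma~\ref{lem.construction-uniqueness} and the uniqueness part of Proposition~\ref{prop.equivalence-defs-Borel-field-vNalg}, that the structure constructed above is contained in $\mathcal{B}$; as both are standard Borel and separate points, they coincide.
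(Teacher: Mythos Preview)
Your proposal is correct and follows precisely the route the paper intends: the paper does not give a proof but simply says ``with essentially the same argument as the one for the reduced crossed product in Proposition~\ref{prop.full-and-reduced-crossed-product}, we obtain the following; we leave the proof as an exercise.'' Your plan of realising $P$ concretely on a constant Hilbert field, building $L^2(G,\cH)$, verifying Borelness of the generating sections $x \mapsto \pi_x(\psi_m(x))$ and $x \mapsto u_x(\vphi_n(x))$ via the weak-integrability device, and invoking Proposition~\ref{prop.construction} and Proposition~\ref{prop.equivalence-defs-Borel-field-vNalg}, is exactly that argument carried out in the von Neumann setting.
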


\section{Universal Borel fields}\label{sec.universal}

We say that two Borel fields $V = (V_x)_{x \in X}$ and $W = (W_y)_{y \in Y}$ of Polish spaces are \emph{isomorphic} if there exist bijective Borel maps $\psi : X \to Y$ and $\theta : V \to W$ such that $\pi \circ \theta = \psi \circ \pi$ and for every $x \in X$, the map $\theta_x : V_x \to W_{\psi(x)}$ is a homeomorphism.

Of course, one defines entirely similarly the notion of isomorphism between Borel fields of separable Banach spaces, Polish groups, separable von Neumann algebras, etc.

\begin{definition}\label{def.universal-field-Polish}
We say that a Borel field $V = (V_x)_{x \in X}$ of Polish spaces is \emph{universal} if any Borel field of Polish spaces is isomorphic with $(V_x)_{x \in X_0}$ for some Borel set $X_0 \subset X$.

We define similarly the notion of a universal Borel field of any other separable structure.
\end{definition}

At first sight, it may be surprising that such a universal Borel field of Polish spaces even exists. But even more is true: the following provides a concrete such universal Borel field. To prove this proposition, one has to show that the abstract result saying that every Polish space is homeomorphic to a closed subset of $\R^\N$ can be proven in a constructive, Borel manner.

Recall that given a Polish space $U$, we denote by $F(U)$ the set of nonempty closed subsets of $U$, equipped with the Effros Borel structure (see Section \ref{sec.Effros-Borel}). In particular, recall from Proposition \ref{prop.selection-Effros} that for every standard Borel space $Y$ and Borel map $\gamma : Y \to F(U)$, we have the Borel field of Polish spaces $V_\gamma = (\gamma(y))_{y \in Y}$, on which the standard Borel structure is given by viewing $V_\gamma$ as a Borel subset of $Y \times U$.

\begin{proposition}\label{prop.universal-field-RN}
Write $U = \R^\N$.
\begin{enumlist}
\item Every Borel field $(W_y)_{y \in Y}$ of Polish spaces is isomorphic with a Borel field of the form $V_\gamma$ for some Borel map $\gamma : Y \to F(U)$.
\item Consider the standard Borel space $X = [0,1] \times F(U)$. The tautological Borel field $V_{\gammatil}$ given by $\gammatil : X \to F(U) : \gammatil(t,P) = P$ is universal.
\end{enumlist}
\end{proposition}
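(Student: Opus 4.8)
The plan is to deduce both statements from a parametrized form of the classical fact that every Polish space is homeomorphic to a closed subspace of $\R^\N$; the trick is to arrange the argument so that essentially all of the $y$-dependence is concentrated in a single Effros-Borel map $y\mapsto C_y\in F(\R^\N)$. For (1), given a Borel field $(W_y)_{y\in Y}$ I would fix a compatible Borel metric $d$ on $W$, which we may assume satisfies $d\leq 1$, and a dense sequence of Borel sections $\vphi_k:Y\to W$. Let $Q=[0,1]^\N$ be the Hilbert cube, a closed subspace of $\R^\N$, and fix a metric $\rho$ on $Q$. As in the proof of Lemma \ref{lem.Borel-field-Polish-some-properties}, the map
\[
\iota:W\to Y\times Q:\iota(w)=\bigl(\pi(w),(d(w,\vphi_n(\pi(w))))_n\bigr)
\]
is injective and Borel, hence a Borel isomorphism onto its (Borel) image, and each $\iota_y:W_y\to Q$ is a homeomorphism onto $\iota_y(W_y)$. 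The key observation is the identity
\[
\iota_y(W_y)=\overline{\iota_y(W_y)}\cap\{q\in Q\mid\inf_n q_n=0\}\,,
\]
with closure taken in $Q$: ``$\subseteq$'' is clear by density of $(\vphi_n(y))_n$, and for ``$\supseteq$'' one notes that if $\inf_n q_n=0$ and $\iota_y(w^{(j)})\to q$, then choosing $n$ with $q_n$ small shows $(w^{(j)})$ is $d$-Cauchy, hence convergent, with limit mapping to $q$. Writing $C_y:=\overline{\iota_y(W_y)}$, $V_M:=\{q\in Q\mid\exists n:q_n<1/M\}$ and $D_M:=Q\setminus V_M=[1/M,1]^\N$, this exhibits $\iota_y(W_y)=\bigcap_M(C_y\cap V_M)$ as a $G_\delta$ inside the compact space $C_y$, with the $V_M$ and the compact $D_M$ \emph{independent of $y$}.

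Since $\iota_y(W_y)\subseteq V_M$ for all $M$, the map $\Phi_y:\iota_y(W_y)\to Q\times\R^\N$, $\Phi_y(q)=\bigl(q,(1/\rho(q,D_M))_M\bigr)$, is well defined, is a homeomorphism onto its image, and that image is closed in $Q\times\R^\N$: from $(q^{(j)},z^{(j)})\to(q,z)$ with $q^{(j)}\in\iota_y(W_y)$ one gets $q\in C_y$, and boundedness of the $z^{(j)}_M$ forces $\rho(q,D_M)>0$ for every $M$, so $q\in\bigcap_M V_M$, hence $q\in\iota_y(W_y)$ and $z=\Phi_y(q)$. Fixing once and for all a homeomorphism $\iota_{\#}$ of $Q\times\R^\N$ onto a closed subspace of $\R^\N$ (combining $Q\subseteq\R^\N$ with $\R^\N\times\R^\N\cong\R^\N$), I set $\gamma(y):=\iota_{\#}(\Phi_y(\iota_y(W_y)))\in F(\R^\N)$, which is homeomorphic to $W_y$. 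As $\gamma(y)$ is the closure in $\R^\N$ of $\{\iota_{\#}(\Phi_y(\iota_y(\vphi_k(y))))\mid k\in\N\}$ and each map $y\mapsto\iota_{\#}(\Phi_y(\iota_y(\vphi_k(y))))$ is Borel — note $\iota_y(\vphi_k(y))$ has a vanishing coordinate, so it lies in every $V_M$ and $\rho(\iota_y(\vphi_k(y)),D_M)>0$ — the map $y\mapsto\gamma(y)$ is Borel into $F(\R^\N)$. By Proposition \ref{prop.selection-Effros}, $V_\gamma=(\gamma(y))_{y\in Y}$ is then a Borel field of Polish spaces, and $w\mapsto(\pi(w),\iota_{\#}(\Phi_{\pi(w)}(\iota_{\pi(w)}(w))))$ is an injective Borel map onto $V_\gamma$ which is a homeomorphism on each fibre; this is the required field isomorphism $(W_y)_{y\in Y}\cong V_\gamma$, proving (1).

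For (2), let $(W_y)_{y\in Y}$ be any Borel field of Polish spaces and, by (1), fix a Borel map $\gamma:Y\to F(U)$ with $U=\R^\N$ and $V_\gamma\cong(W_y)_{y\in Y}$. Since $Y$ is standard Borel, fix a Borel isomorphism $\si$ of $Y$ onto a Borel subset $\si(Y)\subseteq[0,1]$ and put $j:Y\to X=[0,1]\times F(U)$, $j(y)=(\si(y),\gamma(y))$; then $j$ is injective and Borel, so $X_0:=j(Y)$ is a Borel subset of $X$. Because $\gammatil(j(y))=\gamma(y)$, the restriction $(V_{\gammatil})_{x\in X_0}$ has fibre $\gamma(y)$ over $j(y)$, and $(y,z)\mapsto(j(y),z)$ is an injective Borel map onto it that covers the Borel isomorphism $j$ of base spaces and equals the identity on each fibre; hence it is an isomorphism of Borel fields $V_\gamma\cong(V_{\gammatil})_{x\in X_0}$. (Here $V_{\gammatil}$ is a Borel field of Polish spaces because $\gammatil$, being essentially the projection $X\to F(U)$, is Borel, and the restriction of a Borel field to a Borel subset of the base is again a Borel field.) Combining the isomorphisms, $(W_y)_{y\in Y}\cong(V_{\gammatil})_{x\in X_0}$, so $V_{\gammatil}$ is universal. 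The extra $[0,1]$-factor is genuinely needed: $\gamma$ itself may fail to be injective, whereas $j$ must be.

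The only real difficulty is the one flagged just before the statement, namely carrying out the ``Polish $\Rightarrow$ closed subspace of $\R^\N$'' embedding Borel-measurably in the parameter $y$. This is resolved by the displayed identity for $\iota_y(W_y)$, which makes the $G_\delta$-presentation uniform in $y$ so that the subsequent closed-graph construction uses only the fixed data $(D_M)_M$ and $\rho$; everything else is routine manipulation of injective Borel maps and of the Effros Borel structure via the lemmas already available.
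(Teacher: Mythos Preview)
Your proof is correct. Both arguments follow the same overall strategy for (1): embed $W_y$ into $[0,1]^\N$ via the Kuratowski map $w\mapsto (d(w,\vphi_n(\pi(w))))_n$, identify the image as a $G_\delta$, and then use the standard ``reciprocals of distances to the closed complements'' trick to convert the $G_\delta$ into a closed subset of $\R^\N\times\R^\N\cong\R^\N$, all in a Borel-in-$y$ manner. For (2) your argument is essentially identical to the paper's.

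The difference lies in how the $G_\delta$ structure of the image is exhibited. The paper constructs explicit continuous functions $F_n:\R^\N\to[0,1]$ (built from the data $d(\vphi_r(y),\vphi_k(y))$) and proves directly that $z\in\iota_y(W_y)$ if and only if $F_n(z)>0$ for all $n$; the closed embedding is then $w\mapsto(\iota_y(w),(F_n(\iota_y(w))^{-1})_n)$. Your route is more modular: you first observe the clean identity $\iota_y(W_y)=\overline{\iota_y(W_y)}\cap\{q:\inf_n q_n=0\}$, which separates the $y$-dependence into the single Effros-Borel map $y\mapsto C_y=\overline{\iota_y(W_y)}$, while the open sets $V_M=\{q:\exists n,\;q_n<1/M\}$ and their compact complements $D_M$ are fixed once and for all. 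This makes the Borel dependence in the subsequent closed-graph construction particularly transparent. The paper's approach, by contrast, is self-contained in that it never needs to name the closure $C_y$ explicitly. Both are legitimate implementations of the same idea; yours is arguably a bit more conceptual, the paper's a bit more hands-on.
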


The role of the interval $[0,1]$ in point 2 of Proposition \ref{prop.universal-field-RN} is only to create enough multiplicity to really get a universal field in the sense of Definition \ref{def.universal-field-Polish}.

\begin{proof}
It is a classical result that every Polish space is homeomorphic to a closed subset of $U = \R^\N$, see e.g.\ \cite[Theorem 4.17]{Kec95}. To prove the proposition, it suffices to rewrite that proof in a constructive, Borel manner. It will be more convenient to write $U = \R^\N \times \R^\N \cong \R^\N$ and embed any Polish space $P$ as a closed subset of $U$.

For every $n \in \N$, define the continuous function
\[
\kappa_n : [0,+\infty) \to [0,1] : \kappa_n(t) = \begin{cases} 1-nt &\quad\text{if $0 \leq t \leq 1/n$,}\\ 0 &\quad\text{if $t \geq 1/n$.}\end{cases}
\]
Assume that $P$ is a Polish space and that $(a_n)_{n \in \N}$ is a dense sequence in $P$. Let $d$ be a compatible complete metric on $P$. We then define the continuous injective map
\[
\theta : P \to \R^\N : \theta(a) = (d(a,a_n))_{n \in \N}
\]
and the continuous function
\[
F_n : \R^\N \to [0,1] : F_n(z) = \sum_{r = n}^\infty 2^{-r} \, \kappa_n\bigl(\max_{k \in \{1,\ldots,r\}} |z_k - d(a_r,a_k)|\bigr) \; .
\]
We claim that $z \in \R^\N$ satisfies $F_n(z) > 0$ for all $n \in \N$ if and only if $z \in \theta(P)$.

To prove this claim, first assume that $z = \theta(a)$ for $a \in P$ and take $n \in \N$. Choose $r \in \N$ such that $d(a,a_r) < 1/n$. Then
\[
|\theta(a)_k - d(a_r,a_k)| \leq d(a,a_r) < 1/n \quad\text{for all $k \in \N$,}
\]
so that $F_n(\theta(a)) > 0$.

Conversely, assume that $z \in \R^\N$ and $F_n(z) > 0$ for all $n \in \N$. So for every $n \in \N$, we find an $r_n \geq n$ such that
\[
\kappa_n\bigl(\max_{k \in \{1,\ldots,r_n\}} |z_k - d(a_{r_n},a_k)|\bigr) > 0 \; .
\]
This means that $|z_k - d(a_{r_n},a_k)| < 1/n$ for all $k \in \{1,\ldots,r_n\}$. For every $n,m \in \N$, we have $r_n \leq r_m$ or $r_m \leq r_n$. In the first case, we get that
\[
|z_{r_n} - d(a_{r_m},a_{r_n})| < 1/m \quad\text{and}\quad |z_{r_n}| = |z_{r_n} - d(a_{r_n},a_{r_n})| < 1/n
\]
so that $d(a_{r_m},a_{r_n}) < 1/m + 1/n$. By symmetry, the same inequality holds when $r_m \leq r_n$. It follows that $(a_{r_n})_{n \in \N}$ is a Cauchy sequence in $P$. Take $a \in P$ such that $a_{r_n} \to a$. As explained above, $|z_k - d(a_{r_n},a_k)| < 1/n$ for all $k \leq r_n$. Since $r_n \geq n$, we get in particular that $|z_k - d(a_{r_n},a_k)| < 1/n$ for all $k \leq n$. Taking $n \to \infty$, it follows that $z_k = d(a,a_k)$ for all $k \in \N$. This means that $z = \theta(a)$.

Because of the claim proven above, we have a well defined continuous function
\[
\zeta : P \to \R^\N \times \R^\N : \zeta(a) = \bigl(\theta(a),(F_n(\theta(a))^{-1})_{n \in \N}\bigr) \; .
\]
We next claim that $\zeta(P) \subset \R^\N \times \R^\N$ is closed and that $\zeta$ is a homeomorphism from $P$ onto $\zeta(P)$.

Let $(b_i)_{i \in \N}$ be a sequence such that $\zeta(b_i) \to (z,f) \in \R^\N \times \R^\N$. It suffices to prove that there exists a $b \in P$ such that $b_i \to b$ and $\zeta(b) = (z,f)$.

Fix $n \in \N$. Since $F_n(\theta(b_i))^{-1} \to f_n$, there exists a $\delta > 0$ and $i_0 \in \N$ such that $F_n(\theta(b_i)) \geq \delta$ for all $i \geq i_0$. Since $\theta(b_i) \to z$ and $F_n$ is continuous, it follows that $F_n(z) \geq \delta > 0$. So $F_n(z) > 0$ for all $n \in \N$. By the claim proven above, it follows that $z = \theta(b)$ for a unique $b \in P$.

Choose $\eps > 0$. Take $n \in \N$ such that $d(b,b_n) < \eps/3$. Since $\lim_i \theta(b_i)_n = z_n = \theta(b)_n$, we can take $i_0 \in \N$ such that $|d(b_i,b_n) - d(b,b_n)| < \eps/3$ for all $i \geq i_0$. Since $d(b,b_n) < \eps/3$, it follows that $d(b_i,b) < \eps$ for all $i \geq i_0$. So we have proven that $b_i \to b$. Since $\zeta$ is continuous, the claim is proven.

Now assume that $(W_y)_{y \in Y}$ is any Borel field of Polish spaces. Take a compatible Borel metric $d : W \times_\pi W \to [0,+\infty)$ and take a dense sequence of Borel sections $\vphi_k : Y \to W$. Define the Borel maps
\begin{align*}
& \theta : W \to \R^\N : \theta(w) = d(w,\vphi_n(\pi(w)))_{n \in \N} \quad\text{and}\\
& G_n : W \to (0,1] : G_n(w) = \sum_{r = n}^\infty 2^{-r} \, \kappa_n\bigl(\max_{k \in \{1,\ldots,r\}} |\theta(w)_k - d(\vphi_r(\pi(w)),\vphi_k(\pi(w)))|\bigr) \; .
\end{align*}
By the discussion above, for every $y \in Y$, the map
\[
\zeta_y : W_y \to \R^\N \times \R^\N : \zeta_y(w) = \bigl(\theta(w),(G_n(w)^{-1})_{n \in \N} \bigr)
\]
has closed range and is a homeomorphism of $W_y$ onto $\zeta_y(W_y)$. Write $\gamma(y) = \zeta_y(W_y)$. By construction, $\gamma(y)$ equals the closure of $\{\zeta_y(\vphi_k(y)) \mid k \in \N\}$. So, $\gamma : Y \to F(U)$ is Borel by Proposition \ref{prop.selection-Effros}. By construction, $V$ is isomorphic with the Borel field $V_\gamma$.

To prove the final statement, define $X = [0,1] \times F(U)$ and consider the Borel field $V_{\gammatil}$ where $\gammatil : X \to F(U) : \gammatil(t,P) = P$. Choose any Borel field $(W_y)_{y \in Y}$ of Polish spaces. By the previous point, we can take an injective Borel map $\zeta : W \to Y \times U : w \mapsto (\pi(w),\zeta_{\pi(w)}(w))$ that provides an isomorphism between $W$ and the field $V_\gamma$ where $\gamma : Y \to F(U)$ is the Borel map $\gamma(y) = \zeta_y(W_y)$. Choose an injective Borel map $\rho : Y \to [0,1]$. Then,
\[
\psi : Y \to [0,1] \times F(U) : \psi(y) = (\rho(y),\gamma(y))
\]
is an injective Borel map. Define $X_0 = \psi(Y)$. By construction, the map
\[
\Theta : W \to [0,1] \times F(U) \times U : w \mapsto (\rho(\pi(w)),\gamma(y),\zeta_{\pi(w)}(w))
\]
together with $\psi$ realizes an isomorphism between $(W_y)_{y \in Y}$ and $V_{\gammatil}|_{X_0}$. So, $V_{\gammatil}$ is universal.
\end{proof}

For Borel fields of separable Banach spaces, we have a similar result. It is based on the classical result that every separable Banach space can be isometrically embedded as a closed subspace of $C(\Delta)$, where $\Delta$ is the Cantor set. Before stating and proving this result, we show that also the classical Alexandroff-Hausdorff theorem can be easily Borel coded.

\begin{lemma}\label{lem.Alexandroff-Hausdorff}
Let $K = (K_x)_{x \in X}$ be a Borel field of Polish spaces. Assume that every $K_x$ is compact. Let $\Delta$ be the Cantor set.

There exists a Borel map $\theta : X \times \Delta \to K$ such that for every $x \in X$, the map $\theta_x$ defined by $\theta_x(b) = \theta(x,b)$ is a continuous surjective map of $\Delta$ onto $K_x$.
\end{lemma}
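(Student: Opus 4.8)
The plan is to reduce the statement to the rigid, classical case of closed subsets of the Cantor set, carried out uniformly in $x$. First I would replace the compatible Borel metric $d$ by $\min\{d,1\}$ (still compatible and complete) and fix a dense sequence of Borel sections $\varphi_n:X\to K$, so that $\Theta:K\to X\times[0,1]^\N:\Theta(v)=(\pi(v),(d(v,\varphi_n(\pi(v))))_{n})$ is an injective Borel map whose restriction $\Theta_x$ to each $K_x$ is a continuous injection of the compact space $K_x$ into the Hausdorff space $[0,1]^\N$, hence a homeomorphism onto a compact subset $Z_x=\{z:(x,z)\in Z\}$, where $Z=\Theta(K)$ is a Borel subset of $X\times[0,1]^\N$ and $\Theta^{-1}:Z\to K$ is Borel. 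I would then fix, once and for all independently of $x$, a continuous surjection $g:\Delta\to[0,1]^\N$, for instance by identifying $\Delta=2^{\N}\cong(2^{\N})^{\N}$ and sending the $m$-th block $(b_i)_i$ to $\sum_{i\ge 0}b_i 2^{-i-1}\in[0,1]$. Setting $C_x=g^{-1}(Z_x)\subseteq\Delta$, this is a nonempty compact subset of $\Delta$ with $g(C_x)=Z_x$, and $\widetilde C=\{(x,b):(x,g(b))\in Z\}$ is a Borel subset of $X\times\Delta$ with these compact sections. By Proposition~\ref{prop.subfield-sigma-compact} (applied to the constant field $\Delta$), $\widetilde C$ is a Borel field of Polish spaces, so it carries a dense sequence of Borel sections $\psi_k:X\to\widetilde C$; in particular, for each finite binary string $t$, writing $N_t\subseteq\Delta$ for the corresponding basic clopen set, the set $\{x\in X:C_x\cap N_t\neq\emptyset\}=\bigcup_k\psi_k^{-1}(X\times N_t)$ is Borel. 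If I can produce a Borel map $r:X\times\Delta\to\Delta$ with each $r_x=r(x,\cdot)$ a continuous surjection $\Delta\to C_x$, then $\theta(x,b):=\Theta^{-1}(x,g(r(x,b)))$ is the desired map: it is Borel as a composition of Borel maps, and for fixed $x$ it equals $\Theta_x^{-1}\circ(g|_{C_x})\circ r_x$, a composition of continuous surjections $\Delta\twoheadrightarrow C_x\twoheadrightarrow Z_x\twoheadrightarrow K_x$.

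It then remains to construct $r$, i.e.\ to show uniformly that every nonempty closed subset of $\Delta$ is a continuous (even retractive) image of $\Delta$. For each $x$, the tree $T_x=\{t\in 2^{<\N}:C_x\cap N_t\neq\emptyset\}$ is pruned, meaning every node of $T_x$ has a child in $T_x$ (a point of $C_x\cap N_t$ yields one), and it satisfies $C_x=\{b\in\Delta:b|_n\in T_x\text{ for all }n\}$ since $C_x$ is closed. I would define $r(x,b)\in\Delta$ recursively by $r(x,b)(n)=b(n)$ if $(r(x,b)|_n)\frown\langle b(n)\rangle\in T_x$, and $r(x,b)(n)=1-b(n)$ otherwise; an easy induction using pruning gives $r(x,b)|_n\in T_x$ for all $n$, so $r(x,b)\in C_x$. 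Since $r(x,b)|_n$ depends only on $b|_n$, each $r_x$ is continuous; and if $c\in C_x$ then running the recursion with $b=c$ never flips a bit, so $r(x,c)=c$ and $r_x$ maps onto $C_x$. Finally $r$ is Borel: one checks by induction on $n$ that $(x,b)\mapsto r(x,b)|_n$ is Borel with values in the finite set $2^n$, using at each step that $(x,b)\mapsto b(n)$ is Borel and that membership of a given string in $T_x$ is a Borel condition in $x$ by the previous paragraph.

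The one genuinely non-formal point — and the place where some care is needed — is exactly this Borel uniformity: that $x\mapsto C_x$ (equivalently $x\mapsto T_x$) is Borel as a map into the Effros Borel space of $\Delta$, which is what lets the otherwise entirely classical Alexandroff--Hausdorff argument be carried through with Borel bookkeeping. This is supplied by Proposition~\ref{prop.subfield-sigma-compact}, crucially using that $\Delta$ is compact; everything else is routine. (The factor $[0,1]$ in $X\times[0,1]^\N$ plays no role beyond providing a convenient compact target; one could instead build a direct Cantor scheme $s\mapsto K_{s,x}$ on $K$ itself, but the reduction to the rigid binary-tree situation on $\Delta$ seems cleaner, since splitting a compact set into exactly two nonempty closed pieces of small diameter in a Borel way in $x$ is itself slightly delicate.)
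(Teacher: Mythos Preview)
Your proof is correct and follows the same overall architecture as the paper's: embed each $K_x$ into $[0,1]^\N$ via $\Theta_x(v)=(d(v,\varphi_n(x)))_n$, pull back through a fixed continuous surjection $\Delta\to[0,1]^\N$ to obtain closed subsets $C_x\subset\Delta$, invoke the $\sigma$-compact selection result to see that $(C_x)_{x\in X}$ is a Borel field, and then build a Borel-in-$x$ continuous surjection $\Delta\to C_x$. The only substantive difference is in this last step. The paper realizes $\Delta$ as the middle-two-thirds set in $[0,1]$ and uses Rosenholtz's observation that for $b,b'\in\Delta$ one has $(b+b')/2\notin\Delta$, so the closest point of $C_x$ to any $b\in\Delta$ is unique; the retraction is then $b\mapsto$ (unique nearest point), whose Borelness follows directly from Proposition~\ref{prop.selection-Effros} via the distance map $(x,b)\mapsto d(b,C_x)$. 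You instead work with $\Delta=2^\N$ and build the retraction combinatorially from the pruned tree $T_x$, flipping bits only when forced. Both constructions are classical and equally clean; the paper's buys Borelness in one line from the distance function, while yours makes the continuity and retraction properties completely transparent but requires an induction on string length for Borelness.
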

\begin{proof}
We follow the approach of \cite{Ros76}. We realize $\Delta \subset [0,1]$ as the middle-two-thirds set, i.e.\ the set of all $x \in [0,1]$ that can be written as $x = \sum_{n=1}^\infty a_n \, 6^{-n}$ with $a_n \in \{0,5\}$.

Let $\vphi_n : X \to K$ be a dense sequence of Borel sections and choose a compatible Borel metric $d : K \times_\pi K \to [0,+\infty)$. Replacing $d$ by $\min\{d,1\}$, we may assume that $d \leq 1$. For every $x \in X$, define the continuous injective map
\[
\rho_x : K_x \to [0,1]^\N : \rho_x(k) = (d(k,\vphi_n(x)))_{n \in \N} \; .
\]
Then define the injective Borel map $\rho : K \to X \times [0,1]^\N : \rho(k) = (\pi(k),\rho_{\pi(k)}(k))$. So, $\rho(K) \subset X \times [0,1]^\N$ is a Borel set. Fix a continuous surjection $\zeta : \Delta \to [0,1]^\N$. Define the Borel set $L \subset X \times \Delta$ by $L = (\id \times \zeta)^{-1}(\rho(K))$. By construction, for every $x \in X$, the section $L_x \subset \Delta$ equals the closed subset $\zeta^{-1}(\rho_x(K_x))$. Since all $L_x$ are compact, it follows from Theorem \ref{thm.selection-sigma-compact} that $L = (L_x)_{x \in X}$ is a Borel field of closed subsets of $\Delta$.

By Proposition \ref{prop.selection-Effros}, the map $X \times \Delta \to [0,1] : (x,b) \mapsto d(b,L_x)$ is Borel. So,
$$Z = \{(x,b,a) \in X \times \Delta \times \Delta \mid (x,a) \in L , |b-a| = d(b,L_x)\}$$
is Borel. As in \cite{Ros76}, when $b,b' \in \Delta$, we have that $(b+b')/2 \not\in \Delta$. So for every $(x,b) \in X \times \Delta$, there is a unique element $a \in L_x$ such that $|b-a| = d(b,L_x)$. We write $a = \gamma(x,b)$. By construction, the Borel set $Z$ is the graph of $\gamma$, so that $\gamma$ is a Borel map. By construction, the Borel map
$$\theta : X \times \Delta \to K : \theta(x,b) = \rho^{-1}(x,\zeta(\gamma(x,b)))$$
satisfies the conclusions of the lemma.
\end{proof}

Given a Banach space $V$, we denote by $S(V)$ the set of closed subspaces of $V$ with the Effros Borel structure. As above, we denote by $\Delta$ the Cantor set.

\begin{proposition}\phantomsection\label{prop.universal-field-Banach}
\begin{enumlist}
\item Every Borel field $(W_y)_{y \in Y}$ of separable Banach spaces is isomorphic with a Borel field of the form $V_\gamma$ for some Borel map $\gamma : Y \to S(C(\Delta))$.
\item Consider the standard Borel space $X = [0,1] \times S(C(\Delta))$. The tautological Borel field $V_{\gammatil}$ given by $\gammatil : X \to S(C(\Delta)) : \gammatil(t,A) = A$ is universal.
\end{enumlist}
\end{proposition}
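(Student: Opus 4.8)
The classical fact underlying part~1 is that every separable Banach space $V$ embeds isometrically as a closed subspace of $C(\Delta)$: the evaluation map $V \to C(\ball V^*) : v \mapsto (\om \mapsto \om(v))$ is isometric by Hahn--Banach, and pulling back along a continuous surjection $\Delta \to \ball V^*$ provided by the Alexandroff--Hausdorff theorem yields an isometric embedding $V \hookrightarrow C(\Delta)$. The plan is to perform this construction uniformly, using the Borel machinery already available. Starting from a Borel field $W = (W_y)_{y \in Y}$ of separable Banach spaces, I would first pass to the dual field $W^* = (W_y^*)_{y \in Y}$ with the standard Borel structure of Proposition~\ref{prop.dual-field-Banach}; by that proposition $\ball W^*$, with the weak$^*$ topology on each fibre, is a Borel field of compact Polish spaces. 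Applying Lemma~\ref{lem.Alexandroff-Hausdorff} to it gives a Borel map $\theta : Y \times \Delta \to \ball W^*$ with $\pi(\theta(y,b)) = y$ such that each $\theta_y := \theta(y,\cdot) : \Delta \to \ball W_y^*$ is a continuous surjection.

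Next I would define $\zeta : W \to Y \times C(\Delta)$ by $\zeta(w) = (\pi(w), \zeta_{\pi(w)}(w))$, where $\zeta_y(w) \in C(\Delta)$ is the function $b \mapsto \langle \theta(y,b), w\rangle$. Surjectivity of $\theta_y$ onto $\ball W_y^*$ makes each $\zeta_y : W_y \to C(\Delta)$ a linear isometric embedding, so $\gamma(y) := \zeta_y(W_y)$ is a closed subspace of $C(\Delta)$ and $\zeta$ is injective with image $V_\gamma = \{(y,f) \mid f \in \gamma(y)\}$. To see $\zeta$ is Borel, fix a dense sequence of Borel sections $\vphi_k : Y \to W$ and a dense sequence $(b_n)$ in $\Delta$; each map $y \mapsto \langle \theta(y,b_n), \vphi_k(y)\rangle$ is Borel, being the composition of the Borel section $y \mapsto (\theta(y,b_n),\vphi_k(y))$ of $W^* \times_\pi W$ with the Borel pairing from Proposition~\ref{prop.dual-field-Banach}; hence by the characterisation of the Borel structure on the constant field $C(\Delta)$ in Proposition~\ref{prop.Borel-field-CK} each $\zeta \circ \vphi_k$ is Borel, and by Lemma~\ref{lem.uniqueness} so is $\zeta$. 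Since an injective Borel map between standard Borel spaces has Borel image and Borel inverse, $V_\gamma$ is a Borel subset of $Y \times C(\Delta)$ and $\zeta : W \to V_\gamma$ is a Borel isomorphism restricting to an isometry on each fibre. Moreover $\gamma(y)$ is the closure of $\{\zeta_y(\vphi_k(y)) \mid k \in \N\}$, so by the Effros-Borel discussion around Proposition~\ref{prop.selection-Effros} the map $\gamma : Y \to F(C(\Delta))$ is Borel, and since its values lie in the Borel subset $S(C(\Delta))$ it is Borel as a map into $S(C(\Delta))$. As $V_\gamma$ is the field of closed subspaces $(\gamma(y))_y$ with the restricted Borel structure and the dense Borel sections $y \mapsto \zeta_y(\vphi_k(y))$, with algebraic operations and norm inherited from $C(\Delta)$, it is a Borel field of separable Banach spaces. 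This proves part~1.

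Part~2 then follows formally, just as in the proof of Proposition~\ref{prop.universal-field-RN}, once one observes that $V_{\gammatil}$ is itself a Borel field of separable Banach spaces: the map $X = [0,1] \times S(C(\Delta)) \to F(C(\Delta)) : (t,A) \mapsto A$ is Borel, so Proposition~\ref{prop.selection-Effros} applied to the constant field $C(\Delta)$ makes $(\gammatil(t,A))_{(t,A) \in X}$ a Borel field of Polish spaces, with operations and norm restricted from $C(\Delta)$. Given an arbitrary Borel field $W = (W_y)_{y \in Y}$ of separable Banach spaces, I would take $\gamma : Y \to S(C(\Delta))$ and $\zeta : W \to V_\gamma$ from part~1, fix an injective Borel map $\rho : Y \to [0,1]$, and set $\psi : Y \to X : \psi(y) = (\rho(y),\gamma(y))$, which is injective Borel with Borel image $X_0$. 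Then $\Theta : W \to X \times C(\Delta) : w \mapsto (\rho(\pi(w)),\gamma(\pi(w)),\zeta_{\pi(w)}(w))$ together with $\psi$ realises an isomorphism of $W$ onto $V_{\gammatil}|_{X_0}$, so $V_{\gammatil}$ is universal. As for Proposition~\ref{prop.universal-field-RN}, the factor $[0,1]$ only serves to provide enough multiplicity.

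The genuinely nontrivial ingredient --- producing the Alexandroff--Hausdorff surjections $\Delta \to \ball W_y^*$ in a Borel way in $y$ --- has already been isolated as Lemma~\ref{lem.Alexandroff-Hausdorff}, and the Borel duality theory needed for $\ball W^*$ is Proposition~\ref{prop.dual-field-Banach}; consequently I do not anticipate a real obstacle here. The only care required is to keep track of which field each object lives over and to invoke the correct characterisation (Proposition~\ref{prop.Borel-field-CK}, Lemma~\ref{lem.uniqueness}) of the Borel structure on the constant field $C(\Delta)$ when verifying that $\zeta$ and $\gamma$ are Borel.
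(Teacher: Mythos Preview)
Your proposal is correct and follows essentially the same route as the paper: pass to the Borel field of compact Polish spaces $\ball W^*$ via Proposition~\ref{prop.dual-field-Banach}, apply Lemma~\ref{lem.Alexandroff-Hausdorff} to get Borel surjections $\Delta \to \ball W_y^*$, and compose with the evaluation embedding to land in $C(\Delta)$; part~2 is then deduced exactly as in Proposition~\ref{prop.universal-field-RN}. The only cosmetic difference is that the paper inserts the intermediate Borel field $C(K) = (C(\ball W_y^*))_{y \in Y}$ from Proposition~\ref{prop.Borel-field-CK} and factors the embedding as $W \to C(K) \to C(\Delta)$, whereas you write the composite directly and verify its Borelness by hand via Lemma~\ref{lem.uniqueness}; your version is slightly more explicit about why $\zeta$ and $\gamma$ are Borel, but the underlying argument is identical.
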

\begin{proof}
Let $(W_y)_{y \in Y}$ be a Borel field of separable Banach spaces. By Proposition \ref{prop.dual-field-Banach}, we get the Borel field $(K_y)_{y \in Y}$ of Polish spaces $K_y = \ball W_y^*$, equipped with the weak$^*$ topology. By Proposition \ref{prop.Borel-field-CK}, we have the corresponding Borel field $C(K) = C(K_y)_{y \in Y}$ of the separable Banach spaces of continuous functions on $K_y$. By duality, every $a \in W$ defines a continuous function $\eta_a$ on $K_{\pi(a)}$ given by $\om \mapsto \om(a)$. In this way, $\eta : W \to C(K)$ is an injective Borel map that identifies $W$ with a Borel field of closed subspaces of $C(K_y)_{y \in Y}$.

By Lemma \ref{lem.Alexandroff-Hausdorff}, we can choose a Borel map $\theta : Y \times \Delta \to K$ such that every $\theta_y$ is a continuous surjective map of $\Delta$ onto $K_y$. Then the map $W \to C(\Delta) : a \mapsto \eta_a \circ \theta_{\pi(a)}$ identifies $W$ with a Borel field of closed subspaces of $C(\Delta)$. The second point of the proposition follows from the first in the same way as in the proof of Proposition \ref{prop.universal-field-RN}.
\end{proof}

\begin{remark}
In the proof of Lemma \ref{lem.Alexandroff-Hausdorff}, we implicitly showed the following observation. Assume that $L$ and $K$ are compact Polish spaces and $\zeta : L \to K$ is a continuous surjective map, then the corresponding map $F(K) \to F(L) : A \mapsto \zeta^{-1}(A)$ is Borel. For this property to hold, compactness is essential and was used in our proof of Lemma \ref{lem.Alexandroff-Hausdorff} when we invoked Theorem \ref{thm.selection-sigma-compact}.

Indeed, by \cite[Exercise 27.7]{Kec95}, we can choose a Polish space $K$ and a closed subset $C \subset K$ such that $\{A \in F(K) \mid A \cap C \neq \emptyset\}$ is not Borel. Define the Polish space $L = C \sqcup K$ and define $\zeta : L \to K$ by the embedding $C \hookrightarrow K$ on $C$ and the identity on $K$. Since
\[
\{A \in F(K) \mid A \cap C = \emptyset\} = \{A \in F(K) \mid \zeta^{-1}(A) = \emptyset \sqcup A \} \; ,
\]
it follows that the map $F(K) \to F(L) : A \mapsto \zeta^{-1}(A)$ is not Borel.
\end{remark}

The following universality result is an immediate consequence of Proposition \ref{prop.equivalence-defs-Borel-field-vNalg}. We leave the proof as an exercise.

Given a separable Hilbert space $H$, we denote by $\vNalg(H)$ the set of von Neumann algebras acting on $M$. Equipped with a version of the Effros Borel structure, see \cite{Eff64}, $\vNalg(H)$ is a standard Borel space.

\begin{proposition}\phantomsection\label{prop.universal-field-vNalg}
\begin{enumlist}
\item Every abstract Borel field $(M_y)_{y \in Y}$ of separable von Neumann algebras is isomorphic with a Borel field of the form $M_\gamma$ for some Borel map $\gamma : Y \to \vNalg(\ell^2(\N))$.
\item Consider the standard Borel space $X = [0,1] \times \vNalg(\ell^2(\N))$. The tautological Borel field $M_{\gammatil}$ given by $\gammatil : X \to \vNalg(\ell^2(\N)) : \gammatil(t,M) = M$ is universal.
\end{enumlist}
\end{proposition}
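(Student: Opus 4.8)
The plan is to deduce both parts from Proposition~\ref{prop.equivalence-defs-Borel-field-vNalg}, the amplification trick used in the proof of Lemma~\ref{lem.GNS-field-Cstar}, and Effros' description from \cite{Eff64} of the Borel structure on $\vNalg(H)$. First I would record, as a preliminary observation, the $\vNalg$-analogue of the relation between the Effros Borel structure and concrete Borel fields that was used (for $F(P)$ and $S(C(\Delta))$) in Propositions~\ref{prop.universal-field-RN} and~\ref{prop.universal-field-Banach}. By \cite{Eff64}, $\vNalg(H)$ is a standard Borel space carrying a sequence of Borel maps $s_n : \vNalg(H) \to \ball B(H)$ such that $\{s_n(N) \mid n \in \N\}$ is strongly dense in $\ball N$ for every $N$, and its $\sigma$-algebra is generated by the $s_n$ (with $B(H)$ given the standard Borel structure of Propositions~\ref{prop.bounded-operators-banach} and~\ref{prop.predual-BH}). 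From this and the Kaplansky density argument of the proof of Proposition~\ref{prop.construction}, one gets: for a standard Borel space $Y$ and a family $N = (N^y)_{y\in Y}$ of von Neumann algebras $N^y \subset B(H)$, the map $\gamma : Y \to \vNalg(H):\ y \mapsto N^y$ is Borel if and only if $N \subset Y \times B(H)$ is a Borel set, i.e.\ if and only if $(N^y \subset B(H))_{y\in Y}$ is a concrete Borel field of separable von Neumann algebras in the sense of Definition~\ref{def.Borel-field-vNalg-concrete} with $H$ the constant Hilbert field; here one direction uses that $\ball N = \{(y,T)\in Y\times\ball B(H) \mid \inf_n d(T,s_n(\gamma(y)))=0\}$ is Borel (for the strong metric $d$) together with $N = \bigcup_k k\cdot\ball N$, while the other uses that a Borel field of von Neumann algebras has a strongly dense sequence of Borel sections of its unit ball (Lemma~\ref{lem.positive-functionals}) and Proposition~\ref{prop.duality}.

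For part~1, let $(M^y)_{y\in Y}$ be an abstract Borel field of separable von Neumann algebras. By Proposition~\ref{prop.equivalence-defs-Borel-field-vNalg}(1) there are a Borel field $H=(H_y)_{y\in Y}$ of separable Hilbert spaces and faithful normal unital $*$-homomorphisms $\theta_y : M^y \to B(H_y)$ such that $\theta : M \to B(H)$ is Borel and $(\theta_y(M^y)\subset B(H_y))_{y\in Y}$ is a concrete Borel field. Amplify: replace $H_y$ by $H_y\ot\ell^2(\N)$ (Corollary~\ref{cor.tensor-product-Hilbert}) and $\theta_y(M^y)$ by $\theta_y(M^y)\ot\C 1$. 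The amplification map $B(H)\to B(H\ot\ell^2(\N)):T\mapsto T\ot 1$ is Borel (check on sections, as in Lemma~\ref{lem.uniqueness}), so the new concrete field is again Borel, and since $a\mapsto a\ot 1$ is a normal isomorphism it is isomorphic, as a Borel field of von Neumann algebras, to the original $M$. Every fiber is now infinite dimensional, so by \cite[Lemma IV.8.12]{Tak79} --- exactly as invoked in the proof of Lemma~\ref{lem.GNS-field-Cstar} --- the Hilbert field $(H_y\ot\ell^2(\N))_{y\in Y}$ is isomorphic, as a Borel field, with the constant field $(\ell^2(\N))_{y\in Y}$; conjugating by the corresponding Borel field of unitaries carries the concrete field to a family $N=(N^y)_{y\in Y}$ of von Neumann algebras $N^y\subset B(\ell^2(\N))$ with $N\subset Y\times B(\ell^2(\N))$ Borel. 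By the preliminary observation, $\gamma:Y\to\vNalg(\ell^2(\N)):y\mapsto N^y$ is Borel and $M\cong M_\gamma$. (Conversely, for any Borel $\gamma:Y\to\vNalg(\ell^2(\N))$ the field $M_\gamma$ is a concrete, hence by Proposition~\ref{prop.equivalence-defs-Borel-field-vNalg}(2) abstract, Borel field of separable von Neumann algebras, again by the preliminary observation.)

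Part~2 is then pure bookkeeping, identical to the end of the proofs of Propositions~\ref{prop.universal-field-RN} and~\ref{prop.universal-field-Banach}. Given any abstract Borel field $(M^y)_{y\in Y}$, use part~1 to get a Borel map $\gamma:Y\to\vNalg(\ell^2(\N))$ with $M\cong M_\gamma$, pick an injective Borel map $\rho:Y\to[0,1]$, and put $\psi:Y\to X=[0,1]\times\vNalg(\ell^2(\N)):\psi(y)=(\rho(y),\gamma(y))$. Then $\psi$ is injective Borel, $X_0:=\psi(Y)$ is a Borel subset of $X$, and $\psi$ together with the induced map on total spaces realizes an isomorphism of $(M^y)_{y\in Y}$ with $M_{\gammatil}|_{X_0}$. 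Hence $M_{\gammatil}$ is universal.

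The main obstacle is the preliminary observation: pinning down the precise form of Effros' Borel structure on $\vNalg(H)$ and matching it with the ``concrete Borel field'' condition. Both implications are short once one has the strongly dense Borel selections $s_n$ and knows they generate the $\sigma$-algebra, but some care is needed because $\ball B(H)$ in the strong topology (not $B(H)$ itself) is the Polish space whose Effros Borel structure is relevant, and one must invoke the Kaplansky density trick of Proposition~\ref{prop.construction} and the unit-ball selections of Lemma~\ref{lem.positive-functionals}. Everything else --- the amplification, the trivialization of the Hilbert field, and the multiplicity bookkeeping with $[0,1]$ --- is routine transport of structure.
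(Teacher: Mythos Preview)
Your proposal is correct and is precisely the argument the paper has in mind: the paper gives no proof at all, only the sentence ``an immediate consequence of Proposition~\ref{prop.equivalence-defs-Borel-field-vNalg}. We leave the proof as an exercise.'' You have supplied that exercise, using Proposition~\ref{prop.equivalence-defs-Borel-field-vNalg} together with the amplification and trivialization of the Hilbert field exactly as in the proof of Lemma~\ref{lem.GNS-field-Cstar}, and the $[0,1]$-multiplicity trick from Propositions~\ref{prop.universal-field-RN} and~\ref{prop.universal-field-Banach}. The only nontrivial step you added beyond what the paper states explicitly is the preliminary observation matching the Effros Borel structure on $\vNalg(H)$ with the concrete-Borel-field condition; this is indeed needed to make sense of the statement and your sketch of both directions (via the Effros selections $s_n$ and via Lemma~\ref{lem.positive-functionals}/Proposition~\ref{prop.duality}) is correct.
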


In \cite{Kec96}, the following Borel parametrization of separable C$^*$-algebras is defined. Let $H$ be a separable infinite dimensional Hilbert space. Consider the standard Borel structure on $B(H)$ generated by the weakly open subsets. Then define the standard Borel space $Y = B(H)^\N$ and, for every $y \in Y$, denote by $A_y \subset B(H)$ the C$^*$-algebra generated by the operators $(y_n)_{n \in \N}$. The following result shows that our definition of a Borel field of separable C$^*$-algebra (see Definition \ref{def.Borel-field-Cstar}) is compatible with the approach of \cite{Kec96}.

\begin{proposition}\phantomsection\label{prop.universal-field-Cstar}
\begin{enumlist}
\item There is a unique standard Borel structure on $A = (A_y)_{y \in Y}$ such that $A$ is a Borel field of separable C$^*$-algebras and the sections $Y \to A : y \mapsto y_n \in A_y$ are Borel for all $n \in \N$.
\item With $X = [0,1] \times Y$, the Borel field $\Atil = (A_y)_{(s,y) \in X}$ of separable C$^*$-algebras is universal.
\end{enumlist}
\end{proposition}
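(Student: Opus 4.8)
The plan is to treat part~1 as a C$^*$-algebraic analogue of Proposition~\ref{prop.construction}, and part~2 along the lines of Propositions~\ref{prop.universal-field-RN} and~\ref{prop.universal-field-vNalg}, with Lemma~\ref{lem.GNS-field-Cstar} supplying the analytic content. For part~1, I would first view $Y = B(H)^\N$ together with the constant Borel field $\cH = (H)_{y \in Y}$ of Hilbert spaces, so that $B(\cH) = B(\cH,\cH)$ carries the standard Borel structure of Proposition~\ref{prop.bounded-operators-banach}; by the last point of that proposition this $\sigma$-algebra is the smallest one making all maps $T \mapsto \langle T\xi,\eta\rangle$ (for $\xi,\eta \in H$) measurable, hence coincides with the Borel structure on $B(H)$ generated by the weak operator topology that is used in \cite{Kec96}, and the tautological sections $Y \to B(\cH) : y \mapsto y_n$ are Borel. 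Let $P$ be the countable set of noncommutative $*$-polynomials without constant term, in countably many variables, with coefficients in $\Q + i\Q$. Since addition, scalar multiplication, multiplication and adjoint are Borel on $B(\cH)$, each section $Y \to B(\cH) : y \mapsto p(y_1,y_2,\dots)$ is Borel for $p \in P$, and for every $y$ the set $\{p(y) \mid p \in P\}$ is norm-dense in $A_y$. Then $A = \bigsqcup_{y \in Y} A_y = \{ T \in B(\cH) \mid \inf_{p \in P} \|T - p(\pi(T))\| = 0 \}$ is a Borel subset of $B(\cH)$, being the zero set of a countable infimum of Borel functions; with the restricted Borel structure, $\pi : A \to Y$ is Borel, the sections $y \mapsto y_n$ and $y \mapsto p(y)$ are Borel and the latter are fibrewise dense, and the C$^*$-operations restrict from $B(\cH)$, so $A$ is a Borel field of separable C$^*$-algebras. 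For uniqueness, if $\cB'$ is any standard Borel structure on the set $A$ making it a Borel field of separable C$^*$-algebras with all sections $y \mapsto y_n$ Borel, then the sections $y \mapsto p(y)$ are $\cB'$-Borel as well, so by the first point of Lemma~\ref{lem.Borel-field-Polish-some-properties} (with compatible metric $(S,T) \mapsto \|S-T\|$) both $\cB'$ and the structure just built equal the smallest $\sigma$-algebra on $A$ making $\pi$ and the maps $T \mapsto \|T - p(\pi(T))\|$ measurable.

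For part~2, let $(B_z)_{z \in Z}$ be a Borel field of separable C$^*$-algebras. By Lemma~\ref{lem.GNS-field-Cstar} (for the same $H$) there is a Borel map $\theta : B \to B(H)$ whose restriction $\theta_z$ to each $B_z$ is a faithful, hence isometric, $*$-homomorphism. Fix a dense sequence of Borel sections $\be_n : Z \to B$ and define the Borel map $\rho : Z \to Y$ by $\rho(z) = (\theta(\be_n(z)))_{n \in \N}$. Since $\theta_z$ is isometric, $\theta_z(B_z)$ is a C$^*$-subalgebra of $B(H)$, and it is generated by $\{\theta(\be_n(z)) \mid n \in \N\}$ because $\{\be_n(z)\}$ generates $B_z$; thus $\theta_z(B_z) = A_{\rho(z)}$ and $\theta_z : B_z \to A_{\rho(z)}$ is an isometric $*$-isomorphism, in particular a homeomorphism. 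As $\rho$ need not be injective (already for a constant field $(B)_{z \in [0,1]}$), I would fix an injective Borel map $\lambda : Z \to [0,1]$ and set $\psi : Z \to X = [0,1] \times Y$, $\psi(z) = (\lambda(z),\rho(z))$, which is injective Borel, so that $X_0 := \psi(Z)$ is a Borel subset of $X$ and $\psi : Z \to X_0$ a Borel isomorphism. Writing the total space of $\Atil$ as $[0,1] \times A$ with projection $(s,T) \mapsto (s,\pi(T))$, define $\Theta : B \to \Atil|_{X_0}$ by $\Theta(b) = (\lambda(\pi(b)), \theta(b))$. This is well defined and Borel (the Borel map $\theta : B \to B(\cH)$ has image in the Borel set $A$, hence corestricts to a Borel map $B \to A$), satisfies $\pi \circ \Theta = \psi \circ \pi$, restricts on each $B_z$ to the homeomorphism $\theta_z$, and is a bijection onto $\Atil|_{X_0}$ by injectivity of $\lambda$ and of each $\theta_z$ together with $A_{\rho(z)} = \theta_z(B_z)$. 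Since a Borel bijection between standard Borel spaces is a Borel isomorphism, $(\psi,\Theta)$ exhibits $(B_z)_{z \in Z}$ as the subfield $\Atil|_{X_0}$, so $\Atil$ is universal.

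I do not expect a genuinely hard step: the analytic work, namely finding Borel families of faithful representations, is already done in Lemma~\ref{lem.GNS-field-Cstar}. The points to be careful about are the identification in part~1 of the Borel structure of \cite{Kec96} on $B(H)$ with the one coming from Proposition~\ref{prop.bounded-operators-banach}, and the Borelness of $A \subseteq B(\cH)$ — both immediate once the countable generating family $P$ of $*$-polynomials is fixed — while in part~2 the only subtlety is the $[0,1]$-coordinate, which serves precisely to replace the possibly non-injective $\rho$ by an injective $\psi$, as required for an isomorphism of Borel fields.
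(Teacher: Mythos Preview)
Your proposal is correct and takes essentially the same approach as the paper: realize $A$ as a Borel subset of $Y \times B(H)$ (your $B(\cH)$) via the countable family of noncommutative $*$-polynomials with coefficients in $\Q + i\Q$, and then prove universality by combining Lemma~\ref{lem.GNS-field-Cstar} with an injective Borel map $Z \to [0,1]$. The only notational slip is in part~2, where your $\Theta(b) = (\lambda(\pi(b)), \theta(b))$ should have as second component the element $(\rho(\pi(b)),\theta(b)) \in A \subset B(\cH) \cong Y \times B(H)$, which is exactly how the paper writes it.
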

\begin{proof}
1.\ Viewing $A \subset Y \times B(H)$, we claim that $A$ is a Borel set. Denote by $\cP$ the $*$-algebra of noncommutative polynomials in the variables $Y_n$ and $Y_n^*$ with coefficients in $\Q + i \Q$ and without constant term. For every $y \in Y$, denote by $\cP \to B(H) : P \mapsto P(y)$ the unique $(\Q + i \Q)$-$*$-algebra morphism satisfying $Y_n(y) = y_n$. Note that $\cP$ is a countable set and that for all $y \in Y$, $A_y$ is the closure of $\{P(y) \mid P \in \cP\}$. It follows that an element $(y,T) \in Y \times B(H)$ belongs to $A$ if and only if
$$\forall n \geq 1, \exists P \in \cP : \|T - P(y)\| < 1/n \; .$$
Since for every $P \in \cP$, the map $Y \to B(H) : y \mapsto P(y)$ is Borel, the claim is proven.

We define a standard Borel structure on $A$ as a Borel subset of $Y \times B(H)$. By construction, $A$ is a Borel field of separable C$^*$-algebras. The uniqueness follows from Lemma \ref{lem.uniqueness}.

2.\ Let $B = (B_z)_{z \in Z}$ be any Borel field of separable C$^*$-algebras. Choose a dense sequence of Borel sections $\vphi_n : Z \to B$. By Lemma \ref{lem.GNS-field-Cstar}, we can choose a Borel map $\theta : B \to B(H)$ such that for every $z \in Z$, the restriction of $\theta$ to $B_z$ is a faithful $*$-homomorphism. Choose an injective Borel map $\rho : Z \to [0,1]$. Define the Borel map
$$\gamma : Z \to Y : \gamma(z) = (\theta(\vphi_n(z)))_{n \in \N} \; .$$
Consider the injective Borel map $\psi : Z \to X = [0,1] \times Y : \psi(z) = (\rho(z),\gamma(z))$. Still viewing $A \subset Y \times B(H)$ and identifying $\Atil = [0,1] \times A$, we define the Borel map
$$\Theta : B \to \Atil : \Theta(b) = (\rho(\pi(b)),\gamma(\pi(b)),\theta(b)) \; .$$
Define the Borel set $X_0 = \psi(Z)$. By construction, $\psi$ and $\Theta$ implement an isomorphism between $B$ and $(\Atil_x)_{x \in X_0}$, so that the universality of $\Atil$ follows.
\end{proof}

Our final universality result concerns the more subtle case of Borel fields of Polish groups, as introduced in \cite{Sut85}. By a theorem of Uspenski\v{\i} (see \cite[Theorem 9.18]{Kec95}), every Polish group is isomorphic to a closed subgroup of the Polish group $\Homeo( [0,1]^\N)$ of homeomorphisms of the Hilbert cube $[0,1]^\N$. We believe that this proof can also be coded entirely in a Borel way, but we did not check all the details. Instead, we give a more abstract proof for the existence of a universal Borel field of Polish groups. Note that a similar abstract proof can be given to prove the existence of universal Borel fields of Polish spaces or separable Banach spaces.

The proof of the following proposition can be seen as a reformulation in our context of Sutherland's approach in \cite{Sut85} to define the standard Borel space of Polish groups as a space of pseudometrics on the free group $\F_\infty$.

To make this work, we need Proposition \ref{prop.left-invariant-metric} and the following basic facts on topological groups. If $G$ is a metrizable topological group and $D$ is a compatible left invariant metric, which exists by the Birkhoff-Kakutani theorem (see e.g.\ \cite[Theorem 9.1]{Kec95}), then also the metric $d(g,h) = D(g,h) + D(g^{-1},h^{-1})$ induces the topology of $G$. The multiplication and inverse map on $G$ uniquely extend to continuous maps on the completion $G_d$ of the metric space $(G,d)$ and then $G_d$ is a topological group (see e.g.\ \cite[Part 3, Theorem 2.3.5]{THJ80}). If the initial $G$ was a Polish group, by \cite[Proposition 1.2.1]{BK96}, $G \subset G_d$ must be a closed subgroup, which means that $G = G_d$. We thus conclude that for any compatible left invariant metric $D$ on a Polish group $G$, the metric $d$ is complete.

\begin{proposition}\label{prop.universal-field-Polish-groups}
There exists a universal Borel field of Polish groups.
\end{proposition}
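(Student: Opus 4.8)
The plan is to realize every Polish group as a separation-completion of a pseudometric space built on the free group $\F_\infty$, in the spirit of Sutherland's coding in \cite{Sut85}. Fix the countable group $F=\F_\infty$, with $n$-th generator written $x_n$, and inside the compact metrizable space $[0,1]^{F\times F}$ let $X$ be the set of all $\rho$ that are pseudometrics, are inversion-invariant in the sense that $\rho(g^{-1},h^{-1})=\rho(g,h)$ for all $g,h$, and for which multiplication is $\rho$-uniformly continuous: for every $n$ there is $m$ with $\rho(gh,g'h')<1/n$ whenever $\rho(g,g')<1/m$ and $\rho(h,h')<1/m$. Since $F$ is countable, each condition is a countable boolean combination of conditions on the coordinates of $\rho$, so $X\subset[0,1]^{F\times F}$ is Borel, hence standard Borel. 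For $\rho\in X$ let $G_\rho$ be the separation-completion of $(F,\rho)$ with canonical map $\gamma_\rho:F\to G_\rho$. The three conditions force inversion (an isometry of $(F,\rho)$) and multiplication to descend to the separated quotient and extend uniformly continuously to $G_\rho$; the group axioms then pass to $G_\rho$ by density of $\gamma_\rho(F)$ and continuity, so $G_\rho$ is a Polish group in which $\gamma_\rho(F)$ is a dense subgroup.

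First I would equip $\widetilde G=(G_\rho)_{\rho\in X}$ with the structure of a Borel field of Polish groups. The constant family $(F)_{\rho\in X}$, with total space $X\times F$, projection onto the first coordinate, Borel ``metric'' $((\rho,g),(\rho,h))\mapsto\rho(g,h)$ (Borel because it is a coordinate evaluation on $X$), and the constant sections $\rho\mapsto(\rho,g)$ as a dense sequence of Borel sections, is a Borel field of separable pseudometric spaces in the sense of Definition \ref{def.Borel-field-metric-spaces}. By Proposition \ref{prop.completion} its separation-completion $\widetilde G$ is a Borel field of Polish spaces, with Borel structure characterized by requiring $\pi$, the metric, and $\gamma:X\times F\to\widetilde G$ to be Borel; composing $\gamma$ with the constant sections yields a dense sequence of Borel sections. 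The unit section $\rho\mapsto\gamma(\rho,e_F)$ is Borel, and inversion and multiplication on $\widetilde G$ are fibrewise continuous, so by iterated application of part~3 of Proposition \ref{prop.completion} it suffices that $(\rho,g)\mapsto\gamma(\rho,g^{-1})$ and $(\rho,g,h)\mapsto\gamma(\rho,gh)$ be Borel, which they are. Hence $\widetilde G$ is a Borel field of Polish groups.

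Next I would prove universality. Let $(H_y)_{y\in Y}$ be an arbitrary Borel field of Polish groups; by Proposition \ref{prop.left-invariant-metric} fix a Borel left invariant compatible metric $D$ on $H$ and set $d(g,h)=\min\{1,D(g,h)+D(g^{-1},h^{-1})\}$, which is Borel on $H\times_\pi H$ and on each fibre is a complete metric compatible with the topology for which the group operations are uniformly continuous and which satisfies $d(g^{-1},h^{-1})=d(g,h)$ (these are the facts recalled just before the proposition, with truncation preserving both completeness and symmetry). Fix a dense sequence of Borel sections $\psi_n:Y\to H$, let $\pi_y:F\to H_y$ be the homomorphism with $\pi_y(x_n)=\psi_n(y)$, and define $\beta:Y\to[0,1]^{F\times F}$ by $\beta(y)(g,h)=d(\pi_y(g),\pi_y(h))$. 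Each $y\mapsto\pi_y(g)$ is Borel (a fixed word in the $\psi_n$ built with the Borel group operations of $H$), so $\beta$ is Borel, and $\beta(y)\in X$ because it is the pullback of $d$ under the homomorphism $\pi_y$. Since $\pi_y(F)$ is dense in $H_y$, the separated quotient of $(F,\beta(y))$, with its group structure, is isometrically isomorphic to $\pi_y(F)\subseteq H_y$, so there is a topological group isomorphism $\sigma_y:G_{\beta(y)}\to H_y$ with $\sigma_y(\gamma_{\beta(y)}(g))=\pi_y(g)$. The pullback $\beta^*\widetilde G$ is again a Borel field of Polish groups over $Y$ (pullbacks of Borel fields along Borel maps are Borel fields, transporting metric, sections and structure maps), and $\theta=(\sigma_y^{-1})_y:H\to\beta^*\widetilde G$ is fibrewise a topological group isomorphism; it is Borel by part~2 of Lemma \ref{lem.uniqueness}, since $\theta\circ\psi_n$ is the Borel section $y\mapsto\gamma_{\beta(y)}(x_n)$, and since a bijective Borel map of standard Borel spaces has Borel inverse it is an isomorphism of Borel fields. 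Finally, exactly as in the last paragraphs of Propositions \ref{prop.universal-field-RN} and \ref{prop.universal-field-Banach}, one adds a multiplicity factor: with $\widetilde X=[0,1]\times X$ and tautological field $(G_\rho)_{(t,\rho)\in\widetilde X}$, choosing an injective Borel $\rho_0:Y\to[0,1]$ and setting $\psi(y)=(\rho_0(y),\beta(y))$ makes $\psi$ an injective Borel map with Borel image $X_0$, and $\psi$ together with $\theta$ exhibits $(H_y)_{y\in Y}$ as the restriction of the tautological field to $X_0$. So the tautological field over $\widetilde X$ is universal.

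The step I expect to be the main obstacle is pinning down the space $X$: one must isolate precisely the Borel conditions on a pseudometric $\rho$ on $F$ that force the separation-completion to be a topological group, and dually verify that the complete two-sided metric on an arbitrary Polish group pulls back to such a $\rho$. This is exactly where the structural facts recalled just before the proposition — uniform continuity of the group operations for the two-sided uniformity, and completeness of $D(g,h)+D(g^{-1},h^{-1})$ — are indispensable; the remaining steps are routine book-keeping with Propositions \ref{prop.completion} and \ref{prop.left-invariant-metric} and Lemma \ref{lem.uniqueness}.
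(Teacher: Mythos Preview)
Your proof is correct and follows essentially the same strategy as the paper: parametrize Polish groups by suitable pseudometrics on $\F_\infty$, use Proposition~\ref{prop.completion} to obtain a Borel field via separation-completion, and then invoke Proposition~\ref{prop.left-invariant-metric} together with a dense sequence of Borel sections to embed an arbitrary Borel field of Polish groups into this one. The only difference is cosmetic: the paper codes by \emph{group seminorms} $E:\F_\infty\to[0,\infty)$ (equivalently, left-invariant pseudometrics) satisfying a conjugation-continuity condition, and then symmetrizes to $d(g,h)=E(h^{-1}g)+E(hg^{-1})$ before completing; you code directly by inversion-invariant pseudometrics with uniformly continuous multiplication. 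Both codings are Borel for the same reason (countably many Borel conditions on a product space), and the two-sided metric you pull back from $H_y$ lands in your $X$ precisely because of the facts about the two-sided uniformity recalled just before the proposition, so the identification is exact.
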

\begin{proof}
Denote by $\F_\infty$ the free group on a countably infinite number of generators. Consider the Borel subset $\cE \subset \R^{\F_\infty}$ of functions $E : \F_\infty \to \R$ satisfying the following conditions.
\begin{enumlist}
\item $E(e) = 0$, $E(g) \geq 0$, $E(g) = E(g^{-1})$ and $E(gh) \leq E(g) + E(h)$ for all $g,h \in \F_\infty$.
\item $\forall g \in \F_\infty , \forall n \in \N , \exists k \in \N :$ if $h \in \F_\infty$ and $E(h) < 1/k$, then $E(ghg^{-1}) < 1/n$.
\end{enumlist}
For every $E \in \cE$, the set of $g \in \F_\infty$ with $E(g) = 0$ forms a normal subgroup $N_E \lhd \F_\infty$ and the formula $D(g,h) = E(h^{-1}g)$ yields a well-defined left invariant metric on $\Gamma_E := \F_\infty / N_E$ that turns $\Gamma_E$ into a topological group. As explained above, completing $\Gamma_E$ w.r.t.\ the metric $d(g,h) = D(g,h) + D(g^{-1},h^{-1})$ gives a Polish group $G_E$.

We thus consider the standard Borel space $\cE \times \F_\infty$ with the Borel maps $\pi : \cE \times \F_\infty \to \cE : \pi(E,g) = E$ and
$$d : (\cE \times \F_\infty) \times_\pi (\cE \times \F_\infty) \to [0,+\infty) : d((E,g),(E,h)) = E(h^{-1}g) + E(hg^{-1}) \; .$$
We obtain a Borel field of pseudometric spaces in the sense of Definition \ref{def.Borel-field-metric-spaces}. Proposition \ref{prop.completion} gives us by separation-completion a Borel field $G = (G_E)_{E \in \cE}$ of Polish spaces, with canonical Borel map $\gamma : \cE \times \F_\infty \to G$. We write $\gamma_E : \F_\infty \to G_E$ for every $E \in \cE$. By Proposition \ref{prop.completion} there is a unique Borel multiplication map $G \times_\pi G \to G$ and inverse map $G \to G$ such that $G$ becomes a Borel field of Polish groups and all $\gamma_E$ are group homomorphisms.

Write $X = [0,1] \times P$ and define $G'_{(t,E)} = G_E$ for all $(t,E) \in X$. We prove that $G'$ is a universal Borel field of Polish groups.

Let $(H_y)_{y \in Y}$ be any Borel field of Polish groups. By Proposition \ref{prop.left-invariant-metric}, we can choose a Borel map $\delta : H \times_\pi H \to [0,+\infty)$ such that for every $y \in Y$, $\delta_y$ is a left invariant metric on $H_y$ that is compatible with the topology. Define $\delta'(g,h) = \delta(g,h) + \delta(g^{-1},h^{-1})$. By the explanation above, for every $y \in Y$, $\delta'_y$ is a compatible complete metric on $H_y$. Choose a dense sequence of Borel sections $\vphi_n : Y \to H$.

For every $y \in Y$, denote by $\theta_y : \F_\infty \to H_y$ the unique group homomorphism that maps the $n$'th generator of $\F_\infty$ to $\vphi_n(y) \in H_y$. Then, $\theta : Y \times \F_\infty \to H : \theta(y,g) = \theta_y(g)$ is a Borel map. For every $y \in Y$, define $E_y \in \cE$ by $E_y(g) = \delta(\theta_y(g),e_y)$. Note that $Y \to \cE : y \mapsto E_y$ is Borel. Since for every $y \in Y$, the metric $\delta'_y$ on $H_y$ is complete, there is a unique isomorphism of Polish groups $\zeta_y : H_y \to G_{E_y}$ satisfying $\zeta_y(\theta_y(g)) = \gamma_{E_y}(g)$ for all $g \in \F_\infty$ and $y \in Y$. By the second point of Proposition \ref{prop.completion}, the resulting map $\zeta : H \to G$ is Borel.

Note that $G' = [0,1] \times G$. Choose an injective Borel map $\rho : Y \to [0,1]$. Define the injective Borel map
\[
\Theta : H \to G' : \Theta(h) = (\rho(\pi(h)),\zeta_{\pi(h)}(h)) \; .
\]
Also consider the injective Borel map $\psi : Y \to X : \psi(y) = (\rho(y),E_y)$. Writing $X_0 = \psi(Y)$, we get that $\psi$ and $\Theta$ implement an isomorphism between the fields of Polish groups $H = (H_y)_{y \in Y}$ and $(G'_{(t,d)})_{(t,d) \in X_0}$. So, $G'$ is universal.
\end{proof}

\begin{remark}\label{rem.about-Borel-properties}
Consider any of the separable structures that we considered in this paper, such as Polish spaces, separable Banach spaces, von Neumann algebras with separable predual, etc. Assume now that $\cR$ is an $n$-ary relation between such structures. For example, for $n=1$, $\cR$ is a property such as ``being a reflexive Banach space'' or ``being a type III factor''. For $n=2$, $\cR$ is a relation, with the isomorphism relation being the prime example, but another example would be ``are measure equivalent'', when talking about countable groups.

From our definition of a universal Borel field, it follows tautologically that the following two statements are equivalent, in the context of any of the separable structures considered in this paper.
\begin{itemlist}
\item For every Borel field $(S_x)_{x \in X}$, the set $\{x \in X^n \mid (S_{x_1},\ldots,S_{x_n}) \;\text{satisfies}\; \cR\}$ is Borel, resp.\ analytic.
\item For a universal Borel field $(S_x)_{x \in X}$, the set $\{x \in X^n \mid (S_{x_1},\ldots,S_{x_n}) \;\text{satisfies}\; \cR\}$ is Borel, resp.\ analytic.
\end{itemlist}

There is a lot of literature on properties and relations between separable structures being Borel or analytic, see e.g.\ \cite{FLR06}. In particular, it has been proven in many cases that the isomorphism relation is complete analytic and thus, not Borel. Such statements are usually proven in a specific Borel coding of the specific separable structure that is being studied. Such a Borel coding will, by definition, be a Borel field in the sense of our paper. In \cite{FLR06}, it is then said that \emph{``It is an empirical fact that any other way of defining this leads to equivalent results.''} The precise mathematical formulation of this empirical fact is that the Borel field that is used as a Borel coding, actually is universal, in the sense of Definition \ref{def.universal-field-Polish}.
\end{remark}


\begin{thebibliography}{Wou23}\setlength{\itemsep}{-1mm} \setlength{\parsep}{0mm} \small
\bibitem[Azo81]{Azo81} E.A. Azoff, Borel maps on sets of von Neumann algebras. {\it J. Operator Theory} {\bf 9} (1983), 319-340.

\bibitem[BK96]{BK96} H. Becker and A.S. Kechris, The descriptive set theory of Polish group actions. {\it London Math. Soc. Lecture Note Ser.} {\bf 232}, Cambridge University Press, Cambridge, 1996.

\bibitem[Bos01]{Bos01} B. Bossard, A coding of separable Banach spaces. {\it Fund. Math.} {\bf 172} (2002), 117-152.

\bibitem[Bra15]{Bra15} B. Braga, Duality on Banach spaces and a Borel parametrized version of Zippin's theorem. {\it Ann. Inst. Fourier (Grenoble)} {\bf 65} (2015), 2413-2435.

\bibitem[Bus67]{Bus67} R.C. Busby, Double centralizers and extensions of C$^*$-algebras. {\it Trans. Amer. Math. Soc.} {\bf 132} (1968), 79-99.

\bibitem[Cha23]{Cha23} S. Chakraborty, Classification of regular subalgebras of injective type III factors. {\it Internat. J. Math.} {\bf 35} (2024), paper id.\ 2350101.

\bibitem[CG15]{CG15} C. Chang and S. Gao, The complexity of the classification problem of continua. {\it Proc. Amer. Math. Soc.} {\bf 145} (2017), 1329-1342.

\bibitem[DG11]{DG11} L. Ding and S. Gao, On the Borelness of the intersection operation. {\it Israel J. Math.} {\bf 195} (2013), 783-800.

\bibitem[Dod08]{Dod08} P. Dodos, Definability under duality. {\it Houston J. Math.} {\bf 36} (2010), 781-792.

\bibitem[Dod10]{Dod10} P. Dodos, Banach spaces and descriptive set theory: selected topics. {\it Lecture Notes in Math.} {\bf 1993}, Springer-Verlag, Berlin, 2010.

\bibitem[Eff64]{Eff64} E.G. Effros, The Borel space of von Neumann algebras on a separable Hilbert space. {\it Pacific J. Math.} {\bf 15} (1965), 1153-1164.

\bibitem[Eff65]{Eff65} E.G. Effros, Global structure in von Neumann algebras. {\it Trans. Amer. Math. Soc.} {\bf 121} (1966), 434-454.

\bibitem[FLR06]{FLR06} V. Ferenczi, A. Louveau and C. Rosendal, The complexity of classifying separable Banach spaces up to isomorphism. {\it J. Lond. Math. Soc. (2)} {\bf 79} (2009), 323-345.

\bibitem[Kec95]{Kec95} A.S. Kechris, Classical descriptive set theory. {\it Grad. Texts in Math.} {\bf 156}, Springer-Verlag, New York, 1995.

\bibitem[Kec96]{Kec96} A.S. Kechris, The descriptive classification of some classes of C$^*$-algebras. In {\it Proceedings of the Sixth Asian Logic Conference, Beijing, 1996}, World Scientific Publishing Co., River Edge, 1998, pp.\ 121-149.

\bibitem[PSV18]{PSV18} S. Popa, D. Shlyakhtenko and S. Vaes, Classification of regular subalgebras of the hyperfinite II$_1$ factor. {\it J. Math. Pures Appl.} {\bf 140} (2020), 280-308.

\bibitem[Ros76]{Ros76} I. Rosenholtz, Another proof that any compact metric space is the continuous image of the Cantor set. {\it Amer. Math. Monthly} {\bf 83} (1976), 646-747.

\bibitem[Sai75]{Sai75} J. Saint-Raymond, Bor\'{e}liens \`{a} coupes $K_\sigma$. {\it Bull. Soc. Math. France} {\bf 104} (1976), 389-400.

\bibitem[ST08]{ST08} R. Sasyk and A. T\"{o}rnquist, The classification problem for von Neumann factors. {\it J. Funct. Anal.} {\bf 256} (2009), 2710-2724.

\bibitem[Sri80]{Sri80} S.M. Srivastava, Selection and representation theorems for $\sigma$-compact valued multifunctions. {\it Proc. Amer. Math. Soc.} {\bf 83} (1981), 775-780.

\bibitem[Sri98]{Sri98} S.M. Srivastava, A course on Borel sets. {\it Graduate Texts in Mathematics} {\bf 180}, Springer New York, 1998.

\bibitem[Sut85]{Sut85} C.E. Sutherland, A Borel parametrization of Polish groups. {\it Publ. Res. Inst. Math. Sci.} {\bf 21} (1985), 1067-1086.

\bibitem[ST84]{ST84} C.E. Sutherland and M. Takesaki, Actions of discrete amenable groups and groupoids on von Neumann algebras. {\it Publ. Res. Inst. Math. Sci.} {\bf 21} (1985), 1087-1120.

\bibitem[SW23]{SW23} G. Szab\'{o} and L. Wouters, Dynamical McDuff-type properties for group actions on von Neumann algebras. {\it J. Inst. Math. Jussieu.} {\bf 23} (2024), 2593-2629

\bibitem[Tak79]{Tak79} M. Takesaki, Theory of operator algebras I. Springer-Verlag, New York-Heidelberg, 1979.

\bibitem[Tao14]{Tao14} T. Tao, Hilbert's fifth problem and related topics. {\it Grad. Stud. Math.} {\bf 153}, American Mathematical Society, Providence, 2014.

\bibitem[THJ80]{THJ80} F. Tops{\o}e and J. Hoffmann-J{\o}rgensen, Analytic spaces and their application. In {\it Analytic sets}, Eds.\ C.A.\ Rogers et al., Academic Press, London-New York, 1980.

\bibitem[Wou23]{Wou23} L. Wouters, Equivariant Jiang-Su stability for actions of amenable groups. \href{https://lirias.kuleuven.be/retrieve/732494}{PhD thesis KU~Leuven, 2023}

\end{thebibliography}
\end{document}